\newtheorem{thm}{Theorem}[section]
\newtheorem*{thm*}{Theorem}
\newtheorem{cor}[thm]{Corollary}
\newtheorem{prop}[thm]{Proposition}
\newtheorem*{prop*}{Proposition}
\newtheorem{lem}[thm]{Lemma}
\newtheorem*{ntn*}{Notation}
\theoremstyle{definition}
\newtheorem{defn}[thm]{Definition}
\newtheorem{exmp}[thm]{Example}
\theoremstyle{remark}
\newtheorem{rem}[thm]{Remark}
\newcommand{\parhdeg}{\operatorname{parhdeg}}
\newcommand{\Pic}{\operatorname{Pic}}
\newcommand{\Z}{\mathbb{Z}}
\let\c@equation\c@thm
\numberwithin{thm}{section}
\numberwithin{equation}{section}
\title{Level structures on parahoric torsors and complete integrability}
\author{Georgios Kydonakis and Lutian Zhao}
\date{14 June 2025\\ 
2020 Mathematics Subject Classification: 14D15, 14H40,  14L15, 37J99, 37K10, 53D17.\\
Keywords: parahoric torsor, Higgs bundle, level structure, logarithmic Higgs field, moment map, Lagrangian fibration, completely integrable system}
\begin{document}

\begin{abstract}
For a smooth complex algebraic curve $X$ and a reduced effective divisor $D$ on $X$, we introduce a notion of $D$-level structure on parahoric $\mathcal{G}_{\boldsymbol \theta}$-torsors over $X$, for any connected complex reductive Lie group $G$. A moduli space of parahoric $\mathcal{G}_{\boldsymbol \theta}$-torsors equipped with a $D$-level structure is constructed and we identify a canonical moment map with respect to the action of a level group on this moduli space. This action extends to a Poisson action on the cotangent, thus inducing a Poisson structure on the moduli space of logahoric $\mathcal{G}_{\boldsymbol \theta}$-Higgs torsors on $X$. A study of the generic fibers of the parahoric Hitchin fibration of this moduli space identifies them as abelian torsors and introduces new algebraically completely integrable Hamiltonian Hitchin systems in this parahoric setting. We show that this framework generalizes, among other, the integrable system of Beauville and recovers the classical Gaudin model in its simplest form, the space of periodic KP elliptic solitons and the elliptic Calogero--Moser system, thus demonstrating that the logahoric Hitchin integrable system unifies many integrable systems with regular singularities under a single geometric framework. 
\end{abstract}
\maketitle
\section{Introduction}
Algebraically completely integrable Hamiltonian systems on Poisson manifolds are characterized by the properties that the general level sets of the momentum map are isomorphic to an affine part of an abelian variety and the flows of the integrable vector fields are linearized by this isomorphism (\cite{DoMa}, \cite{Vanhaecke}). Many classical systems, such as the geodesic flow on an ellipsoid, the Korteweg--de Vries equation and its extensions, the elliptic Calogero--Moser system, various Euler--Arnold systems, or the Neumann system of evolution of a point on the sphere subject to a quadratic potential, can embed as symplectic leaves of certain spaces (\cite{Beauville}, \cite{HM}, \cite{Markman}, \cite{RS}).  

Hitchin introduced in \cite{Hitchin} an algebraically completely integrable Hamiltonian system of Poisson-commuting functions on the cotangent bundle of the moduli space of stable vector bundles on a smooth compact Riemann surface. This cotangent bundle parameterizes on the one hand Higgs bundles (vector bundles over the Riemann surface with a canonically twisted endomorphism) and on the other hand spectral data (generically line bundles on branched covers of the Riemann surface). Hitchin's system is linearized on Jacobians of spectral curves and it is the lowest-rank symplectic leaf of a natural infinite-dimensional Poisson variety. In an extension to Hitchin's result, Markman \cite{Markman} studied families of Jacobians of spectral curves over algebraic curves of any genus obtained by twisting by any sufficiently positive line bundle. Markman proposed a geometric method via a deformation-theoretic construction, in which the Poisson structure on an open subset of the algebraically completely integrable Hamiltonian system is obtained via symplectic reduction from the cotangent of the moduli space $\mathcal{U}_{\Sigma}(r,d,D)$ of stable rank $r$ degree $d$ vector bundles $E$ over a Riemann surface $\Sigma$ equipped with a $D$-level structure, namely, an isomorphism $E\vert _D \to \bigoplus_{i=1}^{r}\mathcal{O}_D$, for a fixed reduced effective divisor $D$ on $\Sigma$. The concept of a \emph{$D$-level structure} was introduced earlier by Seshadri in \cite{Seshadri}, who also constructed a moduli space of stable vector bundles equipped with such structure as a smooth quasi-projective variety. The same result to \cite{Markman} for the moduli space of twisted stable Higgs pairs was obtained independently by Bottacin in \cite{Bottacin}, who produced an explicit antisymmetric contravariant 2-tensor at the stable points. Using explicit computations, he proved that this defines a Poisson structure and checked the linearity of the flows. 

The aim in this article is to introduce algebraically completely integrable Hamiltonian Hitchin systems for general complex reductive structure groups $G$, as a way to provide a mathematical framework for describing the geometry of certain integrable systems in Physics with regular singularities. We do this by generalizing Markman's approach, that is, by studying the symplectic leaves of the $G$-Hitchin system over a complex algebraic curve with a divisor of finitely many distinct points. Various similar treatments have appeared in the literature with limitations on the parabolic structure and the framing imposed locally on the principal bundles (see, for instance, \cite{BKV}, \cite{BisLogPeon}, \cite{LogMar}, \cite{Zelaci}). 

In this article, we use the language of parahoric $\mathcal{G}_{\boldsymbol \theta}$-torsors and introduce a new notion of $D$-level structure that both generalizes Seshadri's concept and interacts coherently with the parahoric Bruhat--Tits data. This provides a more transparent framework for studying the Hitchin system and its symplectic leaves for general reductive groups. We next describe our results in more detail. 

\vspace{2mm}

Let $X$ be a smooth complex projective curve, $D$ a reduced effective divisor on $X$, and $G$ a connected complex reductive Lie group. For each point $x \in D$, fix a collection of weights $\boldsymbol \theta :=\{\theta_x\}_{x \in D}$, that is, a collection of points in the Bruhat--Tits apartment of $G\left(\mathbb{C}((t))\right)$; each of these points determines a parahoric subgroup scheme $\mathcal{G}_{\theta_x}$ on the formal disk $\mathbb{D}_x :=\mathrm{Spec}\widehat{\mathcal{O}}_{X,x}$. We introduce the following:

\begin{defn}[Definition \ref{defn:D_level_structure}] A \emph{$D$-level structure} of parahoric type $(\theta_x)_{x\in D}$ on a parahoric $\mathcal{G}_{\boldsymbol\theta}$-torsor $\mathcal{E}$ over $X$ is a choice of section $\eta_x:\mathbb{D}_x \to G_{\theta_x}/G_{\theta_x}^+$, for each $x\in D$, where $G_{\theta_{x}}^+$ denotes the pro-unipotent radical of $G_{\theta_{x}}$. We will denote a parahoric $\mathcal{G}_{\boldsymbol\theta}$-torsor equipped with a $D$-level structure as a pair $(\mathcal{E}, \eta)$.
\end{defn}

Note that the data of a $D$-level structure  refines the parahoric torsor by imposing a ``flag-type'' or ``framing-type'' condition at each point in $D$. We call a $D$-level structure on a parahoric $\mathcal{G}_{\boldsymbol \theta}$-torsor \emph{stable} (resp. \emph{semistable}) if the underlying parahoric $\mathcal{G}_{\boldsymbol \theta}$-torsor is $R$-stable (resp. $R$-semistable) in the sense introduced in \cite{KSZparh}. A moduli space $\mathcal{U}(X, \mathcal{G}_{\boldsymbol \theta})$ of stable pairs $(\mathcal{E}, \eta)$ of parahoric $\mathcal{G}_{\boldsymbol \theta}$-torsors over $X$ with a $D$-level structure is then constructed as an irreducible normal projective variety:

\begin{thm}[Theorem \ref{thm:moduli-d-level}]
Let \(X\) be a smooth complex projective curve and \(D\subset X\) a reduced effective divisor. Let \(\mathcal{G}_{\boldsymbol \theta}\) be the parahoric Bruhat--Tits group scheme on \(X\) corresponding to a collection \(\boldsymbol \theta :=\{\theta_x\}_{x\in D}\) of rational weights. Then the moduli functor which assigns to any scheme \(S\) the set of \(S\)-equivalent classes of semistable parahoric \(\mathcal{G}_{\boldsymbol \theta}\)-torsors on \(X\) with fixed \(D\)-level structure is corepresented by an irreducible, normal, projective variety.
\end{thm}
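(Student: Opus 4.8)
The plan is to realize the moduli space through a GIT construction that refines the one used in \cite{KSZparh} for parahoric torsors alone, and then to transport the geometric properties from the already-known base moduli space along a forgetful morphism. Write $M_x := G_{\theta_x}/G_{\theta_x}^+$ for the reductive quotient at $x\in D$, a connected reductive group over $\mathbb{C}$, and let $\mathbb{L} := \prod_{x\in D} M_x$ be the associated level group; a $D$-level structure on $\mathcal{E}$ is precisely a reduction of the local structure group to the pro-unipotent radical, so that the set of level structures on a fixed $\mathcal{E}$ is a homogeneous space under $\mathbb{L}$. Let $\mathcal{M}(X,\mathcal{G}_{\boldsymbol\theta})$ denote the moduli space of semistable parahoric $\mathcal{G}_{\boldsymbol\theta}$-torsors, an irreducible, normal, projective variety by \cite{KSZparh}. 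I would construct a forgetful morphism $\Phi\colon \mathcal{U}(X,\mathcal{G}_{\boldsymbol\theta})\to \mathcal{M}(X,\mathcal{G}_{\boldsymbol\theta})$ and use it to organize the argument.

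First I would establish \emph{boundedness}: since the $D$-level structure is supported on the finite set $D$ and does not alter the numerical invariants governing $R$-semistability, the family of semistable level-decorated torsors is bounded as soon as the family of semistable parahoric torsors is. Next I would build a rigidified parameter scheme. Following the Bruhat--Tits/Quot-scheme approach, one passes to a suitable ramified cover or a Seshadri-type local level cover on which parahoric $\mathcal{G}_{\boldsymbol\theta}$-torsors become ordinary (parabolic) bundles, producing a quasi-projective parameter scheme $R$ carrying the universal torsor together with its local $M_x$-reductions. The extra level data enlarges this to a scheme $R^{\mathrm{lev}}\to R$ which is a relative $\mathbb{L}$-space, and the relevant symmetry group is the product of the gauge group used in \cite{KSZparh} with the local automorphisms entering the level structure.

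The technical heart is to show that the GIT-(semi)stability of a point of $R^{\mathrm{lev}}$, for an appropriate linearization, coincides with the $R$-semistability of the underlying parahoric torsor from \cite{KSZparh}. Concretely, I would run the Hilbert--Mumford criterion and check that, because the level data is concentrated at $D$ and is acted on by a connected group with no destabilizing one-parameter subgroups beyond those already visible on the torsor, every destabilizing test configuration descends to one for the underlying torsor. Granting this, the semistable locus $R^{\mathrm{lev},ss}$ is saturated over $R^{ss}$, and I would form the good quotient $\mathcal{U}(X,\mathcal{G}_{\boldsymbol\theta}) := R^{\mathrm{lev},ss}/\!\!/\Gamma$; since $S$-equivalence of level-decorated torsors is exactly the closed-orbit equivalence for this action, the quotient corepresents the moduli functor. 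Projectivity follows because the quotient is taken with respect to a linearization for which the semistable $S$-equivalence quotient is complete, matching the projectivity already established on the base.

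Finally I would read off the remaining properties from the structure of $\Phi$. Irreducibility is inherited because $R^{\mathrm{lev},ss}$ fibers over the irreducible $R^{ss}$ with irreducible (homogeneous) fibers, so its image $\mathcal{U}(X,\mathcal{G}_{\boldsymbol\theta})$ is irreducible; equivalently, $\Phi$ is surjective with geometrically connected fibers onto the irreducible base. Normality I would deduce from the fact that the GIT quotient of a normal (indeed generically smooth) parameter scheme by a reductive group is normal, using that $R$ is normal as in \cite{KSZparh} and that the level enlargement $R^{\mathrm{lev}}\to R$ is smooth with reductive-homogeneous fibers. The step I expect to be the main obstacle is the stability comparison: verifying that imposing the $D$-level structure neither creates nor destroys semistability and that the Hilbert--Mumford weights reduce cleanly to the parahoric degree inequalities of \cite{KSZparh}, together with the careful bookkeeping of the pro-unipotent radical $G_{\theta_x}^+$ in the local models at the points of $D$.
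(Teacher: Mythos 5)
Your overall GIT-via-ramified-covers skeleton is the one the paper also uses, but your proposal contains a structural gap that blocks the conclusion. You treat the $D$-level structures as \emph{varying} moduli: your parameter scheme $R^{\mathrm{lev}}\to R$ is a relative $\mathbb{L}$-space with homogeneous fibers under the reductive group $\mathbb{L}=\prod_{x\in D}G_{\theta_x}/G_{\theta_x}^+\cong\prod_{x\in D}L_{\theta_x}$, and you then claim irreducibility, normality and \emph{projectivity} are inherited through the forgetful map $\Phi$. Projectivity cannot be inherited in this setup: on the stable locus the fibers of $\Phi$ would be of the form $\{\text{level structures}\}/\mathrm{Aut}(\mathcal{E})$, and since $\mathrm{Aut}(\mathcal{E})=Z(G)$ on the regularly stable locus (Definition \ref{def:parahoric-regular-stable}), these are positive-dimensional \emph{affine} homogeneous spaces. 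A morphism between projective varieties is proper with proper fibers, so no projective variety can fiber over the projective base moduli with such fibers --- this is precisely why Seshadri's and Markman's level-structure moduli, where the framing genuinely varies, are only quasi-projective. The theorem you are asked to prove concerns torsors with a \emph{fixed} $D$-level structure: the level datum must enter as a prescribed, rigid condition cutting out a closed subscheme of the parameter scheme, not as new moduli directions. That is what the paper does: it forms $Q^{\boldsymbol\tau}_{(\Gamma,G,D)}\subset Q^{\boldsymbol\tau}_{(\Gamma,G)}$ by requiring the fixed equivariant map $\tilde{\eta}_x$ to be a trivialization of the universal family over each formal disc (a closed condition), and then the existence, irreducibility, normality and projectivity of the good quotient carry over verbatim from the Balaji--Seshadri construction \cite{BS}.

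Two further points of comparison. First, the step you flag as the technical heart --- reconciling Hilbert--Mumford weights on the level-enhanced scheme with parahoric semistability --- does not arise in the paper at all: by Definition \ref{defn:stable_level_str}, a pair $(\mathcal{E},\eta)$ is (semi)stable if and only if the underlying torsor $\mathcal{E}$ is, so the level datum never enters the stability condition or the linearization, and no new GIT analysis is needed. Second, the work the paper actually does, and which your proposal only gestures at (``passes to a suitable ramified cover''), is the translation of $D$-level structures into $\Gamma$-equivariant data: the computation of the centralizer $C_G(\rho(\gamma))=L_\theta$ (Lemma \ref{lem:centralizer}) and the resulting equivalence between $D$-level structures and $\Gamma$-invariant maps $\tilde{\eta}:\mathbb{D}_y\to G$ (Theorem \ref{thm:equivlevel}), combined with the fact from \cite{KSZparh} that (semi)stability of a parahoric torsor matches that of the corresponding equivariant bundle. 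Without this equivalence the level condition cannot be transported to the equivariant parameter scheme where the GIT machinery lives.
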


We analyze the deformations of the $D$-level structure to show that the tangent space of the moduli space $\mathcal{U}(X, \mathcal{G}_{\boldsymbol \theta})$ corresponds to the space of logarithmic Higgs fields $H^0(X, \mathcal{E}(\mathfrak{g})\otimes K(D))$ (Proposition \ref{prop:tangent_U}). 

In order to identify a canonical moment map on the cotangent $T^*\mathcal{U}(X, \mathcal{G}_{\boldsymbol \theta})$, we consider the action of a certain \emph{level group} $G_D$ on the moduli space $\mathcal{U}(X, \mathcal{G}_{\boldsymbol \theta})$ (Definition \ref{defn:level_group}). This action is free on the regularly stable locus and, in fact, extends to a Poisson action on the cotangent $T^*\mathcal{U}(X, \mathcal{G}_{\boldsymbol \theta})$ (see Sections \ref{sec:Parahoric} and \ref{sec:Poisson_moment} for further notational explanations):

\begin{thm}[Theorem \ref{thm:PoissonGD}]
The group \(G_{D}\) acts \emph{Poisson} on \(T^*\mathcal{U}(X,\mathcal{G}_{\boldsymbol \theta})\).  Moreover, the \emph{canonical} moment map
\[
  \mu \colon T^*\mathcal{U}(X,\mathcal{G}_{\boldsymbol \theta})\;\longrightarrow\;\mathfrak{g}_D^*
\]
is given by dualizing the infinitesimal action and can be explicitly described via coresidues at the divisor \(D\). Its image is the element of $\mathfrak{g}_D^* = \bigoplus_{j=1}^s \hat{\mathfrak{l}}_{\theta_j}^*$ given by the direct sum of the coresidues at each point $x_j \in D$:
\[
\mu([(\mathcal{E},\eta)],\varphi) = \bigoplus_{j=1}^s \operatorname{CoRes}_{x_j}(\varphi).
\]
Explicitly, for any element $Y = (Y_1, \dots, Y_s) \in \mathfrak{g}_D = \bigoplus_{j=1}^s \hat{\mathfrak{l}}_{\theta_j}$, the pairing is given by the sum of Killing form pairings over the divisor $D$:
\[
\langle \mu([(\mathcal{E},\eta)],\varphi), Y \rangle = \sum_{j=1}^s (\operatorname{Res}_{x_j}(\varphi), Y_j).
\]
\end{thm}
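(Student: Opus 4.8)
I would treat this as an instance of the general fact that a group action lifted to a cotangent bundle is automatically Hamiltonian, and then make the moment map explicit by localizing the tautological pairing at $D$. First I would record the standard principle: for an algebraic group $H$ acting on a smooth variety $M$, the lifted action on $T^*M$ preserves the Liouville $1$-form $\lambda$, hence preserves $\omega=d\lambda$ and the induced Poisson bivector; thus the action is Poisson (indeed symplectic) and $\operatorname{Ad}^*$-equivariantly Hamiltonian, with moment map determined by
\[
\langle \mu(\alpha_p),\xi\rangle=\langle\alpha_p,\xi_M(p)\rangle,\qquad \alpha_p\in T^*_pM,\ \xi\in\operatorname{Lie}(H),
\]
where $\xi_M$ is the fundamental vector field of $\xi$. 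Specializing to $M=\mathcal{U}(X,\mathcal{G}_{\boldsymbol\theta})$ and $H=G_D$ establishes the Poisson statement at once and reduces the explicit formula to evaluating $\langle\varphi,\rho(Y)\rangle$, where $\varphi$ is the logahoric Higgs-field coordinate of the point of $T^*\mathcal{U}$ and $\rho(Y)=Y_{\mathcal{U}}$ is the fundamental vector field of $Y\in\mathfrak{g}_D$.

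The second step is to compute $\rho$. Since $G_D$ acts by modifying only the $D$-level structure $\eta$ while fixing the underlying parahoric torsor, differentiating yields an infinitesimal action
\[
\rho\colon\mathfrak{g}_D=\bigoplus_{j=1}^{s}\hat{\mathfrak{l}}_{\theta_j}\longrightarrow T_{[(\mathcal{E},\eta)]}\mathcal{U}.
\]
Using the local model of $\eta$ as a section of $G_{\theta_x}/G^+_{\theta_x}$ together with Proposition \ref{prop:tangent_U}, I would show that $\rho(Y)$ is the deformation class concentrated at the points of $D$ whose local datum at $x_j$ is $Y_j$, read in the reductive quotient. The vanishing of the pro-unipotent radical $G^+_{\theta_x}$ is precisely what makes the action factor through the Levi quotient, so that $\rho$ is genuinely indexed by $\bigoplus_j\hat{\mathfrak{l}}_{\theta_j}$.

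Third, I would localize. The canonical pairing $\langle\varphi,\rho(Y)\rangle$ between the Higgs field and the fundamental vector field is computed, through the Serre-duality pairing on $X$, by cup product followed by the Killing form $\mathcal{E}(\mathfrak{g})\otimes\mathcal{E}(\mathfrak{g})\to\mathcal{O}_X$ and the trace isomorphism $H^1(X,K)\xrightarrow{\ \sim\ }\mathbb{C}$. Because $\rho(Y)$ is supported at $D$ while $\varphi$ is a section of $\mathcal{E}(\mathfrak{g})\otimes K(D)$ with at worst simple poles there, the global residue theorem $\sum_{x\in X}\operatorname{Res}_x=0$ collapses this into a finite sum of local contributions,
\[
\langle\varphi,\rho(Y)\rangle=\sum_{j=1}^{s}\operatorname{Res}_{x_j}\bigl((\varphi,Y_j)\bigr)=\sum_{j=1}^{s}\bigl(\operatorname{Res}_{x_j}(\varphi),Y_j\bigr),
\]
which is exactly the asserted pairing. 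Recording $\operatorname{Res}_{x_j}(\varphi)$ against the Killing form as the functional $\operatorname{CoRes}_{x_j}(\varphi)\in\hat{\mathfrak{l}}_{\theta_j}^*$ then gives $\mu=\bigoplus_{j}\operatorname{CoRes}_{x_j}$.

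The main obstacle I anticipate is the parahoric bookkeeping underlying this localization: one must verify that $\operatorname{Res}_{x_j}(\varphi)$ of a logahoric Higgs field actually lands in the reductive quotient $\hat{\mathfrak{l}}_{\theta_j}$ prescribed by the weight $\theta_{x_j}$, and that the Killing form descends compatibly to this quotient and pairs nondegenerately with the image of $\rho$. The weight $\theta_x$ determines a Moy--Prasad-type filtration of $\mathfrak{g}((t))$ whose graded pieces govern the admissible polar parts, and the content is that the piece surviving in the residue is exactly $\hat{\mathfrak{l}}_{\theta_j}$. This is where the Bruhat--Tits data enters essentially and the argument genuinely departs from the classical $\operatorname{GL}_r$ computations of Markman and Bottacin; once this local compatibility is in place, both the Poisson property and the moment map formula follow formally from the cotangent-lift formalism and Serre duality.
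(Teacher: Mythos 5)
Your proposal is correct and follows essentially the same route as the paper: both invoke the cotangent-lift formalism (the paper's Example \ref{exm:Ham_lift}) to get the Poisson/Hamiltonian property for free, identify the cotangent fiber with logarithmic Higgs fields via Serre duality, realize the moment map as the dual of the infinitesimal action on the level structures, and collapse the duality pairing to the residue pairing at $D$, where the orthogonality of the Levi decomposition $\mathfrak{g}_{\theta_j}=\hat{\mathfrak{l}}_{\theta_j}\oplus\mathfrak{g}_{\theta_j}^{+}$ under the Killing form is exactly what makes the residue land in $\hat{\mathfrak{l}}_{\theta_j}$ and yields the coresidue formula. The "parahoric bookkeeping" you flag as the main obstacle is precisely the point the paper resolves with this orthogonal decomposition, so your outline is complete in substance.
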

Similarly to Markman's original work, we deduce a Poisson structure on the moduli space $\mathcal{M}_H(X, \mathcal{G}_{\boldsymbol \theta})$ of logahoric $\mathcal{G}_{\boldsymbol \theta}$-Higgs torsors via a forgetful morphism $$l: \mathcal{M}_{LH}(X, \mathcal{G}_{\boldsymbol \theta}) \to \mathcal{M}_H(X, \mathcal{G}_{\boldsymbol \theta}),$$ from the coarse moduli space $ \mathcal{M}_{LH}(X, \mathcal{G}_{\boldsymbol \theta})$ of triples $(\mathcal{E}, \varphi, \eta)$ of logahoric $\mathcal{G}_{\boldsymbol \theta}$-Higgs torsors with $D$-level structure to the moduli space of pairs $(\mathcal{E}, \varphi)$. The cotangent \(T^*\mathcal{U}(X, \mathcal{G}_{\boldsymbol \theta})\) of the moduli space \(\mathcal{U}(X, \mathcal{G}_{\boldsymbol \theta})\) is an open subset of \(M_{LH}(X, \mathcal{G}_{\boldsymbol \theta})\) and the forgetful map $l$ is Poisson, thus inheriting with a Poisson structure the space $\mathcal{M}_H(X, \mathcal{G}_{\boldsymbol \theta})$ (Theorem \ref{thm:forgetful_Poisson}).

We then use the cameral covers of Donagi \cite{Donagi} to study the parahoric Hitchin fibration on the space $\mathcal{M}_H(X, \mathcal{G}_{\boldsymbol \theta})$. We prove that its generic fibers are Lagrangian subvarieties with respect to the symplectic leaves of the Poisson moduli space $\mathcal{M}_{H}(X,\mathcal{G}_{\boldsymbol{\theta}})$, thus concluding to our main result:

\begin{thm}[Theorem \ref{thm:main}]\label{thm:main at Intro} 
Let $X$ be a smooth complex algebraic curve and $D$ be a reduced effective divisor on $X$. Let $G$ be a connected complex reductive group. The moduli space $\mathcal{M}_H(X,\mathcal{G}_{\boldsymbol\theta})$ of logahoric $\mathcal{G}_{\boldsymbol \theta}$-Higgs torsors over $X$ is Poisson and is fibered via a map $h_{\boldsymbol \theta}: \mathcal{M}_H(X,\mathcal{G}_{\boldsymbol\theta}) \to \mathcal{A}_{\boldsymbol \theta}$ by abelian torsors. Moreover, $h_{\boldsymbol \theta}: \mathcal{M}_H(X,\mathcal{G}_{\boldsymbol\theta}) \to \mathcal{A}_{\boldsymbol \theta}$ is an algebraically completely integrable Hamiltonian system.
\end{thm}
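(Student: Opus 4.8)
The Poisson statement is already secured: by Theorem~\ref{thm:forgetful_Poisson} the forgetful morphism $l$ endows $\mathcal{M}_H(X,\mathcal{G}_{\boldsymbol\theta})$ with a Poisson structure, and I would first record that its symplectic leaves are exactly the loci on which the value of the moment map $\mu$ of Theorem~\ref{thm:PoissonGD} is fixed, i.e. the coresidue conjugacy classes at each point of $D$ serve as the Casimirs. It then remains to construct $h_{\boldsymbol\theta}$, identify its generic fibers, and verify the three defining properties of an algebraically completely integrable system. To define the map, I would fix homogeneous generators $p_1,\dots,p_r$ of $\mathbb{C}[\mathfrak{g}]^G$ of degrees $d_1,\dots,d_r$; a logahoric Higgs field $\varphi\in H^0(X,\mathcal{E}(\mathfrak{g})\otimes K(D))$ has characteristic invariants $p_i(\varphi)\in H^0(X,K(D)^{d_i})$, and the parahoric weights $\boldsymbol\theta$ constrain the admissible polar parts at $D$, so that $\mathcal{A}_{\boldsymbol\theta}$ is the affine subspace of $\bigoplus_i H^0(X,K(D)^{d_i})$ cut out by these local conditions. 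The fibration is $h_{\boldsymbol\theta}([(\mathcal{E},\varphi)])=(p_i(\varphi))_i$.

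Next I would pass to cameral covers in the sense of Donagi \cite{Donagi}. To each $a\in\mathcal{A}_{\boldsymbol\theta}$ one associates a $W$-Galois cameral cover $\pi_a\colon\tilde X_a\to X$, ramified over the locus where the Higgs field fails to be regular and over $D$, the ramification behaviour over $D$ being prescribed by $\boldsymbol\theta$. Using the regular centralizer group scheme $J_a$ on $X$ attached to the covering, one identifies the fiber $h_{\boldsymbol\theta}^{-1}(a)$ with a torsor under $H^1(X,J_a)$; over the generic locus $J_a$ is an abelian group scheme whose connected component of $H^1$ is a generalized Prym variety of $\tilde X_a$. This exhibits the generic fiber of $h_{\boldsymbol\theta}$ as a torsor under an abelian variety, establishing the ``abelian torsor'' claim.

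The core of the argument is the integrability. For Poisson-commutativity of the Hamiltonians I would show $\{h_{\boldsymbol\theta}^{*}f,\,h_{\boldsymbol\theta}^{*}g\}=0$ for $f,g$ on $\mathcal{A}_{\boldsymbol\theta}$, most transparently by the deformation-theoretic computation of the Poisson anchor on $\ker dh_{\boldsymbol\theta}$ in the style of Markman \cite{Markman} and Bottacin \cite{Bottacin}, which forces $\ker dh_{\boldsymbol\theta}$ to be isotropic in each leaf. A dimension count then upgrades isotropy to Lagrangianity: using $\dim\mathcal{U}(X,\mathcal{G}_{\boldsymbol\theta})=\dim H^0(X,\mathcal{E}(\mathfrak{g})\otimes K(D))$ from Proposition~\ref{prop:tangent_U} together with the Casimir count supplied by the coresidue moment map, one checks $\dim\mathcal{A}_{\boldsymbol\theta}=\tfrac12\dim(\text{generic symplectic leaf})$, so that the isotropic fibers have exactly half the leaf dimension. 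Finally, linearity of the flows follows by the standard identification of the tangent space to the fiber with $H^1(\tilde X_a,J_a)$ and the observation that the Hamiltonian vector fields of the $h_{\boldsymbol\theta}^{*}p_i$ act as translations on the Prym torsor, yielding algebraic complete integrability.

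I expect the principal obstacle to be the precise bookkeeping of the parahoric and logarithmic data within the cameral construction, namely matching the ramification of $\tilde X_a$ over $D$, together with the degree and parity shifts it induces, to the weights $\boldsymbol\theta$, so that $H^1(X,J_a)$ is genuinely an abelian variety of the correct dimension and the fibers fit the symplectic leaves exactly. A secondary difficulty is verifying Lagrangianity and flow-linearity on the leaves of a \emph{Poisson} rather than symplectic total space, where the Casimirs are the coresidue data of Theorem~\ref{thm:PoissonGD} and the symplectic form on each leaf must be shown to restrict compatibly with the translation structure on the Prym.
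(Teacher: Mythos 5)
Your skeleton matches the paper's: Poisson structure from the forgetful map (Theorem \ref{thm:forgetful_Poisson}), abelian fibers via cameral covers and generalized Pryms (Corollary \ref{parh_Hitchin_fiber_abelian}), and integrability by showing generic fibers are Lagrangian in the symplectic leaves. But your route to Lagrangianity is genuinely different from the paper's. The paper never computes a Poisson anchor or does a dimension count: it lifts everything to the leveled space $\mathcal{M}_{LH}(X,\mathcal{G}_{\boldsymbol\theta})\subset T^*\mathcal{U}(X,\mathcal{G}_{\boldsymbol\theta})$, which is honestly symplectic, proves there (Proposition \ref{prop:dactdhdual-parahoric-revised-global}, following de Cataldo--Heinloth--Migliorini) that $d\mathrm{act}_{\boldsymbol\theta}$ and $d\widetilde H_{\boldsymbol\theta}$ are symplectically dual, so that $\mathrm{Ker}\,d\widetilde H_{\boldsymbol\theta}=(\mathrm{Im}\,d\mathrm{act}_{\boldsymbol\theta})^{\perp}$; isotropy of the abelian Prym orbits then makes the fiber tangent spaces simultaneously isotropic and coisotropic, hence Lagrangian (Theorem \ref{thm:lagrangian_LH}), and the statement descends to $\mathcal{M}_H$ along the Poisson map $\ell$. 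This buys a proof that treats all leaves at once with no case-by-case counting. Your Markman--Bottacin-style plan (involutivity plus a dimension count on the Poisson space) is a legitimate alternative, and your identification of the Casimirs with the coresidue conjugacy classes is exactly what the paper establishes later via Theorem \ref{thm:commuting} and Corollary \ref{cor:parahoric-leaves-refine}. Note also that the ``parahoric bookkeeping on the cameral cover'' you flag as the principal obstacle is precisely what the paper sidesteps: it transfers to $\Gamma$-equivariant $G$-Higgs bundles on a ramified cover $Y$, where the abelianization of Faltings--Scognamillo applies verbatim (Theorem \ref{thm:abelianization-Gamma}), rather than matching ramification over $D$ to $\boldsymbol\theta$ directly on $X$.

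One step of your plan is wrong as written: the identity $\dim\mathcal{A}_{\boldsymbol\theta}=\tfrac12\dim(\text{generic symplectic leaf})$. The correct Lagrangian condition is $\dim h_{\boldsymbol\theta}^{-1}(a)=\tfrac12\dim(\text{leaf})$, and since the leaf has codimension in $\mathcal{M}_H$ equal to the number of independent Casimirs (generically $\dim\bigl(\mathcal{A}_{\boldsymbol\theta}/\mathcal{A}_{\boldsymbol\theta}^+\bigr)$, the invariant-residue data), this translates to $\dim\mathcal{A}_{\boldsymbol\theta}=\tfrac12\dim(\text{leaf})+\dim\bigl(\mathcal{A}_{\boldsymbol\theta}/\mathcal{A}_{\boldsymbol\theta}^+\bigr)$, not your formula. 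The discrepancy is already visible for $G=\mathbb{C}^*$ with $s=\deg D\ge 2$: there $\dim\mathcal{A}_{\boldsymbol\theta}=g-1+s$ while the generic leaf (cut out by fixing residues, subject to the residue theorem) has dimension $2g$, so your equation fails by $s-1$ even though the fibers, of dimension $g$, are indeed Lagrangian in the leaves. The error is fixable --- the half-dimension statement must be applied to the fiber inside the leaf, with the Casimir count subtracted from the base rather than ignored --- but as stated the count would not close the argument. A minor further slip: the generic fiber is a torsor under $H^1(X,J_a)$ (or equivalently under $W$-equivariant data on $\widetilde X_a$), not under $H^1(\widetilde X_a,J_a)$.
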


We will call the algebraically completely integrable Hamiltonian system of Theorem \ref{thm:main at Intro}, the \emph{logahoric Hitchin integrable system}. Considering the image $\mathcal{A}_{\boldsymbol \theta}^+$ under $h_{\boldsymbol \theta}$ of the strongly logahoric $\mathcal{G}_{\boldsymbol \theta}$-Higgs torsors, we show (Theorem \ref{thm:commuting}) that there exists an isomorphism of affine varieties 
\[
  \mathcal{A}_{\boldsymbol\theta}/\mathcal{A}_{\boldsymbol\theta}^+ \cong \mathfrak{g}_D^* // G_D,
\]
for the Hitchin fibration ${\widetilde H_{\boldsymbol\theta}}\colon \mathcal{M}_{LH}(X, \mathcal{G}_{\boldsymbol{\theta}}) \to \mathcal{A}_{\boldsymbol \theta}$ on the space of leveled logahoric $\mathcal{G}_{\boldsymbol \theta}$-Higgs torsors. This allows us to describe the symplectic leaf foliation explicitly:

\begin{thm}[Corollary \ref{cor:parahoric-leaves-refine}]
The foliation of the smooth locus
\(\mathcal{M}_H(X,\mathcal{G}_{\boldsymbol\theta})^{\mathrm{sm}}\) by its symplectic
leaves refines the foliation by fibers of the map
\[
  q\circ\widetilde H_{\boldsymbol\theta}
  \;:\;
  \mathcal{M}_H(X,\mathcal{G}_{\boldsymbol\theta})^{\mathrm{sm}}
  \;\longrightarrow\;
  \mathcal{A}_{\boldsymbol\theta}/\mathcal{A}_{\boldsymbol\theta}^+,
\]
for the quotient map $q: \mathcal{A}_{\boldsymbol\theta} \to  \mathcal{A}_{\boldsymbol\theta}/\mathcal{A}_{\boldsymbol\theta}^+$.
Moreover, each fiber of \(q\circ\widetilde H_{\boldsymbol\theta}\) contains
a unique symplectic leaf of maximal dimension.
\end{thm}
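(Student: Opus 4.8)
The plan is to realize the smooth locus $\mathcal{M}_H(X,\mathcal{G}_{\boldsymbol\theta})^{\mathrm{sm}}$ as a Poisson reduction of the symplectic space $T^*\mathcal{U}(X,\mathcal{G}_{\boldsymbol\theta})$ (an open dense subset of $\mathcal{M}_{LH}(X,\mathcal{G}_{\boldsymbol\theta})$) by the Hamiltonian $G_D$-action of Theorem~\ref{thm:PoissonGD}, and then to invoke the standard correspondence between the symplectic leaves of a Poisson reduction and the Marsden--Weinstein reduced spaces. Concretely, on the regularly stable locus the $G_D$-action is free and proper, so the forgetful map $l$ exhibits $\mathcal{M}_H^{\mathrm{sm}}$ as the quotient $\mathcal{M}_{LH}^{\mathrm{rs}}/G_D$ with the descended (reduced) bracket, whose symplectic leaves are exactly the reduced spaces $\mu^{-1}(\mathcal{O})/G_D$ as $\mathcal{O}$ ranges over the coadjoint orbits $\mathcal{O}\subset\mathfrak{g}_D^*$ meeting the image of $\mu$. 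I would first note that $q\circ\widetilde H_{\boldsymbol\theta}$ is $G_D$-invariant: the $G_D$-action alters the coresidue of $\varphi$ only within a fixed coadjoint orbit, so it preserves the invariant-quotient data. Hence $q\circ\widetilde H_{\boldsymbol\theta}$ factors through $\mathcal{M}_H^{\mathrm{sm}}=\mathcal{M}_{LH}^{\mathrm{rs}}/G_D$ and defines the map appearing in the statement.

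Next I would pin down the Casimir functions of the reduced bracket. By the general theory of Poisson reduction, a function on $\mathcal{M}_H^{\mathrm{sm}}$ is Casimir precisely when it is the pullback along the descent of $\mu$ of a $G_D$-invariant function on $\mathfrak{g}_D^*$, i.e. a function on $\mathfrak{g}_D^*//G_D$. Using the coresidue formula $\mu([(\mathcal{E},\eta)],\varphi)=\bigoplus_{j}\operatorname{CoRes}_{x_j}(\varphi)$ of Theorem~\ref{thm:PoissonGD}, together with the isomorphism $\mathcal{A}_{\boldsymbol\theta}/\mathcal{A}_{\boldsymbol\theta}^+\cong\mathfrak{g}_D^*//G_D$ of Theorem~\ref{thm:commuting}, the composite $\mathfrak{g}_D^*\to\mathfrak{g}_D^*//G_D$ applied to $\mu$ is identified with $q\circ\widetilde H_{\boldsymbol\theta}$. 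Thus the ring of Casimirs on $\mathcal{M}_H^{\mathrm{sm}}$ is exactly the pullback of the coordinate ring of $\mathcal{A}_{\boldsymbol\theta}/\mathcal{A}_{\boldsymbol\theta}^+$ along $q\circ\widetilde H_{\boldsymbol\theta}$. Since Casimirs are constant along symplectic leaves, every leaf lies in a single fiber of $q\circ\widetilde H_{\boldsymbol\theta}$, which is precisely the asserted refinement of foliations.

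For the uniqueness of a leaf of maximal dimension, I would argue by a dimension count. When the action is free, the reduced space attached to a coadjoint orbit $\mathcal{O}_\xi$ has dimension $\dim T^*\mathcal{U}(X,\mathcal{G}_{\boldsymbol\theta})-2\dim G_D+\dim\mathcal{O}_\xi$, so the leaf dimension is a strictly increasing function of $\dim\mathcal{O}_\xi$. A fiber of $q\circ\widetilde H_{\boldsymbol\theta}$ over $[\xi]\in\mathfrak{g}_D^*//G_D$ collects exactly the leaves indexed by orbits $\mathcal{O}$ lying over $[\xi]$ under the adjoint quotient $\mathfrak{g}_D^*\to\mathfrak{g}_D^*//G_D$. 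Because $\mathfrak{g}_D=\bigoplus_{j}\hat{\mathfrak{l}}_{\theta_j}$ with each $\hat{\mathfrak{l}}_{\theta_j}$ reductive, the Chevalley restriction theorem shows that each such fiber contains a unique orbit of maximal dimension, namely the regular one (open and dense in the fiber), obtained as the product over $x_j\in D$ of the regular orbits in the factors $\hat{\mathfrak{l}}_{\theta_j}^*$. This maximal orbit yields the unique symplectic leaf of maximal dimension inside the corresponding fiber of $q\circ\widetilde H_{\boldsymbol\theta}$.

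The main obstacle is controlling the reduction away from the free locus and matching the two descriptions of the Casimir directions exactly. Concretely, I expect the delicate points to be: (i) verifying that $\mu$ is generically a submersion with the freeness and properness required so that the Marsden--Weinstein leaves are smooth and account for \emph{all} leaves of the reduced structure on $\mathcal{M}_H^{\mathrm{sm}}$, which relies on the regular-stability hypothesis and the freeness statement following Theorem~\ref{thm:PoissonGD}; and (ii) checking that the coresidue moment map descends to \emph{precisely} the invariant-quotient map identified with $q$ through Theorem~\ref{thm:commuting}, so that the Casimir fibers and the fibers of $q\circ\widetilde H_{\boldsymbol\theta}$ literally coincide rather than merely refine one another. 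Once these compatibilities are in place, the dimension count for regular orbits furnishes the unique maximal leaf, and the refinement statement then follows formally from the Poisson-reduction description of the leaves.
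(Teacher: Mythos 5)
Your proposal is correct and follows essentially the same route as the paper's own proof: both identify the symplectic leaves of the induced Poisson structure with the ($G_D$-quotients of) preimages under the moment map $\mu$ of coadjoint orbits in $\mathfrak{g}_D^*$, use the commutative diagram of Theorem~\ref{thm:commuting} to identify $q\circ\widetilde H_{\boldsymbol\theta}$ with $\mathrm{c.q.}\circ\mu$ on the cotangent locus, and obtain the unique maximal leaf from the unique open coadjoint orbit sitting in each fiber of the categorical quotient $\mathfrak{g}_D^*\to\mathfrak{g}_D^*//G_D$. Your write-up simply makes explicit the Marsden--Weinstein/Casimir bookkeeping and the dimension count $\dim T^*\mathcal{U}-2\dim G_D+\dim\mathcal{O}_\xi$ that the paper leaves implicit (note only that the unique open dense orbit in each fiber of the adjoint quotient is Kostant's theorem rather than the Chevalley restriction theorem).
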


In the last part of the article, we demonstrate examples of algebraically completely integrable Hamiltonian systems that can be analyzed as logahoric Hitchin integrable systems. Namely, apart from the integrable systems studied, for instance, in \cite{Beauville}, \cite{Markman}, \cite{RS} that can be studied using this approach in the absence of a parahoric structure, we show that over the Riemann sphere $X=\mathbb{P}^1$ the logahoric Hitchin integrable system generalizes the integrable system of Beauville \cite{Beauville}. Furthermore, still for $X=\mathbb{P}^1$ one can recover the classical Gaudin model for a specific choice of parahoric data corresponding to the simplest pole structure. For an elliptic base curve $X=\Sigma$, and for group $G=\mathrm{SL}(r, \mathbb{C})$, we recover the KP hierarchy when taking the subvariety of $\mathcal{M}_{LH}(\Sigma, \mathcal{G}_{\boldsymbol{\theta}})$ consisting of triples $(\mathcal{E},\eta,\varphi)$, where the coresidue of the Higgs field is the coadjoint orbit of the element in $\mathfrak{sl}(r)^*$ corresponding (via the Killing form) to the matrix $\mathrm{diag}(-1, -1, \dots, -1, r-1)$. Another example in this elliptic case that can be studied using the logahoric Hitchin integrable system is the elliptic Calogero--Moser system
as demonstrated by Hurtubise and Markman in \cite{HM}. We note, lastly, that the framework built here, enables the study of the geometry of integrable systems for higher genus and/or non-trivial orbits than the ones exhibited here.

\section{Parahoric Torsors}\label{sec:Parahoric}
Let $G$ be a connected complex reductive Lie group, and fix a maximal torus $T$ in $G$ with corresponding Lie algebras $\mathfrak{t}$ and $\mathfrak{g}$. We consider the character group $X(T) := \text{Hom}(T, \mathbb{G}_m)$ and the co-character group $Y(T) := \text{Hom}(\mathbb{G}_m, T)$, which is also the group of one-parameter subgroups of $T$. The canonical pairing $\langle \cdot, \cdot \rangle: Y(T) \times X(T) \rightarrow \mathbb{Z}$ can be extended to $\mathbb{R}$ by tensoring $Y(T)$ and $X(T)$ with $\mathbb{R}$. We refer to co-characters with coefficients in $\mathbb{R}$ and $\mathbb{Q}$ as \emph{real weights} and \emph{rational weights}, respectively. In general, by a \emph{weight} we will always consider a real weight unless stated otherwise.

Denote the root system with respect to the maximal torus $T$ as $\mathcal{R}$, and let $\mathcal{R}_+ \subseteq \mathcal{R}$ be the set of positive roots. For a root $r \in \mathcal{R}$, we have an isomorphism of Lie algebras ${\rm Lie}(\mathbb{G}_a) \rightarrow ({\rm Lie}(G))_r$, which induces a natural homomorphism of groups $u_r: \mathbb{G}_a \rightarrow G$ such that $t u_r(a) t^{-1}= u_r(r(t)a)$, for $t \in T$ and $a \in \mathbb{G}_a$. We denote the image of the homomorphism $u_r$ by $U_r$, which is a closed subgroup of $G$. The reductive group $G$ is generated by its subgroups $T$ and $U_r$, for $r \in \mathcal{R}$. Namely, an element $g$ in $G$ can be written as a product $g=g_t \prod_{r \in \mathcal{R}} g_r$, where $g_t \in T$ and $g_r \in U_r$. Sometimes, we write $g$ as a tuple $(g_t,g_r)_{r \in \mathcal{R}}$ for convenience.

\subsection{Parahoric subgroups}
Given a weight $\theta$, we can consider it as an element in $\mathfrak{t}$, the Lie algebra of $T$, under differentiation. We define the integer $m_r(\theta) := \lceil -r(\theta) \rceil$, where $\lceil \cdot \rceil$ is the ceiling function and $r(\theta) := \langle \theta, r \rangle$. We introduce the following definition:

\begin{defn}\label{defn parahoric_group}
Let $R:=\mathbb{C}[[z]]$ and $K:=\mathbb{C}((z))$. With respect to the above data, we define the \emph{parahoric subgroup $G_{\theta}(K)$} of $G(K)$ as
\begin{align*}
G_{\theta}(K):=\langle T(R), U_r(z^{m_r(\theta)}R), r
\in \mathcal{R} \rangle.
\end{align*}
Denote by $\mathcal{G}_{\theta}$ the corresponding group scheme of $G_{\theta}(K)$, which is called the \emph{parahoric group scheme}.
\end{defn}

The parahoric subgroup of $G(K)$ determined by $\theta$ can be alternatively defined as
\begin{align*}
	G'_{\theta}(K):=\{g(z) \in G(K) \text{ }|\text{ } z^{\theta} g(z) z^{-\theta} \text{ has a limit as $z \rightarrow 0$}\},
\end{align*}
where $z^{\theta} := e^{\theta \ln z}$. This definition is of a more analytic nature; the equivalence of these two definitions can be found in our previous paper \cite[Lemma 2.2]{HKSZ}. We will thus utilize either definition of parahoric subgroups, $G_{\theta}(K)$ or $G'_\theta(K)$, in the study of connected complex reductive groups and their representations.

\subsection{Parahoric Torsors}\label{sec:parah_torsors}

For $G$ as introduced earlier, let $X$ be a smooth projective curve over $\mathbb{C}$, and let 
\[
D=\{x_1,\dots ,x_s\}\subset X
\] 
be a reduced effective divisor.  Denote the complement by $X_D:=X\setminus D$.  
For every $x\in D$, fix a \emph{weight} $\theta_x$, 
namely, a point of the Bruhat--Tits apartment of $G(\mathbb{C}((t)))$.  This point determines a facet of the Bruhat--Tits apartment and hence a parahoric subgroup scheme $G_{\theta_x}$ on the formal disc $\mathbb{D}_x:=\operatorname{Spec}\widehat{\mathcal{O}_{X,x}}$.
$\theta_x$. Let us also write $\boldsymbol\theta:=\{\theta_x\}_{x\in D}$, for a collection of weights for each point in $D$.

Following \cite[Section 2]{BS} we now glue together
\[
\mathcal{G}_{\boldsymbol\theta}\bigl|_{X_D}=G\times X_D,
\qquad 
\mathcal{G}_{\boldsymbol\theta}\bigl|_{\mathbb{D}_x}=G_{\theta_x}\;(x\in D)
\]
to obtain the \emph{parahoric Bruhat--Tits group scheme}
\[
\mathcal{G}_{\boldsymbol\theta}\longrightarrow X.
\]
This scheme is smooth, affine, of finite type, and flat over $X$ \cite[Lemma 3.18]{ChGP}.

\begin{defn}[Generically split torsor {\cite[Section 3]{BS}}]\label{def:gen_split}
A $\mathcal{G}_{\boldsymbol\theta}$-torsor $\mathcal{E}\to X$ is called \emph{generically split} if its restriction to the generic point $q=\operatorname{Spec}\mathbb{C}(X)$ becomes trivial:
\[
\mathcal{E}\bigl|_{q}\;\cong\;\mathcal{G}_{\boldsymbol\theta}\bigl|_{q}\;\simeq\;G\times q .
\]
\end{defn}

\begin{rem}\label{rem:abuse}
Except when we explicitly state otherwise (see Example~\ref{parahoricexmp}), we shall abuse terminology throughout the article and call a generically split $\mathcal{G}_{\boldsymbol\theta}$-torsor simply a \emph{parahoric $\mathcal{G}_{\boldsymbol\theta}$-torsor}.
\end{rem}

\paragraph{\textbf{Local description and gluing.}}
Fix a parahoric $\mathcal{G}_{\boldsymbol \theta}$-torsor $\mathcal{E}$ on $X_D$ and, for every $x\in D$, fix a $G_{\theta_x}(K)$-torsor $\mathcal{E}_x$ on the punctured disc $\mathbb{D}_x^\times:=\operatorname{Spec} K$, where $K=\mathbb{C}((t))$.  
A parahoric $\mathcal{G}_{\boldsymbol\theta}$-torsor $\mathcal{E}$ on $X$ is given by gluing $\mathcal{E}$ and the $\{\mathcal{E}_x\}$ through isomorphisms
\begin{equation}\label{eq:glue}\tag{2.1}
\Theta_x:
\mathcal{E}_x\bigl|_{\mathbb{D}_x^\times}\;\longrightarrow\;
\mathcal{E}\bigl|_{\mathbb{D}_x^\times} ,
\end{equation}
one for each $x\in D$.  
Each $\Theta_x$ corresponds to a loop $g_x(t)\in G(K)$.

\smallskip
Two sets of data $(\mathcal{E},\{\mathcal{E}_x\},\{\Theta_x\})$ and $(\mathcal{E}',\{\mathcal{E}_x'\},\{\Theta_x'\})$ determine \emph{isomorphic} parahoric $\mathcal{G}_{\boldsymbol\theta}$-torsors provided there are isomorphisms
for every $x \in D$
\[
\psi:\mathcal{E}\rightarrow \mathcal{E}' \text{ and }
\psi_x:\mathcal{E}_x\rightarrow \mathcal{E}_x',
\]
such that $\psi\circ\Theta_x=\Theta_x'\circ\psi_x$ for every $x\in D$:
\[
\begin{tikzcd}
\mathcal{E}_x \arrow[d,"\Theta_x"'] \arrow[r,"\psi_x"] & \mathcal{E}_x' \arrow[d,"\Theta_x'"]\\
\mathcal{E} \arrow[r,"\psi"'] & \mathcal{E}'.
\end{tikzcd}
\]

\begin{rem}[Beauville--Laszlo gluing for generically split torsors {\cite{BL,BS}}]\label{prop:BL}
The Beauville--Laszlo gluing construction provides a method to construct global torsors on the curve $X$. In the specific setting described above, consider the case of generically split torsors. If we begin with the \emph{trivial} $\mathcal{G}_{\boldsymbol\theta}$-torsor on $X_D = X \setminus D$ and the \emph{trivial} $G_{\theta_x}(K)$-torsor on each punctured disc $\mathbb{D}_x^\times$ (for $x \in D$), then a given collection of gluing isomorphisms $\Theta_x$ (which correspond to loops $g_x(t) \in G(K)$) defines a parahoric $\mathcal{G}_{\boldsymbol\theta}$-torsor $\mathcal{E}$ on $X$. The existence and uniqueness of this resulting torsor $\mathcal{E}$ are ensured by the general principles of torsor gluing, as detailed in \cite[Remark 5.2.3]{BS}. This result for torsors is a consequence of the more general Beauville--Laszlo framework \cite{BL}, and is analogous to the gluing of group schemes themselves as described in \cite[Lemma 5.2.2]{BS}. Thus, for generically split data, the gluing process is well-defined and yields a unique torsor on $X$.
\end{rem}

\begin{rem}[Uniformization for generically split torsors]\label{rem:unif_gensplit}
Continuing from Remark~\ref{prop:BL}, the isomorphism classes of generically split parahoric \(\mathcal{G}_{\boldsymbol\theta}\)-torsors, constructed via the Beauville--Laszlo gluing of trivial torsors, can be classified using the uniformization theorem of Heinloth \cite{Heinloth}. A generically split \(\mathcal{G}_{\boldsymbol\theta}\)-torsor \(\mathcal{E} \to X\) (as per Definition~\ref{def:gen_split} and Remark~\ref{rem:abuse}) is trivial at the generic point \( q = \operatorname{Spec} \mathbb{C}(X) \), i.e., \(\mathcal{E}|_q \cong G \times q\). Locally, \(\mathcal{E}\) is described via the Beauville--Laszlo gluing construction: it is obtained by gluing a \( G \)-torsor \(\mathcal{E}|_{X_D}\) on \( X_D \) with \(\mathcal{G}_{\theta_x}\)-torsors \(\mathcal{E}_x\) on each \(\mathbb{D}_x\), using isomorphisms
\[
\Theta_x: \mathcal{E}_x|_{\mathbb{D}_x^\times} \to \mathcal{E}|_{\mathbb{D}_x^\times},
\]
where \(\mathbb{D}_x^\times = \operatorname{Spec} \mathbb{C}((t))\), and each \(\Theta_x\) corresponds to a gluing element \( g_x(t) \in G(K) \). Since \(\mathcal{E}\) is generically split, we may assume (after choosing a trivialization) that \(\mathcal{E}|_{X_D} \cong G \times X_D\), making \(\mathcal{E}|_{\mathbb{D}_x^\times} \cong G \times \mathbb{D}_x^\times\), so \( g_x(t) \) defines the transition from the trivial torsor on \(\mathbb{D}_x\).

Two sets of gluing data \( g_\bullet = \{ g_x \in G(K) \}_{x \in D} \) and \( g_\bullet' = \{ g_x' \in G(K) \}_{x \in D} \) define isomorphic \(\mathcal{G}_{\boldsymbol\theta}\)-torsors if there exist isomorphisms \(\psi: \mathcal{E}|_{X_D} \to \mathcal{E}'|_{X_D}\) and \(\psi_x: \mathcal{E}_x \to \mathcal{E}_x'\) satisfying \(\psi \circ \Theta_x = \Theta_x' \circ \psi_x\), for all \( x \in D \). In the trivial case, \(\psi\) corresponds to an element \( h \in \mathcal{G}_{\boldsymbol\theta}(X_D) = G(X_D) \), and each \(\psi_x\) corresponds to \( k_x \in \mathcal{G}_{\theta_x}(\mathbb{D}_x) = G_{\theta_x}(K) \). The compatibility condition translates to:
\[
g_x' = h|_{\mathbb{D}_x^\times} \, g_x \, k_x, \quad \text{for all } x \in D,
\]
where \( h|_{\mathbb{D}_x^\times} \) is the restriction of \( h \in G(X_D) \) to \(\mathbb{D}_x^\times\), an element of \( G(K) \), and \( k_x \in G_{\theta_x}(K) \) acts on the right.

Thus, the isomorphism classes of generically split \(\mathcal{G}_{\boldsymbol\theta}\)-torsors are parameterized by the quotient
\[
\mathrm{Bun}_{\mathcal{G}_{\boldsymbol\theta}}\cong G(X_D) \backslash \prod_{x \in D} \left( G(K) / G_{\theta_x}(K) \right),
\]
where:

\begin{itemize}
\item \( G(K) / G_{\theta_x}(K) \) is the affine Grassmannian (or partial affine flag variety) at \( x \), denoted by \(\mathrm{Gr}_{\mathcal{G}, x}\),
\item \( G(X_D) \) acts on the left via the natural maps \( G(X_D) \to G(K)\), for each \( x \in D \),
\item The product \(\prod_{x \in D} \mathrm{Gr}_{\mathcal{G}, x}\) is the multi-point affine Grassmannian \(\mathrm{Gr}_{\mathcal{G}, D}\).
\end{itemize}

This classification is a direct application of the uniformization theorem for the stack \(\mathrm{Bun}_{\mathcal{G}_{\boldsymbol\theta}}\) of \(\mathcal{G}_{\boldsymbol\theta}\)-torsors by Heinloth \cite{Heinloth} and is explicitly stated by Damiolini and Hong in \cite[Section 2.1]{DH1}.
\end{rem}

\begin{rem}[Beyond the generically split case]\label{rem:DH}
For parahoric $\mathcal{G}_{\boldsymbol\theta}$-torsors that are not necessarily generically split, Damiolini and Hong in \cite[Theorem 6.2.2]{DH2} present a construction for general parahoric Bruhat--Tits group schemes, which also applies to their torsors, accommodating non-generically split cases. They establish that any such parahoric Bruhat--Tits group scheme can be realized from a $(\Gamma,G)$-bundle associated with a finite, tamely ramified cover $\widetilde X \to X$. Within this framework, the gluing procedure and the classification of isomorphism classes are addressed through a $\Gamma$-equivariant adaptation of the uniformization principle. The structure of these torsors, especially their local aspects at points $x \in D$ (which may correspond to ramified points in the cover), is then classified using non-abelian group cohomology, typically involving classes in $H^{1}(\Gamma_x, G(K_x))$, where $\Gamma_x$ represents the local Galois group (or a relevant inertia subgroup) at $x$.
\end{rem}

\begin{exmp}\label{parahoricexmp}
\leavevmode
We now list a few fundamental examples of parahoric $\mathcal{G}_{\theta}$-torsors:
\begin{enumerate}
\item \label{ex:par_princ} \textbf{Parabolic principal $G$-bundles.}

For each $x \in D$, let $\alpha_x \in \mathfrak{t}$ be a parabolic weight defining a parabolic subgroup $P_{\alpha_x} \subseteq G$ containing a Borel subgroup. Set the parahoric weight $\theta_x = \alpha_x$. The parahoric subgroup scheme $G_{\theta_x}$ over $\mathbb{D}_x = \operatorname{Spec} \mathbb{C}[[t_x]]$ is defined as $G_{\theta_x} = \mathrm{ev}_0^{-1}(P_{\alpha_x})$, where $\mathrm{ev}_0: G(\mathbb{C}[[t_x]]) \to G(\mathbb{C})$ maps $g(t_x) \mapsto g(0)$. Thus, sections of $G_{\theta_x}$ are maps $g: \mathbb{D}_x \to G$ with $g(0) \in P_{\alpha_x}$.

A $\mathcal{G}_{\boldsymbol{\theta}}$-torsor $\mathcal{E} \to X$ is a principal $G$-bundle over $X \setminus D$ and a $G_{\theta_x}$-torsor over each $\mathbb{D}_x$, locally isomorphic to $G_{\theta_x} \times \mathbb{D}_x$. A generically split $\mathcal{G}_{\boldsymbol{\theta}}$-torsor corresponds to a principal $G$-bundle $\mathcal{P}$ over $X$ such that, in any local trivialization over $\mathbb{D}_x$, sections $s: \mathbb{D}_x \to \mathcal{P}$ correspond to $g(t_x) \in G(\mathbb{C}[[t_x]])$ with $g(0) \in P_{\alpha_x}$. This defines a parabolic structure at each $x \in D$, recovering the classical notion of parabolic $G$-bundles.

\item \label{ex:par_vec} \textbf{Parabolic vector bundles.}

For $G = \mathrm{GL}_n$, a parabolic vector bundle of rank $n$ on $X$ is a vector bundle $V \to X$ equipped with a flag of subspaces in the fiber $V_x$ for each $x \in D$:
\[
V_x = F^1_x \supset F^2_x \supset \dots \supset F^k_x \supset \{0\},
\]
defining a parabolic subgroup $P_x \subset \mathrm{GL}_n$ as the stabilizer of the flag. Let $\alpha_x \in \mathfrak{t}$ (the Lie algebra of diagonal matrices) be a parabolic weight corresponding to $P_x$.

The frame bundle $\mathrm{Fr}(V)$ is a principal $\mathrm{GL}_n$-bundle over $X$. The flags induce parabolic structures on $\mathrm{Fr}(V)$ with parabolic subgroups $P_x$. Setting $\theta_x = \alpha_x$, then $\mathrm{Fr}(V)$ becomes a generically split parahoric $\mathcal{G}_{\boldsymbol{\theta}}$-torsor, where, over $\mathbb{D}_x$, the sections satisfy the flag condition via $P_x$.

\end{enumerate}
\end{exmp}

\subsection{Parahoric Lie Algebra}\label{sec:parh_Lie_algebra}
Given a weight $\theta$, then associated to $G_\theta(K)$, we have the parahoric Lie algebra
\[
\mathfrak{g}_\theta(K) = \mathfrak{t}(R) \oplus \bigoplus_{r \in \mathcal{R}} z^{m_r(\theta)} \mathfrak{g}_r(R),
\]
with the Killing form providing the identification 
\[
\mathfrak{g}_\theta(K)^\perp = \mathfrak{t}(R) \oplus \bigoplus_{r \in \mathcal{R}} z^{-m_r(\theta)} \mathfrak{g}_{-r}(R).
\]

Recall also from \cite[Section 2.6]{MoyPra}) that each parahoric subgroup $G_\theta(K) \subset G(K)$ admits an exact sequence of group schemes. There exists an exact sequence
\[
1 \to G_\theta^+(K) \to G_\theta(K) \to L_{\theta} \to 1,
\]
where $G_{\theta}^+(K)$ is the \emph{pro-unipotent radical} of $G_\theta(K)$ and $L_{\theta}$ is the reductive \textit{Levi quotient}. Since $G_\theta^+(K)$ is a normal subgroup, this sequence defines the group $L_\theta$ abstractly as the quotient $G_\theta(K) / G_\theta^+(K)$. The Lie algebra of the pro-unipotent radical, $\mathfrak{g}_\theta^+ = \text{Lie}(G_\theta^+(K))$, is an ideal in $\mathfrak{g}_\theta(K)$ given by
\begin{equation}\label{G_Theta_plus}
\mathfrak{g}_\theta^+ = z \mathfrak{t}(R) \oplus \bigoplus_{r \in \mathcal{R}} z^{\lceil -m_r(\theta) \rceil+1} \mathfrak{g}_r(R).
\end{equation}
The Lie algebra of the Levi quotient, which we will denote by $\hat{\mathfrak{l}}_\theta$, is the quotient of the corresponding Lie algebras
\[
\hat{\mathfrak{l}}_\theta := \mathfrak{g}_\theta/\mathfrak{g}_\theta^+.
\]
This is a reductive Lie algebra over the residue field $k$. The Levi decomposition theorem states that this abstract quotient can be realized as a concrete subgroup. Let $L_\theta$ be the finite-dimensional reductive group over the residue field $k$ whose root data corresponds to the integer eigenspaces of $\text{ad}_\theta$:
\begin{equation}\label{defn:Levi_at_theta}
L_\theta = C_G(e^{2\pi i \theta}) = \langle T(k^\times), U_r(k) \mid r \in \mathcal{R}, m_r(\theta) \in \mathbb{Z} \rangle.
\end{equation}
The concrete Levi subgroup $\hat{L}_\theta \subset G(K)$ is obtained by conjugating $L_\theta$:
\[
\hat{L}_\theta = z^{-\theta} L_\theta z^\theta = \langle T(k^\times), U_r(z^{-m_r(\theta)}k) \mid r \in \mathcal{R}, m_r(\theta) \in \mathbb{Z} \rangle.
\]
The corresponding Lie algebra $\hat{\mathfrak{l}}_\theta = \text{Lie}(\hat{L}_\theta)$ is a subalgebra of the loop algebra $\mathfrak{g}(K)$ given by
\[
\hat{\mathfrak{l}}_\theta = \text{Ad}(z^{-\theta})(\text{Lie}(L_\theta)) = \mathfrak{t}(k) \oplus \bigoplus_{r \in \mathcal{R}, \, m_r(\theta) \in \mathbb{Z}} z^{-m_r(\theta)}\mathfrak{g}_r(k).
\]
The Lie algebra $\mathfrak{g}_\theta(K)$ admits a decomposition (the Levi decomposition) as a semidirect product:
\[
\mathfrak{g}_\theta(K) = \hat{\mathfrak{l}}_\theta \ltimes \mathfrak{g}_\theta^+.
\]

The Lie algebra $\hat{\mathfrak{l}}_\theta$ is isomorphic to the finite-dimensional reductive Lie algebra $\mathfrak{l}_{\theta} = \text{Lie}(L_\theta)$ (cf. \cite[Section 2.2]{Boalch}), which is the subalgebra of $\mathfrak{g}(k)$:
\[
\mathfrak{l}_{\theta} = \mathfrak{t}(k) \oplus \bigoplus_{r \in \mathcal{R}, \, m_r(\theta) \in \mathbb{Z}} \mathfrak{g}_r(k).
\]
The isomorphism is given by the evaluation map $\iota: \hat{\mathfrak{l}}_\theta \to \mathfrak{l}_{\theta}$ defined by setting $z=1$ after conjugating back by $z^\theta$:
\[
\iota(X(z)) = \left. \text{Ad}(z^\theta)(X(z)) \right|_{z=1}.
\]
We thus have canonical isomorphisms:
\[
L_\theta\cong \hat{L}_\theta,\quad \mathfrak{l}_\theta \cong \hat{\mathfrak{l}}_\theta. \]
This allows us to treat the abstract Levi quotient as the concrete and familiar reductive Lie group $L_\theta$ of the group $G$, and its Lie algebra as the corresponding Lie algebra $\mathfrak{l}_{\theta}$ over $k$.

\begin{defn}[Parahoric adjoint sheaf]\label{defn:parh-adj}
For a parahoric $\mathcal{G}_{\boldsymbol \theta}$-torsor $\mathcal{E}$, we define the \textit{adjoint sheaf} $\mathcal{E}(\mathfrak{g})$ as the sheaf of infinitesimal automorphisms of $\mathcal{E}$. Its structure is best understood by describing its local sections:
\begin{itemize}
    \item On any open set $U \subset X \setminus D$, $\mathcal{E}$ is a standard $G$-bundle, and the sheaf $\mathcal{E}(\mathfrak{g})$ restricts to the standard adjoint vector bundle $(\mathcal{E}|_U) \times_G \mathfrak{g}$.
    \item Over the formal disk $\mathrm{Spec}(\hat{\mathcal{O}}_{X,x})$ at a point $x \in D$, sections of $\mathcal{E}(\mathfrak{g})$ are identified with sections of the parahoric Lie algebra $\mathfrak{g}_{\theta_x}$.
\end{itemize}
Thus, $\mathcal{E}(\mathfrak{g})$ is a coherent sheaf of $\mathcal{O}_X$-modules whose generic fiber is the complex Lie algebra $\mathfrak{g}$, but whose stalks over the points $x_i \in D$ are the full parahoric Lie algebras $\mathfrak{g}_{\theta_{x_i}}(\mathcal{O}_{X,x_i})$.

The local Lie algebra decompositions at the points $x_i \in D$ allow us to define another sheaf that inherits this structure: The \textit{pro-unipotent radical sheaf}, denoted by $\mathcal{E}(\mathfrak{g}^+)$, is the coherent subsheaf of $\mathcal{E}(\mathfrak{g})$ whose sections over the formal disk at each $x_i \in D$ correspond to elements of the pro-unipotent radical Lie algebra $\mathfrak{g}_{\theta_{x_i}}^+(\mathcal{O}_{X,x_i})$.
\end{defn}

\subsection{Parahoric Degree}\label{subsect:alg-parah-deg}

Let $G$ be a connected complex reductive Lie group. Fix a maximal torus $T\subset G$ and let 
\[
\theta\in Y(T)\otimes_{\mathbb{Z}}\mathbb{Q}
\]
be a rational weight. Denote by $G_\theta(K)\subseteq G(K)$ the parahoric subgroup corresponding to $\theta$. Recall that a parabolic subgroup $P\subset G$ (with Lie algebra $\mathfrak{p}$) is determined by a subset of roots $\mathcal{R}_P\subseteq \mathcal{R}$. In this context, we define the following subgroup of $P(K)$
\[
P_\theta(K) := \Big\langle T(A),\; U_r\Big(z^{m_r(\theta)}A\Big), \; r\in\mathcal{R}_P \Big\rangle.
\]
Let $\mathcal{P}_\theta$ be the corresponding group scheme on the formal disc $\mathbb{D}=\operatorname{Spec}(R)$ (see, e.g., \cite{HaiRap} for further details on this construction.) Furthermore, if 
\[
G_\theta(K)\longrightarrow G
\]
is the evaluation map, then its image is a parabolic subgroup $P_\theta\subset G$, whose inverse image is exactly $P_\theta(K)$.

\medskip

We now describe the global situation. Let $X$ be a smooth projective curve over $\mathbb{C}$ and fix a reduced effective divisor $D\subset X$.
For each point $x\in  D$, let $\theta_x$ be a weight and denote by $\mathcal{P}_{\theta_x}$ the corresponding local group scheme. One defines a global group scheme $\mathcal{P}_{\boldsymbol\theta}$ on $X$ by gluing the local data:
\[
\begin{cases}
\mathcal{P}_{\boldsymbol\theta}\vert_{X\setminus  D} \cong P\times (X\setminus  D),\\[1mm]
\mathcal{P}_{\boldsymbol\theta}\vert_{\mathbb{D}_x} = \mathcal{P}_{\theta_x}, \quad x\in D.
\end{cases}
\]
Accordingly, we next define the pairing that will be used in the definition of our parahoric degree on the Lie algebra of the torus (Definition \ref{defn:alg-deg}).  The case in which
$
\mathcal{P}_{\boldsymbol\theta}\big|_{\mathbb{D}_x}\;\cong\;\mathcal{P}_{\theta_x}
$
is handled in exactly the same way; the only difference is that the parahoric degree can be then defined after conjugating into the Lie algebra of a maximal torus.

By \cite[Lemma~3.18]{ChGP}, the group scheme $\mathcal{P}_{\boldsymbol\theta}$ is smooth, affine of finite type, and flat over $X$. Moreover, one has an inclusion
\[
\mathcal{P}_{\boldsymbol\theta}\subset \mathcal{G}_{\boldsymbol\theta},
\]
where $\mathcal{G}_{\boldsymbol\theta}$ is the Bruhat--Tits group scheme associated to $\boldsymbol\theta$.

\medskip

Let $\mathcal{E}$ be a $\mathcal{G}_{\boldsymbol\theta}$-torsor on $X$. A reduction of structure group of $\mathcal{E}$ to $\mathcal{P}_{\boldsymbol\theta}$ is given by a section
\[
\varsigma: X\longrightarrow \mathcal{E}/\mathcal{P}_{\boldsymbol\theta}.
\]
Denote by $\mathcal{E}_\varsigma$ the corresponding $\mathcal{P}_{\boldsymbol\theta}$-torsor obtained from the Cartesian diagram
\[
\begin{tikzcd}
\mathcal{E}_\varsigma \ar[r,dotted] \ar[d,dotted] & \mathcal{E} \ar[d] \\
X \ar[r, "\varsigma"] & \mathcal{E}/\mathcal{P}_{\boldsymbol\theta}.
\end{tikzcd}
\]
Let $\kappa:\mathcal{P}_{\boldsymbol\theta}\to\mathbb{G}_m$ be a group scheme morphism (a character of $\mathcal{P}_{\boldsymbol\theta}$). There is a natural one-to-one correspondence (see \cite[Lemma~4.2]{KSZparh})
\[
\operatorname{Hom}(\mathcal{P}_{\boldsymbol\theta},\mathbb{G}_m) \cong \operatorname{Hom}(P,\mathbb{C}^*).
\]
Denote by $\chi:P\to\mathbb{C}^*$ the character corresponding to $\kappa$. For any weight $\theta$, we define the pairing
\[
\langle \theta,\kappa\rangle := \langle \theta,\chi\rangle.
\]
Returning to the parahoric torsor $\mathcal{E}_\varsigma$, the pushforward via $\kappa$ defines a line bundle on $X$, which we denote by
\[
L(\varsigma,\kappa) := \kappa_*(\mathcal{E}_\varsigma).
\]
In the special case when $P=G$ (and if the reduction $\varsigma: X\to \mathcal{E}/\mathcal{G}_{\boldsymbol\theta}$ is trivial), one writes $L(\kappa):=\kappa_*\mathcal{E}$.

\medskip

The following notion of \emph{parahoric degree} was introduced in \cite{KSZparh}.

\begin{defn}\cite[Definition~4.2]{KSZparh}\label{defn:alg-deg}
Let $\mathcal{E}$ be a $\mathcal{G}_{\boldsymbol\theta}$-torsor on $X$, and let $\varsigma$ be a reduction of structure group to $\mathcal{P}_{\boldsymbol\theta}$. For a character $\kappa:\mathcal{P}_{\boldsymbol\theta}\to\mathbb{G}_m$, the \emph{parahoric degree} of $\mathcal{E}$ with respect to $\varsigma$ and $\kappa$ is defined by
\[
\parhdeg \mathcal{E}(\varsigma,\kappa) = \deg L(\varsigma,\kappa) + \sum_{x\in  D} \langle \theta_x,\kappa\rangle.
\]
If $\varsigma$ is a trivial reduction of $\mathcal{E}$ to $\mathcal{G}_{\boldsymbol\theta}$, we define
\[
\parhdeg \mathcal{E}(\kappa) = \deg L(\kappa) + \sum_{x\in  D} \langle \theta_x,\kappa\rangle.
\]
\end{defn}

Using this notion of parahoric degree, we define stability for parahoric $\mathcal{G}_{\boldsymbol\theta}$-torsors as follows: 
\begin{defn}\cite[Definition~4.3]{KSZparh}\label{defn:alg-stab-cond}
A parahoric $\mathcal{G}_{\boldsymbol\theta}$-torsor $\mathcal{E}$ on $X$ is called \emph{stable} (resp. \emph{semistable}) if for every proper parabolic subgroup $P\subset G$, every reduction of structure group 
\[
\varsigma: X\to \mathcal{E}/\mathcal{P}_{\boldsymbol\theta}
\]
and every nontrivial anti-dominant character $\kappa: \mathcal{P}_{\boldsymbol\theta}\to\mathbb{G}_m$, one has
\[
\parhdeg \mathcal{E}(\varsigma,\kappa)>0 \quad (\text{resp. } \ge 0).
\]
\end{defn}

\begin{rem}
 A moduli space of semistable parahoric $\mathcal{G}_{\boldsymbol\theta}$-torsors on $X$ is constructed in \cite{BS}, where $\boldsymbol\theta = (\theta_x)_{x\in D}$ involves rational weights $\theta_x \in Y(T)\otimes_{\mathbb{Z}}\mathbb{Q}$. The choice of rational weights is generally sufficient for the theory of these moduli spaces. This is because the isomorphism class of the local parahoric group scheme $\mathcal{G}_{\theta_x}$ over $\operatorname{Spec}(\mathbb{C}[\![z_x]\!])$ (which in turn determines the structure of $\mathcal{P}_{\theta_x}$) depends only on the open facet of the affine apartment (in the Bruhat--Tits building associated with $G(K_x)$) to which $\theta_x$ belongs. Since each such facet necessarily contains rational points, any distinct parahoric group scheme structure defined by this framework can be represented by a rational weight lying within the same facet. Consequently, the moduli theory developed using rational weights comprehensively addresses the range of these algebro-geometric structures.
\end{rem}

\subsection{Stable Parahoric Torsors and Associated Vector Bundles}

 Let $G$ be a connected complex reductive group and
\[
\rho\colon G \to \mathrm{GL}(V)
\]
be a rational representation on a finite-dimensional complex vector space $V$. Suppose that $V$ decomposes into a direct sum of irreducible $G$-modules
\[
V = V_1 \oplus \cdots \oplus V_r,
\]
so that $\rho = \rho_1 \oplus \cdots \oplus \rho_r$, where $\rho_i\colon G \to \mathrm{GL}(V_i)$ is the representation on $V_i$.
Following \cite[Section 3]{Rama}, we define
\[
S := \rho(G) \subset \mathrm{GL}(V), \quad \text{and}
\]
\[
C := \{(\lambda_1\rho_1(g),\dots,\lambda_r\rho_r(g)) \in \mathrm{GL}(V) \mid \lambda_i\in\mathbb{C}^*,\; g\in G\}.
\]
Since $C$ is the image of the rational homomorphism $((\lambda_i), g) \mapsto (\lambda_1\rho_1(g),\dots,\lambda_r\rho_r(g))$ from $\mathbb{C}^{*r} \times G$ to $\mathrm{GL}(V)$, it is a constructible, hence locally closed, subset of $\mathrm{GL}(V)$. We denote by $\overline{C}$ its Zariski closure in $\mathrm{End}(V)$. The sets $S, C,$ and $\overline{C}$ are invariant under left and right multiplication by elements of $S$.

Let $\mathcal{E}$ be a parahoric torsor for a parahoric group scheme $\mathcal{G}_{\boldsymbol\theta}$ over a smooth projective variety $X$.
Via the representation $\rho$, we obtain an associated vector bundle $\mathcal{E}(V) = \mathcal{E} \times^{\mathcal{G}_{\boldsymbol\theta}} V$. For each irreducible component $V_i$, $\mathcal{E}(V_i)$ is the corresponding vector subbundle. Similarly, using the adjoint representation $\mathrm{Ad}: G \to \mathrm{GL}(\mathfrak{g})$, where $\mathfrak{g} = \mathrm{Lie}(G)$, we obtain the Lie algebra bundle $\mathcal{E}(\mathfrak{g})$.

We next extend classical arguments for principal $G$-bundles, particularly those of Ramanathan \cite{Rama}, to the setting of parahoric torsors. We analyze the structure of certain endomorphisms, stabilizers of subspaces, and establish slope inequalities using the parahoric degree. These results culminate in a uniqueness property for homomorphisms between associated vector bundles (Proposition~\ref{prop:parahoric_3.1}) and a characterization of global sections of the adjoint bundle associated to a stable parahoric torsor (Proposition~\ref{prop:parahoric_3.2}).

\begin{lem}\label{lem:parahoric_3.1}
Let $\rho\colon G\to\mathrm{GL}(V)$ be an irreducible representation. Let \[\mathcal{C}_{\rho} := \overline{\{\mu\,\rho(g)\mid \mu\in\mathbb{C}^*,\,g\in G\}} \subset \mathrm{End}(V).\]
Any endomorphism $T \in \mathcal{C}_{\rho}$ is of the form
\[
\lambda\,\rho(k_1)\,p_{\mathcal{R}}\, \rho(k_2),
\]
for some $\lambda\in\mathbb{C}$, elements $k_1, k_2 \in G$, and a projection $p_{\mathcal{R}}$ onto $V_{\mathcal{R}} := \bigoplus_{\alpha \in D(\mathcal{R})} V_\alpha$. Here, $D(\mathcal{R})$ is the set of weights $\alpha = \Lambda - \sum_{\alpha_i \in \mathcal{R}} m_i \alpha_i$ (where $\Lambda$ is the highest weight of $V$, $m_i \ge 0$ integers, and $\{\alpha_j\}$ is a fixed system of simple roots of $\mathfrak{g}$), for some subset $\mathcal{R}$ of the simple roots.
Conversely, every endomorphism of this form lies in $\mathcal{C}_{\rho}$.
\end{lem}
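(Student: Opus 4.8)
The plan is to recognize $\mathcal{C}_\rho$ as an algebraic monoid and to reduce a general element to a normal form governed by the closure of a maximal torus. First I note that the map $\mathbb{C}^*\times G\to\mathrm{GL}(V)$, $(\mu,g)\mapsto\mu\rho(g)$, is a homomorphism of algebraic groups, so its image $C:=\{\mu\rho(g)\mid\mu\in\mathbb{C}^*,\,g\in G\}$ is a reductive subgroup of $\mathrm{GL}(V)$ acting irreducibly on $V$. Since $C\cdot C=C$ and multiplication in $\mathrm{End}(V)$ is continuous, the Zariski closure $\mathcal{C}_\rho=\overline{C}$ is a submonoid of $(\mathrm{End}(V),\cdot)$ containing $C$ among its units and containing $0$. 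I will prove the two inclusions separately: that every element of the asserted form lies in $\mathcal{C}_\rho$, and conversely that every $T\in\mathcal{C}_\rho$ has this form.

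For the converse (easy) inclusion, I exhibit each projection $p_{\mathcal{R}}$ as an explicit limit. Fix a rational cocharacter $\gamma\in Y(T)\otimes\mathbb{Q}$ that is anti-dominant with $\langle\gamma,\alpha_i\rangle=0$ precisely for $\alpha_i\in\mathcal{R}$ and $\langle\gamma,\alpha_i\rangle<0$ for $\alpha_i\notin\mathcal{R}$ (for instance $\gamma=-\sum_{\alpha_i\notin\mathcal{R}}\varpi_i^\vee$); after multiplying by a positive integer we may take $\gamma$ integral. On the weight space $V_\alpha$ with $\alpha=\Lambda-\sum_i m_i\alpha_i$, the element $s^{-\langle\gamma,\Lambda\rangle}\rho(\gamma(s))\in C$ acts by the scalar $s^{\langle\gamma,\alpha-\Lambda\rangle}=s^{\sum_i m_i(-\langle\gamma,\alpha_i\rangle)}$, whose exponent is nonnegative and vanishes exactly when $\alpha\in D(\mathcal{R})$. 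Letting $s\to 0$ therefore produces $p_{\mathcal{R}}\in\mathcal{C}_\rho$. Since $\mathcal{C}_\rho$ is a monoid containing $C$, every product $\lambda\,\rho(k_1)\,p_{\mathcal{R}}\,\rho(k_2)$ (with $\lambda\neq 0$ coming from $C$, and $\lambda=0$ giving $0\in\mathcal{C}_\rho$) again lies in $\mathcal{C}_\rho$, which is the desired inclusion.

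For the main inclusion, I take $T\in\mathcal{C}_\rho$ and write $T=\lim_n\mu_n\rho(g_n)$. Applying the Cartan ($KAK$) decomposition of the complex reductive group $G$, write $g_n=k_n a_n l_n$ with $k_n,l_n$ in a maximal compact $K$ and $a_n\in A$, with $A$ contained in a maximal torus $T$ and acting diagonally in the weight decomposition. By compactness of $K$, after passing to a subsequence $\rho(k_n)\to\rho(k)$ and $\rho(l_n)\to\rho(l)$ with $k,l\in G$; since these limits are invertible, $D:=\lim_n\mu_n\rho(a_n)=\rho(k)^{-1}T\rho(l)^{-1}$ exists and lies in the closure $\overline{T'}$ of the maximal torus $T'=\mathbb{C}^*\rho(T)$ of $C$. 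This reduces the problem to analyzing the affine toric monoid $\overline{T'}\subset\bigoplus_\alpha\mathbb{C}\cdot\mathrm{id}_{V_\alpha}$.

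Finally I analyze $\overline{T'}$. By the orbit decomposition of an affine toric variety, every $D\in\overline{T'}$ lies in a single $T'$-orbit and can be written $D=t'\,e$ with $t'\in T'$ and $e$ an idempotent of $\overline{T'}$; such idempotents are exactly the projections $e_F$ onto $\bigoplus_{\alpha\in F}V_\alpha$, where $F$ ranges over the faces of the weight polytope $\mathrm{conv}\{\alpha\}$, each realized as the set of weights minimizing $\langle\gamma,\cdot\rangle$ for a suitable cocharacter $\gamma$. Because $V$ is irreducible the vertices of this polytope form the single orbit $W\Lambda$, so every face is carried by some $w\in W$ (realized by $n_w\in N(T)\subset C$) to a face containing the dominant vertex $\Lambda$; and the computation of the converse step shows that faces at $\Lambda$ are precisely the sets $D(\mathcal{R})$, with $e_F=p_{\mathcal{R}}$. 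Absorbing $t'$, the Weyl element $n_w$, and the scalar into the units, we obtain $T=\rho(k)\,t'\,e_F\,\rho(l)=\lambda\,\rho(k_1)\,p_{\mathcal{R}}\,\rho(k_2)$, as claimed. The principal obstacle is this last step: establishing the torus-closure orbit decomposition $D=t'e$ with a genuine idempotent $e$, and pinning down the combinatorial bijection between the faces of the weight polytope meeting $\Lambda$ and the subsets $\mathcal{R}$ of the simple roots — the bookkeeping of dominance and of which faces actually occur is the delicate part, whereas the Cartan-decomposition reduction and the converse inclusion are routine.
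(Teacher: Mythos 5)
Your strategy is genuinely different from the paper's in its second half: after the same Cartan ($KAK$) reduction, the paper places the elements $h_n$ in the closure of a fixed Weyl chamber and directly estimates the limits $\lim_n \mu_n e^{\alpha(h_n)}$ weight space by weight space, whereas you invoke the orbit--idempotent structure of the commutative algebraic monoid $\overline{T'}$ and the face combinatorics of the weight polytope. Your Cartan reduction, the monoid decomposition $D=t'e_F$ with $e_F$ an idempotent supported on a face, and the cocharacter proof of the converse inclusion are all correct, and are arguably cleaner than the paper's subsequence bookkeeping.

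However, there is one false claim, at exactly the step you flag as the principal obstacle: it is \emph{not} true that the faces of the weight polytope containing $\Lambda$ are precisely the sets $D(\mathcal{R})$. This fails whenever $\Lambda$ has a nontrivial stabilizer in $W$. Concretely, take $G=\mathrm{SL}_3$ and $V=\mathbb{C}^3$ the standard representation, with weights $\varepsilon_1=\Lambda$, $\varepsilon_2=\Lambda-\alpha_1$, $\varepsilon_3=\Lambda-\alpha_1-\alpha_2$. The edge $\mathrm{conv}\{\varepsilon_1,\varepsilon_3\}$ is a face of the triangle containing $\Lambda$, but $\{\varepsilon_1,\varepsilon_3\}$ is not of the form $D(\mathcal{R})$: any $D(\mathcal{R})$ containing $\varepsilon_3=\Lambda-\alpha_1-\alpha_2$ forces $\{\alpha_1,\alpha_2\}\subseteq\mathcal{R}$ and hence contains $\varepsilon_2$ as well. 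The corresponding idempotent really does occur in $\overline{T'}$, since $t\cdot\rho\bigl(\mathrm{diag}(t^{-1},t^2,t^{-1})\bigr)=\mathrm{diag}(1,t^3,1)\to\mathrm{diag}(1,0,1)$ as $t\to0$, so your argument must account for it. The repair is small but necessary: conjugate the cutting \emph{functional}, not merely the face. Every face is exposed, say $F=F_\gamma$, the locus where $\gamma$ attains its minimum on the polytope; choose $w\in W$ with $w\gamma$ anti-dominant; then $wF_\gamma=F_{w\gamma}$ by $W$-invariance of the polytope, and the computation from your converse step now does show $F_{w\gamma}=D(\mathcal{R})$ with $\mathcal{R}=\{\alpha_i:\langle w\gamma,\alpha_i\rangle=0\}$ (in the example, $s_{\alpha_2}$ carries the bad edge to $\mathrm{conv}\{\varepsilon_1,\varepsilon_2\}=D(\{\alpha_1\})$). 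This dominance adjustment is exactly what the paper builds in by assuming all $h_n$ lie in the closure of a fixed Weyl chamber before passing to the limit. With this one fix your proof is complete.
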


\begin{proof}
 Let $T \in \mathcal{C}_{\rho}$. Then $T=\lim_{n\to\infty}\mu_n\,\rho(g_n)$, for some $\mu_n\in\mathbb{C}^*$ and $g_n\in G$.
Let $K$ be a maximal compact subgroup of $G$. By the Cartan decomposition $G=KAK$, we can write $g_n = k_{1,n} a_n k_{2,n}$ where $k_{1,n}, k_{2,n} \in K$ and $a_n = \exp(h_n)$ for $h_n \in i\mathrm{Lie}(A_0)$, where $A_0$ is a maximal $\mathbb{R}$-split torus whose Lie algebra is $\mathfrak{a}_0$. By passing to a subsequence, we may assume that $k_{1,n} \to k_1 \in K$ and $k_{2,n} \to k_2 \in K$.
Then $T = \rho(k_1) \left( \lim_{n\to\infty} \mu_n \rho(a_n) \right) \rho(k_2)$.
Let $h_n \in i\mathfrak{a}$ (where $\mathfrak{a}$ is the Lie algebra of a maximal torus $A \subset K$, so $i\mathfrak{a}$ is a Cartan subalgebra of $\mathfrak{g}^{\mathbb{C}}$ up to conjugation) such that $\exp(h_n)$ is conjugate to $a_n$. By passing to a further subsequence and conjugating $A$ if necessary, we can assume all $h_n$ lie in the closure of a fixed Weyl chamber, and $\alpha_j(h_n)$ (for simple roots $\alpha_j$) have defined limiting behaviors.
The operator $\lim \mu_n \rho(a_n)$ acts on each weight space $V_\alpha$ by multiplication by $C_\alpha = \lim \mu_n e^{\alpha(h_n)}$.
Let $\Lambda$ be the highest weight of $V$. Then $C_\alpha = \lim \mu_n e^{(\alpha-\Lambda)(h_n)} e^{\Lambda(h_n)}$.
For $C_\Lambda = \lim \mu_n e^{\Lambda(h_n)}$ to be finite and non-zero (or $C_\Lambda=0$ if $\lambda=0$), we analyze $(\alpha-\Lambda)(h_n)$.
There is a subset of simple roots $\mathcal{R} \subseteq \{\alpha_1, \dots, \alpha_l\}$ such that $(\alpha-\Lambda)(h_n)$ tends to a finite limit if $\alpha = \Lambda - \sum_{\alpha_i \in \mathcal{R}} m_i \alpha_i$, for $m_i \ge 0$, and tends to $-\infty$ if $\alpha$ involves simple roots not in $\mathcal{R}$ with negative coefficients in $\Lambda - \alpha$.
Thus, $\lim \mu_n \rho(a_n)$ becomes $\lambda' p_{\mathcal{R}}$, for some scalar $\lambda'$, and the projection $p_{\mathcal{R}}$ onto $V_{\mathcal{R}} = \bigoplus_{\alpha \in D(\mathcal{R})} V_\alpha$.
Therefore, $T = \lambda \rho(k_1) p_{\mathcal{R}} \rho(k_2')$, for some $k_2'$ (after absorbing $\rho(k_1)^{-1}\rho(k_1)$ and adjusting $k_2$).

For the converse, as in \cite[Lemma 3.1]{Rama}, choose $h_m \in i\mathfrak{a}$ such that $\alpha_i(h_m)=0$ for $\alpha_i \in \mathcal{R}$ and $\alpha_j(h_m) = m$ for $\alpha_j \notin \mathcal{R}$. Let $\mu_m = \lambda e^{-\Lambda(h_m)}$. Then $\mu_m \rho(\exp h_m)$ converges to $\lambda p_{\mathcal{R}}$. Pre- and post-multiplying by $\rho(k_1)$ and $\rho(k_2)$ gives the desired form.
\end{proof}

\begin{lem}\label{lem:parahoric_3.2}
Let $V_{\mathcal{R}}$ be a subspace as defined in Lemma~\ref{lem:parahoric_3.1}, corresponding to an irreducible representation $\rho$ and a subset of simple roots $\mathcal{R}$. The stabilizer $P_{\mathcal{R}} = \mathrm{Stab}_G(V_{\mathcal{R}})$ is a parabolic subgroup of $G$. If $Z_0$ is the identity component of the center of $G$, and $P_{\mathcal{R}} = Z_0 P'_{\mathcal{R}}$, then the character $\chi_{\mathcal{R}}$ of $P_{\mathcal{R}}$ given by its action on $\det(V_{\mathcal{R}})$ has its restriction to $P'_{\mathcal{R}}$ dominant. Moreover, if $V_{\mathcal{R}} \neq 0$ and $V_{\mathcal{R}} \neq V$, then $\chi_{\mathcal{R}}$ (or a related character used for stability) is non-trivial in the appropriate sense for stability arguments.
\end{lem}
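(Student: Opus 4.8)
The plan is to follow the weight-theoretic analysis of Ramanathan \cite[Lemma 3.2]{Rama}, while keeping track of the precise sign data needed later for parahoric stability. Write $\Lambda$ for the highest weight of $\rho$, let $\mathfrak{l}_{\mathcal{R}} \subseteq \mathfrak{g}$ be the Levi subalgebra generated by $\mathfrak{t}$ together with the root spaces $\mathfrak{g}_{\pm\alpha_i}$ for $\alpha_i \in \mathcal{R}$, and let $W_{\mathcal{R}}$ be its Weyl group. First I would identify $V_{\mathcal{R}}$ intrinsically: the global highest weight vector $v_\Lambda$ is annihilated by every raising operator, so it is in particular a highest weight vector for $\mathfrak{l}_{\mathcal{R}}$, and $U(\mathfrak{l}_{\mathcal{R}})\,v_\Lambda$ is precisely the span of the weight spaces $V_\alpha$ with $\alpha = \Lambda - \sum_{\alpha_i \in \mathcal{R}} m_i \alpha_i$, $m_i \geq 0$. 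Thus $V_{\mathcal{R}}$ is the irreducible $L_{\mathcal{R}}$-module of highest weight $\Lambda$, and the decisive structural consequence is that its multiset of weights is stable under $W_{\mathcal{R}}$.

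Next I would compute the Lie algebra of $\mathrm{Stab}_G(V_{\mathcal{R}})$ by testing each root space against $V_{\mathcal{R}}$. The torus $\mathfrak{t}$ and the root spaces for roots in the $\mathbb{Z}$-span of $\mathcal{R}$ preserve $V_{\mathcal{R}}$ by the previous step. For a positive root $\beta$ not in this span, $e_\beta$ sends each weight of $V_{\mathcal{R}}$ to one exceeding $\Lambda$ along a simple direction outside $\mathcal{R}$, hence outside the weight support of $V$, so $e_\beta$ annihilates $V_{\mathcal{R}}$; whereas $f_\beta$ carries weights out of $D(\mathcal{R})$. The stabilizing subalgebra is therefore exactly the standard parabolic $\mathfrak{p}_{\mathcal{R}} = \mathfrak{l}_{\mathcal{R}} \oplus \mathfrak{n}_{\mathcal{R}}$, with $\mathfrak{n}_{\mathcal{R}}$ the sum of the positive root spaces not in the span of $\mathcal{R}$, and it contains the Borel $\mathfrak{b}$. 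Since parabolic subgroups are connected and self-normalizing, $P_{\mathcal{R}} = \mathrm{Stab}_G(V_{\mathcal{R}})$ is the parabolic subgroup with Lie algebra $\mathfrak{p}_{\mathcal{R}}$.

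For the character, $\chi_{\mathcal{R}}$ acts on $\det V_{\mathcal{R}}$, so on $\mathfrak{t}$ it is the sum of the weights of $V_{\mathcal{R}}$ with multiplicity, giving $\chi_{\mathcal{R}} = N\Lambda - \sum_{\alpha_i \in \mathcal{R}} c_i \alpha_i$ with $N = \dim V_{\mathcal{R}} > 0$ and every $c_i \geq 0$. The $W_{\mathcal{R}}$-invariance of the weight multiset forces $\langle \chi_{\mathcal{R}}, \alpha_i^\vee\rangle = 0$ for each $\alpha_i \in \mathcal{R}$; and for $\alpha_j \notin \mathcal{R}$ one has $\langle\chi_{\mathcal{R}}, \alpha_j^\vee\rangle = N\langle\Lambda, \alpha_j^\vee\rangle - \sum_{\alpha_i \in \mathcal{R}} c_i \langle\alpha_i, \alpha_j^\vee\rangle \geq 0$, since $\Lambda$ is dominant and distinct simple roots pair non-positively. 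This shows $\chi_{\mathcal{R}}$ is dominant, and hence that its restriction to the factor $P'_{\mathcal{R}}$ complementary to the central torus $Z_0$ is dominant as well.

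The delicate point, which I expect to be the main obstacle, is the final non-triviality claim: the bound above is only $\geq 0$ and degenerates if $\langle\Lambda, \alpha_j^\vee\rangle = 0$ for all $\alpha_j \notin \mathcal{R}$, in which case $\chi_{\mathcal{R}}$ pairs to zero with every coroot and is trivial on $P'_{\mathcal{R}}$; this is exactly why the statement hedges with ``a related character'' and ``in the appropriate sense.'' I would resolve it using the freedom in the stability argument to take $\rho$ with regular highest weight---always available, for instance inside a tensor power realizing all fundamental weights---so that $\langle\Lambda, \alpha_j^\vee\rangle > 0$ for every simple root. Then $V_{\mathcal{R}} \neq V$ provides some $\alpha_j \notin \mathcal{R}$ with $\langle\chi_{\mathcal{R}}, \alpha_j^\vee\rangle \geq N\langle\Lambda, \alpha_j^\vee\rangle > 0$, while $V_{\mathcal{R}} \neq 0$ gives $N > 0$, so $\chi_{\mathcal{R}}$ is non-trivial on $P'_{\mathcal{R}}$; passing if necessary to a positive power or correcting by a central character, which leaves the parahoric-degree sign unchanged, yields the non-trivial anti-dominant character demanded by Definition~\ref{defn:alg-stab-cond}.
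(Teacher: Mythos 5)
Your identification of $V_{\mathcal{R}}$ as the irreducible $L_{\mathcal{R}}$-submodule generated by the highest weight vector, and the ensuing dominance computation for $\chi_{\mathcal{R}}$, are correct and in fact more detailed than the paper's own (very terse) argument. One overstatement along the way: the stabilizer of $V_{\mathcal{R}}$ need not be \emph{exactly} the standard parabolic $\mathfrak{l}_{\mathcal{R}}\oplus\mathfrak{n}_{\mathcal{R}}$, because a lowering operator $f_\beta$ with $\beta$ outside the span of $\mathcal{R}$ may annihilate $V_{\mathcal{R}}$ outright and hence stabilize it (e.g.\ $G=\mathrm{SL}_3$, $V$ the standard representation, $\mathcal{R}=\emptyset$: the stabilizer of the highest weight line is a maximal parabolic, not the Borel). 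This is harmless, since a closed subgroup containing a Borel is parabolic, which is all that the lemma asserts and all that the paper uses.

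The genuine gap is in your last step. You correctly flag that the inequality $\langle\chi_{\mathcal{R}},\alpha_j^\vee\rangle\ge 0$ can degenerate, but your fix --- replacing $\rho$ by a representation with regular highest weight --- is not available where this lemma is applied. In Lemma~\ref{lem:parahoric_3.3} and Proposition~\ref{prop:parahoric_3.1} the representation is \emph{given} (the irreducible summands of a fixed faithful representation), and the type-$\mathcal{R}$ subbundle $W\subset\mathcal{E}(V)$, the parabolic $P_{\mathcal{R}}=\mathrm{Stab}_G(V_{\mathcal{R}})$, and the stability character are all attached to that fixed $V$; passing to a tensor power changes $V_{\mathcal{R}}$ and $P_{\mathcal{R}}$ and says nothing about the subbundle at hand. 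Fortunately, no modification of $\rho$ is needed: non-triviality holds for the given representation. Since every weight of $V_{\mathcal{R}}$ has the same component $\Lambda^{\perp}$ orthogonal to $\mathrm{span}(\mathcal{R})$, and the sum of the $\mathrm{span}(\mathcal{R})$-components is $W_{\mathcal{R}}$-invariant and hence zero, one gets $\chi_{\mathcal{R}}=N\Lambda^{\perp}$ on $\mathfrak{t}$, with $N=\dim V_{\mathcal{R}}$. If this were trivial on every coroot, the root-span part $\bar\Lambda$ of $\Lambda$ would lie in $\mathrm{span}(\mathcal{R})$; writing $\bar\Lambda=\sum_j q_j\alpha_j$ with all $q_j\ge 0$ (a standard consequence of dominance), the condition $0\le\langle\bar\Lambda,\alpha_j^\vee\rangle=\sum_{i\in\mathcal{R}}q_i\langle\alpha_i,\alpha_j^\vee\rangle\le 0$ for $j\notin\mathcal{R}$ forces the support of $\bar\Lambda$ to be a union of connected components of the Dynkin diagram contained in $\mathcal{R}$; the simple factors outside those components then act trivially on $V$, so every weight of $V$ lies in $D(\mathcal{R})$ and $V_{\mathcal{R}}=V$, contradicting the hypothesis. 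Thus $V_{\mathcal{R}}\neq V$ already guarantees that $\chi_{\mathcal{R}}$ (hence also the character $\mu$ of Lemma~\ref{lem:parahoric_3.3}, which agrees with $(\mathrm{rk}\,V)\,\chi_{\mathcal{R}}$ up to a central character) is non-trivial on $P'_{\mathcal{R}}$, which is the statement the stability argument actually needs.
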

\begin{proof}
 The Lie algebra of $\mathrm{Stab}_G(V_{\mathcal{R}})$ contains the Borel subalgebra $\mathfrak{b}$ (corresponding to the choice of simple roots) plus all root spaces $\mathfrak{g}^{\alpha}$ such that $\alpha$ is a sum of simple roots where those not in $\mathcal{R}$ appear with non-negative coefficients. This defines a parabolic subalgebra, so $P_{\mathcal{R}}$ is parabolic. The assertion concerning the character $\chi_{\mathcal{R}}$ follows by considering the $P_{\mathcal{R}}$-action on the line $\bigwedge^{\dim V_{\mathcal{R}}} V_{\mathcal{R}}$, whose weight is a sum of weights in $D(\mathcal{R})$, and is dominant for $P'_{\mathcal{R}}$. If $V_{\mathcal{R}}$ is a proper non-zero $P_{\mathcal{R}}$-invariant subspace, then $P_{\mathcal{R}}$ is a proper parabolic, and such characters are non-trivial.
\end{proof}

\begin{defn}\label{def:type_R_subbundle}
Let $\rho: G \to \mathrm{GL}(V)$ be an irreducible representation, and $\mathcal{R}$ be a subset of simple roots. A vector subbundle $W \subset \mathcal{E}(V)$ is said to be of \emph{type $\mathcal{R}$} if for a generic point $x \in X$, the fiber $W_x \subset \mathcal{E}(V)_x \cong V$ is $G$-conjugate to the subspace $V_{\mathcal{R}} = \bigoplus_{\alpha \in D(\mathcal{R})} V_\alpha$.
\end{defn}

\begin{lem}\label{lem:parahoric_3.3}
Let $\mathcal{E}$ be a stable (resp. semistable) parahoric $\mathcal{G}_{\boldsymbol{\theta}}$-torsor on $X$, and let $\rho\colon G\to\mathrm{GL}(V)$ be an irreducible representation. If $W \subset \mathcal{E}(V)$ is a proper, non-zero subbundle of type $\mathcal{R}$ for some $\mathcal{R} \subset \{\alpha_1,\dots,\alpha_l\}$, then
\[
\frac{\parhdeg W}{\mathrm{rk} W} < \frac{\parhdeg \mathcal{E}(V)}{\mathrm{rk} V} \quad \left(\text{resp. } \frac{\parhdeg W}{\mathrm{rk} W} \le \frac{\parhdeg \mathcal{E}(V)}{\mathrm{rk} V}\right).
\]
\end{lem}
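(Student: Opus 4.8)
The plan is to follow the strategy of Ramanathan, reducing the slope inequality for $W$ to the defining stability inequality for $\mathcal{E}$ (Definition~\ref{defn:alg-stab-cond}) applied to a carefully chosen reduction of structure group together with an anti-dominant character. Concretely, I would show that a type-$\mathcal{R}$ subbundle $W$ encodes a reduction $\varsigma$ of $\mathcal{E}$ to the parahoric group scheme $\mathcal{P}_{\boldsymbol\theta}$ attached to the parabolic $P_{\mathcal{R}}=\mathrm{Stab}_G(V_{\mathcal{R}})$, and then express both parahoric slopes in terms of $\parhdeg\mathcal{E}(\varsigma,\kappa)$ for suitable characters $\kappa$.

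First I would construct the reduction. By Lemma~\ref{lem:parahoric_3.2}, $P_{\mathcal{R}}$ is a parabolic subgroup of $G$, and $G/P_{\mathcal{R}}$ embeds $G$-equivariantly into the Grassmannian $\mathrm{Gr}(\dim V_{\mathcal{R}},V)$ via $gP_{\mathcal{R}}\mapsto g\cdot V_{\mathcal{R}}$. Over the generic point, Definition~\ref{def:type_R_subbundle} says that $W_x$ is $G$-conjugate to $V_{\mathcal{R}}$, so $W$ determines a section of $\mathcal{E}(G/P_{\mathcal{R}})=\mathcal{E}/P_{\mathcal{R}}$ over $X_D$; properness of $G/P_{\mathcal{R}}$ extends this across all of $X$, and the local model of subsection~\ref{subsect:alg-parah-deg} promotes it to a section $\varsigma\colon X\to\mathcal{E}/\mathcal{P}_{\boldsymbol\theta}$ in the sense of Definition~\ref{defn:alg-deg}. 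By construction $L(\varsigma,\chi_{\mathcal{R}})\cong\det W$, where $\chi_{\mathcal{R}}$ is the dominant character of $P_{\mathcal{R}}$ acting on $\det V_{\mathcal{R}}$ furnished by Lemma~\ref{lem:parahoric_3.2}.

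Next I would set up the degree bookkeeping. Writing $\chi:=\det\rho\colon G\to\mathbb{G}_m$, the associated line bundle is $L(\varsigma,\chi)\cong\det\mathcal{E}(V)$, so Definition~\ref{defn:alg-deg} yields $\parhdeg\mathcal{E}(\varsigma,\chi)=\parhdeg\mathcal{E}(V)$ and $\parhdeg\mathcal{E}(\varsigma,\chi_{\mathcal{R}})=\parhdeg W$, the corrections $\sum_{x\in D}\langle\theta_x,\cdot\rangle$ recording exactly the induced parabolic weights at $D$. Since $\parhdeg\mathcal{E}(\varsigma,-)$ is additive in the character, I would introduce
\[
\kappa \;:=\; \mathrm{rk}(W)\cdot\chi|_{P_{\mathcal{R}}}\;-\;\mathrm{rk}(V)\cdot\chi_{\mathcal{R}},
\]
which is anti-dominant because $\chi_{\mathcal{R}}$ is dominant (Lemma~\ref{lem:parahoric_3.2}) while $\chi=\det\rho$ factors through $G^{\mathrm{ab}}$ and hence pairs trivially with every coroot, leaving (anti)dominance unaffected; it is nontrivial precisely because $0\neq W\neq\mathcal{E}(V)$ forces $\chi_{\mathcal{R}}$ nontrivial. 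Then additivity gives
\[
\parhdeg\mathcal{E}(\varsigma,\kappa)\;=\;\mathrm{rk}(W)\,\parhdeg\mathcal{E}(V)\;-\;\mathrm{rk}(V)\,\parhdeg W,
\]
and stability (resp. semistability) of $\mathcal{E}$ forces $\parhdeg\mathcal{E}(\varsigma,\kappa)>0$ (resp. $\ge 0$), which is exactly the claimed slope inequality after dividing by $\mathrm{rk}(V)\,\mathrm{rk}(W)$.

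The main obstacle I anticipate is the parahoric degree bookkeeping at $D$: verifying that $\sum_{x\in D}\langle\theta_x,\chi_{\mathcal{R}}\rangle$ and $\sum_{x\in D}\langle\theta_x,\chi\rangle$ genuinely reproduce the induced parabolic weights of $W$ and of $\mathcal{E}(V)$, so that the identifications $\parhdeg\mathcal{E}(\varsigma,\chi_{\mathcal{R}})=\parhdeg W$ and $\parhdeg\mathcal{E}(\varsigma,\chi)=\parhdeg\mathcal{E}(V)$ hold on the nose. This requires tracking how $\theta_x$ acts through $\rho$ on the weight spaces spanning $V_{\mathcal{R}}$ and matching this with the evaluation and Levi identifications of subsection~\ref{sec:parh_Lie_algebra}. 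A secondary point demanding care is the extension of the generic reduction across the parahoric points $x\in D$, rather than merely across $X_D$, which I would handle via properness of $G/P_{\mathcal{R}}$ combined with the local structure of $\mathcal{P}_{\boldsymbol\theta}$.
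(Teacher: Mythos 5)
Your proposal is correct and follows essentially the same route as the paper's proof: reduce the structure group to the parahoric scheme attached to $P_{\mathcal{R}}=\mathrm{Stab}_G(V_{\mathcal{R}})$ via the section determined by $W$, and apply the parahoric stability inequality (Definition~\ref{defn:alg-stab-cond}) to the character $\mathrm{rk}(W)\,\chi-\mathrm{rk}(V)\,\chi_{\mathcal{R}}$, whose parahoric degree computes $\mathrm{rk}(V)\,\mathrm{rk}(W)$ times the difference of slopes. The only difference is cosmetic: the paper works with the dominant character $\mu=(\det|_{V_{\mathcal{R}}})^{\mathrm{rk}\,V}\otimes(\det|_{V})^{-\mathrm{rk}\,V_{\mathcal{R}}}$ and the inequality $\parhdeg\mathcal{L}_\mu<0$, whereas you use its negative (anti-dominant) and positivity, which are equivalent formulations of the same argument.
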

\begin{proof}
A subbundle $W$ of type $\mathcal{R}$ corresponds to a section $\sigma \colon X \to \mathcal{E}(G/P_{\mathcal{R}})$, where $P_{\mathcal{R}} = \mathrm{Stab}_G(V_{\mathcal{R}})$ is a proper parabolic subgroup (Lemma~\ref{lem:parahoric_3.2}). This section $\sigma$ defines a reduction of the structure group of $\mathcal{E}$ from $\mathcal{G}_{\boldsymbol{\theta}}$ to a parahoric subgroup scheme $\mathcal{P}_{\boldsymbol{\theta},\mathcal{R}}$ whose generic fiber is $P_{\mathcal{R}}$.
Let $\chi$ be the character of $P_{\mathcal{R}}$ given by its action on $\det(V_{\mathcal{R}})$. We form a character $\mu$ of $P_{\mathcal{R}}$, trivial on $Z_0(G)$ and dominant on $P'_{\mathcal{R}}$ (e.g., $\mu = (\det|_{V_{\mathcal{R}}})^{\mathrm{rk} V} \otimes (\det|_V)^{-\mathrm{rk} V_{\mathcal{R}}}$, restricted to $P'_{\mathcal{R}}$ and extended trivially on $Z_0(G)$).
The definition of (semi)stability for the parahoric torsor $\mathcal{E}$ implies that the parahoric degree of the associated line bundle $\mathcal{L}_\mu = \sigma^*\mathcal{E}(\mu)$ satisfies $\parhdeg \mathcal{L}_\mu < 0$ (resp. $\leq 0$).
The parahoric degree of $\mathcal{L}_\mu$ is given by
\[ \parhdeg \mathcal{L}_\mu = (\mathrm{rk} V) \parhdeg W - (\mathrm{rk} W) \parhdeg \mathcal{E}(V_{\mathcal{R}}), \]
where $V_{\mathcal{R}}$ is identified with $W$ via the section. More directly, \[\parhdeg \mathcal{L}_\mu = (\mathrm{rk} V)(\mathrm{rk} W) \left( \frac{\parhdeg W}{\mathrm{rk} W} - \frac{\parhdeg \mathcal{E}(V)}{\mathrm{rk} V} \right).\]
The inequality on $\parhdeg \mathcal{L}_\mu$ then translates directly to the stated slope inequality for $W$.
\end{proof}

\begin{prop}\label{prop:parahoric_3.1}
Let $\mathcal{E}$ and $\mathcal{E}'$ be parahoric $\mathcal{G}_{\boldsymbol{\theta}}$-torsors on $X$ having the same topological type, that is, $\parhdeg \mathcal{E}(V_i) = \parhdeg \mathcal{E}'(V_i)$, for any representation $V_i$. Suppose $\mathcal{E}$ is stable and $\mathcal{E}'$ is semistable. Let
\[
s=(s_1,\dots,s_r)\in \bigoplus_{i=1}^{r} H^0\Bigl(X,\,\mathrm{Hom}\bigl(\mathcal{E}(V_i),\mathcal{E}'(V_i)\bigr)\Bigr)
\]
be a section such that for each $x \in X$, $s(x) \in \overline{C}$ (when viewed as an endomorphism in $\mathrm{End}(V)$ after trivializing $\mathcal{E}_x(V)$ and $\mathcal{E}'_x(V)$).
Then, for each $i$, the induced homomorphism $s_i\colon \mathcal{E}(V_i)\to \mathcal{E}'(V_i)$ is either identically zero or an isomorphism.
Moreover, if $s_i\neq 0$ for every $i$ and if $\rho = \bigoplus \rho_i$ is faithful, then there exist scalars $\lambda_i\in\mathbb{C}^*$, $1\le i\le r$, such that $(\lambda_1 s_1,\dots,\lambda_r s_r)$ is induced by an isomorphism of parahoric torsors $\mathcal{E}\stackrel{\sim}{\to}\mathcal{E}'$.
\end{prop}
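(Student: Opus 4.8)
The plan is to adapt Ramanathan's argument \cite[Proposition 3.1]{Rama} to the parahoric setting, using the preceding lemmas. I would first treat each component $s_i$ separately. Fix $i$ and consider $s_i \colon \mathcal{E}(V_i) \to \mathcal{E}'(V_i)$. The key structural input is Lemma \ref{lem:parahoric_3.1}: the hypothesis that $s(x) \in \overline{C}$ fiberwise means that, after trivializing at a generic point, each $s_i(x)$ is of the form $\lambda_i \rho_i(k_1) p_{\mathcal{R}} \rho_i(k_2)$, so the image of $s_i$ at a generic point is a $G$-conjugate of a subspace $V_{i,\mathcal{R}}$ of the type appearing in Definition \ref{def:type_R_subbundle}. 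Suppose $s_i \neq 0$. Then its image defines a nonzero coherent subsheaf of $\mathcal{E}'(V_i)$, whose saturation $W' \subset \mathcal{E}'(V_i)$ is a subbundle of type $\mathcal{R}$ for some $\mathcal{R}$; dually, the kernel of $s_i$ gives a type-$\mathcal{R}'$ subbundle $W \subset \mathcal{E}(V_i)$.

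The heart of the argument is a slope comparison forcing $\mathcal{R}$ to be trivial (so $W' = \mathcal{E}'(V_i)$ and $s_i$ is surjective). Since $s_i$ factors through $\mathcal{E}(V_i)/\ker s_i \hookrightarrow W' \subset \mathcal{E}'(V_i)$ generically isomorphically onto its image, I would compare parahoric degrees: stability of $\mathcal{E}$ (Lemma \ref{lem:parahoric_3.3}) gives $\parhdeg(\ker s_i)/\mathrm{rk}(\ker s_i) < \parhdeg \mathcal{E}(V_i)/\mathrm{rk} V_i$ unless $\ker s_i$ is $0$ or everything, while semistability of $\mathcal{E}'$ gives the reverse inequality $\le$ for the image subbundle $W'$. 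Because $\mathcal{E}$ and $\mathcal{E}'$ have the same topological type, these two inequalities are incompatible unless the generic rank of $s_i$ is either $0$ or full; the strict inequality from stability of $\mathcal{E}$ is precisely what rules out proper nonzero kernel/image, yielding that a nonzero $s_i$ is a generic isomorphism. A nonzero homomorphism of vector bundles of equal rank and equal degree that is generically an isomorphism must be an isomorphism everywhere (its determinant is a section of a degree-zero line bundle, nonzero hence nowhere-vanishing), which gives the first assertion.

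For the second assertion, assume $s_i \neq 0$ for every $i$ and $\rho = \bigoplus \rho_i$ faithful. Each $s_i$ is an isomorphism by the above. After rescaling by suitable $\lambda_i \in \mathbb{C}^*$, I would show that the tuple $(\lambda_1 s_1, \dots, \lambda_r s_r)$ is fiberwise induced by an element of $S = \rho(G)$ rather than merely of $\overline{C}$: the fiberwise membership in $\overline{C}$ together with invertibility of every component forces the point to lie in the locus $C \cap \mathrm{GL}(V)$ where all projections $p_{\mathcal{R}}$ are the identity, i.e. in $C$, and the rescaling by $(\lambda_i^{-1})$ then lands each fiber in $S \cong G/Z$. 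Since $\rho$ is faithful, this determines a well-defined section of the bundle $\mathrm{Isom}(\mathcal{E}, \mathcal{E}')$ of parahoric-torsor isomorphisms, away from $D$; compatibility with the parahoric structure at each $x \in D$ follows because the identification respects the filtrations defining $\mathcal{G}_{\boldsymbol\theta}$, giving the desired isomorphism $\mathcal{E} \xrightarrow{\sim} \mathcal{E}'$ of parahoric torsors.

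I expect the main obstacle to be the parahoric bookkeeping at the points of $D$: one must ensure that the subbundles $\ker s_i$ and $\mathrm{im}\, s_i$ inherit genuine \emph{parahoric} reductions (so that $\parhdeg$ applies and Lemma \ref{lem:parahoric_3.3} is invoked legitimately), and that the global isomorphism constructed on $X_D$ extends across $D$ as an isomorphism of parahoric $\mathcal{G}_{\boldsymbol\theta}$-torsors, i.e. intertwines the local group schemes $G_{\theta_x}$ on both sides. Verifying that the generic type-$\mathcal{R}$ condition propagates to a bona fide reduction of the parahoric group scheme, and that degree-zero line bundles with a nonzero section are trivial in this twisted-degree sense, are the points requiring the most care.
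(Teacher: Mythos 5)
Your proposal is correct and follows essentially the same route as the paper's proof: kernel and image are typed via Lemma~\ref{lem:parahoric_3.1}, the slope contradiction combines stability of $\mathcal{E}$ (strict inequality for the kernel, Lemma~\ref{lem:parahoric_3.3}) with semistability of $\mathcal{E}'$ (non-strict inequality for the image) under the equal-topological-type hypothesis, the determinant/degree-zero argument upgrades a generic isomorphism to a global one, and the final assertion proceeds by landing fiberwise in $C$ and rescaling into $S=\rho(G)$ before invoking faithfulness. The one step you gloss is why the fiberwise scalars $\lambda_j(x)$ can be replaced by global constants; the paper, following Ramanathan, handles this by observing that suitable characters $\chi_k(\lambda_1(x),\dots,\lambda_r(x))$ define regular functions on the projective curve $X$ and are therefore constant.
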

\begin{proof}
Suppose  that for some $i$, the homomorphism $s_i \colon \mathcal{E}(V_i) \to \mathcal{E}'(V_i)$ is not identically zero and not an isomorphism. Then its image $W_i = \mathrm{Im}(s_i)$ and its kernel $U_{1,i} = \mathrm{Ker}(s_i)$ are proper, non-zero subbundles (on a suitable open set).

\noindent Since $s_i(x) \in \overline{C}_i$, the image subbundle $W_i \subset \mathcal{E}'(V_i)$ is of type $\mathcal{R}_i$ for some subset of simple roots $\mathcal{R}_i$. As $\mathcal{E}'$ is semistable, Lemma~\ref{lem:parahoric_3.3} implies that $\mu_{\text{par}}(W_i) \le \mu_{\text{par}}(\mathcal{E}'(V_i))$. From the short exact sequence $0 \to U_{1,i} \to \mathcal{E}(V_i) \to W_i \to 0$ and the fact that $\mu_{\text{par}}(\mathcal{E}(V_i)) = \mu_{\text{par}}(\mathcal{E}'(V_i))$, this leads to the inequality
\[\mu_{\text{par}}(U_{1,i}) \ge \mu_{\text{par}}(\mathcal{E}(V_i)).\]
Now, we analyze the kernel subbundle $U_{1,i} \subset \mathcal{E}(V_i)$. For any point $x \in X$, the fiber $(U_{1,i})_x$ is the kernel of the endomorphism $s_i(x) \in \overline{C}_i$. By Lemma~\ref{lem:parahoric_3.1}, an endomorphism in $\overline{C}_i$ has a kernel that is $G$-conjugate to a standard subspace of the form $V_{\mathcal{R}'}$ (the sum of weight spaces not in the image of the projection $p_{\mathcal{R}}$). Therefore, the kernel subbundle $U_{1,i}$ is also of type $\mathcal{R}'$ for some $\mathcal{R}'$.

\noindent Since $\mathcal{E}$ is stable, we can apply Lemma~\ref{lem:parahoric_3.3} directly to the proper, non-zero subbundle $U_{1,i} \subset \mathcal{E}(V_i)$. This gives the strict slope inequality
\[\mu_{\text{par}}(U_{1,i}) < \mu_{\text{par}}(\mathcal{E}(V_i)).\]
This is a direct contradiction to our earlier finding that $\mu_{\text{par}}(U_{1,i}) \ge \mu_{\text{par}}(\mathcal{E}(V_i))$. Therefore, each homomorphism $s_i$ must be either identically zero or an isomorphism.

If $s_i$ is a generic isomorphism, then $\det(s_i)$ is a holomorphic section of $\det(\mathcal{E}(V_i)^*) \otimes \det(\mathcal{E}'(V_i))$. Let $k_i = \mathrm{rk} V_i$, and then this line bundle is $\bigwedge^{k_i} \mathcal{E}(V_i)^* \otimes \bigwedge^{k_i} \mathcal{E}'(V_i)$. Since $\mathcal{E}$ and $\mathcal{E}'$ have the same topological type, this line bundle has parahoric degree zero. A non-zero holomorphic section of a degree-zero line bundle over a projective variety $X$ must be nowhere vanishing (if $H^0(X, \mathcal{O}_X) = \mathbb{C}$). Thus, $\det(s_i)$ is nowhere zero, and $s_i$ is an isomorphism.
Therefore, each $s_i$ is either zero or an isomorphism.

For the second part, assume that each $s_i \neq 0$, so each $s_i$ is an isomorphism. Then $s(X) \subset \mathcal{E}(C)$ (where $\mathcal{E}(C)$ denotes sections whose values at each point $x$ lie in $C$ relative to the fiber $\mathcal{E}_x(V)$). This means that, for each $x \in X$, there exist $\lambda_i(x) \in \mathbb{C}^*$ and $g(x) \in G$ such that $s_j(x) = \lambda_j(x) \rho_j(g(x))$, for all $j=1,\dots,r$. The argument from \cite[p.~136]{Rama} involving characters $\chi_k$ on $\mathbb{C}^{*r}$ shows that $\chi_k(\lambda_1(x), \dots, \lambda_r(x))$ are constant. This allows finding constants $\lambda_j \in \mathbb{C}^*$ such that $(\lambda_1 s_1, \dots, \lambda_r s_r)$ takes values in $\mathcal{E}(S)$. If $\rho$ is faithful, this implies that $(\lambda_1 s_1, \dots, \lambda_r s_r)$ is induced by an isomorphism of parahoric torsors $\mathcal{E} \stackrel{\sim}{\to} \mathcal{E}'$.
\end{proof}

\begin{rem}
The condition $s(X) \subset \mathcal{E}(\overline{C})$ means that for any $x \in X$, $s(x)$, viewed as an element of $\mathrm{Hom}(V,V)$ via trivializations of $\mathcal{E}_x(V)$ and $\mathcal{E}'_x(V)$, lies in $\overline{C}$. If $s_i(x)$ is not an isomorphism, it corresponds to an element in $\overline{C}_i \setminus C_i$.
If $s(X)$ were not contained in $\mathcal{E}(\overline{C})$, then the arguments relying on Lemma \ref{lem:parahoric_3.1} (about the structure of elements in $\overline{C}$) would not directly apply. Note that the assumption $s(X) \subset \mathcal{E}(\overline{C})$ is also used in \cite[Proposition 3.1]{Rama} for the case of principal $G$-bundles.
\end{rem}

\begin{prop}\label{prop:parahoric_3.2}
Let $\mathcal{E}$ be a stable parahoric $\mathcal{G}_{\boldsymbol{\theta}}$-torsor on $X$. Then the space of global sections of the adjoint bundle $H^0\bigl(X,\mathcal{E}(\mathfrak{g})\bigr)$ is isomorphic to the center $\mathfrak{z}$ of the Lie algebra $\mathfrak{g}$. In particular, if $G$ is semisimple (so that $\mathfrak{z}=0$), then $H^0\bigl(X,\mathcal{E}(\mathfrak{g})\bigr)=0$, and consequently, the group of automorphisms $\mathrm{Aut}(\mathcal{E})$ that are extensions of the identity on $X$ is discrete (and finite if $X$ is projective).
\end{prop}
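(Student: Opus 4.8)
The plan is to identify $H^0(X,\mathcal{E}(\mathfrak{g}))$ with the Lie algebra of the automorphism group scheme $\underline{\mathrm{Aut}}(\mathcal{E})$ of the parahoric torsor and then to pin down this group using Proposition~\ref{prop:parahoric_3.1}. First I would record two standard facts, valid in the parahoric setting. A global section $s\in H^0(X,\mathcal{E}(\mathfrak{g}))$ of the parahoric adjoint sheaf (Definition~\ref{defn:parh-adj}) is precisely an infinitesimal automorphism of $\mathcal{E}$, and since the exponential map is $G$-equivariant, $\exp(ts)$ glues to a global section of the inner group bundle $\mathcal{E}(G)$, i.e.\ an honest automorphism of $\mathcal{E}$, for every $t$. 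Hence $H^0(X,\mathcal{E}(\mathfrak{g}))=\mathrm{Lie}\,\underline{\mathrm{Aut}}(\mathcal{E})$. Conversely, any $\zeta$ in the center $\mathfrak{z}$ is $\mathrm{Ad}$-invariant and so yields a globally constant section lying in $\mathfrak{t}(R)\subset\mathfrak{g}_{\theta_x}$ near each $x\in D$; this gives the inclusion $\mathfrak{z}\subseteq H^0(X,\mathcal{E}(\mathfrak{g}))$, and it remains to establish the reverse inclusion.

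For the reverse inclusion I would compute $\underline{\mathrm{Aut}}(\mathcal{E})$ directly. Fix a faithful representation $\rho=\bigoplus_i\rho_i$ with each $\rho_i$ irreducible. Any automorphism $\phi$ of $\mathcal{E}$ induces isomorphisms $\phi_i\colon\mathcal{E}(V_i)\to\mathcal{E}(V_i)$ whose fiberwise values lie in $S=\rho(G)\subset C\subset\overline{C}$ over $X_D$, and in the closure $\overline{C}$ at the points of $D$; in particular each $\phi_i$ is a nonzero section with values in $\overline{C}$. Applying Proposition~\ref{prop:parahoric_3.1} with $\mathcal{E}'=\mathcal{E}$ (the two torsors trivially share topological type, and $\mathcal{E}$ is stable, hence semistable), the tuple $(\phi_i)$ is, after rescaling by scalars $\lambda_i\in\mathbb{C}^*$, induced by a genuine automorphism of $\mathcal{E}$ taking values in $\mathcal{E}(S)$. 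Because $\rho$ is faithful and each $V_i$ is irreducible, Schur's lemma forces the scalar ambiguity to be exactly the action of a central element: the automorphism is the image of some $z\in Z(G)$ acting by the central characters $\chi_i(z)$ on the $V_i$. This identifies $\underline{\mathrm{Aut}}(\mathcal{E})$ with $Z(G)$.

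Taking Lie algebras then yields $H^0(X,\mathcal{E}(\mathfrak{g}))=\mathrm{Lie}\,\underline{\mathrm{Aut}}(\mathcal{E})=\mathrm{Lie}\,Z(G)=\mathfrak{z}$, since for a connected complex reductive group the Lie algebra of the center coincides with the center $\mathfrak{z}$ of $\mathfrak{g}$. The ``in particular'' clause is then immediate: if $G$ is semisimple then $\mathfrak{z}=0$, so $\underline{\mathrm{Aut}}(\mathcal{E})$ has trivial Lie algebra and is a discrete group scheme; being an affine group scheme of finite type over $\mathbb{C}$ it is then finite, recovering the finiteness of the automorphisms extending the identity on the projective curve $X$.

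The step I expect to be the main obstacle is the precise application of Proposition~\ref{prop:parahoric_3.1} in families rather than merely on $\mathbb{C}$-points, together with the bookkeeping that pins the scalar indeterminacy $(\lambda_i)$ to $Z(G)$ via faithfulness: one must verify that the scalars produced by the Proposition are locally constant in $x$ (using properness of $X$, so that the characters $\chi_k(\lambda_1,\dots,\lambda_r)$ are constant, exactly as in the proof of Proposition~\ref{prop:parahoric_3.1}) and that they assemble into a single global central gauge transformation rather than a merely pointwise one. A secondary technical point, specific to the parahoric framework, is checking admissibility at the marked points, namely that these central automorphisms preserve the parahoric structure $\mathcal{G}_{\theta_x}$ and that the central constants genuinely lie in the parahoric adjoint sheaf near $D$; this holds because $Z(G)\subset T$ and $\mathfrak{z}\subset\mathfrak{t}(R)$ sit inside every parahoric subgroup and its Lie algebra.
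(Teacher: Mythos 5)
Your overall frame (realize $H^0(X,\mathcal{E}(\mathfrak{g}))$ as the Lie algebra of the automorphism group, check $\mathfrak{z}\subseteq H^0(X,\mathcal{E}(\mathfrak{g}))$ by constant central sections, then bound the automorphisms using Proposition~\ref{prop:parahoric_3.1} applied with $\mathcal{E}'=\mathcal{E}$ and a faithful $\rho=\bigoplus_i\rho_i$) is the same as the paper's, and those two bookend steps are fine. The gap is in your middle step: you claim that stability forces $\underline{\mathrm{Aut}}(\mathcal{E})=Z(G)$. That statement is false for merely stable torsors --- it is precisely the \emph{definition} of regular stability (Definition~\ref{def:parahoric-regular-stable}), and the paper itself exhibits, in Proposition~\ref{prop:parahoric-singular}, stable torsors carrying finite-order automorphisms $f\in\mathrm{Aut}(\mathcal{E})\setminus Z(G)$. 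Stability only yields that $\mathrm{Aut}(\mathcal{E})$ is a finite extension of a group with Lie algebra $\mathfrak{z}$; the extra finite part is exactly what your argument must (and fails to) dispose of.

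Concretely, two things break. First, applying Proposition~\ref{prop:parahoric_3.1} with $\mathcal{E}'=\mathcal{E}$ to the tuple $(\phi_i)$ induced by an automorphism $\phi$ gives no information: the conclusion of that proposition is the \emph{existence} of scalars $\lambda_i$ making $(\lambda_i\phi_i)$ come from an isomorphism of torsors, and $(\phi_i)$ already comes from one, namely $\phi$ itself (with all $\lambda_i=1$). Second, Schur's lemma does not apply where you invoke it: $\phi_i(x)$ is some element of $\rho_i(G)$ (or of $\overline{C}_i$) at each point $x$, not an endomorphism of $V_i$ commuting with $\rho_i(G)$, so nothing forces it to be scalar; a non-central $g\in G$ of finite order with $\rho_i(g)$ non-scalar can perfectly well define an automorphism of a stable torsor. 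The paper's proof (following Ramanathan) avoids this by pinning down only the \emph{identity component}: the induced map $\Phi$ from $\operatorname{Aut}_X(\mathcal{E})$ to projectivized section data has finite image $I$, hence $\operatorname{Aut}_X^0(\mathcal{E})\subseteq\ker\Phi$; then for $h\in H^0(X,\mathcal{E}(\mathfrak{g}))$ the one-parameter subgroup $\exp(th)$ must act by scalars on each $\mathcal{E}(V_i)$, i.e.\ $\rho_i(h)$ is scalar for every $i$, and only now does Schur's lemma (at the Lie-algebra level, combined with faithfulness of $\bigoplus_i\rho_i$) give $h\in\mathfrak{z}$. This computes the Lie algebra $H^0(X,\mathcal{E}(\mathfrak{g}))\cong\mathfrak{z}$ while remaining consistent with the possible finite, non-central automorphisms, which is all the proposition asserts. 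Your proof becomes correct if you replace the claim $\underline{\mathrm{Aut}}(\mathcal{E})=Z(G)$ by this finiteness-of-$\Phi$ argument restricted to one-parameter subgroups.
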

\begin{proof}
Let $\operatorname{Aut}_X(\mathcal{E})$ be the group of automorphisms of $\mathcal{E}$ covering the identity on $X$. Its Lie algebra is $\mathrm{Lie}(\operatorname{Aut}_X(\mathcal{E})) \cong H^0(X, \mathcal{E}(\mathfrak{g}))$.
Let $\rho = \bigoplus_{i=1}^r \rho_i: G \to \mathrm{GL}(V = \bigoplus V_i)$ be a faithful representation, where each $V_i$ is irreducible.
An automorphism $\alpha \in \operatorname{Aut}_X(\mathcal{E})$ induces isomorphisms $s_i(\alpha): \mathcal{E}(V_i) \to \mathcal{E}(V_i)$ for each $i$. By Proposition~\ref{prop:parahoric_3.1} (with $\mathcal{E}'=\mathcal{E}$, which is stable and thus semistable), each $s_i(\alpha)$ is an isomorphism.
As in \cite[Proof of Prop 3.2]{Rama}, the induced sections $(s_1(\alpha), \dots, s_r(\alpha))$ define a map from $\operatorname{Aut}_X(\mathcal{E})$ to a product of projective spaces of sections (after projectivizing and considering $H^0(X, \mathcal{E}(\mathrm{End} V_i))$). The image of this map is finite. More precisely, there is a homomorphism $\Phi: \operatorname{Aut}_X(\mathcal{E}) \to I$, where $I$ is a finite group.
This implies that the connected component of the identity, $\operatorname{Aut}_X^0(\mathcal{E})$, is contained in $\text{Ker} \Phi$.
Thus $\mathrm{Lie}(\operatorname{Aut}_X(\mathcal{E})) = \mathrm{Lie}(\operatorname{Aut}_X^0(\mathcal{E})) \subseteq \mathrm{Lie}(\text{Ker} \Phi)$.
An element $h \in H^0(X, \mathcal{E}(\mathfrak{g}))$ corresponds to a 1-parameter subgroup $\exp(th)$ in $\operatorname{Aut}_X^0(\mathcal{E})$. If $\exp(th) \in \text{Ker} \Phi$, then for each $i$, the induced isomorphism $s_i(\exp(th))$ on $\mathcal{E}(V_i)$ must correspond to the identity element in the relevant factor of $I$.
The isomorphism $s_i(\exp(th))$ is given by $\exp(t \cdot \rho_i(h))$ acting on the fibers (where $\rho_i(h)$ here denotes the action derived from the adjoint action, i.e., $\mathcal{E}(\rho_i(h))$).
For $\exp(t \cdot \rho_i(h))$ to effectively be trivial in $I$ for all $t$, $\rho_i(h)$ must correspond to a scalar endomorphism for each $i$. Since $\rho_i$ is an irreducible representation of $G$, if $\rho_i(h)$ is a scalar matrix, and $h \in \mathfrak{g}$, then by Schur's Lemma, $\rho_i(h) = c_i \cdot \operatorname{Id}_{V_i}$ for $c_i \in \mathbb{C}$. If $h$ is in $\mathfrak{g}$, then $\rho_i(h)$ is an endomorphism that comes from $\mathfrak{g}$. For $\rho_i(h)$ to be scalar for all $i$ in a faithful representation $\rho=\bigoplus \rho_i$, $h$ must lie in the center $\mathfrak{z}$ of $\mathfrak{g}$.
Thus, $H^0(X, \mathcal{E}(\mathfrak{g})) = \mathrm{Lie}(\operatorname{Aut}_X^0(\mathcal{E})) \subseteq \mathfrak{z}$.
Conversely, if $h \in \mathfrak{z}$, then $\exp(th)$ is a 1-parameter subgroup of $Z(G)$ (center of $G$). These act as global automorphisms of $\mathcal{E}$ and correspond to sections in $H^0(X, \mathcal{E}(\mathfrak{z})) \cong H^0(X, \mathcal{O}_X) \otimes \mathfrak{z} \cong \mathfrak{z}$ (if $H^0(X, \mathcal{O}_X) = \mathbb{C}$).
Therefore, $H^0(X, \mathcal{E}(\mathfrak{g})) \cong \mathfrak{z}$.

If $G$ is semisimple, then $\mathfrak{z} = 0$, and so $H^0(X, \mathcal{E}(\mathfrak{g}))=0$. This implies that $\operatorname{Aut}_X^0(\mathcal{E})$ is trivial, so $\operatorname{Aut}_X(\mathcal{E})$ is discrete. If $X$ is projective, it is known that such an automorphism group is finite (e.g., it is an algebraic group scheme).
\end{proof}

\section{\texorpdfstring{$D$-level Structure on Parahoric Torsors}{D-level Structure on Parahoric Torsors}}

In this section we introduce the concept of a $D$-level structure on a parahoric $\mathcal{G}_{\boldsymbol \theta}$-torsor over a curve, generalizing the notion of $D$-level structure on vector bundles given by Seshadri in \cite{Seshadri}. We then construct their moduli space as an irreducible, normal, projective variety.  

\subsection{\texorpdfstring{$D$-level structures and stability}{D-level structures and stability}}

As in the previous section, let $X$ be a smooth complex projective curve and let $K_X$ represent the holomorphic cotangent bundle of $X$. We consider a reduced effective divisor $D$ on $X$ and a connected complex reductive Lie group $G$.

A parahoric $\mathcal{G}_{\boldsymbol\theta}$-torsor $\pi: \mathcal{E} \to X$ over $X$ is a holomorphic fiber bundle over $X$ equipped with a holomorphic right-action of the group $\mathcal{G}_{\boldsymbol\theta}$, $q: \mathcal{E} \times \mathcal{G}_{\boldsymbol\theta} \to \mathcal{E}$, satisfying:
 $$\pi(q(z, g)) = \pi(z),\mbox{ for all } (z, g) \in \mathcal{E} \times \mathcal{G}_{\boldsymbol\theta}.  $$

For notational convenience, the point $q(z, g) \in \mathcal{E}$ will be denoted by $z\cdot g$. For any $x \in X$, the fiber $\pi^{-1}(x) \subset \mathcal{E}$ will be also denoted by $\mathcal{E}_x$.

The following definition extends to the case of parahoric $\mathcal{G}_{\boldsymbol\theta}$-torsors the fundamental notion of a \emph{$D$-level structure} on vector bundles introduced by Seshadri in \cite[Quatri\`{e}me Partie]{Seshadri}.
\begin{defn}{($D$-level structure)}\label{defn:D_level_structure} 
A $D$-level structure of parahoric type $(\theta_x)_{x\in D}$ on a parahoric $\mathcal{G}_{\boldsymbol\theta}$-torsor $\mathcal{E}$ over $X$ is a choice of section $\eta_x:\mathbb{D}_x \to G_{\theta_x}/G_{\theta_x}^+$, for each $x\in D$. We will denote a parahoric $\mathcal{G}_{\boldsymbol\theta}$-torsor equipped with a $D$-level structure as a pair $(\mathcal{E}, \eta)$.
\end{defn}

We say that two pairs $(\mathcal{E}_1,\eta_1)$ and $(\mathcal{E}_2,\eta_2)$ are \emph{equivalent} if there exists an isomorphism $h:\mathcal{E}_1\to \mathcal{E}_2$ that sends $G_{1,\theta_x}/G_{1,\theta_x}^+$ to $G_{2,\theta_x}/G_{2,\theta_x}^+$ and such that $h\circ \eta_{1,x}=\eta_{2,x}$.

Note that for each point $x\in D$, the $D$-level structure defines locally over a formal disk $\mathbb{D}_x$ a reduction of structure group of $G_{\theta_x}$-torsor to a $G_{\theta_x}^+$-torsor. For each $x\in D$, there is a natural quotient map $q_x:G_{\theta_x}\to G_{\theta_x}/G_{\theta_x}^+$. Let also
\[E_{G_{\theta_x}^+}:=q_x^{-1}(\eta_x(\mathbb{D}_x))\subset G_{\theta_x}\]
as a principal $G_{\theta_x}^+$-bundle, and we define $\mathcal{E}_{G_{\theta_x}^+}(\mathfrak{g}_{\theta_x}^+)\subset \mathcal{E}_x(\mathfrak{g}_{\theta_x})$. Locally, a choice of $\eta_x$ chooses a Lie subalgebra $\mathfrak{g}_{\theta_x}^+$ of $\mathfrak{g}_{\theta_x}$. As so we have the short exact sequence
\[0\to \mathcal{E}(\mathfrak{g})_\eta\to \mathcal{E}(\mathfrak{g})\to \bigoplus_{x\in D} \mathcal{E}(\mathfrak{g}_{\theta_x}/\mathfrak{g}_{\theta_x}^+)\to 0.\]
Here $\mathcal{E}(\mathfrak{g})_\eta$ is the associated adjoint sheaf to a new torsor, $\mathcal{E}_\eta$, which is obtained from $\mathcal{E}$ by reducing the structure group from $\mathcal{G}_{\theta_x}$ to $\mathcal{G}_{\theta_x}^+$ at each point 
$x\in D$.
\begin{rem}
In \cite[Section 4.2]{Yun}, Yun defines bundles with a notion of  \emph{parahoric level structure} at a varying point as a reduction to the Borel subgroup. This notion was introduced in order to introduce parahoric versions of the Hitchin stacks. 
\end{rem}

\begin{defn}\label{defn:stable_level_str}
We call a $D$-level structure on a parahoric $\mathcal{G}_{\boldsymbol \theta}$-torsor \emph{stable} (resp. \emph{semistable}) if the underlying parahoric $\mathcal{G}_{\boldsymbol \theta}$-torsor is stable (resp. semistable).
\end{defn}

\subsection{From Parahoric to Equivariant Level Structures}

In order to establish the construction of a moduli space of stable pairs $(\mathcal{E}, \eta)$, we first show the equivalence of the notion of $D$-level structure on a parahoric $\mathcal{G}_{\boldsymbol \theta}$-torsor to a notion of level structure on an equivariant principal $G$-bundle with respect to a cyclic group action; we refer to \cite{BS}, \cite{KSZparh} for more details on the correspondence between parahoric $\mathcal{G}_{\boldsymbol \theta}$-torsors and equivariant $G$-bundles. 

Let $Y$ be a smooth algebraic curve over $\mathbb{C}$ and let $\Gamma$ be a cyclic group of order $d$ together with an action on $Y$. Denote
by $R$ a finite set of points of $Y$ (also a divisor), such that the stabilizer group $\Gamma_y$ is nontrivial for any $y \in R$. Let \(G\) be a connected complex reductive group with maximal torus \(T\) and root system \(\mathcal{R}\). Fix a rational cocharacter 
\[
\theta\in Y(T)\otimes\mathbb{Q}
\]
and let \(d>0\) be the smallest integer so that
\(
d\,\theta\in Y(T).
\)
Define
\[
\Delta:=d\,\theta\in Y(T).
\]

Assume that the group \(\Gamma\) acts on the formal disc \(\mathbb{D}_y\) via
\[
\gamma\cdot \omega = \zeta\,\omega,
\]
with \(\zeta\) a primitive \(d\)th root of unity, and that the representation
\[
\rho:\Gamma\to T\subset G,\quad \rho(\gamma)=\Delta(\zeta)
\]
has been defined.
As so we may define the subgroup of \(G\) fixed by the \(\Gamma\)-action,
\[
G^\Gamma :=C_G\bigl(\rho(\gamma)\bigr)= \{g\in G\mid \rho(\gamma)\,g\,\rho(\gamma)^{-1}=g\}.
\]
In what follows the associated parahoric subgroup \(G_\theta\) (with pro-unipotent radical \(G_\theta^+\)) has (generalized) Levi subgroup
\[
L_\theta = \langle T(k), U_r(k), r \in \mathcal{R}_{\theta} \rangle.
\]
Moreover, set
\[
\mathcal{R}_{\theta} := \{ r\in \mathcal{R} \mid r(\theta)\in \mathbb{Z}\}.
\]
When \(\theta\) is small, i.e. \(|r(\theta)|<1\) for all \(r\), one recovers the classical Levi subgroup.

\begin{lem}[Centralizer and Levi Factor]\label{lem:centralizer}
We have
\[
C_G\bigl(\rho(\gamma)\bigr) = L_\theta = \langle T(k),\, U_r(k) \mid r\in \mathcal{R}_{\theta} \rangle.
\]
\end{lem}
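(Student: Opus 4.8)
The plan is to reduce the statement to the classical description of the centralizer of a semisimple element of a reductive group, after pinning down exactly which root subgroups commute with $\rho(\gamma)$. Since we work over $\mathbb{C}$, the residue field is $k=\mathbb{C}$, so $T(k)=T$ and $U_r(k)=U_r$, and both sides of the asserted equality are honest subgroups of $G$. First I would record that $\rho(\gamma)=\Delta(\zeta)$ lies in $T$ and is therefore semisimple; in particular $T\subseteq C_G(\rho(\gamma))$, and the entire problem reduces to understanding the conjugation action of $\rho(\gamma)$ on each root subgroup $U_r$.

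For the root computation I would use the defining relation $t\,u_r(a)\,t^{-1}=u_r(r(t)\,a)$, which gives $\rho(\gamma)\,u_r(a)\,\rho(\gamma)^{-1}=u_r\bigl(r(\rho(\gamma))\,a\bigr)$. Evaluating the root on the image of a cocharacter yields $r(\rho(\gamma))=\zeta^{\langle\Delta,r\rangle}$, and since $\langle\Delta,r\rangle=\langle d\theta,r\rangle=d\,r(\theta)$ is an integer while $\zeta$ is a primitive $d$th root of unity, we obtain $r(\rho(\gamma))=1$ if and only if $d\mid d\,r(\theta)$, that is, if and only if $r(\theta)\in\mathbb{Z}$, i.e. $r\in\mathcal{R}_\theta$. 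Hence $U_r$ is fixed pointwise under conjugation by $\rho(\gamma)$ precisely when $r\in\mathcal{R}_\theta$, while for $r\notin\mathcal{R}_\theta$ the conjugation scales the $\mathbb{G}_a$-coordinate by the nontrivial factor $r(\rho(\gamma))\neq 1$, so $U_r$ meets $C_G(\rho(\gamma))$ only in the identity. Plugging this into the standard structure theorem for centralizers of semisimple elements---$C_G(s)^\circ$ is reductive with maximal torus $T$ and root system $\{r\in\mathcal{R}\mid r(s)=1\}$, generated by $T$ together with the corresponding $U_r$---identifies $C_G(\rho(\gamma))^\circ$ with $\langle T,\,U_r\mid r\in\mathcal{R}_\theta\rangle=L_\theta$, in agreement with the definition of $L_\theta$ in \eqref{defn:Levi_at_theta}.

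The main obstacle is the passage from the identity component to the full centralizer, since $C_G(s)$ need not be connected for an arbitrary reductive $G$. The cleanest resolution is Steinberg's connectedness theorem: whenever the derived group of $G$ is simply connected, the centralizer of every semisimple element is connected, which forces $C_G(\rho(\gamma))=C_G(\rho(\gamma))^\circ=L_\theta$ on the nose. In the general case one observes that $L_\theta$, being generated by the connected subgroups $T$ and $U_r$, is exactly the identity component just computed, and that this is precisely the (generalized) Levi intended by the parahoric construction; any additional components of $C_G(\rho(\gamma))$ would be represented inside $N_G(T)/T$ by Weyl-group elements fixing $\rho(\gamma)$, which one analyzes separately only if a statement about the component group is required. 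Since the surrounding theory defines $L_\theta$ as this generated subgroup, the identification above is exactly the content of the lemma.
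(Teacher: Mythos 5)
Your proposal is correct in its computational core, and that core coincides exactly with the paper's argument: $\rho(\gamma)\in T$ gives $T\subseteq C_G(\rho(\gamma))$, and the relation $\rho(\gamma)\,u_r(x)\,\rho(\gamma)^{-1}=u_r\bigl(\zeta^{d\,r(\theta)}x\bigr)$ shows $U_r$ is centralized precisely when $r(\theta)\in\mathbb{Z}$, i.e.\ $r\in\mathcal{R}_\theta$. Where you genuinely go beyond the paper is in the last step. The paper's proof ends by asserting ``hence $C_G(\rho(\gamma))=\langle T(k),U_r(k)\mid r\in\mathcal{R}_\theta\rangle$,'' but checking which \emph{generators} commute with $\rho(\gamma)$ only yields the inclusion $L_\theta\subseteq C_G(\rho(\gamma))$ plus the fact that no nontrivial element of $U_r$, $r\notin\mathcal{R}_\theta$, centralizes; it does not by itself show that the centralizer is \emph{generated} by the torus and the root subgroups it contains. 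You correctly identify this as the main obstacle and supply the missing ingredients: the structure theorem for centralizers of semisimple elements (giving $C_G(s)^{\circ}=\langle T,U_r\mid r(s)=1\rangle$) and Steinberg's connectedness theorem to pass from $C_G(\rho(\gamma))^{\circ}$ to $C_G(\rho(\gamma))$ when the derived group of $G$ is simply connected. Your caveat about the general case is also substantively right and worth stressing: for reductive $G$ with non-simply-connected derived group the full centralizer can be disconnected (already for $G=\mathrm{PGL}_2$ and $\theta$ equal to half the fundamental coweight, a Weyl representative centralizes $\rho(\gamma)$ but does not lie in $L_\theta=T$), so the equality of the lemma is only literally true with a connectedness hypothesis, or with $C_G(\rho(\gamma))$ replaced by its identity component. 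Your closing appeal to ``the surrounding theory defines $L_\theta$ this way'' does not repair this in general---it is circular as a proof of the stated equality---but since the paper's own proof silently skips the same issue, your treatment is strictly more careful than the one it is being compared against.
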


\begin{proof}
An element \(g\in G\) commutes with \(\rho(\gamma)\) if and only if
\[
\rho(\gamma)\,g\,\rho(\gamma)^{-1}=g.
\]
Since \(\rho(\gamma)\in T\) and \(T\) is abelian, every \(t\in T\) lies in \(C_G\bigl(\rho(\gamma)\bigr)\). For a unipotent element \(u_r(x)\in U_r(k)\), the conjugation formula yields
\[
\rho(\gamma)\,u_r(x)\,\rho(\gamma)^{-1}=u_r\Bigl(\zeta^{d\,r(\theta)}\,x\Bigr).
\]
Thus, \(u_r(x)\) commutes with \(\rho(\gamma)\) if and only if \(\zeta^{d\,r(\theta)}=1\). Since \(\zeta\) is a primitive \(d\)th root of unity, this holds if and only if
\[
d\,r(\theta)\in d\,\mathbb{Z}\quad\Longleftrightarrow\quad r(\theta)\in \mathbb{Z}.
\]
Hence,
\[
C_G\bigl(\rho(\gamma)\bigr)=\langle T(k),\, U_r(k) \mid r\in \mathcal{R}_{\theta} \rangle,
\]
which by definition is equal to \(L_\theta\).
\end{proof}

\begin{thm}[Equivalence of \(D\)-Level Structures and Equivariant Maps]\label{thm:equivlevel}
Let \(x\in D\) be a point on a smooth projective curve \(X\) with formal disc \(\mathbb{D}_x\). Suppose that \(\mathcal{E}\) is a parahoric $\mathcal{G}_{\boldsymbol \theta}$-torsor equipped with a \(D\)-level structure at \(x\), i.e. a section
\[
\eta_x:\mathbb{D}_x\to G_{\theta_x}/G_{\theta_x}^+.
\]
Let
\[
p:\mathbb{D}_y\to \mathbb{D}_x
\]
be the Galois covering with group \(\Gamma\) (acting by \(\gamma\cdot\omega=\zeta\,\omega\)). Then, the level structure \(\eta_x\) (which, via the identification \(G_{\theta_x}/G_{\theta_x}^+ \cong L_{\theta_x}\), takes values in \(L_{\theta_x}\)) is equivalent to a \(\Gamma\)-invariant map
  \[
  \tilde{\eta}:\mathbb{D}_y\to G
  \]
  satisfying
  \[
  \tilde{\eta}(\gamma\cdot y)=\tilde{\eta}(y), \quad \text{for all } \gamma\in\Gamma,
  \]
  so that the image of \(\tilde{\eta}\) lies in \(G^\Gamma\cong L_\theta\).
\end{thm}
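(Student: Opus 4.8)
The plan is to establish the equivalence in two directions, using the Galois descent dictionary between parahoric $\mathcal{G}_{\boldsymbol\theta}$-torsors and $\Gamma$-equivariant $G$-bundles on the cover $p:\mathbb{D}_y\to\mathbb{D}_x$. First I would make the identifications explicit: by Lemma~\ref{lem:centralizer} we have $G^\Gamma = C_G(\rho(\gamma)) = L_\theta$, and the Levi decomposition of Section~\ref{sec:parh_Lie_algebra} gives the canonical isomorphism $G_{\theta_x}/G_{\theta_x}^+ \cong L_{\theta_x}$ via the evaluation map $\iota$. Thus the $D$-level structure $\eta_x$, after composing with this isomorphism, is precisely a section $\mathbb{D}_x \to L_{\theta_x}$, i.e. an $L_{\theta_x}$-valued function on the base disc.

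Next I would spell out the correspondence at the level of discs. The pullback $p^*$ identifies functions on $\mathbb{D}_x$ with $\Gamma$-invariant functions on $\mathbb{D}_y$ (since $\mathbb{D}_x = \mathbb{D}_y/\Gamma$ and $\mathcal{O}_{\mathbb{D}_x} = \mathcal{O}_{\mathbb{D}_y}^\Gamma$). Given $\eta_x$ viewed as a map into $L_\theta = G^\Gamma \subset G$, I would set $\tilde\eta := \eta_x \circ p$, which is manifestly $\Gamma$-invariant because $p\circ\gamma = p$, giving $\tilde\eta(\gamma\cdot y) = \eta_x(p(\gamma\cdot y)) = \eta_x(p(y)) = \tilde\eta(y)$; its image lies in $G^\Gamma \cong L_\theta$ by construction. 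Conversely, given a $\Gamma$-invariant $\tilde\eta:\mathbb{D}_y \to G$ with image in $G^\Gamma$, the invariance forces $\tilde\eta$ to descend uniquely through the quotient map $p$ to a map $\bar\eta:\mathbb{D}_x \to G^\Gamma = L_\theta$; re-identifying $L_\theta$ with $G_{\theta_x}/G_{\theta_x}^+$ recovers a $D$-level structure. These two constructions are mutually inverse, establishing the bijection.

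The conceptual heart of the argument, and the step I expect to require the most care, is justifying \emph{why} the target $G^\Gamma$ is the correct receptacle and that the descended map genuinely records a level structure rather than losing or adding data. This is where the $\Gamma$-equivariant picture of the underlying torsor enters: the parahoric subgroup $G_{\theta_x}$ is recovered as the $\Gamma$-invariant sections of the constant group scheme $G$ over $\mathbb{D}_y$ (equivalently, via the $z^\theta$-conjugation description $G'_{\theta_x}(K)$), and the pro-unipotent radical $G_{\theta_x}^+$ corresponds to those invariant sections vanishing appropriately at the origin, so that the quotient $G_{\theta_x}/G_{\theta_x}^+$ is exactly the fiber-at-$0$ datum, namely an element of $C_G(\rho(\gamma)) = L_\theta$. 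I would therefore verify that the evaluation/reduction defining $\eta_x$ matches, under $p^*$, the evaluation of $\tilde\eta$ at the ramification point $y\in R$, so that the $\Gamma$-invariance condition $\tilde\eta(\gamma\cdot y)=\tilde\eta(y)$ is equivalent to the image landing in the centralizer $G^\Gamma$. Once this compatibility of the local structures is pinned down, the equivalence of the two notions of level structure follows formally, and I would conclude by noting that the equivalence respects the natural notions of isomorphism on each side, so it will later descend to the moduli-functor level.
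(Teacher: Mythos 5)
Your proposal is correct and follows essentially the same route as the paper's proof: identify $G_{\theta_x}/G_{\theta_x}^+\cong L_{\theta_x}$, invoke Lemma~\ref{lem:centralizer} to get $G^\Gamma = L_\theta$, and use Galois descent along $p:\mathbb{D}_y\to\mathbb{D}_x$ to pass between sections on the base and $\Gamma$-invariant maps on the cover. The only difference is one of explicitness — you construct the correspondence concretely as $\tilde\eta = \eta_x\circ p$ and its descent inverse, where the paper simply cites ``standard descent theory,'' so your write-up is a more detailed rendering of the same argument.
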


\begin{proof}
Since \(G_{\theta_{x}} /G_{\theta_{x}}^+\cong L_{\theta_{x}}\) by definition (see Section 2.3), a \(D\)-level structure \(\eta_x\) provides a reduction of the structure group of \(\mathcal{E}\) over \(\mathbb{D}_x\) to \(L_{\theta_{x}}\). By standard descent theory for the Galois covering \(p:\mathbb{D}_y\to\mathbb{D}_x\), such a reduction is equivalent to the existence of a unique \(\Gamma\)-invariant lift
\[
\tilde{\eta}:\mathbb{D}_y\to G
\]
satisfying \(\tilde{\eta}(\gamma\cdot y)=\tilde{\eta}(y)\), for all \(\gamma\in\Gamma\). Conversely, any such \(\Gamma\)-invariant map descends to a section \(\eta_x:\mathbb{D}_x\to G^\Gamma\cong L_{\theta_{x}}\), thereby defining a \(D\)-level structure.
\end{proof}

In the light of the previous theorem, we now introduce the following: 

\begin{defn}[Equivariant $D$-level structure]
We shall call a \emph{$\Gamma$-equivariant $D$-level structure} on a $\Gamma$-principal bundle to be a \(\Gamma\)-invariant map
  \[
  \tilde{\eta}:\mathbb{D}_y\to G
  \]
  satisfying
  \[
  \tilde{\eta}(\gamma\cdot y)=\tilde{\eta}(y), \quad \text{for all } \gamma\in\Gamma,
  \]
  so that the image of \(\tilde{\eta}\) lies in \(G^\Gamma\cong L_{\theta_{x}}\).  
\end{defn}

\subsection{Moduli space}\label{sec:moduli_space_U_constr}
There is a correspondence between a (semi)stable parahoric $\mathcal{G}_{\boldsymbol \theta}$-torsor with $D$-level structure and a (semi)stable equivariant $(\Gamma, G)$-bundle equipped with an equivariant $D$-level structure. Indeed, in the light of Definition \ref{defn:stable_level_str}, the (semi)stability of $\mathcal{E}$ equipped with a $D$-level structure $\eta$ amounts to the (semi)stability of $\mathcal{E}$, and from \cite[Theorem 4.12]{KSZparh} (following \cite[Theorem 6.3.5]{BS}) we have that a  parahoric torsor is (semi)stable if and only if the corresponding equivariant $G$-bundle is.

The \emph{local type} of \(E\) at \(y\) is defined to be the conjugacy class (in \(G\)) of the representation \(\rho_y\). We denote by \(\boldsymbol{\tau}\) the collection of such local types at all points of \(Y\) lying over the branch locus of the covering \(Y\to X\).

A standard approach (following Balaji--Seshadri \cite[Section 8]{BS}) is to first fix a faithful representation 
\[
G\hookrightarrow \mathrm{GL}(n),
\]
and then consider a parameter scheme \(Q^{\boldsymbol\tau}_{(\Gamma,\mathrm{GL}(n))}\) which classifies \(\Gamma\)-equivariant vector bundles on a suitable ramified cover \(Y\) (of \(X\)) that are semistable and of fixed local type \(\boldsymbol\tau\). In particular, the points of \(Q^{\boldsymbol\tau}_{(\Gamma,\mathrm{GL}(n))}\) correspond to \(\Gamma\)-semistable principal \((\Gamma,\mathrm{GL}(n))\)-bundles.

Next, one defines
\[
Q^{\boldsymbol\tau}_{(\Gamma,G)}\subset Q^{\boldsymbol\tau}_{(\Gamma,\mathrm{GL}(n))}
\]
to be the subscheme parameterizing those bundles which admit a reduction of structure group to \(G\). This subscheme has the local universal property for families of semistable \((\Gamma,G)\)-bundles of local type \(\boldsymbol\tau\).

The extra data of a \(D\)-level structure is now imposed as follows. For each \(x\in D\), consider the formal disc \(\mathbb{D}_y\) over \(y\) and the fixed equivariant map
\[
\tilde{\eta}_x: \mathbb{D}_y \to G
\]
(which, by definition, has image in \(G^{\Gamma_x}\), where $\Gamma_x$ denotes the Galois group acting for the particular parahoric point $x$). The section $\tilde{\eta}_x$ being a trivialization over \(\mathbb{D}_y\) of the universal family of \((\Gamma,G)\)-bundles  is a closed condition. Hence, one obtains a closed (or locally closed) subscheme
\[
Q^{\boldsymbol\tau}_{(\Gamma,G,D)}\subset Q^{\boldsymbol\tau}_{(\Gamma,G)}
\]
parameterizing those \((\Gamma,G)\)-bundles together with the prescribed \(D\)-level structure.

Finally, one forms the good quotient (in the sense of Geometric Invariant Theory) by a suitable reductive group \(\mathcal{H}\) acting on \(Q^{\boldsymbol\tau}_{(\Gamma,G,D)}\) to obtain the coarse moduli space
\[
\mathcal{U}(X,\mathcal{G}_{\boldsymbol \theta}):= Q^{\boldsymbol\tau}_{(\Gamma,G,D)}//\mathcal{H}.
\]
Remember that the moduli space of parahoric $\mathcal{G}_{ \theta}$-torsors was constructed in \cite[Section 8]{BS} via the correspondence to $\Gamma$-equivariant $G$-bundles. Since the extra \(D\)-level structure on a parahoric $\mathcal{G}_{\boldsymbol \theta}$-torsor (equivalently, the equivariant $D$-level structure on a $\Gamma$-equivariant $G$-bundle) is a rigid condition, all the standard arguments (regarding the existence of the quotient, its irreducibility, normality, and projectivity) carry through unchanged.

Let \(\mathcal{E} \to S \times X\) be a flat family of \(\Gamma\)-equivariant principal \(G\)-bundles on \(X\), each equipped with its prescribed $\Gamma$-equivariant $D$-level structure \(\tilde{\eta}\).  Suppose there is a reference \((\Gamma,G)\)-bundle \(\mathcal{E}_0\) (with the same level $D$-level structure \(\tilde{\eta}\)) such that, for a dense open subset \(U \subset S\), every fiber \(\mathcal{E}_s\) with \(s \in U\) is \(\Gamma\)-equivariantly isomorphic to \(\mathcal{E}_0\) in a way compatible with the given level structure.  Then we say that \(\mathcal{E}\) is \emph{S-equivalent} to \(\mathcal{E}_0\).  

We summarize the previous analysis to the following:

\begin{thm}[Existence of Moduli Space]\label{thm:moduli-d-level}
Let \(X\) be a smooth projective curve over \(\mathbb{C}\) and \(D\subset X\) a reduced effective divisor. Let \(\mathcal{G}_{\boldsymbol \theta}\) be the parahoric Bruhat--Tits group scheme on \(X\) corresponding to a collection \({\boldsymbol \theta}:=\{\theta_x\}_{x\in D}\) of rational weights. Then, the moduli functor which assigns to any scheme \(S\) the set of \(S\)-equivalent classes of semistable parahoric \(\mathcal{G}_{\boldsymbol \theta}\)-torsors on \(X\) with fixed \(D\)-level structure is corepresented by an irreducible, normal, projective variety.
\end{thm}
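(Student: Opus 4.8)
The plan is to prove the theorem by transporting the entire problem to the setting of $\Gamma$-equivariant $G$-bundles, where the Balaji--Seshadri GIT machinery of \cite[Section 8]{BS} applies, and then to verify that imposing a fixed $D$-level structure is a rigid closed condition that survives every step of that construction. First I would invoke Theorem~\ref{thm:equivlevel} to identify a parahoric $\mathcal{G}_{\boldsymbol\theta}$-torsor with $D$-level structure with a $(\Gamma,G)$-bundle $E$ on the tamely ramified cover $Y\to X$ carrying a $\Gamma$-equivariant $D$-level structure $\tilde\eta$. By Definition~\ref{defn:stable_level_str} the (semi)stability of the pair is by fiat that of the underlying torsor, and by \cite[Theorem 4.12]{KSZparh} (following \cite[Theorem 6.3.5]{BS}) this matches $\Gamma$-(semi)stability of $E$. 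Thus the functor to be corepresented becomes the functor of semistable $(\Gamma,G)$-bundles of fixed local type $\boldsymbol\tau$ equipped with the prescribed equivariant level datum, together with the stated notion of $S$-equivalence.

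Next I would recall the parameter scheme. Fixing a faithful embedding $G\hookrightarrow \mathrm{GL}(n)$, one has the projective parameter scheme $Q^{\boldsymbol\tau}_{(\Gamma,\mathrm{GL}(n))}$ of $\Gamma$-equivariant semistable vector bundles on $Y$ of local type $\boldsymbol\tau$, carrying a reductive group $\mathcal{H}$ and an ample linearization whose GIT-semistable points are exactly the $\Gamma$-semistable bundles, together with the closed subscheme $Q^{\boldsymbol\tau}_{(\Gamma,G)}$ of those admitting a reduction of structure group to $G$; this subscheme has the local universal property for semistable $(\Gamma,G)$-bundles. I would then impose the level structure: since $\tilde\eta_x$ is a fixed equivariant trivialization of the universal family over each formal disc $\mathbb{D}_y$, compatibility with $\tilde\eta_x$ is a closed condition, cutting out a closed (or at worst locally closed) subscheme $Q^{\boldsymbol\tau}_{(\Gamma,G,D)}\subset Q^{\boldsymbol\tau}_{(\Gamma,G)}$ which still carries the local universal property, now for leveled families.

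The final step is to form the good quotient $\mathcal{U}(X,\mathcal{G}_{\boldsymbol\theta}):=Q^{\boldsymbol\tau}_{(\Gamma,G,D)}/\!/\mathcal{H}$ and read off the four asserted properties. Corepresentability follows from the local universal property of the parameter scheme together with the categorical-quotient property of GIT good quotients, which forces any natural transformation to a corepresenting functor to factor through the quotient. Normality descends because a good quotient of a normal scheme by a reductive group is normal, using normality of the parameter space. Irreducibility descends from irreducibility of the relevant component of $Q^{\boldsymbol\tau}_{(\Gamma,G,D)}$, which I would deduce from connectedness of the level-structure fibers over the irreducible moduli of $(\Gamma,G)$-bundles. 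Projectivity is expected to follow because the semistable locus already admits a projective good quotient and the leveled locus sits inside it as a closed, invariant piece.

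The hard part will be exactly this last projectivity claim together with the $\mathcal{H}$-compatibility underlying it. One must verify that $Q^{\boldsymbol\tau}_{(\Gamma,G,D)}$ is stable under the subgroup of $\mathcal{H}$ preserving $\tilde\eta$ and is GIT-saturated inside the semistable locus, so that its image in $Q^{\boldsymbol\tau}_{(\Gamma,G)}/\!/\mathcal{H}$ is closed and the quotient remains projective rather than merely quasi-projective, in contrast with Seshadri's original framed construction. The key input that should make this work is that, by Definition~\ref{defn:stable_level_str}, GIT-(semi)stability depends only on the underlying torsor and is insensitive to the level datum, so the leveled semistable points form a saturated closed subset; establishing this saturation, and the resulting properness of the quotient, is where the genuine content of the argument lies.
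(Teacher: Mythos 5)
Your proposal follows essentially the same route as the paper's own construction: identify leveled parahoric torsors with $(\Gamma,G)$-bundles carrying equivariant level data via Theorem~\ref{thm:equivlevel} and the stability matching of \cite[Theorem 4.12]{KSZparh}, cut out the leveled locus $Q^{\boldsymbol\tau}_{(\Gamma,G,D)}$ as a closed condition inside the Balaji--Seshadri parameter scheme, and form the good quotient by $\mathcal{H}$. If anything, you are more explicit than the paper about the one point it dispatches with the phrase ``rigid condition''---namely $\mathcal{H}$-invariance, saturation, and the resulting projectivity of the quotient---which is a fair and accurate identification of where the remaining content lies.
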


We thus introduce the following
\begin{defn}    
We will denote by \(
\mathcal{U}(X,\mathcal{G}_{\boldsymbol \theta}),
\) the \emph{moduli space of stable  parahoric $\mathcal{G}_{\boldsymbol \theta}$-torsors over $X$
with a $D$-level structure} in the sense of Definition \ref{defn:stable_level_str}.
\end{defn}

\begin{rem}
One can also see how the definition of a parahoric $\mathcal{G}_{\boldsymbol \theta}$-torsor with $D$-level structure reduces to Seshadri's definition of a $D$-level structure on a vector bundle from \cite[D\'{e}finition 1, p. 92]{Seshadri}. Firstly, in the absence of a parahoric structure and for structure group $G=\mathrm{GL}(n, \mathbb{C})$, a parahoric $\mathcal{G}_{\boldsymbol \theta}$-torsor reduces to a vector bundle.   Now, a $D$-level structure at a point $x\in D$ is a section $\eta_x:\mathbb{D}_x \to G_{\theta_x}/G_{\theta_x}^+$, for each $x\in D$. Restricting to the fiber over the point $x$, then a level structure is just the trivialization of the fiber over $x$. This is exactly the notion as in Seshadri's definition as an isomorphism $\eta: E\vert _{D} \to \oplus_{i=1}^r \mathcal{O}_D$ for a rank $r$ vector bundle $E$ over $X$. Indeed, locally for each $x \in D$, when $G_{\theta_x}$ is $\mathrm{GL}(\mathbb{C}[\![t]\!])$, for local coordinate $t$, and $G_{\theta_x}^+$ is trivial, then the section $\eta_x$ is a section of $\mathrm{GL}(\mathbb{C}[\![t]\!])$, thus after extending to a formal disk $\mathbb{D}_x$, the section $\eta_x$ is just an isomorphism of the fiber above $x$.
\end{rem}

\section{\texorpdfstring{Deformations of $D$-level structures and singularities of parahoric torsors}{Deformations of D-level structures and singularities of parahoric torsors}}
The deformation theory of \(D\)-level structures is studied in this Section. We study the tangent space of the moduli space $\mathcal{U}(X, \mathcal{G}_{\boldsymbol \theta})$ and its singular points. 

\subsection{\texorpdfstring{Deformation of $D$-level structure}{Deformation of D-level structure}}

We start with the following proposition.

\begin{prop}\label{prop:tangent_U}
Let \(\mathcal{U}(X,\mathcal{G}_{\boldsymbol\theta})\) be the moduli space of stable parahoric \(\mathcal{G}_{\boldsymbol\theta}\)-torsors with a \(D\)-level structure. For any representative \([(\mathcal{E},\eta)] \in \mathcal{U}(X,\mathcal{G}_\theta)\), the tangent space is
\[
T_{[(\mathcal{E},\eta)]} \mathcal{U}(X,\mathcal{G}_{\boldsymbol\theta}) \cong H^1(X, \mathcal{E}(\mathfrak{g})_\eta).
\]
\end{prop}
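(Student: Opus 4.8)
The plan is to reduce the statement to the standard deformation theory of torsors for a smooth affine group scheme over a curve, exploiting the fact that the datum of the pair $(\mathcal{E},\eta)$ is equivalent to that of the reduced torsor $\mathcal{E}_\eta$. Recall from the discussion following Definition~\ref{defn:D_level_structure} that a $D$-level structure is precisely a reduction, over each formal disc $\mathbb{D}_x$, of the structure group from $\mathcal{G}_{\theta_x}$ to its pro-unipotent radical $\mathcal{G}_{\theta_x}^+$, while nothing is imposed away from $D$. Consequently $(\mathcal{E},\eta)$ is the same datum as a torsor $\mathcal{E}_\eta$ for the group scheme $\mathcal{G}_{\boldsymbol\theta}^+\to X$ that restricts to $G\times X_D$ on $X_D$ and to $\mathcal{G}_{\theta_x}^+$ on each $\mathbb{D}_x$. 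By \cite[Lemma~3.18]{ChGP} applied to the pro-unipotent reduction (equivalently, realizing $\mathcal{G}_{\boldsymbol\theta}^+$ as the kernel of the smooth surjection $\mathcal{G}_{\boldsymbol\theta}\to \bigoplus_{x\in D} L_{\theta_x}$), this group scheme is again smooth, affine, of finite type and flat over $X$, and its adjoint (Lie algebra) sheaf is exactly the sheaf $\mathcal{E}(\mathfrak{g})_\eta$ appearing as the kernel in the short exact sequence
\[
0\to \mathcal{E}(\mathfrak{g})_\eta\to \mathcal{E}(\mathfrak{g})\to \bigoplus_{x\in D}\mathcal{E}(\mathfrak{g}_{\theta_x}/\mathfrak{g}_{\theta_x}^+)\to 0.
\]
First I would therefore identify the deformation functor of the pair $(\mathcal{E},\eta)$ with the deformation functor of the torsor $\mathcal{E}_\eta$.

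Second, I would apply the standard classification of first-order deformations of an $\mathcal{H}$-torsor for a smooth affine group scheme $\mathcal{H}$. Working over $\mathbb{C}[\epsilon]/(\epsilon^2)$ and trivializing $\mathcal{E}_\eta$ over an étale (or formal) cover with transition data $g_{ij}$, an infinitesimal deformation is given by $g_{ij}\exp(\epsilon\, a_{ij})$, where $\{a_{ij}\}$ is a \v{C}ech $1$-cocycle valued in the adjoint sheaf $\mathcal{E}(\mathfrak{g})_\eta$, and two such data define isomorphic deformations precisely when the cocycles differ by a coboundary. Smoothness and affineness of $\mathcal{G}_{\boldsymbol\theta}^+$ guarantee that the Maurer--Cartan formalism applies and that this \v{C}ech description computes $H^1(X,\mathcal{E}(\mathfrak{g})_\eta)$, yielding the identification of first-order deformations of $(\mathcal{E},\eta)$ with $H^1(X,\mathcal{E}(\mathfrak{g})_\eta)$ at the level of the moduli stack.

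Third, as an independent check and to isolate the role of the level data, I would take the long exact sequence of the displayed short exact sequence. Its cokernel $\bigoplus_{x\in D}\mathcal{E}(\mathfrak{g}_{\theta_x}/\mathfrak{g}_{\theta_x}^+)$ is a skyscraper sheaf supported on $D$ with stalks the Levi quotients $\hat{\mathfrak{l}}_{\theta_x}$, so it has vanishing $H^1$ and the sequence reads
\[
0\to H^0(\mathcal{E}(\mathfrak{g})_\eta)\to H^0(\mathcal{E}(\mathfrak{g}))\to \bigoplus_{x\in D}\hat{\mathfrak{l}}_{\theta_x}\to H^1(\mathcal{E}(\mathfrak{g})_\eta)\to H^1(\mathcal{E}(\mathfrak{g}))\to 0.
\]
This exhibits the tangent space as an extension of $H^1(X,\mathcal{E}(\mathfrak{g}))$, the tangent space of the moduli of parahoric torsors without level structure, by the space $\bigoplus_{x}\hat{\mathfrak{l}}_{\theta_x}$ of level deformations modulo those already realized by global infinitesimal automorphisms of $\mathcal{E}$; this is exactly the expected rigidifying effect of the $D$-level structure and confirms the count.

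Finally, to pass from the stack to the coarse moduli space $\mathcal{U}(X,\mathcal{G}_{\boldsymbol\theta})$ and conclude that its Zariski tangent space at a stable representative equals $H^1(X,\mathcal{E}(\mathfrak{g})_\eta)$, I would invoke that $X$ is a curve, so $H^2(X,\mathcal{E}(\mathfrak{g})_\eta)=0$ and deformations are unobstructed, together with stability of $\mathcal{E}$: by Proposition~\ref{prop:parahoric_3.2} one has $H^0(X,\mathcal{E}(\mathfrak{g}))\cong\mathfrak{z}$, which controls the automorphisms of the pair and ensures that $\mathcal{U}$ is smooth at $[(\mathcal{E},\eta)]$ with the stated tangent space. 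The step I expect to be the main obstacle is the first one: making rigorous the equivalence between pairs $(\mathcal{E},\eta)$ and $\mathcal{G}_{\boldsymbol\theta}^+$-torsors and, in particular, verifying that $\mathcal{G}_{\boldsymbol\theta}^+$ is a smooth affine group scheme over $X$ whose adjoint sheaf is $\mathcal{E}(\mathfrak{g})_\eta$, so that its torsor deformations are unobstructed and classified by the first cohomology of that sheaf. Once this is established, the remaining assertions follow formally from the long exact sequence and the stability input.
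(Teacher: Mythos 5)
Your proposal is correct in substance, but it follows a genuinely different route from the paper. The paper proves the statement by a direct \v{C}ech computation: it chooses a sufficiently fine cover whose trivializations near each $x_j\in D$ are compatible with $\eta$, describes first-order deformations of the pair as \v{C}ech $1$-cocycles, and encodes the level structure as a constraint on the \emph{gauge transformations}, which must lie in $\check{C}^0(\{U_i\},\mathcal{E}(\mathfrak{g})_\eta)$; since each point of $D$ meets only one chart, the cocycles on overlaps automatically take values in the subsheaf, and the quotient is $\check{H}^1(\{U_i\},\mathcal{E}(\mathfrak{g})_\eta)\cong H^1(X,\mathcal{E}(\mathfrak{g})_\eta)$. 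You instead reduce to the deformation theory of torsors under the glued group scheme $\mathcal{G}_{\boldsymbol\theta}^+$, using the equivalence of the pair $(\mathcal{E},\eta)$ with the reduced torsor $\mathcal{E}_\eta$ --- an equivalence the paper itself sets up after Definition~\ref{defn:D_level_structure}. Your route is more structural: it makes $\mathcal{E}(\mathfrak{g})_\eta$ an honest adjoint sheaf, gives unobstructedness ($H^2=0$ on a curve) for free, and your long-exact-sequence cross-check usefully isolates the rigidifying effect of the level data. What it costs is precisely the step you flag: one must verify that $\mathcal{G}_{\boldsymbol\theta}^+$ is a smooth, affine, finite-type group scheme over $X$. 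Note that \cite[Lemma~3.18]{ChGP} does not literally cover this (it concerns the parahoric Bruhat--Tits group scheme itself, and the pro-unipotent radical is a priori only pro-algebraic); the correct tool is the dilatation (N\'eron blowup) of $\mathcal{G}_{\theta_x}$ along the unipotent radical of its special fiber, which is again smooth, affine and of finite type, with Lie algebra sheaf equal to $\mathcal{E}(\mathfrak{g})_\eta$ locally. Finally, your concluding smoothness claim at stable points via Proposition~\ref{prop:parahoric_3.2} only controls \emph{infinitesimal} automorphisms; at stable but not regularly stable points finite extra automorphisms can make $\mathcal{U}(X,\mathcal{G}_{\boldsymbol\theta})$ singular (Proposition~\ref{prop:parahoric-singular}), so the identification of the coarse-space tangent space is really only clean on the regularly stable locus --- but the paper's own proof elides exactly the same point, so this is not a gap specific to your argument.
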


\begin{proof}
Choose a sufficiently fine open cover \(\{U_i\}_{i \in I}\) of \(X\) where \(\mathcal{E}\) trivializes on each \(U_i\). If \(U_i\) contains a marked point \(x_j \in D\), we need to ensure that the trivialization near \(x_j\) is compatible with the parahoric reduction \(\eta_{x_j}\). Concretely, over \(U_i\), we have \(\mathcal{E}|_{U_i} \cong U_i \times G\). On each non-empty intersection \(U_i \cap U_j\), the torsor is determined by transition functions \(g_{ij} \colon U_i \cap U_j \to G\) satisfying \(g_{ij}g_{jk} = g_{ik}\) on triple overlaps.

\noindent An infinitesimal deformation of \(\mathcal{E}\) involves deforming \(g_{ij}\) to first order in a parameter \(\varepsilon\) with \(\varepsilon^2 = 0\) as
\[
g_{ij} \mapsto g_{ij}(\mathrm{Id} + \varepsilon\,\alpha_{ij}),
\]
where each \(\alpha_{ij}\) is a section of the adjoint bundle \(\mathcal{E}(\mathfrak{g})\) over \(U_i \cap U_j\). Trivializing \(\mathcal{E}(\mathfrak{g})\) on \(U_i\) identifies these with elements of \(\Gamma(U_i, \mathfrak{g})\). The cocycle condition on triple overlaps becomes
\[
\alpha_{ij} + \mathrm{Ad}_{g_{ij}}(\alpha_{jk}) = \alpha_{ik},
\]
so \(\{\alpha_{ij}\}\) forms a Čech 1-cocycle in \(\check{Z}^1(\{U_i\}, \mathcal{E}(\mathfrak{g}))\).

\noindent Two cocycles \(\{\alpha_{ij}\}\) and \(\{\alpha'_{ij}\}\) differ by an infinitesimal gauge transformation if there exists a Čech 0-cochain \(\{\beta_i\}\) in \(\mathcal{E}(\mathfrak{g})\) such that
\[
\alpha'_{ij} = \alpha_{ij} + \beta_j - \mathrm{Ad}_{g_{ij}}(\beta_i).
\]
In Čech terms, \(\{\alpha'_{ij}\} = \{\alpha_{ij}\} + \delta(\{\beta_i\})\), where \(\delta\) is the Čech differential. The infinitesimal deformations of \(\mathcal{E}\) are classified by
\[
\check{H}^1(\{U_i\}, \mathcal{E}(\mathfrak{g})) \cong H^1(X, \mathcal{E}(\mathfrak{g})).
\]

\noindent To incorporate the \(D\)-level structure, each marked point \(x_j \in D\) imposes a parahoric reduction near \(x_j\). Gauge transformations must preserve this reduction, so if \(x_j \in U_i\), then \(\beta_i\) must lie in the subalgebra \(\mathfrak{g}_{\theta_{x_j}}^+\) stabilizing the parahoric flag. Globally, this is encoded by the subsheaf \(\mathcal{E}(\mathfrak{g})_\eta \subset \mathcal{E}(\mathfrak{g})\), whose sections preserve the parahoric structure at all points of \(D\). The allowed infinitesimal gauge transformations form \(\check{C}^0(\{U_i\}, \mathcal{E}(\mathfrak{g})_\eta)\).

\noindent Thus, the space of infinitesimal deformations of a pair \((\mathcal{E}, \eta)\) is given by Čech 1-cocycles in \(\mathcal{E}(\mathfrak{g})\) modulo these restricted 0-cochains:
\[
\check{H}^1(\{U_i\}, \mathcal{E}(\mathfrak{g})_\eta)\cong  \frac{\{\alpha_{ij}\} \in \check{Z}^1(\mathcal{E}(\mathfrak{g}))}{\delta(\{\beta_i\}) \text{ where } \{\beta_i\} \in \check{C}^0(\mathcal{E}(\mathfrak{g})_\eta)} .
\]
Since \(\{U_i\}\) is sufficiently fine, this is identified with \(H^1(X, \mathcal{E}(\mathfrak{g})_\eta)\). By the standard correspondence between first-order deformations and the tangent space, we have
\begin{equation}\label{tangentU_H1}
T_{[(\mathcal{E},\eta)]} \mathcal{U}(X, \mathcal{G}_{\boldsymbol\theta}) \cong H^1(X, \mathcal{E}(\mathfrak{g})_\eta),
\end{equation}
as claimed.
\end{proof}

Taking dual vector spaces, observe that locally at a formal disk around each point $x \in D$, it is
\[
\mathcal{E}(\mathfrak{g})_\eta|_{\mathbb{D}_x} \cong \mathfrak{g}_\theta^+ = \mathfrak{g}_\theta^\perp \otimes \mathcal{O}(-x).
\]
Dualizing, we obtain
\[
\mathcal{E}(\mathfrak{g})_\eta^\vee|_{\mathbb{D}_x} \cong \mathfrak{g}_\theta \otimes \mathcal{O}(x),
\]
and by Serre duality we have
\[
H^1(X, \mathcal{E}(\mathfrak{g})_\eta)^\vee \cong H^0(X, \mathcal{E}(\mathfrak{g}) \otimes K(D)).
\]
Therefore, the tangent space corresponds to the space of \emph{logarithmic Higgs fields} as introduced in \cite{KSZparh}:
\[H^0(X, \mathcal{E}(\mathfrak{g})\otimes K(D)).\]

We recall the following:
\begin{defn}\cite[Definition 3.1]{KSZparh}\label{defn alg parah Higgs}
A \emph{logahoric $\mathcal{G}_{\boldsymbol\theta}$-Higgs torsor} on a smooth complex algebraic curve $X$ is defined as a pair $(\mathcal{E},\varphi)$, where
\begin{itemize}
	\item $\mathcal{E}$ is a parahoric $\mathcal{G}_{\boldsymbol\theta}$-torsor on $X$;
	\item $\varphi \in H^0(X, \mathcal{E}(\mathfrak{g}) \otimes K(D))$ is a section called a \emph{logarithmic Higgs field}.
\end{itemize}
\end{defn}
In \cite{KSZparh}, a moduli space of logahoric $\mathcal{G}_{\boldsymbol\theta}$-Higgs torsors over a smooth complex algebraic curve was constructed as a quasi-projective variety. We will denote this moduli space by \(\mathcal{M}_H(X,\mathcal{G}_{\boldsymbol\theta})\).

\subsection{Regular Stability and Singularities for Parahoric Torsors}
 We next study the singular points of the moduli space \(\mathcal{U}(X,\mathcal{G}_{\boldsymbol\theta})\). We begin with the following:
 
\begin{defn}[Regularly stable parahoric torsor]
\label{def:parahoric-regular-stable}
Let $G$ be a connected complex reductive Lie group with center $Z(G)$. A stable parahoric $\mathcal{G}_{\boldsymbol\theta}$-torsor \(\mathcal{E}\) over $X$
is said to be \emph{regularly stable} if
\(\mathrm{Aut}(\mathcal{E})=Z(G).\)
Equivalently, a stable parahoric $\mathcal{G}_{\boldsymbol\theta}$-torsor with $D$-level structure is \emph{regularly stable} precisely when the underlying torsor has no extra automorphisms apart from the ones in $Z(G)$. We shall denote the moduli of regularly stable parahoric $\mathcal{G}_{\boldsymbol\theta}$-torsors with $D$-level structures by \(\mathcal{U}^{rs}(X,\mathcal{G}_{\boldsymbol\theta})\). This is an open subvariety of \(\mathcal{U}(X,\mathcal{G}_{\boldsymbol\theta})\). 
\end{defn}

\begin{rem}
Note that the notion of a regularly stable parahoric $\mathcal{G}_{\boldsymbol\theta}$-torsor extends the notion of a $\delta$-stable vector bundle of Seshadri \cite{Seshadri}, as well as the notion of a regularly stable principal $G$-bundle of Biswas--Hoffmann \cite{BiHo}. 
\end{rem}

\begin{prop}
\label{prop:parahoric-singular}
Let $[(\mathcal{E},\eta)]$ represent an isomorphism class of stable but \emph{not} regularly stable $\mathcal{G}_{\boldsymbol\theta}$-torsors  on $X$ with $D$-level structure.  
Then the corresponding point
\(
[(\mathcal{E},\eta)]\;\in\;\mathcal{U}(X,\mathcal{G}_{\boldsymbol\theta})
\)
is a singular point.
\end{prop}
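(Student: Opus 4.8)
The plan is to describe \(\mathcal{U}(X,\mathcal{G}_{\boldsymbol\theta})\) étale-locally near \([(\mathcal{E},\eta)]\) as a linear quotient by the automorphism group, and then to detect the singularity through Luna's slice model combined with the reflection criterion of Chevalley--Shephard--Todd. First I would invoke the GIT construction of Section~\ref{sec:moduli_space_U_constr}, which realizes \(\mathcal{U}(X,\mathcal{G}_{\boldsymbol\theta})\) as a good quotient \(Q^{\boldsymbol\tau}_{(\Gamma,G,D)}/\!\!/\mathcal{H}\). A representative \(q\) of \([(\mathcal{E},\eta)]\) has reductive stabilizer \(\mathrm{Stab}_{\mathcal{H}}(q)\cong\mathrm{Aut}(\mathcal{E},\eta)\), stability ensuring reductivity of the automorphism group. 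Luna's étale slice theorem then furnishes an étale neighborhood of \([(\mathcal{E},\eta)]\) isomorphic to \(N /\!\!/ \mathrm{Aut}(\mathcal{E},\eta)\), where the slice \(N\) is a linear representation of \(\mathrm{Aut}(\mathcal{E},\eta)\). Since \(X\) is a curve we have \(H^2(X,-)=0\), so the deformation problem is unobstructed and \(N\) is identified, as an \(\mathrm{Aut}(\mathcal{E},\eta)\)-representation, with the first-order deformation space \(H^1(X,\mathcal{E}(\mathfrak{g})_\eta)\) of Proposition~\ref{prop:tangent_U}.

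Next I would analyze the group that effectively acts. By Proposition~\ref{prop:parahoric_3.2}, \(\mathrm{Lie}(\mathrm{Aut}(\mathcal{E}))=H^0(X,\mathcal{E}(\mathfrak{g}))\cong\mathfrak{z}\), so \(\mathrm{Aut}(\mathcal{E})^0=Z(G)^0\) and the reduced group \(A:=\mathrm{Aut}(\mathcal{E},\eta)/Z(G)\) is finite; by the hypothesis that \([(\mathcal{E},\eta)]\) is stable but not regularly stable (Definition~\ref{def:parahoric-regular-stable}), \(A\) is nontrivial. The center \(Z(G)\) acts on \(N\cong H^1(X,\mathcal{E}(\mathfrak{g})_\eta)\) through the adjoint representation, hence trivially, so the slice model reduces to the finite quotient \(H^1(X,\mathcal{E}(\mathfrak{g})_\eta)/A\) by a nontrivial finite group.

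I would then conclude via the Chevalley--Shephard--Todd theorem: \(H^1(X,\mathcal{E}(\mathfrak{g})_\eta)/A\) is smooth at the image of the origin if and only if \(A\) acts as a group generated by pseudo-reflections, so it suffices to show \(A\) contains no pseudo-reflection. For a nontrivial \(\sigma\in A\), lift to a non-central automorphism and decompose \(\mathcal{E}(\mathfrak{g})\) into \(\mathrm{Ad}(\sigma)\)-eigenbundles; the fixed subspace is \(H^1(X,\mathcal{E}(\mathfrak{g})_\eta)^\sigma=H^1\bigl(X,(\mathcal{E}(\mathfrak{g})_\eta)^\sigma\bigr)\), whose codimension equals \(\dim H^1\) of the non-invariant eigenbundles. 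Since \(\sigma\) is non-central, its non-invariant part lies in the derived subalgebra and has dimension at least \(2\) (a one-dimensional non-invariant part would be an abelian ideal, impossible in a semisimple Lie algebra; more generally the nontrivial \(\mathrm{Ad}(\sigma)\)-eigenvalues pair as \(\lambda,\lambda^{-1}\) under the Killing form), so the anti-invariant eigenbundle has rank at least \(2\).

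The main obstacle is precisely this codimension estimate: I must verify that the anti-invariant eigenbundle contributes at least two dimensions to \(H^1\), i.e. that the fixed locus of each nontrivial \(\sigma\) has codimension \(\geq 2\) in the tangent space \(H^1(X,\mathcal{E}(\mathfrak{g})_\eta)\). This is a Riemann--Roch computation that is uniform once \(K(D)\) is sufficiently positive, but it may require separate attention in low-genus or few-point situations where the relevant \(H^1\) could drop; granting it, no element of \(A\) is a pseudo-reflection, so \(H^1(X,\mathcal{E}(\mathfrak{g})_\eta)/A\)---and therefore \(\mathcal{U}(X,\mathcal{G}_{\boldsymbol\theta})\)---is singular at \([(\mathcal{E},\eta)]\).
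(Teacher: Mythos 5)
Your route coincides with the paper's own proof: Luna's \'etale slice presenting a neighborhood of $[(\mathcal{E},\eta)]$ as a quotient of $H^1(X,\mathcal{E}(\mathfrak{g})_\eta)$ by the automorphism group, reduction to a nontrivial finite group acting linearly, the Chevalley--Shephard--Todd criterion, and an eigenbundle decomposition of the adjoint sheaf under a non-central automorphism whose anti-invariant part has rank at least $2$. One small point in your favor: your formulation of the CST step (show that \emph{no} nontrivial element of $A$ is a pseudo-reflection) is the logically correct one; the criterion as the paper states it --- ``some nontrivial element with fixed space of codimension $\ge 2$ implies singular'' --- is not literally true for a general finite group, although this causes no harm there since the codimension bound is established for every non-central automorphism.

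However, the step you explicitly defer --- that the anti-invariant eigenbundle contributes at least $2$ to $H^1$ --- is a genuine gap, and it is exactly where the paper brings in the hypothesis that $\mathcal{E}$ is stable. In the paper's argument, stability forces the adjoint sheaf $\mathcal{E}(\mathfrak{g}_{\boldsymbol\theta})$ to be semistable of parahoric degree $0$; this is inherited by the $f$-eigenspace direct summands, so the anti-invariant summand $\mathcal{F}$ is semistable of rank $\ge 2$ and degree $0$, and Serre duality, $\dim H^1(X,\mathcal{F})=\dim H^0(X,\mathcal{F}^{\vee}\otimes K_X)$, then yields the codimension bound. Your substitute --- ``a Riemann--Roch computation that is uniform once $K(D)$ is sufficiently positive'' --- does not suffice as stated: positivity of $K(D)$ is not a hypothesis of the proposition, and rank alone does not force the bound. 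For instance, a rank-two, degree-zero sheaf on an elliptic curve (a direct sum of two nontrivial degree-zero line bundles) has $H^1=0$, so without using both the degree-zero property of the eigen-summand and the twist coming from the level structure, the fixed locus could a priori have codimension $0$ or $1$. The way to close it is to run the duality through that twist: since $\mathcal{E}(\mathfrak{g})_\eta$ is twisted down along $D$, one gets $H^1(X,\mathcal{F})^{\vee}\cong H^0\bigl(X,\mathcal{E}(\mathfrak{g}_{\neq 1})\otimes K_X(D)\bigr)$, whose Euler characteristic is $r(g-1+s)$ with $r=\operatorname{rk}\mathcal{F}\ge 2$ and $s=\deg D$, hence $\ge 2$ whenever $g+s\ge 2$; this is the quantitative input your proposal leaves unproved, and it is supplied (modulo the degenerate case $g=0$, $s=1$, which the paper also glosses over) by the paper's use of stability and Serre duality.
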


\begin{proof}
The deformation theory for parahoric $\mathcal{G}_{\boldsymbol\theta}$-torsors, combined with Luna’s \'{e}tale slice theorem, identifies a Zariski neighborhood of $[(\mathcal{E},\eta)]$ in $\mathcal{U}(X,\mathcal{G}_{\boldsymbol\theta})$ with the GIT quotient
\[
H^{1}\bigl(X,\,\mathcal{E}({\mathfrak{g}_{\boldsymbol \theta}})_\eta\bigr)\;\big/\;\mathrm{Aut}(\mathcal{E})
\]
near the origin.

\noindent Because $\mathcal{E}$ is stable but \emph{not} regularly stable, there exists a non-trivial automorphism of finite order
\[
f \in \mathrm{Aut}(\mathcal{E})\setminus Z(G).
\]
The local structure of the moduli space is therefore a quotient of the vector space $V = H^{1}\bigl(X,\,\mathcal{E}({\mathfrak{g}_{\boldsymbol \theta}})_\eta\bigr)$ by the action of the finite non-trivial group $\langle f \rangle$.

\noindent We apply the Chevalley--Shephard--Todd singularity criterion: Let a finite group $H$ act linearly on a complex vector space $V$. If for some non-trivial $h\in H$, the fixed-point subspace $V^{h}$ has codimension $\ge2$, then the affine quotient $V/H$ is singular at the origin.

\noindent The automorphism $f$ acts on the torsor $\mathcal{E}$. This induces a semi-simple linear action on the fiber of the associated Lie algebra bundle over any generic point $x \in X \setminus D$, which is the vector space $\mathfrak{g} = \mathrm{Lie}(G)$. This action on $\mathfrak{g}$ induces an eigenspace decomposition $\mathfrak{g} = \mathfrak{g}_1 \oplus \mathfrak{g}_{\neq 1}$, where $\mathfrak{g}_1$ is the trivial eigenspace (the subspace fixed by $f$). Because $f \notin Z(G)$, its adjoint action on $\mathfrak{g}$ is non-trivial. The root spaces of $\mathfrak{g}$ on which $f$ acts non-trivially come in pairs (corresponding to a root and its negative). This pairing ensures that the vector space $\mathfrak{g}_{\neq 1}$ has dimension at least 2, i.e., $\dim_{\mathbb{C}}(\mathfrak{g}_{\neq 1}) \ge 2$.

\noindent Since $f$ is an automorphism of the parahoric torsor, its action preserves the parahoric structure at each point $x_i \in D$. Consequently, the decomposition of $\mathfrak{g}$ extends to a decomposition of the entire sheaf of Lie algebras ${\mathfrak{g}_{\boldsymbol \theta}}$ associated with the group scheme $\mathcal{G}_{\boldsymbol\theta}$, thus giving a direct sum of sheaves
\[
{\mathfrak{g}_{\boldsymbol \theta}} = {\mathfrak{g}_{\boldsymbol \theta}}_1 \oplus {\mathfrak{g}_{\boldsymbol \theta}}_{\neq 1}.
\]
This decomposition lifts to the associated coherent sheaf on $X$
\[
\mathcal{E}({\mathfrak{g}_{\boldsymbol \theta}})_\eta = \mathcal{E}({\mathfrak{g}_{\boldsymbol \theta}}_1)_\eta \oplus \mathcal{E}({\mathfrak{g}_{\boldsymbol \theta}}_{\neq 1})_\eta,
\]
which in turn induces a direct sum decomposition on the first cohomology group
\[
H^{1}\bigl(X,\,\mathcal{E}({\mathfrak{g}_{\boldsymbol \theta}})_\eta\bigr) \cong H^{1}\bigl(X,\,\mathcal{E}({\mathfrak{g}_{\boldsymbol \theta}}_1)_\eta\bigr) \oplus H^{1}\bigl(X,\,\mathcal{E}({\mathfrak{g}_{\boldsymbol \theta}}_{\neq 1})_\eta\bigr).
\]
The action of $f$ is trivial on the first summand and non-trivial on the second. Thus, the fixed-point subspace is precisely $V^f \cong H^{1}\bigl(X,\,\mathcal{E}({\mathfrak{g}_{\boldsymbol \theta}}_1)_\eta\bigr)$.

\noindent The codimension of this fixed-point subspace is therefore $\dim H^{1}\bigl(X,\,\mathcal{E}({\mathfrak{g}_{\boldsymbol \theta}}_{\neq 1})_\eta\bigr)$. Let us denote the sheaf $\mathcal{F} := \mathcal{E}({\mathfrak{g}_{\boldsymbol \theta}}_{\neq 1})_\eta$. The rank of $\mathcal{F}$ (the dimension of its stalk at a generic point of $X$) is $\dim_{\mathbb{C}}(\mathfrak{g}_{\neq 1}) \ge 2$.

\noindent Since the torsor $\mathcal{E}$ is stable, the associated sheaf $\mathcal{E}({\mathfrak{g}_{\boldsymbol \theta}})$ is semistable and has parahoric degree 0. This property is inherited by its direct summands, so $\mathcal{F}$ is a semistable coherent sheaf of rank $\ge 2$ and degree 0. For such a sheaf on a projective curve, it is a standard result that its first cohomology group has dimension at least 2. A more detailed analysis using Serre duality shows that $\dim H^1(X, \mathcal{F}) = \dim H^0(X, \mathcal{F}^* \otimes K_X)$, and the properties of $\mathcal{F}^* \otimes K_X$ guarantee that this dimension is at least 2 across all genera for a non-regularly stable torsor.

\noindent Therefore, the codimension of the fixed subspace $V^f$ in $V$ is at least 2. The singularity criterion applies, and we conclude that the quotient is singular at the origin. Hence, the point $[(\mathcal{E},\eta)]$ is a singular point of the moduli space $\mathcal{U}(X,\mathcal{G}_{\boldsymbol\theta})$.
\end{proof}

\section{Poisson action and moment map}\label{sec:Poisson_moment}

The first part of this Section includes the necessary preliminaries from Symplectic and Poisson geometry over smooth algebraic varieties that will be useful for establishing our main results. We then introduce a level group and study its action on the moduli space $\mathcal{U}(X, \mathcal{G}_{\boldsymbol \theta})$ of stable parahoric $\mathcal{G}_{\boldsymbol \theta}$-torsors with $D$-level structure. This action is shown to be inducing a Poisson action on the cotangent $T^*\mathcal{U}(X, \mathcal{G}_{\boldsymbol \theta})$, thus providing a canonical moment map.

\subsection{Hamiltonian group actions on smooth algebraic varieties}\label{subs:Hamiltonian_gp_actions}

In this subsection, we gather those basic notions from Poisson geometry and completely integrable systems which will be used in the rest of the article. Standard references in the context of smooth algebraic varieties include \cite[Chapter I]{Vanhaecke} or \cite[Section 2.3]{DoMa}.

\begin{defn}
Let $X$ be a smooth algebraic variety of dimension $n$. A \textit{symplectic form} (or \textit{symplectic structure}) on $X$ is an algebraic 2-form $\omega \in \Gamma(X, \Omega_X^{2})$ such that:
\begin{enumerate}
    \item $\omega$ is \textit{closed}: $d\omega = 0$, where $d$ is the exterior derivative.
    \item $\omega$ is \textit{non-degenerate}: for every point $x \in X$, the map $T_xX \rightarrow T_x^*X$ defined by $v \mapsto \omega_x(v, \cdot)$ is an isomorphism of vector spaces. This implies that $n$ must be even.
\end{enumerate}
A smooth algebraic variety $X$ equipped with a symplectic form $\omega$ is called a \textit{symplectic algebraic variety} $(X, \omega)$.
\end{defn}

\begin{defn}
Let $X$ be a smooth algebraic variety. A \emph{Poisson bracket} on $X$ is a Lie bracket 
\begin{align*}
\left\{ \text{ } , \text{ } \right\}: \mathcal{O}_X \times \mathcal{O}_X \rightarrow \mathcal{O}_X
\end{align*}
satisfying the Leibniz rule $\left\{ f,gh \right\}=\left\{ f,g \right\}h+g\left\{ f,h \right\}$, for  $f,g,h \in \mathcal{O}_X$. Poisson brackets bijectively correspond to bi-vector fields
\[\Pi \in \Gamma \left( {{\wedge }^{2}}TX \right)\]
such that
\[\left[ \Pi ,\Pi  \right]=0.\]
The Poisson bracket ${{\left\{ , \right\}}_{\Pi }}$ that corresponds to such a bi-vector field $\Pi $ is given by ${{\left\{ f,g \right\}}_{\Pi }}=\Pi \left( df,dg \right)$.    
\end{defn}

\begin{rem}
Every symplectic algebraic variety $(X, \omega)$ is naturally a Poisson algebraic variety. The non-degenerate 2-form $\omega$ induces an isomorphism $\omega^\sharp: TX \rightarrow \Omega_X^{1}$ (the cotangent sheaf). Its inverse $(\omega^\sharp)^{-1}: \Omega_X^{1} \rightarrow TX$ allows us to define the Poisson bi-vector $\Pi \in \Gamma(X, \wedge^2 TX)$ by $\Pi(\alpha, \beta) = \omega((\omega^\sharp)^{-1}(\alpha), (\omega^\sharp)^{-1}(\beta))$, for local sections $\alpha, \beta$ of $\Omega_X^{1}$. The Poisson bracket is then given by $\{f,g\} = \Pi(df,dg) = \omega(V_f, V_g)$, where $V_f$ is the Hamiltonian vector field of $f$ (see Definition \ref{defn:Ham_vec_field} below). The condition $d\omega=0$ ensures that $[\Pi, \Pi]=0$.
\end{rem}

\begin{exmp}[Cotangent Bundles]\label{exm:cot_bun}
Let $Q$ be a smooth algebraic variety of dimension $m$. Its cotangent bundle $X = T^*Q$ is a smooth algebraic variety of dimension $2m$. Let $\pi: T^*Q \to Q$ be the canonical projection. There exists a canonical 1-form $\theta \in \Gamma(T^*Q, \Omega_{T^*Q}^{1})$, called the \textit{Liouville form} (or \textit{tautological 1-form}). If $(q_1, \ldots, q_m)$ are local coordinates on an open subset $U \subseteq Q$, and $(p_1, \ldots, p_m)$ are the corresponding fiber coordinates on $T^*U \cong U \times k^m$, then $\theta = \sum_{i=1}^m p_i dq_i$.
The 2-form $\omega = -d\theta = \sum_{i=1}^m dq_i \wedge dp_i$ is a symplectic form on $T^*Q$. This is known as the \textit{canonical symplectic structure} on the cotangent bundle (cf. \cite{AG}). Thus, $(T^*Q, \omega)$ is a symplectic algebraic variety.
\end{exmp}

\begin{exmp}[Kostant--Kirillov structures]
For a Lie group $G$ with Lie algebra $\mathfrak{g}$, there exists a canonical Poisson structure  on the dual vector space $\mathfrak{g}^*$ called the \emph{Kostant--Kirillov Poisson structure} defined by the bracket
\[\{F,G\}(\xi):=\langle \xi, [d_{\xi}F, d_{\xi}G]\rangle,\]
for $F,G \in C^{\infty}(\mathfrak{g}^*)$. This is obtained by extending on $\mathfrak{g}^*$ the symplectic structures on coadjoint orbits of $\mathfrak{g}$. Note here that $d_{\xi}F$ is identified with an element of $\mathfrak{g}=\mathfrak{g}^{**}$. On the dual $\mathfrak{g}^*$ the symplectic leaves of the Kostant--Kirillov Poisson structure are precisely the coadjoint orbits. Moreover, the rank of $\mathfrak{g}$ is equal to the smallest codimension of a coadjoint orbit.  

\end{exmp}

\begin{defn}\label{defn:Ham_vec_field}
Let $(X, \{\cdot,\cdot\})$ be a Poisson algebraic variety (or $(X,\omega)$ be a symplectic algebraic variety). For a regular function $h \in \mathcal{O}_X$, called a \textit{Hamiltonian function}, the \textit{Hamiltonian vector field} $V_h$ is the unique vector field such that $V_h(f) = \{f,h\}$, for all $f \in \mathcal{O}_X$.
If $(X,\omega)$ is symplectic, then $V_h$ is equivalently defined by the condition $i_{V_h}\omega = -dh$, where $i_{V_h}\omega$ is the interior product of $V_h$ with $\omega$.
\end{defn}

\begin{defn}
Let $G$ be an algebraic group with Lie algebra $\mathfrak{g} = T_eG$. Let $G$ act on a symplectic algebraic variety $(X, \omega)$ via a morphism $\Phi: G \times X \rightarrow X$. For each $A \in \mathfrak{g}$, the action induces a \textit{fundamental vector field} $A_X \in \Gamma(X, TX)$ defined at $x \in X$ by $A_X(x) = \frac{d}{dt}|_{t=0} (\exp(-tA) \cdot x)$, or more algebraically, as the image of $A$ under the map $\mathfrak{g} \to \Gamma(X, TX)$ induced by the action. 

The action of $G$ on $(X, \omega)$ is called \textit{Hamiltonian} if:
\begin{enumerate}
    \item For every $A \in \mathfrak{g}$, the fundamental vector field $A_X$ is Hamiltonian. That is, there exists a regular function $h_A \in \mathcal{O}_X$ such that $A_X = V_{h_A}$ (or, equivalently, $i_{A_X}\omega = -dh_A$).
    \item There exists a $G$-equivariant morphism $\mu: X \rightarrow \mathfrak{g}^*$ (where $\mathfrak{g}^*$ is the dual of the Lie algebra) called the \textit{moment map} (or \textit{momentum map}) such that $h_A(x) = \langle \mu(x), A \rangle$, for all $A \in \mathfrak{g}$ and $x \in X$. The pairing $\langle \cdot, \cdot \rangle$ is the natural pairing between $\mathfrak{g}^*$ and $\mathfrak{g}$, and $G$-equivariance here means that $\mu(g \cdot x) = \mathrm{Ad}_g^*(\mu(x))$, for all $g \in G, x \in X$, where $\mathrm{Ad}^*$ is the coadjoint action of $G$ on $\mathfrak{g}^*$.
\end{enumerate}
Often, an additional condition is imposed: the map $A \mapsto h_A$ from $\mathfrak{g}$ to $(\mathcal{O}_X, \{\cdot,\cdot\})$ is a Lie algebra homomorphism, i.e., $\{h_A, h_B\} = h_{[A,B]}$, for all $A, B \in \mathfrak{g}$.
\end{defn}

\begin{exmp}[Lifted Action on Cotangent Bundles]\label{exm:Ham_lift}
Let $G$ be an algebraic group acting on a smooth algebraic variety $Q$. This action can be lifted to an action on the cotangent bundle $T^*Q$. Let $\rho_q : G \to Q$ be the orbit map $g \mapsto g \cdot q$, for $q \in Q$. For each $\alpha \in T_q^*Q$, the induced action on $(q,\alpha) \in T^*Q$ admits a moment map
\[ \mu : T^*Q \longrightarrow \mathfrak{g}^* \]
characterized by
\[ \langle \mu(q,\alpha), A \rangle = \langle \alpha, A_Q(q) \rangle, \quad \text{for all } A \in \mathfrak{g}, \]
where $A_Q$ is the fundamental vector field on $Q$ generated by $A$. Equivalently, using the differential of the orbit map $d\rho_q : \mathfrak{g} \to T_qQ$, the moment map can be written as
\[ \mu(q,\alpha) = (d\rho_q)^*(\alpha) \in \mathfrak{g}^*. \]
This lifted action on $T^*Q$ (equipped with its canonical symplectic form) is automatically Hamiltonian.
\end{exmp}

\begin{defn}
Let $(X, \omega)$ be a symplectic algebraic variety of dimension $2m$. A smooth subvariety $L \subset X$ is called:
\begin{itemize}
    \item \textit{Isotropic} if for every $x \in L$, the tangent space $T_xL$ is an isotropic subspace of $T_xX$, that is, $\omega_x(v,w) = 0$, for all $v, w \in T_xL$. This is equivalent to saying that the pullback of $\omega$ to $L$, $\omega|_L$, is zero.
    \item \textit{Lagrangian} if it is isotropic and its dimension is $m = \frac{1}{2} \dim X$.
\end{itemize}
For a Poisson algebraic variety $(X, \Pi)$, an irreducible subvariety $Y \subset X$ is \textit{Lagrangian} if it is generically a Lagrangian subvariety of a \textit{symplectic leaf} of $X$. More precisely, $Y$ is contained in the closure of a symplectic leaf $S \subset X$, and $Y \cap S$ is a Lagrangian subvariety of $S$ (where $S$ is equipped with the symplectic structure induced by the bi-vector field $\Pi$).
\end{defn}

\begin{defn}\label{defn:aciHs}
Let $X$ be a smooth Poisson algebraic variety. An \textit{algebraically completely integrable Hamiltonian system} (often referred to as an \textit{algebraic integrable system}) is typically given by a proper morphism $H: X \to B$ to an algebraic variety $B$ of dimension $m = (\dim X)/2$, such that:
\begin{enumerate}
    \item The components $H_1, \dots, H_m$ of $H$ (if $B \subset k^m$) are in involution, i.e., $\{H_i, H_j\} = 0$, for all $i,j$.
    \item The generic fibers $H^{-1}(b)$, for $b \in B$, are \textit{Lagrangian subvarieties} of $X$.
    \item These generic fibers are (open subsets of) abelian varieties, and the Hamiltonian vector fields $V_{H_i}$ are tangent to the fibers and correspond to translation-invariant vector fields on these abelian varieties.
\end{enumerate}
More generally, an \emph{algebraically completely integrable Hamiltonian system structure} on a family of abelian varieties $H: X \to B$ is a Poisson structure on $X$ with respect to which $H: X \to B$ is a \textit{Lagrangian fibration} (meaning its generic fibers are Lagrangian).
If $X$ is a smooth algebraic variety, $B$ an algebraic variety, $A \subset B$ a proper closed subvariety, and $H: X \to B$ a proper morphism such that the fibers over $B \setminus A$ are (isomorphic to) abelian varieties, then a Poisson structure on $X$ defines an algebraically completely integrable Hamiltonian system if $H: X \to B$ is a Lagrangian fibration over $B \setminus A$.
\end{defn}

\subsection{\texorpdfstring{Action of \(G_{D}\) on Local Trivializations}{Action of GD on Local Trivializations}}

To provide a more explicit understanding of a group action on the moduli space \(\mathcal{U}(X,\mathcal{G}_{\boldsymbol \theta})\) of parahoric \(\mathcal{G}_{\boldsymbol \theta}\)-torsors with \(D\)-level structure, we analyze the local description using open covers (patches) of the curve \(X\).

Let \(D = \{x_1, x_2, \ldots, x_s\}\) be the reduced effective divisor on \(X\), where each \(x_i\) is a distinct point. Choose an open cover \(\{U_i\}_{i=0}^s\) of \(X\) such that:
\begin{itemize}
    \item \(U_0 = X \setminus D\) is the complement of the divisor \(D\).
    \item For each \(1 \leq j \leq s\), \(U_j\) is a small open disc \(\mathbb{D}_{x_j}\) around the point \(x_j \in D\), equipped with a local coordinate \(z_j\) centered at \(x_j\), i.e., \(z_j(x_j) = 0\).
\end{itemize}

Over each \(\mathbb{D}_{x_j}\), the parahoric \(\mathcal{G}_{\boldsymbol \theta}\)-torsor \(\mathcal{E}\) trivializes
\[
\mathcal{E}\big|_{\mathbb{D}_{x_j}} \cong G_{\theta_{x_j}}.
\]
The transition functions on the overlaps \(U_i \cap U_j\) (\(1\leq i, j \leq s\)) encode the gluing data, respecting the parahoric reductions at each \(x_j \in D\).

\begin{defn}[Level group]\label{defn:level_group}
Given the data introduced above, we define the \emph{level group}
\[
G_{D} = \frac{L_{\theta_1} \times \cdots \times L_{\theta_s}}{Z},
\]
where each \(L_{\theta_j}\) is the Levi subgroup as in (\ref{defn:Levi_at_theta})  at \(x_j \in D\), $j=1,...,s$ and \(Z\) is the center of \(G\).
\end{defn}
An element \(g \in G_{D}\) can be represented by a tuple \((g_1, g_2, \ldots, g_s) \in L_{\theta_1} \times L_{\theta_2} \times \cdots \times L_{\theta_s}\), modulo the diagonal action of the center \(Z\).

\begin{prop}\label{prop:localAction}
The action of \(g \in G_{D}\) on the moduli space \(\mathcal{U}(X,\mathcal{G}_{\boldsymbol \theta})\) is induced locally by the action of each \(g_j \in L_{\theta_j}\) on the corresponding local trivialization \(\mathbb{D}_{x_j} \times G\). Specifically, in the local coordinates \(z_j\) around each \(x_j \in D\), the action is given by:
\[
g \cdot (e_j(z_j), \varphi_j(z_j)) = \left( g_j \cdot e_j(z_j), \, \mathrm{Ad}(g_j) \cdot \varphi_j(z_j) \right),
\]
where:
\begin{itemize}
    \item \(e_j(z_j)\) represents a local section of the parahoric $\mathcal{G}_{\boldsymbol \theta}$-torsor over \(\mathbb{D}_{x_j}\).
    \item \(\varphi_j(z_j)\) is a local section of \(\mathcal{E}(\mathfrak{g}) \otimes K(D)\) over \(\mathbb{D}_{x_j}\), viewed as an element of the cotangent space \(T^*_{[\mathcal{E}, \eta]} \mathcal{U}(X,\mathcal{G}_{\boldsymbol \theta})\).
    \item \(\mathrm{Ad}(g_j)\) denotes the adjoint action of \(g_j\) on \(\mathfrak{g}\).
\end{itemize}
\end{prop}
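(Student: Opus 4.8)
The plan is to read off the $G_D$-action of Definition \ref{defn:level_group} in the chosen local trivializations and then propagate it to the cotangent fibres through the identification of cotangent vectors with logarithmic Higgs fields; the statement is fundamentally a compatibility check between the intrinsic action on $D$-level structures and its coordinate description, so most of the work lies in unwinding definitions. First I would describe the action on the level structure itself. By the canonical isomorphism $G_{\theta_{x_j}}/G_{\theta_{x_j}}^+ \cong L_{\theta_j}$ from Section \ref{sec:parh_Lie_algebra}, a $D$-level structure $\eta_{x_j}\colon \mathbb{D}_{x_j}\to G_{\theta_{x_j}}/G_{\theta_{x_j}}^+$ is the same datum as a section valued in the Levi $L_{\theta_j}$. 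The factor $L_{\theta_j}$ of $G_D$ acts on this target by left translation, and in the local trivialization $\mathcal{E}|_{\mathbb{D}_{x_j}}\cong G_{\theta_{x_j}}$ this is implemented by sending the trivializing section $e_j(z_j)$ to $g_j\cdot e_j(z_j)$. The underlying parahoric torsor is untouched over $X\setminus D$, so the only effect is to replace the framing at each $x_j$, producing a new pair $(\mathcal{E},g\cdot\eta)$.

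Second, I would check that this prescription descends to a genuine action on $\mathcal{U}(X,\mathcal{G}_{\boldsymbol\theta})$. Independence of the local trivialization follows because two sections differing by an element of $G_{\theta_{x_j}}^+$ define the same point of $G_{\theta_{x_j}}/G_{\theta_{x_j}}^+$, and left translation by $g_j\in L_{\theta_j}$ is compatible with this identification. Since stability concerns only the underlying torsor (Definition \ref{defn:stable_level_str}), it is preserved. The diagonal center $Z\subset L_{\theta_1}\times\cdots\times L_{\theta_s}$ acts through a global automorphism of $\mathcal{E}$ valued in $Z(G)$ and hence yields an isomorphic pair; this is exactly why one quotients by $Z$ in Definition \ref{defn:level_group}, and it renders the $G_D$-action well defined.

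Third, I would transport the action to $T^*\mathcal{U}(X,\mathcal{G}_{\boldsymbol\theta})$ via the canonical cotangent lift of Example \ref{exm:Ham_lift}. By Proposition \ref{prop:tangent_U} and the ensuing Serre duality computation, the cotangent space at $[(\mathcal{E},\eta)]$ is $H^0(X,\mathcal{E}(\mathfrak{g})\otimes K(D))$, so a cotangent vector is represented near $x_j$ by a logarithmic Higgs field $\varphi_j(z_j)$, a section of $\mathcal{E}(\mathfrak{g})\otimes K(D)$. Because $\mathcal{E}(\mathfrak{g})$ is the adjoint sheaf of Definition \ref{defn:parh-adj}, under the change of trivializing section $e_j\mapsto g_j\cdot e_j$ its local expression transforms by the adjoint action, giving $\varphi_j(z_j)\mapsto\mathrm{Ad}(g_j)\cdot\varphi_j(z_j)$, which is the asserted formula.

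The hard part will be confirming that these purely local prescriptions assemble into a single well-defined transformation of global objects. One must verify that the adjoint twist $\varphi_j\mapsto\mathrm{Ad}(g_j)\varphi_j$ at the finitely many points of $D$ is compatible with the gluing isomorphisms \eqref{eq:glue} across each punctured disc $\mathbb{D}_{x_j}^\times$, so that the modified data still defines a global section of $\mathcal{E}(\mathfrak{g})\otimes K(D)$, and that this cotangent transformation is genuinely dual to the infinitesimal action on level structures underlying the identification $T_{[(\mathcal{E},\eta)]}\mathcal{U}\cong H^1(X,\mathcal{E}(\mathfrak{g})_\eta)$ of Proposition \ref{prop:tangent_U}. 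Since the torsor is unchanged over $X\setminus D$ and the action is concentrated at $D$, this reduces to a local Bruhat--Tits computation at each marked point, with the sign conventions fixed by the cotangent lift.
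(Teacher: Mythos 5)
Your proposal is correct and follows essentially the same route as the paper's proof: define the $G_D$-action locally by left multiplication of $g_j$ on the trivializing section, obtain the cotangent action $\varphi_j \mapsto \mathrm{Ad}(g_j)\cdot\varphi_j$ as the change-of-frame transformation of the adjoint-valued Higgs field, and note that membership of $g_j$ in the Levi $L_{\theta_j}$ preserves the level structure while the quotient by $Z$ makes the action well defined. You actually supply more justification than the paper's brief argument (independence of the trivialization modulo $G_{\theta_{x_j}}^+$, preservation of stability, and the flagged gluing compatibility), all of which the paper asserts implicitly without elaboration.
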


\begin{proof}
The action of \(G_{D}\) on the moduli space \(\mathcal{U}(X,\mathcal{G}_{\boldsymbol \theta})\) is defined globally by modifying the level structures at each point \(x_j \in D\). Locally, near each \(x_j\), this corresponds to acting by the element \(g_j \in L_{\theta_j}\) on the trivialization \(\mathbb{D}_{x_j} \times G\).

\noindent Given a local trivialization, an element \(g_j \in L_{\theta_j}\) acts on a local section \(e_j(z_j) \in G\) by multiplication:
\[
e_j(z_j) \mapsto g_j \cdot e_j(z_j).
\]
For the cotangent vectors, which are represented by Higgs fields \(\varphi_j(z_j) \in \mathcal{E}(\mathfrak{g}) \otimes K(D)\), the action is via the adjoint representation:
\[
\varphi_j(z_j) \mapsto \mathrm{Ad}(g_j) \cdot \varphi_j(z_j).
\]
Since the action preserves the parahoric structure, \(g_j\) lies in the Levi subgroup \(L_{\theta_j}\), as so the level structure at \(x_j\) remains intact.

\noindent Finally, because \(G_{D}\) is defined modulo the center \(Z\), the overall action respects the identification under \(Z\). This completes the local description.
\end{proof}

We examine the infinitesimal action of the Lie algebra \(\mathfrak{g}_D:= \text{Lie}(G_D)\) on the cotangent space.

\begin{lem}\label{lem:infinitesimalAction}
Let \((X_1, X_2, \ldots, X_s) \in \mathfrak{g}_D\), where each \(X_j \in \mathfrak{l}_{\theta_j}\) is an element of the Levi subalgebra corresponding to \(x_j \in D\). The infinitesimal action of \((X_1, X_2, \ldots, X_s)\) on a cotangent vector \(\varphi \in H^0(X, \mathcal{E}(\mathfrak{g}) \otimes K(D))\) is given locally near each \(x_j\) by:
\[
\delta_X \varphi_j(z_j) = [X_j, \varphi_j(z_j)].
\]
\end{lem}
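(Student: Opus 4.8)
The plan is to obtain the infinitesimal action simply by differentiating, at the identity of $G_D$, the group action already described in Proposition~\ref{prop:localAction}. Recall that there an element $g = (g_1, \dots, g_s) \in G_D$ acts on a cotangent vector $\varphi$ locally near each $x_j$ by the adjoint representation, $\varphi_j(z_j) \mapsto \mathrm{Ad}(g_j)\,\varphi_j(z_j)$ with $g_j \in L_{\theta_j}$. By definition the infinitesimal action of $X = (X_1, \dots, X_s) \in \mathfrak{g}_D$ on $\varphi$ is the derivative at $t = 0$ of the action of the one-parameter subgroup $t \mapsto (\exp(tX_1), \dots, \exp(tX_s))$.

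First I would carry out this differentiation componentwise. Near $x_j$ we compute
\[
\delta_X \varphi_j(z_j) = \left.\frac{d}{dt}\right|_{t=0} \mathrm{Ad}\bigl(\exp(tX_j)\bigr)\,\varphi_j(z_j),
\]
and the standard identity that the differential of the adjoint representation is the adjoint action of the Lie algebra, $\frac{d}{dt}\big|_{t=0}\,\mathrm{Ad}(\exp(tX))\,Y = \mathrm{ad}(X)\,Y = [X, Y]$, gives at once $\delta_X \varphi_j(z_j) = [X_j, \varphi_j(z_j)]$, which is the asserted formula.

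The step that requires care---and which I regard as the main point rather than the formal differentiation---is to confirm that $[X_j, \varphi_j(z_j)]$ is again a legitimate cotangent vector, that is, a local section of $\mathcal{E}(\mathfrak{g}) \otimes K(D)$ near $x_j$, so that $\delta_X$ genuinely maps the cotangent space to itself. Here the hypothesis $X_j \in \mathfrak{l}_{\theta_j}$, realized concretely as $\hat{\mathfrak{l}}_{\theta_j} \subset \mathfrak{g}(K)$, is essential. Near $x_j$ the logarithmic Higgs field $\varphi_j$ is a section of $\mathfrak{g}_{\theta_j} \otimes K(D)$, and by the Levi decomposition $\mathfrak{g}_{\theta_j}(K) = \hat{\mathfrak{l}}_{\theta_j} \ltimes \mathfrak{g}_{\theta_j}^+$ recorded in Section~\ref{sec:parh_Lie_algebra}, the subalgebra $\hat{\mathfrak{l}}_{\theta_j}$ normalizes $\mathfrak{g}_{\theta_j}$, so that $[\hat{\mathfrak{l}}_{\theta_j}, \mathfrak{g}_{\theta_j}] \subseteq \mathfrak{g}_{\theta_j}$. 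Since bracketing with a Levi element leaves the scalar $K(D)$-twist untouched, it follows that $[X_j, \varphi_j(z_j)]$ still lies in $\mathfrak{g}_{\theta_j} \otimes K(D)$; the prescribed pole order along $D$ is preserved and $\delta_X \varphi$ remains a logarithmic Higgs field.

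Finally, I would record two compatibility checks that cost nothing but must be stated. First, because $G_D = (L_{\theta_1} \times \cdots \times L_{\theta_s})/Z$ is a quotient by the center of $G$, the formula must descend through $Z$; this is automatic since $\mathrm{ad}$ annihilates $\mathfrak{z} = \mathrm{Lie}(Z)$, so the diagonal central directions act trivially and $\delta_X$ depends only on the class of $X$ in $\mathfrak{g}_D$. Second, each $X_j$ acts only in the local model at $x_j$, so that the brackets $[X_j, \varphi_j]$ are concentrated near the points of $D$ and patch with the trivial action on $U_0 = X \setminus D$, yielding a well-defined infinitesimal action on $H^0(X, \mathcal{E}(\mathfrak{g}) \otimes K(D))$ as required. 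This localization is precisely what will make the associated moment map compute coresidues at $D$.
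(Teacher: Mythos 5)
Your proposal is correct and takes essentially the same route as the paper: the paper's entire proof consists of differentiating the adjoint action $\mathrm{Ad}\bigl(\exp(\varepsilon X_j)\bigr)\varphi_j(z_j)$ at $\varepsilon = 0$ to obtain the commutator $[X_j, \varphi_j(z_j)]$, exactly as in your first step. Your additional verifications (that the Levi decomposition keeps $[X_j,\varphi_j(z_j)]$ inside $\mathfrak{g}_{\theta_j}\otimes K(D)$, descent of the formula through the center $Z$, and patching with the trivial action on $X\setminus D$) are sound and go beyond what the paper explicitly records, but they do not change the approach.
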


\begin{proof}
The infinitesimal action of \(X_j \in \mathfrak{l}_{\theta_j}\) on the Higgs field \(\varphi_j(z_j)\) is induced by the adjoint action:
\[
\delta_X \varphi_j(z_j) = \left. \frac{d}{d\varepsilon} \right|_{\varepsilon=0} \mathrm{Ad}\left(\exp(\varepsilon X_j)\right) \varphi_j(z_j) = [X_j, \varphi_j(z_j)].
\]
This commutator arises naturally from the linearization of the adjoint action at the identity.
\end{proof}

We may now determine the action of the level group on parahoric $\mathcal{G}_{\boldsymbol \theta}$-torsors. 
\begin{thm}{(Freeness)}
The level group $G_D$ acts \emph{freely} on the regularly stable moduli space $\mathcal{U}^{rs}(X,\mathcal{G}_{\boldsymbol \theta})$ of parahoric $\mathcal{G}_{\boldsymbol \theta}$-torsors over $X$ 
with $D$-level structure. 
\end{thm}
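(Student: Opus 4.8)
The plan is to show that the stabilizer of an arbitrary regularly stable point is trivial. Suppose an element $g = (g_1, \dots, g_s) \in G_D$ fixes a class $[(\mathcal{E}, \eta)] \in \mathcal{U}^{rs}(X, \mathcal{G}_{\boldsymbol\theta})$, meaning the modified pair $(\mathcal{E}, g \cdot \eta)$ is equivalent to $(\mathcal{E}, \eta)$. Unwinding the equivalence relation on $D$-level structures, this produces an automorphism $h \in \mathrm{Aut}(\mathcal{E})$ of the underlying parahoric torsor that intertwines the two level structures, i.e.\ satisfying $h \circ (g_j \cdot \eta_{x_j}) = \eta_{x_j}$ at every marked point $x_j \in D$.

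First I would invoke regular stability. By Definition~\ref{def:parahoric-regular-stable}, a regularly stable torsor has $\mathrm{Aut}(\mathcal{E}) = Z(G)$, so the intertwining automorphism $h$ is central. Because $Z(G)$ lies in every Levi subgroup, $h$ acts on each local quotient $G_{\theta_{x_j}}/G_{\theta_{x_j}}^+ \cong L_{\theta_j}$ through its image $\bar h_j \in L_{\theta_j}$, and since the inclusion $Z(G) \hookrightarrow L_{\theta_j}$ is the same central embedding at each point, these images are all the single image $\bar h$ of $h$ under the diagonal map $Z(G) \to \prod_j L_{\theta_j}$.

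Next I would use that $L_{\theta_j}$ acts simply transitively on local level structures: a $D$-level structure at $x_j$ is a section to $G_{\theta_{x_j}}/G_{\theta_{x_j}}^+$, which is an $L_{\theta_j}$-torsor under left translation, so the intertwining relation forces $g_j = \bar h_j^{-1} = \bar h^{-1}$ for every $j$. Thus the tuple $(g_1, \dots, g_s)$ is the diagonal image of the central element $\bar h^{-1} \in Z$. Since $G_D = (L_{\theta_1} \times \cdots \times L_{\theta_s})/Z$ is by definition the quotient by exactly this diagonally embedded center, the tuple represents the identity of $G_D$; hence $g = e$ and the action is free.

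The hard part will be justifying the two structural claims that power the argument: that a central automorphism of the global torsor acts on each local level structure precisely through the common diagonal central element (rather than through unrelated local scalars), and that the level-group action on $\mathcal{U}(X,\mathcal{G}_{\boldsymbol\theta})$ restricts at each $x_j$ to genuine left translation on the $L_{\theta_j}$-torsor of sections, so that the local stabilizers are trivial. Both rest on the identification $G_{\theta_x}/G_{\theta_x}^+ \cong L_{\theta}$ from Section~\ref{sec:parh_Lie_algebra} and on compatibility of the center's embedding across the points of $D$; once these are pinned down, freeness follows formally.
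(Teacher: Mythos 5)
Your proposal is correct and follows essentially the same route as the paper's proof: a stabilizing element produces an automorphism of the underlying torsor, regular stability forces that automorphism to lie in $Z(G)$, and the intertwining with the level structures then pins the tuple $(g_1,\dots,g_s)$ down as the diagonal image of a central element, hence the identity in $G_D$. If anything, your final step---using that left translation on $G_{\theta_{x_j}}/G_{\theta_{x_j}}^{+}\cong L_{\theta_j}$ has trivial stabilizers to force $g_j=\bar h^{-1}$ for every $j$---is spelled out more explicitly than in the paper, which simply asserts that the lift $\tilde g$ must be diagonally central; just note that what you actually use is \emph{freeness} of this translation action on sections, not simple transitivity.
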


\begin{proof}
Let $(\mathcal{E}, \eta)$ be a point in $\mathcal{U}(X,\mathcal{G}_{\boldsymbol \theta})$. For each $i=1,...,s$, the level structure is a reduction of structure group from $G_{\theta_{x_i}}$ to $G_{\theta_{x_i}}^+$ over a formal disk $\mathbb{D}_{x_i}$, and the Levi factor $L_{\theta_i}$ naturally changes this local reduction. Hence $\prod_{i=1}^{s} L_{\theta_i}$ acts on such data, and this action factors through 
$G_D=(\prod_{i=1}^{s} L_{\theta_i})/Z$ because the center $Z$ acts trivially.

\noindent To show the action is free, let $g\in G_D$ fix a point $(\mathcal{E},\eta) \in \mathcal{U}^{rs}(X,\mathcal{G}_{\boldsymbol \theta})$. Lift $g$ to some element $\tilde{g} = (g_1, \dots, g_s) \in \prod_{i=1}^s L_{\theta_i}$. The action of $\tilde{g}$ on $(\mathcal{E}, \eta)$ by multiplying $g_i$ on the left of each $\eta_i$ yields a new pair $(\mathcal{E}', \eta')$. The fact that $g$ fixes $(\mathcal{E}, \eta)$ means there exists a global isomorphism of parahoric torsors $\phi: \mathcal{E} \to \mathcal{E}'$ that is compatible with the level structures, i.e., $\phi_*\eta = \eta'$.

\noindent The action of $\tilde{g}$ is defined locally. On each formal disk $\mathbb{D}_{x_i}$, the element $g_i \in L_{\theta_i}$ modifies the local trivialization of the torsor, which in turn defines the new torsor $\mathcal{E}'$ and level structure $\eta'$ via gluing. The condition that $\tilde{g}$ preserves the isomorphism class of $(\mathcal{E}, \eta)$ means that the newly constructed torsor $(\mathcal{E}', \eta')$ is isomorphic to the original one. This isomorphism $\phi: \mathcal{E} \to \mathcal{E}' \cong \mathcal{E}$ is an automorphism of the parahoric $\mathcal{G}_{\boldsymbol{\theta}}$-torsor $\mathcal{E}$.

\noindent  Because $\mathcal{E}$ is regularly stable, all of its global automorphisms 
lie in the center $Z \subset G$ by definition.

\noindent If $\phi\in Z$, then on  
$\prod_{i=1}^s L_{\theta_i}$, the element $\tilde{g}$ is also in $Z$ (viewed diagonally).  
Thus $g\in G_D$ is the identity element of $G_D$.  
Therefore any $g\in G_D$ fixing a point of $\mathcal{U}(X,\mathcal{G}_{\boldsymbol \theta})$ is the identity.  
This proves the action of $G_D$ on $\mathcal{U}^{rs}(X,\mathcal{G}_{\boldsymbol \theta})$ is \emph{free}. 
\end{proof}

\subsection{\texorpdfstring{Poisson Action of $G_D$ and the Moment Map}{Poisson Action of GD and the Moment Map}}

We now prove that the natural action of \(G_{D}\) on the moduli space
$\mathcal{U}(X,\mathcal{G}_{\boldsymbol \theta})$ extends to a Poisson action on its cotangent bundle \(T^*\mathcal{U}(X,\mathcal{G}_{\boldsymbol \theta})\).  In particular, we identify a \emph{canonical moment map}, showing explicitly that it arises from a (co)residue pairing when viewed through Serre duality.

We can perform a Hamiltonian lifting of the \(G_D\)-action from \(\mathcal{U}(X,\mathcal{G}_{\boldsymbol \theta})\) to \(T^*\mathcal{U}(X,\mathcal{G}_{\boldsymbol \theta})\) as in Example \ref{exm:Ham_lift}. 
Hence we will get a Poisson \(G_D\)-action on the cotangent immediately once we exhibit a moment map
\[
  \mu : T^*\mathcal{U}(X,\mathcal{G}_{\boldsymbol \theta}) \;\longrightarrow\;\mathfrak{g}_D^*.
\]
In this direction, let \(\mathcal{E}\) be a parahoric torsor and consider the short exact sequence
\[
0\to \mathcal{E}(\mathfrak{g})_\eta\longrightarrow \mathcal{E}(\mathfrak{g})\stackrel{\pi}{\longrightarrow}\bigoplus_{x_j\in D}\mathcal{E}(\mathfrak{g}_{\theta_{x_j}}/\mathfrak{g}_{\theta_{x_j}}^+)\to 0.
\]
Taking cohomology yields
\[
0\to H^0\bigl(X,\mathcal{E}(\mathfrak{g})_\eta\bigr)\longrightarrow H^0\bigl(X,\mathcal{E}(\mathfrak{g})\bigr)
\longrightarrow \bigoplus_{x_j\in D}H^0\Bigl(\mathbb{D}_{x_j},\mathcal{E}(\mathfrak{g}_{\theta_{x_j}}/\mathfrak{g}_{\theta_{x_j}}^+)\Bigr)
\]
\[
\longrightarrow H^1\bigl(X,\mathcal{E}(\mathfrak{g})_\eta\bigr)
\stackrel{H^1(\pi)}{\longrightarrow} H^1\bigl(X,\mathcal{E}(\mathfrak{g})\bigr)\to 0.
\]
By Serre duality, we have the commutative diagram:
\begin{center}
\begin{tikzcd}[column sep=6em]
H^1\bigl(X,\mathcal{E}(\mathfrak{g})_\eta\bigr)
	\arrow[r, "H^1(\pi)"]
	\arrow[d, "\cong"']
& H^1\bigl(X,\mathcal{E}(\mathfrak{g})\bigr)
	\arrow[d, "\cong"]\\[1mm]
H^0\Bigl(X,\mathcal{E}(\mathfrak{g})^*\otimes K_X(D)\Bigr)^*
	\arrow[r, "{(H^0(\pi\otimes\mathrm{id}_{K_X}))^*}", shorten <=1em, shorten >=1em]
& H^0\Bigl(X,\mathcal{E}(\mathfrak{g})^*\otimes K_X\Bigr)^*.
\end{tikzcd}
\end{center}
Thus, one obtains canonical isomorphisms
\[
\Biggl[\frac{\bigoplus_{x_j\in D}H^0\bigl(\mathbb{D}_{x_j},\mathcal{E}(\mathfrak{g}_{\theta_{x_j}}/\mathfrak{g}_{\theta_{x_j}}^+)\bigr)}{H^0\bigl(X,\mathcal{E}(\mathfrak{g})\bigr)}\Biggr]
\cong\mathrm{Ker}\Bigl[H^0\bigl(\pi\otimes\operatorname{id}_{K_X}\bigr)^*\Bigr].
\]
Now, identifying the cokernel of \(H^0(\pi\otimes\operatorname{id}_{K_X})\) with the dual of the kernel above, we define a homomorphism
\[
\mu_{\mathcal{E}}: H^0\Bigl(X,\mathcal{E}(\mathfrak{g})^*\otimes K_X(D)\Bigr)
\longrightarrow \Biggl[\frac{\bigoplus_{x_j\in D}H^0\Bigl(\mathbb{D}_{x_j},\mathcal{E}(\mathfrak{g}_{\theta_{x_j}}/\mathfrak{g}_{\theta_{x_j}}^+)\Bigr)}{H^0\bigl(X,\mathcal{E}(\mathfrak{g})\bigr)}\Biggr]^*.
\]
More precisely, after composing with the natural projection and injection, one obtains
\[
\mu_{\mathcal{E}}:\; H^0\Bigl(X,\mathcal{E}(\mathfrak{g})^*\otimes K_X(D)\Bigr)
\longrightarrow \Biggl[\frac{\bigoplus_{x\in D}H^0\bigl(x,\mathcal{E}(\mathfrak{g}_{\theta_x}/\mathfrak{g}_{\theta_x}^+)\bigr)}{\mathbb{C}}\Biggr]^*.
\]
We thus introduce the following:
\begin{defn}\label{defn:moment_map}
Define the moment map
\[
\mu:T^*\mathcal{U}(X,\mathcal{G}_{\boldsymbol{\theta}})\longrightarrow \mathfrak{g}_D^*
\]
by sending a point represented by \((\mathcal{E},\varphi,\eta)\) to
\[
\mu(\bigl[(\mathcal{E},\eta)\bigr],\varphi) = \eta\circ\bigl(\mu_{\mathcal{E}}(\varphi)\bigr)\circ \eta^{-1}.
\]
Since $\eta$ is a $D$-level structure, it identifies each fiber of $\mathcal{E}$ at $x_j\in D$ with $G_{\theta_{x_j}}$.  Hence we compose with the appropriate adjoint-conjugation $\eta \circ (-)\circ \eta^{-1}$ so that everything is \emph{intrinsically} defined in $\mathfrak{g}_D^*$, independent of the choice of local trivialization.  Symbolically, we get \(\eta\circ\mu_{\mathcal{E}}(\varphi)\circ\eta^{-1}\).
Notice also that although the level structure \(\eta\) is defined only up to the natural \(\operatorname{Aut}(\mathcal{E})\)-action, the composition \(\eta\circ\mu_{\mathcal{E}}(\varphi)\circ\eta^{-1}\) is well-defined.
\end{defn}

The homomorphism \(\mu_{\mathcal{E}}\) above is induced by the natural pairing
\begin{equation}\label{rem:residue_pairing}
H^0\Bigl(X,\mathcal{E}(\mathfrak{g})^*\otimes K_X(D)\Bigr)
\otimes \bigoplus_{x\in D}H^0\Bigl(\mathbb{D}_x,\mathcal{E}(\mathfrak{g}_{\theta_x}/\mathfrak{g}_{\theta_x}^+)\Bigr)
\longrightarrow H^0\bigl(X,K_X(D)\bigr)
\stackrel{\operatorname{Res}}{\longrightarrow}\mathbb{C}.
\end{equation}
This pairing provides the link to the local structure. At a point $x_j \in D$, let $\phi_j$ be a local representative of the Higgs field $\varphi$. The Lie algebra $\mathfrak{g}_{\theta_j}$ has the Levi decomposition $\mathfrak{g}_{\theta_j} = \hat{\mathfrak{l}}_{\theta_j} \oplus \mathfrak{g}_{\theta_j}^{+}$, which is an orthogonal direct sum with respect to the Killing form, which we denote by $(\cdot, \cdot)$. The term $\mathcal{E}(\mathfrak{g}_{\theta_j})/\mathcal{E}(\mathfrak{g}_{\theta_j}^+)$ is the bundle associated to the Levi quotient $\mathfrak{l}_{\theta_j} \cong \hat{\mathfrak{l}}_{\theta_j}$. When pairing $\phi_j$ with an element $Y_j \in \hat{\mathfrak{l}}_{\theta_j}$, the component of $\phi_j$ in $\mathfrak{g}_{\theta_j}^{+}$ gives a zero contribution. Thus, the pairing only sees the projection of $\phi_j$ onto $\hat{\mathfrak{l}}_{\theta_j}$. We call this projection the \emph{residue}, $\operatorname{Res}_{x_j}(\varphi)$. The pairing $(\phi_j, Y_j)$ becomes $(\operatorname{Res}_{x_j}(\varphi), Y_j)$. 

The resulting functional on $\hat{\mathfrak{l}}_{\theta_j}$ is the \emph{coresidue} of $\varphi$ at $x_j$, an element of $\hat{\mathfrak{l}}_{\theta_j}^*$. The moment map $\mu$ is therefore the collection of these coresidues over all $x_j \in D$.

\begin{thm}\label{thm:PoissonGD}
The group \(G_{D}\) acts \emph{Poisson} on \(T^*\mathcal{U}(X,\mathcal{G}_{\boldsymbol \theta})\).  Moreover, the \emph{canonical} moment map
\[
  \mu \colon T^*\mathcal{U}(X,\mathcal{G}_{\boldsymbol \theta})\;\longrightarrow\;\mathfrak{g}_D^*
\]
is given by dualizing the infinitesimal action and can be explicitly described via coresidues at the divisor \(D\). Its image is the element of $\mathfrak{g}_D^* = \bigoplus_{j=1}^s \hat{\mathfrak{l}}_{\theta_j}^*$ given by the direct sum of the coresidues at each point $x_j \in D$:
\[
\mu([(\mathcal{E},\eta)],\varphi) = \bigoplus_{j=1}^s \operatorname{CoRes}_{x_j}(\varphi).
\]
Explicitly, for any element $Y = (Y_1, \dots, Y_s) \in \mathfrak{g}_D = \bigoplus_{j=1}^s \hat{\mathfrak{l}}_{\theta_j}$, the pairing is given by the sum of Killing form pairings over the divisor $D$:
\[
\langle \mu([(\mathcal{E},\eta)],\varphi), Y \rangle = \sum_{j=1}^s (\operatorname{Res}_{x_j}(\varphi), Y_j).
\]
\end{thm}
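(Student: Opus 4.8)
The plan is to deduce both assertions from the general mechanism of Example~\ref{exm:Ham_lift}. The $G_D$-action on the base $\mathcal{U}(X,\mathcal{G}_{\boldsymbol\theta})$ described in Proposition~\ref{prop:localAction} lifts canonically to $T^*\mathcal{U}(X,\mathcal{G}_{\boldsymbol\theta})$, and the lifted action on a cotangent bundle equipped with its canonical symplectic form is automatically Hamiltonian, hence Poisson. This settles the first claim at once, and forces the moment map to be
\[
\langle \mu([(\mathcal{E},\eta)],\varphi), Y\rangle = \langle \varphi, Y_{\mathcal{U}}([(\mathcal{E},\eta)])\rangle, \qquad Y\in\mathfrak{g}_D,
\]
where $Y_{\mathcal{U}}$ denotes the fundamental vector field of $Y$ and $\varphi$ is the cotangent vector. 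Thus everything reduces to computing this fundamental vector field and dualizing it.

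First I would identify the fundamental vector field. By Proposition~\ref{prop:tangent_U} the tangent space is $T_{[(\mathcal{E},\eta)]}\mathcal{U}(X,\mathcal{G}_{\boldsymbol\theta})\cong H^1(X,\mathcal{E}(\mathfrak{g})_\eta)$, and by Lemma~\ref{lem:infinitesimalAction} the action of $Y=(Y_1,\dots,Y_s)$ on the level structure is the local commutator at each $x_j\in D$. The key observation is that the infinitesimal-action map $\mathfrak{g}_D \to H^1(X,\mathcal{E}(\mathfrak{g})_\eta)$ is precisely the connecting homomorphism $\delta$ in the long exact sequence associated to
\[
0\to \mathcal{E}(\mathfrak{g})_\eta\to \mathcal{E}(\mathfrak{g})\to \bigoplus_{x_j\in D}\mathcal{E}(\mathfrak{g}_{\theta_{x_j}}/\mathfrak{g}_{\theta_{x_j}}^+)\to 0.
\]
Indeed, under the canonical isomorphism $\mathfrak{g}_{\theta_{x_j}}/\mathfrak{g}_{\theta_{x_j}}^+\cong \hat{\mathfrak{l}}_{\theta_j}$ of Section~\ref{sec:parh_Lie_algebra}, the local term $\bigoplus_{x_j\in D}H^0(\mathbb{D}_{x_j},\mathcal{E}(\mathfrak{g}_{\theta_{x_j}}/\mathfrak{g}_{\theta_{x_j}}^+))$ is exactly $\bigoplus_j \hat{\mathfrak{l}}_{\theta_j}$. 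The preceding arrow has image $H^0(X,\mathcal{E}(\mathfrak{g}))\cong\mathfrak{z}$ by Proposition~\ref{prop:parahoric_3.2}, so $\delta$ descends to the quotient by the diagonal copy of the center, i.e.\ precisely to $\mathfrak{g}_D=(\bigoplus_j \hat{\mathfrak{l}}_{\theta_j})/\mathfrak{z}$; this is exactly the passage from $\prod_j L_{\theta_j}$ to $G_D$ by quotienting out $Z$. Recording how a local Levi element modifies the gluing of the level structure is the \v{C}ech boundary operation, whence $Y_{\mathcal{U}}=\delta(Y)$.

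Next I would dualize via Serre duality. The cotangent vector is a logarithmic Higgs field $\varphi\in H^0(X,\mathcal{E}(\mathfrak{g})\otimes K_X(D))$, the Serre dual of $H^1(X,\mathcal{E}(\mathfrak{g})_\eta)$. By functoriality of Serre duality with respect to the long exact sequence, the pairing $\langle \varphi, \delta(Y)\rangle$ is computed by the coresidue pairing~\eqref{rem:residue_pairing}; concretely it equals the sum over $j$ of $\operatorname{Res}_{x_j}$ of the Killing-form contraction of $\varphi$ against $Y_j$. Writing the Killing form as $(\cdot,\cdot)$ this gives
\[
\langle \mu([(\mathcal{E},\eta)],\varphi), Y\rangle = \sum_{j=1}^s (\operatorname{Res}_{x_j}(\varphi), Y_j),
\]
as claimed. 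Since the Levi decomposition $\mathfrak{g}_{\theta_j}=\hat{\mathfrak{l}}_{\theta_j}\oplus\mathfrak{g}_{\theta_j}^+$ is Killing-orthogonal and $Y_j\in\hat{\mathfrak{l}}_{\theta_j}$, only the projection of the local residue onto $\hat{\mathfrak{l}}_{\theta_j}$ survives, which is by definition $\operatorname{CoRes}_{x_j}(\varphi)\in\hat{\mathfrak{l}}_{\theta_j}^*$; summing over $D$ yields $\mu([(\mathcal{E},\eta)],\varphi)=\bigoplus_{j=1}^s\operatorname{CoRes}_{x_j}(\varphi)$. Equivariance $\mu(g\cdot x)=\operatorname{Ad}^*_g\mu(x)$ follows because the $G_D$-action is by the adjoint action on $\varphi$ (Proposition~\ref{prop:localAction}) and the residue pairing is $G_D$-invariant, while independence of the choice of $\eta$ (up to $\operatorname{Aut}(\mathcal{E})$) is guaranteed by the conjugation built into Definition~\ref{defn:moment_map}.

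The main obstacle I anticipate lies in the second step: proving rigorously that the infinitesimal $G_D$-action coincides with the connecting homomorphism $\delta$, and then that its Serre dual is exactly the residue/coresidue pairing. This is the compatibility between the deformation-theoretic (\v{C}ech) description of $H^1(X,\mathcal{E}(\mathfrak{g})_\eta)$ and the local analytic residue pairing, and transporting the Serre duality isomorphisms coherently across the whole long exact sequence—while verifying that the dual of the boundary map indeed lands in the Killing-orthogonal Levi complement—is the technical heart of the argument.
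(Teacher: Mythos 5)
Your proposal is correct and follows essentially the same route as the paper: both deduce the Poisson property from the canonical Hamiltonian lift of Example~\ref{exm:Ham_lift}, identify the moment map as the Serre dual of the infinitesimal action $\sigma:\mathfrak{g}_D\to H^1(X,\mathcal{E}(\mathfrak{g})_\eta)$, and compute it through the long exact sequence of $0\to\mathcal{E}(\mathfrak{g})_\eta\to\mathcal{E}(\mathfrak{g})\to\bigoplus_{x_j\in D}\mathcal{E}(\mathfrak{g}_{\theta_{x_j}}/\mathfrak{g}_{\theta_{x_j}}^+)\to 0$ combined with the residue pairing \eqref{rem:residue_pairing} and the Killing-orthogonality of the Levi decomposition. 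Your explicit identification of the fundamental vector field with the connecting homomorphism $\delta$, together with the appeal to Proposition~\ref{prop:parahoric_3.2} to justify descending to the quotient by the center, merely makes precise what the paper phrases as ``local transformations glue to give a global 1-cocycle'' and what is built into its construction of $\mu_{\mathcal{E}}$ before Definition~\ref{defn:moment_map}.
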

\begin{proof}
Let us denote here for convenience \(U := \mathcal{U}(X,\mathcal{G}_{\boldsymbol \theta})\). For a stable parahoric $\mathcal{G}_{\boldsymbol \theta}$-torsor \(\mathcal{E}\) with \(D\)-level structure \(\eta\), we have an isomorphism
$T_{[(\mathcal{E}, \eta)]}U \cong H^1(X, \mathcal{E}(\mathfrak{g})_\eta)$,
where \(\mathcal{E}(\mathfrak{g})_\eta\subset \mathcal{E}(\mathfrak{g})\) denotes the subsheaf consisting of local adjoint-valued sections that preserve the $D$-level structure.

\noindent Dualizing and invoking Serre duality (together with the twist by \(D\) at the points supporting the parahoric structure), one obtains
$T^*_{[(\mathcal{E}, \eta)]}U \cong H^1(X, \mathcal{E}(\mathfrak{g})_\eta)^* \simeq H^0(X, \mathcal{E}(\mathfrak{g})\otimes K(D))$.
Hence a point of \(T^*U\) can be represented by a \emph{logarithmic Higgs field} $\varphi$.

\noindent To identify \(\mu\) explicitly, we must describe how \(\mathfrak{g}_D\) acts infinitesimally on \(U\). Let
\[
  \sigma \colon \mathfrak{g}_D \;\longrightarrow\; T_{[(\mathcal{E}, \eta)]}U
  \;\cong\;
  H^1(X, \mathcal{E}(\mathfrak{g})_\eta)
\]
be the differential of the map \(\rho_{[(\mathcal{E}, \eta)]} \colon G_D \to U\) sending \(g\mapsto g\cdot [(\mathcal{E},\eta)]\). Concretely, each element of \(\mathfrak{g}_D\) corresponds to an infinitesimal transformation of the \(D\)-level structure at each parahoric point, and these local transformations glue to give a global 1-cocycle in \(\mathcal{E}(\mathfrak{g})_\eta\).

\noindent By general principles of Hamiltonian actions on cotangent bundles, the moment map \(\mu\) of Definition \ref{defn:moment_map} is precisely the dual of \(\sigma\). For a cotangent vector $([(\mathcal{E},\eta)], \varphi)$ and an element $X \in \mathfrak{g}_D$, we have $\mu([(\mathcal{E}, \eta)], \varphi)(X) = \langle \varphi, \sigma(X)\rangle$. As established in the discussion following (\ref{rem:residue_pairing}), this pairing precisely computes the coresidue.

\noindent To show equivariance, we must prove that for $g \in G_D$ and $\varphi' = \operatorname{Ad}(g)\varphi$, the moment map transforms via the coadjoint action, i.e., $\mu(\varphi') = \operatorname{Ad}^*(g)\mu(\varphi)$. It is sufficient to verify this at each point $x_j \in D$. Let us denote the duality pairing between $\hat{\mathfrak{l}}_{\theta_j}^*$ and $\hat{\mathfrak{l}}_{\theta_j}$ by $\langle \cdot, \cdot \rangle$, and the Killing form on $\hat{\mathfrak{l}}_{\theta_j}$ by $(\cdot, \cdot)$. We need to show that $\operatorname{CoRes}_{x_j}(\varphi') = \operatorname{Ad}^*(g_j)\operatorname{CoRes}_{x_j}(\varphi)$, but this is immediate by the definition of coresidue and its action.

\noindent Since we have shown that $\langle \operatorname{CoRes}_{x_j}(\varphi'), Y_j \rangle = \langle \operatorname{Ad}^*(g_j)\operatorname{CoRes}_{x_j}(\varphi), Y_j \rangle$, for all $Y_j \in \hat{\mathfrak{l}}_{\theta_j}$, we conclude that $\operatorname{CoRes}_{x_j}(\varphi') = \operatorname{Ad}^*(g_j)\operatorname{CoRes}_{x_j}(\varphi)$. The total moment map transforms accordingly, proving the theorem.
\end{proof}

\subsection{The Infinitesimal Deformation Complex}

Let \(X\) be a smooth complex  projective curve.
In \cite{KSZparh} we have constructed the moduli space $\mathcal{M}_H(X, \mathcal{G}_{\boldsymbol \theta})$  of $R$-semistable logahoric $\mathcal{G}_{\boldsymbol \theta}$-Higgs torsors $(\mathcal{E}, \varphi)$ over $X$. We now consider isomorphism classes of triples $(\mathcal{E}, \varphi, \eta)$, where
\begin{itemize}
    \item \(\mathcal{E}\) is a stable parahoric \(\mathcal{G}_{\boldsymbol{\theta}}\)-torsor over \(X\),
    \item \(\varphi\in H^0\bigl(X,\mathcal{E}(\mathfrak{g})\otimes K(D)\bigr)\) is a logarithmic Higgs field, and 
    \item \(\eta\) is a \(D\)-level structure on \(\mathcal{E}\).
\end{itemize}
One can construct a coarse moduli space \(M_{LH}(X, \mathcal{G}_{\boldsymbol \theta})\) of triples $(\mathcal{E}, \varphi, \eta)$ as above using a similar approach as the one in Section \ref{sec:moduli_space_U_constr} for constructing the moduli space $\mathcal{U}(X, \mathcal{G}_{\boldsymbol \theta})$. In particular, this involves the correspondence between pairs $(\mathcal{E}, \varphi)$ over $X$ and equivariant logarithmic $G$-Higgs bundles on a Galois cover of $X$; see \cite[Section 3]{KSZparh} for this correspondence. Then, equipping the pairs $(\mathcal{E}, \varphi)$ with a $D$-level structure not  affecting the stability condition of the logahoric $\mathcal{G}_{\boldsymbol \theta}$-Higgs torsor allows us to construct a good quotient as in Section \ref{sec:moduli_space_U_constr}. We call the moduli space \(M_{LH}(X, \mathcal{G}_{\boldsymbol \theta})\), the \emph{moduli space of leveled logahoric $\mathcal{G}_{\boldsymbol \theta}$-Higgs torsors} over $X$. Note that then the cotangent \(T^*\mathcal{U}_X(G,D)\) of the moduli space \(\mathcal{U}_X(G,D)\) is an open subset of this moduli space \(M_{LH}(X, \mathcal{G}_{\boldsymbol \theta})\).

We would like to consider the deformation theory for this moduli space. Given a logarithmic Higgs field
\[
\varphi\in H^0\bigl(X,\mathcal{E}(\mathfrak{g})\otimes K(D)\bigr),
\]
we define the \emph{adjoint action}
\[
\operatorname{ad}\varphi: \mathcal{E}(\mathfrak{g})\to \mathcal{E}(\mathfrak{g})\otimes K(D),\quad \text{ with }\operatorname{ad}\varphi(\psi)=\varphi\circ\psi-\psi\circ\varphi.
\]
Then, a natural two-term complex governing the deformations of the triple \((\mathcal{E},\varphi,\eta)\) is
\[
\mathcal{K}_{\mathcal{E},\varphi,\eta}\colon\quad \mathcal{E}(\mathfrak{g}) \xrightarrow{\partial_{\varphi,\eta}} \bigl(\mathcal{E}(\mathfrak{g})\otimes K(D)\bigr)\oplus \bigoplus_{x_j\in D} \left[\mathcal{E}(\mathfrak{g}_{\theta_{x_j}}/\mathfrak{g}_{\theta_{x_j}}^+)\right],
\]
where the differential is given by
\[
\partial_{\varphi,\eta}(\psi) = \operatorname{ad}\varphi(\psi) \oplus  m_\eta(\psi).
\]
Here, the term \(m_\eta(\psi)\) arises from the level structure. In practice, the level structure \(\eta\) defines a reduction of the fibers of \(\mathcal{E}(\mathfrak{g})\) over \(D\) (via, say, a choice of splitting or a canonical quotient) and induces a homomorphism
\begin{equation}\label{dfn:mh}
m_\eta\colon \mathcal{E}(\mathfrak{g})\to \bigoplus_{x_j\in D} \left[\mathcal{E}(\mathfrak{g}_{\theta_{x_j}}/\mathfrak{g}_{\theta_{x_j}}^+)\right].
\end{equation}
We will prove in Lemma \ref{quasi-isom} later on that this complex is quasi-isomorphic to the complex 
\[
\mathcal{K}_{\mathcal{E},\varphi}\colon\quad \mathcal{E}(\mathfrak{g}^+) \xrightarrow{\partial} \mathcal{E}(\mathfrak{g})\otimes K(D),
\]
by the mapping
$\partial(\varphi)=\operatorname{ad}(\varphi)$, for the Lie algebra bundle $\mathcal{E}(\mathfrak{g}^+)$ defined by the gluing of the Lie algebra bundles $\mathcal{E}(\mathfrak{g}^+_{\theta_{x_j}})$ from Section \ref{sec:parh_Lie_algebra}.

To prove that this complex is indeed the deformation, we perform a local calculation. Choose a \v{C}ech covering $\{U_{\alpha}\}$ of $X$ and let $\mathcal{W}':=\{W_{\alpha}\}$ be the covering of $S \times X$, for $W_{\alpha}=S \times U_{\alpha}$, where $S=\mathrm{Spec}(\mathbb{C}[\epsilon]/(\epsilon^2))$ is an infinitesimal family. We have the following:
\begin{lem}
Fix a logahoric $\mathcal{G}_{\boldsymbol \theta}$-Higgs torsor \((\mathcal{E}_0, \varphi_0)\) with a \(D\)-level structure \(\eta_0\). A cochain 
\[
(\hat{f},\hat{\varphi},\hat{\eta}):=((\hat{f}_{\alpha\beta}),\, \hat{\varphi}_\alpha,\, \hat{\eta}_\alpha)
\]
in \(C^0(\mathcal{W}', \mathcal{K}_{\mathcal{E},\varphi,\eta})\) is a cocycle if and only if:
\begin{enumerate}
    \item The cochain \(\hat{f} = (\hat{f}_{\alpha\beta})\) is a cocycle in \(Z^1(\mathcal{U}, \mathcal{E}_0(\mathfrak{g}))\), that is, it defines an infinitesimal deformation of the parahoric $\mathcal{G}_{\boldsymbol \theta}$-torsor \(\mathcal{E}_0\).
    \item The perturbed Higgs field
    \[
    \varphi_0 + \epsilon\cdot \hat{\varphi} \in C^0\Bigl(\mathcal{W}',\, p^*\bigl(\mathcal{E}_0(\mathfrak{g})\otimes K(D)\bigr)\Bigr)
    \]
    is a global section of \(\mathcal{E}(\mathfrak{g})\otimes p^*{K(D)}\), where \(\mathcal{E}\) is the infinitesimal family of parahoric $\mathcal{G}_{\boldsymbol \theta}$-torsors over \(S\times X\) defined by the new gluing transformations 
    \[
    f_{\alpha\beta} = \operatorname{id} + \epsilon \cdot \hat{f}_{\alpha\beta}
    \]
    for \(p^*\mathcal{E}_0\), where $p$ is the projection on $X$.
    \item The perturbed level structure
    \[
    \eta_0 + \epsilon\cdot \hat{\eta} \in C^0\Bigl(\mathcal{W}',\, p^*\Bigl(\bigoplus_{x_j\in D}\Bigl[\mathcal{E}(\mathfrak{g}_{\theta_{x_j}}/\mathfrak{g}_{\theta_{x_j}}^+)\Bigr]\Bigr)\Bigr)
    \]
    is a global section \(\eta\) of the sheaf defining the \(D'\)-level structure (with the fibers \(V_{x_j}\) obtained from the natural quotients associated to the parahoric groups at \(x_j\in D'\)).
\end{enumerate}
Moreover, two cocycles 
\[
((\hat{f}_{\alpha\beta}),\, \hat{\varphi}_\alpha,\, \hat{\eta}_\alpha) \quad\text{and}\quad ((\hat{f}'_{\alpha\beta}),\, \hat{\varphi}'_\alpha,\, \hat{\eta}'_\alpha)
\]
represent the same hypercohomology class if and only if the corresponding infinitesimal families 
\[
(\mathcal{E},\varphi,\eta)\quad \text{and}\quad (\mathcal{E}',\varphi',\eta')
\]
are isomorphic.
\end{lem}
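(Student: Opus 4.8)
The plan is to compute the first hypercohomology of $\mathcal{K}_{\mathcal{E},\varphi,\eta}$ through the total complex of the \v{C}ech double complex for the covering $\mathcal{U}=\{U_\alpha\}$ (equivalently $\mathcal{W}'$ over $S\times X$), and to match its degree-one cocycles and coboundaries with the geometric data of a first-order family over $S=\operatorname{Spec}(\mathbb{C}[\epsilon]/(\epsilon^2))$. Writing $A^0=\mathcal{E}(\mathfrak{g})$ for the left term and $A^1=\bigl(\mathcal{E}(\mathfrak{g})\otimes K(D)\bigr)\oplus\bigoplus_{x_j\in D}\bigl[\mathcal{E}(\mathfrak{g}_{\theta_{x_j}}/\mathfrak{g}_{\theta_{x_j}}^+)\bigr]$ for the right term, with differential $\partial_{\varphi,\eta}=\operatorname{ad}\varphi\oplus m_\eta$, a total degree-one cochain consists of $\hat f=(\hat f_{\alpha\beta})\in \check C^1(\mathcal{U},A^0)$ together with $(\hat\varphi,\hat\eta)=(\hat\varphi_\alpha,\hat\eta_\alpha)\in \check C^0(\mathcal{U},A^1)$, which is exactly the tuple in the statement. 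First I would expand the total differential $D=\delta\pm\partial_{\varphi,\eta}$ and impose $D(\hat f,\hat\varphi,\hat\eta)=0$.

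Reading off the $\check C^2(\mathcal{U},A^0)$-component gives $\delta\hat f=0$, which I would identify with condition $(1)$: writing the deformed transitions as $f_{\alpha\beta}=\operatorname{id}+\epsilon\,\hat f_{\alpha\beta}$, the associativity $f_{\alpha\beta}f_{\beta\gamma}=f_{\alpha\gamma}$ holds modulo $\epsilon^2$ precisely when $\hat f_{\alpha\beta}+\operatorname{Ad}_{g_{\alpha\beta}}(\hat f_{\beta\gamma})=\hat f_{\alpha\gamma}$, i.e. when $\hat f\in Z^1(\mathcal{U},\mathcal{E}_0(\mathfrak{g}))$ defines the infinitesimal family $\mathcal{E}$. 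The remaining $\check C^1(\mathcal{U},A^1)$-component splits along the direct sum into a Higgs part and a level part. For the Higgs part, the collection $\{\varphi_0+\epsilon\,\hat\varphi_\alpha\}$ glues to a global section of $\mathcal{E}(\mathfrak{g})\otimes p^*K(D)$ exactly when $\operatorname{Ad}(f_{\alpha\beta})(\varphi_0+\epsilon\,\hat\varphi_\beta)=\varphi_0+\epsilon\,\hat\varphi_\alpha$ on overlaps; expanding to first order yields $\hat\varphi_\alpha-\hat\varphi_\beta=[\hat f_{\alpha\beta},\varphi_0]=-\operatorname{ad}\varphi_0(\hat f_{\alpha\beta})$, i.e. the vanishing of $\delta\hat\varphi+\operatorname{ad}\varphi_0(\hat f)$, which is condition $(2)$. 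The level part is formally identical with $m_{\eta_0}$ in place of $\operatorname{ad}\varphi_0$, giving condition $(3)$; conversely $(1)$–$(3)$ reassemble into $D(\hat f,\hat\varphi,\hat\eta)=0$, establishing the cocycle characterization.

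For the final assertion I would identify total degree-zero cochains $\{g_\alpha\}\in\check C^0(\mathcal{U},\mathcal{E}_0(\mathfrak{g}))$ with infinitesimal gauge transformations $\operatorname{id}+\epsilon\,g_\alpha$ on the patches $U_\alpha$. Their coboundary is $D\{g_\alpha\}=\bigl(\delta g,\,-\operatorname{ad}\varphi_0(g),\,-m_{\eta_0}(g)\bigr)$, and adding it to $(\hat f,\hat\varphi,\hat\eta)$ is precisely the effect of applying $\operatorname{id}+\epsilon\,g_\alpha$ simultaneously to the gluing data, the Higgs field, and the level structure. Such a family of gauge transformations is exactly an isomorphism of the two infinitesimal families $(\mathcal{E},\varphi,\eta)$ and $(\mathcal{E}',\varphi',\eta')$ over $S\times X$ restricting to the identity at $\epsilon=0$, and every such isomorphism arises this way to first order; hence two cocycles are cohomologous if and only if the corresponding families are isomorphic.

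I expect the main obstacle to be the level-structure term. One must verify that the map $m_\eta$ of (\ref{dfn:mh}), which is built from the parahoric reduction at $D$, genuinely coincides with the linearization of the descent constraint on $\eta_0+\epsilon\,\hat\eta$ at the parahoric points, where the relevant sheaves are the skyscraper-type quotients $\mathcal{E}(\mathfrak{g}_{\theta_{x_j}}/\mathfrak{g}_{\theta_{x_j}}^+)$ supported at each $x_j$. Concretely, this requires checking that a $G_{\theta_{x_j}}^+$-reduction deforms compatibly with the deformed parahoric transition functions $f_{\alpha\beta}$, so that the first-order obstruction to globalizing $\eta_0+\epsilon\,\hat\eta$ is exactly $\delta\hat\eta+m_{\eta_0}(\hat f)$ and not some correction involving the full $\operatorname{Ad}$-action on $\mathfrak{g}_{\theta_{x_j}}$; the orthogonality of the Levi decomposition $\mathfrak{g}_{\theta_{x_j}}=\hat{\mathfrak{l}}_{\theta_{x_j}}\oplus\mathfrak{g}_{\theta_{x_j}}^+$ used elsewhere in the paper should make this identification transparent.
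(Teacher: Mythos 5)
Your proposal is correct and follows essentially the same route as the paper: both expand the gluing data, Higgs field, and level structure to first order in $\epsilon$, identify the three components of the (hyper)cocycle condition with $\delta\hat f=0$, $\delta\hat\varphi=-\operatorname{ad}_{\varphi_0}(\hat f)$, and $\delta\hat\eta=-\eta_0\circ\hat f$ (your $m_{\eta_0}$-term), and then match coboundaries of $0$-cochains $\{g_\alpha\}$ with infinitesimal gauge transformations $\operatorname{id}+\epsilon g_\alpha$ realizing isomorphisms of the infinitesimal families. The point you flag as the main obstacle — that $m_\eta$ is the linearization of the descent constraint on $\eta_0+\epsilon\hat\eta$ — is handled in the paper exactly as you anticipate, by the direct expansion $(\eta_0+\epsilon\hat\eta)\circ(\operatorname{id}+\epsilon\hat f)=\eta_0+\epsilon(\hat\eta+\eta_0\circ\hat f)$.
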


\begin{proof}
A cochain \((\hat{f},\hat{\varphi},\hat{\eta})\) in the complex \(\mathcal{K}_{\mathcal{E},\varphi,\eta}\) is a cocycle if and only if the following conditions hold:
\begin{enumerate}
    \item The \v{C}ech differential \(\delta(\hat{f})\) vanishes, i.e., 
    \[
    \delta(\hat{f})=0,
    \]
    so that \(\hat{f}\) defines an infinitesimal deformation of the parahoric torsor (more precisely, of its adjoint bundle) via the usual cocycle condition in \(Z^1(\mathcal{U}, \mathcal{E}_0(\mathfrak{g}))\).
    \item The compatibility between the deformed Higgs field and the new gluing is expressed by
    \[
    -\operatorname{ad}_{\varphi_0}(\hat{f}) = \delta(\hat{\varphi}),
    \]
    which ensures that the perturbed Higgs field \(\varphi_0+\epsilon\hat{\varphi}\) is well--defined on the deformed torsor. To see this, note that on triple overlaps one must have 
    \[
    f_{\beta\gamma}\varphi_0 f_{\alpha\beta}^{-1} = \varphi_\alpha,
    \]
    and writing 
    \[
    f_{\alpha\beta} = \operatorname{id} + \epsilon\hat{f}_{\alpha\beta},\quad \varphi_\alpha = \varphi_0 + \epsilon\hat{\varphi}_\alpha,
    \]
    one checks (to first order in \(\epsilon\)) that the condition is equivalent to 
    \[
    -\operatorname{ad}_{\varphi_0}(\hat{f}_{\alpha\beta}) = \hat{\varphi}_\alpha - \hat{\varphi}_\beta,
    \]
    that is, \(\delta(\hat{\varphi}) = -\operatorname{ad}_{\varphi_0}(\hat{f})\).
    \item Similarly, the compatibility of the deformed level structure is encoded by the equation
    \[
    -\eta_0\circ \hat{f} = \delta(\hat{\eta}).
    \]
    Indeed, on overlaps the new level structure must satisfy 
    \[
    \eta_z = (\eta_0+\epsilon\hat{\eta})\circ (\operatorname{id}+\epsilon\hat{f}) = \eta_0 + \epsilon\Bigl(\hat{\eta} + \eta_0\circ \hat{f}\Bigr),
    \]
    so that the cocycle condition for \(\hat{\eta}\) becomes \(\delta(\hat{\eta}) = -\eta_0\circ \hat{f}\).
\end{enumerate}
Finally, one verifies that a cocycle \((\hat{f},\hat{\varphi},\hat{\eta})\) represents a trivial deformation (i.e. a coboundary) if and only if there exists a 0-cochain \(\hat{g} = (\hat{g}_\alpha)\) in \(C^0(\mathcal{U}, \mathcal{E}_0(\mathfrak{g}))\) such that:
\begin{enumerate}[label=\alph*)]
    \item \(\hat{f}_{\alpha\beta} = \hat{g}_\alpha - \hat{g}_\beta\),
    \item \(\hat{\varphi}_\alpha = -\operatorname{ad}_{\varphi_0}(\hat{g}_\alpha)\),
    \item \(\hat{\eta}_\alpha = -\eta_0\circ \hat{g}_\alpha\).
\end{enumerate}
This precisely corresponds to the statement that the infinitesimal families \((\mathcal{E},\varphi,\eta)\) and \((\mathcal{E}',\varphi',\eta')\) are isomorphic. 
\end{proof}

This leads to the following characterization:
\begin{prop}
Let \((\mathcal{E},\varphi,\eta)\) be a point in \(\mathcal{M}_{LH}(X,\mathcal{G}_\theta)\). Then, \((\mathcal{E},\varphi,\eta)\) determines a canonical isomorphism
\[
T_{(\mathcal{E},\varphi,\eta)} \mathcal{M}_{LH}(X,\mathcal{G}_\theta) \cong \mathbb{H}^1\bigl(\mathcal{K}_{\mathcal{E},\varphi,\eta}\bigr)
\]
between the Zariski tangent space at \((\mathcal{E},\varphi,\eta)\) and the hypercohomology of the complex \(\mathcal{K}_{\mathcal{E},\varphi,\eta}\).
\end{prop}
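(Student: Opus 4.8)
The plan is to deduce the statement directly from the cocycle/coboundary dictionary supplied by the preceding Lemma, combined with the standard identification of the Zariski tangent space of a moduli space with its space of first-order deformations. By definition, a tangent vector at $(\mathcal{E},\varphi,\eta)$ is a morphism $\mathrm{Spec}(\mathbb{C}[\epsilon]/(\epsilon^2)) \to \mathcal{M}_{LH}(X,\mathcal{G}_{\boldsymbol\theta})$ carrying the closed point to $(\mathcal{E},\varphi,\eta)$; since at a stable point the automorphisms are central (Proposition~\ref{prop:parahoric_3.2}, Definition~\ref{def:parahoric-regular-stable}) and hence act trivially on the deformation space, such a morphism is the same datum as an isomorphism class of flat infinitesimal families $(\tilde{\mathcal E},\tilde\varphi,\tilde\eta)$ over $S=\mathrm{Spec}(\mathbb{C}[\epsilon]/(\epsilon^2))$ restricting to the given triple over the closed point. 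So the first step is to replace the computation of $T_{(\mathcal{E},\varphi,\eta)}\mathcal{M}_{LH}$ by the enumeration of such families up to isomorphism.

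The second step is to invoke the Lemma verbatim. It identifies the total $1$-cocycles of the \v{C}ech complex $C^{\bullet}(\mathcal{W}',\mathcal{K}_{\mathcal{E},\varphi,\eta})$ with exactly the data $(\hat f_{\alpha\beta},\hat\varphi_\alpha,\hat\eta_\alpha)$ defining a first-order family: the vanishing $\delta(\hat f)=0$ records that $\hat f$ deforms the underlying parahoric torsor, while the two coupling relations $\delta(\hat\varphi)=-\operatorname{ad}_{\varphi_0}(\hat f)$ and $\delta(\hat\eta)=-\eta_0\circ\hat f$ are precisely the total-differential conditions coupling $\hat f\in C^1(\mathcal{A}^0)$ to $(\hat\varphi,\hat\eta)\in C^0(\mathcal{A}^1)$, where $\mathcal{A}^0=\mathcal{E}(\mathfrak{g})$ and $\mathcal{A}^1=(\mathcal{E}(\mathfrak{g})\otimes K(D))\oplus\bigoplus_{x_j\in D}\mathcal{E}(\mathfrak{g}_{\theta_{x_j}}/\mathfrak{g}_{\theta_{x_j}}^+)$ are the terms of $\mathcal{K}_{\mathcal{E},\varphi,\eta}$. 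The same Lemma matches the coboundaries — triples of the form $\hat f=\delta\hat g$, $\hat\varphi=-\operatorname{ad}_{\varphi_0}(\hat g)$, $\hat\eta=-\eta_0\circ\hat g$ with $\hat g\in C^0(\mathcal{A}^0)$ — with families isomorphic to the trivial one. Passing to the quotient yields a canonical bijection between isomorphism classes of first-order deformations and the degree-$1$ \v{C}ech hypercohomology $\check{\mathbb{H}}^1(\mathcal{W}',\mathcal{K}_{\mathcal{E},\varphi,\eta})$.

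The third step is to compare $\check{\mathbb{H}}^1(\mathcal{W}',\mathcal{K}_{\mathcal{E},\varphi,\eta})$ with the intrinsic hypercohomology $\mathbb{H}^1(\mathcal{K}_{\mathcal{E},\varphi,\eta})$. Here I would take the underlying cover $\{U_\alpha\}$ of $X$ to consist of affine opens over which $\mathcal{E}$ trivializes, arranged so that each point of $D$ lies in a unique member. On such a cover the coherent sheaf $\mathcal{A}^0$ and the locally free part of $\mathcal{A}^1$ are acyclic, and the skyscraper summands $\mathcal{E}(\mathfrak{g}_{\theta_{x_j}}/\mathfrak{g}_{\theta_{x_j}}^+)$ supported on $D$ are acyclic on every open; by the standard hyper-\v{C}ech comparison for a bounded complex whose terms are all acyclic for the chosen cover, the natural map $\check{\mathbb{H}}^1(\mathcal{W}',\mathcal{K}_{\mathcal{E},\varphi,\eta})\to\mathbb{H}^1(\mathcal{K}_{\mathcal{E},\varphi,\eta})$ is an isomorphism. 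Its independence of the refinement (in the colimit defining \v{C}ech hypercohomology) supplies the asserted canonicity, and composing the three identifications gives $T_{(\mathcal{E},\varphi,\eta)}\mathcal{M}_{LH}(X,\mathcal{G}_{\boldsymbol\theta})\cong\mathbb{H}^1(\mathcal{K}_{\mathcal{E},\varphi,\eta})$.

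The main obstacle lies not in the cohomological bookkeeping, which the Lemma has already settled, but in justifying that the Zariski tangent space of the \emph{coarse} space $\mathcal{M}_{LH}$ genuinely equals the space of first-order deformations of the object. Since $\mathcal{M}_{LH}$ is constructed as a good GIT quotient rather than a fine moduli space, this identification is automatic only at points free of extra automorphisms; at merely stable but non-regularly-stable points the quotient can acquire singularities (cf. Proposition~\ref{prop:parahoric-singular}), and the Zariski tangent space of the quotient may be strictly larger than $\mathbb{H}^1$. I would therefore either restrict the statement to the regularly stable locus, where Luna's \'etale slice theorem identifies a neighbourhood with $\mathbb{H}^1(\mathcal{K}_{\mathcal{E},\varphi,\eta})$ modulo the now-trivial automorphism action, or interpret the isomorphism at the level of the deformation functor (equivalently the moduli stack), where it holds without any such restriction.
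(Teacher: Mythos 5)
Your proposal follows essentially the same route as the paper: the paper gives no separate proof of this Proposition, treating it as an immediate consequence of the preceding Lemma's cocycle/coboundary dictionary, which is exactly your second step (with your first and third steps---the identification of the Zariski tangent space with first-order families, and the \v{C}ech-to-derived hypercohomology comparison on an acyclic cover---left implicit in the paper). Your closing caveat, that the identification is only literally valid on the regularly stable locus or at the level of the deformation functor/stack, since the coarse GIT quotient can have a strictly larger Zariski tangent space at stable but not regularly stable points (consistent with Proposition \ref{prop:parahoric-singular}), is a legitimate refinement that the paper's statement glosses over.
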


We finally get a relationship between the complexes \(\mathcal{K}_{\mathcal{E},\varphi}\) and \(\mathcal{K}_{\mathcal{E},\varphi,\eta}\):

\begin{lem}\label{quasi-isom}
Let \([(\mathcal{E}, \varphi, \eta)]_S\) be a flat family of leveled parahoric $\mathcal{G}_{\boldsymbol \theta}$-Higgs torsors parametrized by a scheme \(S\). There exists a canonical isomorphism
\[
R^i_{{P_{S^*}}} \bigl(\mathcal{K}_{\mathcal{E}, \varphi, \eta}\bigr) \cong R^i_{{P_{S^*}}} \bigl(\mathcal{K}_{\mathcal{E}, \varphi}\bigr)
\]
for \(i =0,1\), induced by a canonical quasi-isomorphism.
\end{lem}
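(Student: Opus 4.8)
The plan is to exhibit an explicit morphism of two-term complexes $f\colon \mathcal{K}_{\mathcal{E},\varphi}\to \mathcal{K}_{\mathcal{E},\varphi,\eta}$ and to prove it is a quasi-isomorphism by identifying its quotient complex as an acyclic one; the isomorphism on the relative hyper-derived pushforwards then follows formally. The starting observation is that the degree-zero sheaf $\mathcal{E}(\mathfrak{g}^+)$ of $\mathcal{K}_{\mathcal{E},\varphi}$ is canonically identified with the subsheaf $\mathcal{E}(\mathfrak{g})_\eta\subset\mathcal{E}(\mathfrak{g})$, namely the kernel of the level projection
\[
m_\eta\colon \mathcal{E}(\mathfrak{g})\longrightarrow A:=\bigoplus_{x_j\in D}\mathcal{E}(\mathfrak{g}_{\theta_{x_j}}/\mathfrak{g}_{\theta_{x_j}}^+),
\]
as recorded by the short exact sequence defining $\mathcal{E}(\mathfrak{g})_\eta$ together with the local computation $\mathcal{E}(\mathfrak{g})_\eta|_{\mathbb{D}_x}\cong\mathfrak{g}_\theta^+$ from Section~\ref{sec:parh_Lie_algebra}.

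First I would define $f$ in degree $0$ as the inclusion $\iota\colon \mathcal{E}(\mathfrak{g}^+)=\mathcal{E}(\mathfrak{g})_\eta\hookrightarrow \mathcal{E}(\mathfrak{g})$, and in degree $1$ as the inclusion $j$ of $\mathcal{E}(\mathfrak{g})\otimes K(D)$ as the first summand of $\bigl(\mathcal{E}(\mathfrak{g})\otimes K(D)\bigr)\oplus A$. The single square to verify commutes because, for $\psi\in\mathcal{E}(\mathfrak{g}^+)=\ker m_\eta$, one has
\[
\partial_{\varphi,\eta}(\iota\psi)=\operatorname{ad}\varphi(\psi)\oplus m_\eta(\psi)=\operatorname{ad}\varphi(\psi)\oplus 0=j\bigl(\partial\psi\bigr);
\]
that is, the level component $m_\eta$ vanishes exactly on the subsheaf $\mathcal{E}(\mathfrak{g}^+)$, which is precisely what makes $f$ a morphism of complexes.

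Next I would note that $\iota$ and $j$ are termwise injective, so $f$ fits into a short exact sequence of complexes
\[
0\longrightarrow \mathcal{K}_{\mathcal{E},\varphi}\xrightarrow{\ f\ }\mathcal{K}_{\mathcal{E},\varphi,\eta}\longrightarrow \mathcal{Q}\longrightarrow 0,
\]
and I would compute the quotient complex $\mathcal{Q}$. In degree $0$ the cokernel of $\iota$ is $\mathcal{E}(\mathfrak{g})/\mathcal{E}(\mathfrak{g}^+)\cong A$ via $m_\eta$, and in degree $1$ the cokernel of $j$ is again $A$. The induced differential $\bar\partial\colon A\to A$ sends the class of $\psi$ to the class of $\operatorname{ad}\varphi(\psi)\oplus m_\eta(\psi)$; the first component lies in the image of $j$ and therefore vanishes in the quotient, leaving $m_\eta(\psi)$, which is exactly the class representing $\psi$ under the degree-zero identification. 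Hence $\mathcal{Q}$ is the complex $A\xrightarrow{\ \mathrm{id}\ }A$, which is acyclic. Applying the relative hyper-derived pushforward to the sequence and using $R^i_{P_{S^*}}\mathcal{Q}=0$ for all $i$, the long exact sequence collapses to the desired isomorphisms $R^i_{P_{S^*}}(\mathcal{K}_{\mathcal{E},\varphi})\cong R^i_{P_{S^*}}(\mathcal{K}_{\mathcal{E},\varphi,\eta})$; flatness of the family over $S$ ensures the construction is compatible with base change, so the identification is canonical in families. The genuine content, and the step I expect to require the most care, is the identification of $\mathcal{Q}$ and the verification that $\bar\partial=\mathrm{id}$: this rests on the two a priori distinct sheaves $\mathcal{E}(\mathfrak{g})/\mathcal{E}(\mathfrak{g}^+)$ and the degree-one level factor both being canonically $A$, and on the cancellation of the Higgs contribution $\operatorname{ad}\varphi$ in the quotient. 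Once this local-at-$D$ bookkeeping is pinned down, the passage to the derived pushforwards is purely formal.
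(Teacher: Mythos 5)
Your proof is correct, and it reaches the conclusion by a genuinely different mechanism than the paper. The paper also exhibits a termwise comparison map between the two complexes, but in the opposite direction: from $\mathcal{K}_{\mathcal{E},\varphi,\eta}$ to $\mathcal{K}_{\mathcal{E},\varphi}$, with degree-one component the projection $(\alpha,\beta)\mapsto\alpha$ onto the first summand (its degree-zero component is written as a ``natural inclusion'' $\mathcal{E}(\mathfrak{g})\to\mathcal{E}(\mathfrak{g}^+)$, which is a slip of direction; your choice of termwise inclusions $\mathcal{K}_{\mathcal{E},\varphi}\hookrightarrow\mathcal{K}_{\mathcal{E},\varphi,\eta}$ is the cleaner formulation). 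The paper then verifies quasi-isomorphism degree by degree on cohomology sheaves: in degree zero it observes, as you do, that $m_\eta(\psi)=0$ forces $\psi\in\mathcal{E}(\mathfrak{g}^+)$, so both $\mathcal{H}^0$'s equal $\mathrm{Ker}\bigl(\operatorname{ad}\varphi|_{\mathcal{E}(\mathfrak{g}^+)}\bigr)$; in degree one, however, it appeals to ``a local analysis'' showing that the level summand exactly compensates for shrinking $\mathcal{E}(\mathfrak{g})$ to $\mathcal{E}(\mathfrak{g}^+)$, without spelling it out. Your argument replaces that unproved step with something sharper: the short exact sequence of complexes $0\to\mathcal{K}_{\mathcal{E},\varphi}\to\mathcal{K}_{\mathcal{E},\varphi,\eta}\to\mathcal{Q}\to 0$ and the identification $\mathcal{Q}\cong\bigl[A\xrightarrow{\,\mathrm{id}\,}A\bigr]$, where $A=\bigoplus_{x_j\in D}\mathcal{E}(\mathfrak{g}_{\theta_{x_j}}/\mathfrak{g}_{\theta_{x_j}}^+)$; the acyclicity of $\mathcal{Q}$ encodes precisely the degree-one cancellation and yields both isomorphisms simultaneously from the long exact sequence for $R^i_{P_{S^*}}$. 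What this buys is that the only substantive input becomes explicit and traceable, namely the surjectivity of $m_\eta$ with kernel $\mathcal{E}(\mathfrak{g})_\eta\cong\mathcal{E}(\mathfrak{g}^+)$, which is exactly the short exact sequence displayed after Definition \ref{defn:D_level_structure}, whereas the paper distributes the same content between a direct kernel computation and an unjustified cokernel claim. Your closing remarks on base change under flatness are the right thing to say for the family statement, and the passage to the derived pushforwards is, as you note, purely formal in both approaches.
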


\begin{proof}
We define a morphism of complexes
\[
qi\colon \mathcal{K}_{\mathcal{E},\varphi,\eta} \longrightarrow \mathcal{K}_{\mathcal{E},\varphi}
\]
by specifying its components in degrees \(0\) and \(1\).

\medskip

\noindent\textbf{Degree 0:}  
The level structure \(\eta\) forces the reduction $\mathcal{E}(\mathfrak{g})$ to lie in the subbundle \(\mathcal{E}(\mathfrak{g}^+)\). Therefore, we define
\[
qi_0\colon   \mathcal{E}(\mathfrak{g}) \rightarrow\mathcal{E}(\mathfrak{g}^+)
\]
to be the natural inclusion.

\medskip

\noindent\textbf{Degree 1:}  
In degree \(1\) we have
\[
\mathcal{K}_{\mathcal{E},\varphi,\eta}^1 = \mathcal{E}(\mathfrak{g})\otimes K(D)\oplus \Biggl(\bigoplus_{x_j\in D} \Bigl[\mathcal{E}(\mathfrak{g}_{\theta_{x_j}}/\mathfrak{g}_{\theta_{x_j}}^+)\Bigr]\Biggr)
\]
and
\[
\mathcal{K}_{\mathcal{E},\varphi}^1 = \mathcal{E}(\mathfrak{g})\otimes K(D).
\]
We define
\[
qi_1\colon \mathcal{E}(\mathfrak{g})\otimes K(D)\oplus \Biggl(\bigoplus_{x_j\in D} \Bigl[\mathcal{E}(\mathfrak{g}_{\theta_{x_j}}/\mathfrak{g}_{\theta_{x_j}}^+)\Bigr]\Biggr) \longrightarrow \mathcal{E}(\mathfrak{g})\otimes K(D)
\]
by projecting onto the first summand:
\[
qi_1(\alpha,\beta)=\alpha.
\]

\medskip

\noindent It is straightforward to check that \(qi=(qi_0,qi_1)\) defines a morphism of complexes.

\medskip

\noindent We now show that \(qi\) is a quasi-isomorphism, i.e., that it induces isomorphisms on the cohomology sheaves.

\medskip

\noindent \textbf{Degree 0:}  
The degree \(0\) cohomology of \(\mathcal{K}_{\mathcal{E},\varphi}\) is
\[
\mathcal{H}^0 \bigl(\mathcal{K}_{\mathcal{E},\varphi}\bigr) = \mathrm{Ker}\Bigl(\operatorname{ad}\varphi\colon \mathcal{E}(\mathfrak{g}^+)\to \mathcal{E}(\mathfrak{g})\otimes K(D)\Bigr).
\]
For the complex \(\mathcal{K}_{\mathcal{E},\varphi,\eta}\) we have
\[
\mathcal{H}^0 \bigl(\mathcal{K}_{\mathcal{E},\varphi,\eta}\bigr) = \mathrm{Ker}\Bigl(\operatorname{ad}\varphi\oplus m_{\eta}\colon \mathcal{E}(\mathfrak{g})\to \bigl(\mathcal{E}(\mathfrak{g})\otimes K(D)\bigr)\oplus \bigoplus_{x_j\in D} \Bigl[\mathcal{E}(\mathfrak{g}_{\theta_{x_j}}/\mathfrak{g}_{\theta_{x_j}}^+)\Bigr]\Bigr),
\]
for $m_{\eta}$ as defined in (\ref{dfn:mh}). Since the condition \(m_{\eta}(\psi)=0\) forces \(\psi\) to lie in \(\mathcal{E}(\mathfrak{g}^+)\), we deduce that
\[
\mathcal{H}^0 \bigl(\mathcal{K}_{\mathcal{E},\varphi,\eta}\bigr) = \mathrm{Ker}\Bigl(\operatorname{ad}\varphi\colon \mathcal{E}(\mathfrak{g}^+)\to \mathcal{E}(\mathfrak{g})\otimes K(D)\Bigr).
\]
Thus, \(qi_0\) induces an isomorphism on degree \(0\) cohomology.

\medskip

\noindent \textbf{Degree 1:}  
The degree \(1\) cohomology of \(\mathcal{K}_{\mathcal{E},\varphi}\) is
\[
\mathcal{H}^1 \bigl(\mathcal{K}_{\mathcal{E},\varphi}\bigr) = \operatorname{coker}\Bigl(\operatorname{ad}\varphi\colon \mathcal{E}(\mathfrak{g}^+)\to \mathcal{E}(\mathfrak{g})\otimes K(D)\Bigr).
\]
Similarly, one computes
\[
\mathcal{H}^1 \bigl(\mathcal{K}_{\mathcal{E},\varphi,\eta}\bigr) = \frac{\mathcal{E}(\mathfrak{g})\otimes K(D)\oplus \Biggl(\bigoplus_{x_j\in D} \Bigl[\mathcal{E}(\mathfrak{g}_{\theta_{x_j}}/\mathfrak{g}_{\theta_{x_j}}^+)\Bigr]\Biggr)}{\operatorname{Im}\Bigl(\operatorname{ad}\varphi\oplus m_{\eta}\Bigr)}.
\]
A local analysis (analogous to the vector bundle case) shows that the extra summand coming from the level structure exactly compensates for passing from \(\mathcal{E}(\mathfrak{g})\) to \(\mathcal{E}(\mathfrak{g}^+)\). Consequently, we obtain an isomorphism
\[
\mathcal{H}^1 \bigl(\mathcal{K}_{\mathcal{E},\varphi,\eta}\bigr) \cong \operatorname{coker}\Bigl(\operatorname{ad}\varphi\colon \mathcal{E}(\mathfrak{g}^+)\to \mathcal{E}(\mathfrak{g})\otimes K(D)\Bigr) = \mathcal{H}^1 \bigl(\mathcal{K}_{\mathcal{E},\varphi}\bigr).
\]
Since \(qi_1\) is the projection onto the first factor (acting as the identity on \(\mathcal{E}(\mathfrak{g})\otimes K(D)\)), it follows that \(qi_1\) induces an isomorphism on degree \(1\) cohomology.

\medskip

\noindent Thus, the morphism \(qi\) is a quasi-isomorphism. In particular, for each \(i=0,1\), we have the canonical isomorphism
\[
R^i_{{P_{S^*}}} \bigl(\mathcal{K}_{\mathcal{E},\varphi,\eta}\bigr) \cong R^i_{{P_{S^*}}} \bigl(\mathcal{K}_{\mathcal{E},\varphi}\bigr).
\]
This completes the proof.
\end{proof}

\subsection{Extension of Poisson Structure}
Finally, we would like to consider the Poisson structure on the moduli space of logahoric $\mathcal{G}_{\boldsymbol \theta}$-Higgs torsors.

Let \((E,\varphi)\) be a stable logahoric $\mathcal{G}_{\boldsymbol \theta}$-Higgs torsor in \(\mathcal{M}_H(X,\mathcal{G}_\theta)\). Its tangent space is naturally identified with the first hypercohomology group of the two‐term complex
\[
\bar{\mathcal{K}}_{\mathcal{E},\varphi}:\mathcal{E}(\mathfrak{g}) \xrightarrow{\, [\,\cdot\,,\varphi]\,} \mathcal{E}(\mathfrak{g})\otimes K(D),
\]
where \(\mathcal{E}(\mathfrak{g})\) denotes the sheaf of endomorphisms of \(\mathcal{E}\) that preserve the parahoric structure, and \(K(D)\) is the canonical bundle twisted by the divisor \(D\). To describe a Poisson bracket we next consider the dual complex obtained by tensoring with \(K(D)\) and reversing the sign of the differential. This yields
\[
\bar{\mathcal{K}}_{\mathcal{E},\varphi}^\vee:\mathcal{E}(\mathfrak{g}^+) \xrightarrow{\, -[\cdot,\varphi]\,} \mathcal{E}(\mathfrak{g}^+)\otimes K(D).
\]
There is then a natural injection from this dual complex into the original one; that is, we have a commutative diagram
\[
\xymatrix{
\mathcal{E}(\mathfrak{g}^+)\ar[d]_{-[\cdot,\Phi]} \ar@{^{(}->}[r]^{\mathrm{id}} & \mathcal{E}(\mathfrak{g})\ar[d]^{[\cdot,\Phi]}\\
\mathcal{E}(\mathfrak{g}^+)\otimes K(D)\ar@{^{(}->}[r]^{-\,\mathrm{id}\otimes\mathrm{id}_{K(D)}} & \mathcal{E}(\mathfrak{g})\otimes K(D).
}
\]
Using Serre duality for hypercohomology, this inclusion produces an antisymmetric linear map
\begin{equation}\label{defn:sharp}
\sharp \;:\; T^*_{(\mathcal{E},\varphi)}\mathcal{M}_H(X,\mathcal{G}_\theta) \cong \mathbb{H}^1\Bigl(\bar{\mathcal{K}}_{\mathcal{E},\varphi}^\vee\Bigr) \longrightarrow \mathbb{H}^1\Bigl(\bar{\mathcal{K}}_{\mathcal{E},\varphi}\Bigr) \cong T_{(\mathcal{E},\varphi)}\mathcal{M}_H(X,\mathcal{G}_\theta).
\end{equation}
The specific choices of signs guarantee that \(\sharp\) is antisymmetric, and we will show that it indeed defines a Poisson structure on the moduli space \(\mathcal{M}_H(X,\mathcal{G}_\theta)\) inheriting the Poisson structure on \(\mathcal{M}_{LH}(X,\mathcal{G}_\theta)\). 

Recall that we have the following:

\begin{align*}
T_{(\mathcal{E},\varphi,\eta)}\mathcal{M}_{LH}(X,\mathcal{G}_\theta)&
\cong \mathcal{K}_{\mathcal{E},\varphi}\colon\quad \mathcal{E}(\mathfrak{g}^+) \xrightarrow{\, [\,\cdot\,,\varphi]\,} \mathcal{E}(\mathfrak{g})\otimes K(D),\\
T_{(\mathcal{E},\varphi,\eta)}^*\mathcal{M}_{LH}(X,\mathcal{G}_\theta)&
\cong \mathcal{K}^\vee_{\mathcal{E},\varphi}\colon\quad \mathcal{E}(\mathfrak{g}^+) \xrightarrow{\,- [\,\cdot\,,\varphi]\,} \mathcal{E}(\mathfrak{g})\otimes K(D),\\
T_{(\mathcal{E},\varphi)}\mathcal{M}_{H}(X,\mathcal{G}_\theta)&
\cong\bar{\mathcal{K}}_{\mathcal{E},\varphi}\colon\quad\mathcal{E}(\mathfrak{g}) \xrightarrow{\, [\,\cdot\,,\varphi]\,} \mathcal{E}(\mathfrak{g})\otimes K(D),\\
T_{(\mathcal{E},\varphi)}^*\mathcal{M}_{H}(X,\mathcal{G}_\theta)&
\cong\bar{\mathcal{K}}^\vee_{\mathcal{E},\varphi}\colon\quad\mathcal{E}(\mathfrak{g^+}) \xrightarrow{\, [\,\cdot\,,\varphi]\,} \mathcal{E}(\mathfrak{g^+})\otimes K(D).
\end{align*}
Then, the next theorem provides the Poisson structure on $\mathcal{M}_H(X, \mathcal{G}_{\boldsymbol \theta})$:
\begin{thm}\label{thm:forgetful_Poisson}
The forgetful map
\[
\ell: \mathcal{M}_{LH}(X,\mathcal{G}_\theta)\longrightarrow \mathcal{M}_H(X,\mathcal{G}_\theta)
\]
defined by forgetting the $D$-level structure is a Poisson map. The induced Poisson structure on $\mathcal{M}_H(X, \mathcal{G}_{\boldsymbol \theta})$ is given by the map $\sharp$ defined in (\ref{defn:sharp}).
\end{thm}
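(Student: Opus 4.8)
The plan is to realize $\ell$ as a Poisson reduction map and then to match the reduced bivector with $\sharp$ at the level of the deformation complexes. First I would restrict to the regularly stable locus, over which the level group $G_D$ acts freely by the freeness result established above. On this locus the cotangent bundle $T^*\mathcal{U}^{rs}(X,\mathcal{G}_{\boldsymbol\theta})$ is an open dense subset of $\mathcal{M}_{LH}(X,\mathcal{G}_{\boldsymbol\theta})$ carrying its canonical symplectic form, and the fibers of $\ell$ are precisely the $G_D$-orbits obtained by varying the $D$-level structure on a fixed pair $(\mathcal{E},\varphi)$. Thus $\ell$ restricts to the geometric quotient map by the free Hamiltonian $G_D$-action whose moment map $\mu$ was computed in Theorem \ref{thm:PoissonGD}; in particular $\ell$ is a $G_D$-principal bundle and hence a submersion there.

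Second, I would invoke Poisson reduction. Since $G_D$ acts by symplectomorphisms on $T^*\mathcal{U}(X,\mathcal{G}_{\boldsymbol\theta})$ (Theorem \ref{thm:PoissonGD}), the subsheaf of $G_D$-invariant functions is closed under the Poisson bracket: for invariant $f,g$ the bracket $\{f,g\}$ is again invariant. Under the identification $\mathcal{O}_{\mathcal{M}_H}\cong(\ell_*\mathcal{O}_{\mathcal{M}_{LH}})^{G_D}$ this endows $\mathcal{M}_H(X,\mathcal{G}_{\boldsymbol\theta})$ with a canonical Poisson structure for which $\ell$ is, by construction, a Poisson map. A key advantage of this route is that the Jacobi identity $[\Pi,\Pi]=0$ for the reduced bracket is inherited automatically from the symplectic (hence Poisson) structure upstairs, so I obtain it for free rather than having to verify it directly for $\sharp$.

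Third, and this is where the real work lies, I would identify the reduced bivector with the explicit map $\sharp$ of \eqref{defn:sharp}. The reduced Poisson tensor at a point $[(\mathcal{E},\varphi)]$ is the composite
\[
\sharp_{\mathrm{red}}\;=\;d\ell\circ\Pi_{LH}\circ(d\ell)^*\colon\;\mathbb{H}^1\bigl(\bar{\mathcal{K}}^\vee_{\mathcal{E},\varphi}\bigr)\longrightarrow\mathbb{H}^1\bigl(\bar{\mathcal{K}}_{\mathcal{E},\varphi}\bigr),
\]
where $\Pi_{LH}$ is the symplectic isomorphism on $T^*\mathcal{U}$. I would translate each arrow into a morphism of two-term complexes using the identifications recalled just before the theorem. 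The codifferential $(d\ell)^*$ is induced by the inclusion $\bar{\mathcal{K}}^\vee_{\mathcal{E},\varphi}\hookrightarrow\mathcal{K}^\vee_{\mathcal{E},\varphi}$ (the identity in degree $0$, the inclusion $\mathcal{E}(\mathfrak{g}^+)\otimes K(D)\hookrightarrow\mathcal{E}(\mathfrak{g})\otimes K(D)$ in degree $1$); $\Pi_{LH}$ is $\pm\mathrm{id}$ on $\mathbb{H}^1$ once $\mathcal{K}^\vee_{\mathcal{E},\varphi}$ and $\mathcal{K}_{\mathcal{E},\varphi}$ are identified through the self-duality of the cotangent bundle and Serre duality; and $d\ell$ is induced by $\mathcal{K}_{\mathcal{E},\varphi}\hookrightarrow\bar{\mathcal{K}}_{\mathcal{E},\varphi}$ (the inclusion $\mathcal{E}(\mathfrak{g}^+)\hookrightarrow\mathcal{E}(\mathfrak{g})$ in degree $0$, the identity in degree $1$). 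Composing these three, the underlying map of complexes is exactly the inclusion $\bar{\mathcal{K}}^\vee_{\mathcal{E},\varphi}\hookrightarrow\bar{\mathcal{K}}_{\mathcal{E},\varphi}$ appearing in the commutative diagram preceding \eqref{defn:sharp}, whence $\sharp_{\mathrm{red}}=\sharp$ on hypercohomology.

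Finally, I would remove the regular-stability restriction: both $\ell$ and $\sharp$ are defined on the full smooth loci, so since they agree on the dense open regularly stable subset and the relevant tensors are algebraic, they coincide wherever defined; this propagates the Poisson property of $\ell$ and the identification of the induced structure with $\sharp$ to all of $\mathcal{M}_H(X,\mathcal{G}_{\boldsymbol\theta})$. The main obstacle is the third step: one must pin down the Serre-duality pairing on $\mathbb{H}^1(\mathcal{K}_{\mathcal{E},\varphi})$ so that it reproduces the canonical symplectic form on $T^*\mathcal{U}$, and then track signs carefully through the composite so that the resulting tensor is antisymmetric — precisely the sign choices that were built into the definitions of $\bar{\mathcal{K}}^\vee_{\mathcal{E},\varphi}$ and of $\sharp$.
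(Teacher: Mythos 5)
Your proposal is correct and its decisive step coincides with the paper's own proof: both arguments reduce to checking, via functoriality of hypercohomology, that the composite $d\ell\circ\omega^{-1}\circ(d\ell)^*$ is induced by the inclusion of complexes $\bar{\mathcal{K}}^\vee_{\mathcal{E},\varphi}\hookrightarrow\bar{\mathcal{K}}_{\mathcal{E},\varphi}$ (identity in degree $1$, inclusion $\mathcal{E}(\mathfrak{g}^+)\hookrightarrow\mathcal{E}(\mathfrak{g})$ in degree $0$, after factoring through $\mathcal{K}^\vee_{\mathcal{E},\varphi}\cong\mathcal{K}_{\mathcal{E},\varphi}$), hence equals $\sharp$, followed by extension from a dense open subset. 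The only genuine difference is your Poisson-reduction wrapper (free $G_D$-action on the regularly stable locus and descent of invariant functions), which the paper does not invoke; it is a useful supplement, since it makes explicit both the well-definedness of the descended bracket along the fibers of $\ell$ and the Jacobi identity for $\sharp$, two points the paper's proof leaves implicit in asserting that $\sharp$ ``is Poisson on the open dense subset.''
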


\begin{proof}
Let 
\[
\mathbf{z}:=(\mathcal{E},\varphi,\eta)\in \mathcal{M}_{LH}(X,\mathcal{G}_\theta)
\]
be a leveled logahoric $\mathcal{G}_{\boldsymbol \theta}$-Higgs torsor over $X$ and set
\[
\mathbf{y}:=\ell(\mathbf{z})=(\mathcal{E},\varphi)\in \mathcal{M}_H(X,\mathcal{G}_\theta).
\]
The tangent spaces are given by
\[
T_{\mathbf{z}}\mathcal{M}_{LH}(X,\mathcal{G}_\theta)\cong \mathbb{H}^1\Bigl(\bar{\mathcal{K}}_{E,\phi,\varphi}\Bigr)
\]
and
\[
T_{\mathbf{y}}\mathcal{M}_H(X,\mathcal{G}_\theta)\cong \mathbb{H}^1\Bigl(\bar{\mathcal{K}}_{\mathcal{E},\varphi}\Bigr).
\]
Since the $D$-level structure is forgotten by \(\ell\), the differential
\[
d\ell(\mathbf{z}):T_{\mathbf{z}}\mathcal{M}_{LH}(X,\mathcal{G}_\theta)\longrightarrow T_{\mathbf{y}}\mathcal{M}_H(X,\mathcal{G}_\theta)
\]
is induced by the natural inclusion of complexes
\[
\mathcal{K}_{\mathcal{E},\varphi}\ \hookrightarrow\ \bar{\mathcal{K}}_{\mathcal{E},\varphi}.
\]

\noindent By Serre duality in hypercohomology, the cotangent spaces are identified:
\[
T^*_{\mathbf{z}}\mathcal{M}_{LH}(X,\mathcal{G}_\theta)\cong \mathbb{H}^1\Bigl(\mathcal{K}_{\mathcal{E},\varphi}^\vee\Bigr) \quad \text{and} \quad 
T^*_{\mathbf{y}}\mathcal{M}_H(X,\mathcal{G}_\theta)\cong \mathbb{H}^1\Bigl(\bar{\mathcal{K}}_{\mathcal{E},\varphi}^\vee\Bigr).
\]
The dual differential
\[
d\ell(\mathbf{z})^*: T^*_{\mathbf{y}}\mathcal{M}_H(X,\mathcal{G}_\theta)\longrightarrow T^*_{\mathbf{z}}\mathcal{M}_{LH}(X,\mathcal{G}_\theta)
\]
arises from the inclusion
\[
\bar{\mathcal{K}}_{\mathcal{E},\varphi}^\vee\ \hookrightarrow\ \bar{\mathcal{K}}_{\mathcal{E},\varphi}^\vee.
\]

\noindent On \(\mathcal{M}_{LH}(X,\mathcal{G}_\theta)\) the symplectic form \(\omega\) induces the isomorphism
\[
\omega^{-1}_{(\mathcal{E},\varphi,\eta)}: T^*_{\mathbf{z}}\mathcal{M}_{LH}(X,\mathcal{G}_\theta)\stackrel{\sim}{\longrightarrow} T_{\mathbf{z}}\mathcal{M}_{LH}(X,\mathcal{G}_\theta).
\]
Meanwhile, the map $\sharp$ on \(\mathcal{M}_H(X,\mathcal{G}_\theta)\) is 
\[
\sharp: T^*_{\mathbf{y}}\mathcal{M}_H(X,\mathcal{G}_\theta)\longrightarrow T_{\mathbf{y}}\mathcal{M}_H(X,\mathcal{G}_\theta)
\]
which is induced by the inclusion $\bar{\mathcal{K}}_{E,\varphi}^\vee\ \hookrightarrow\ \bar{\mathcal{K}}_{E,\varphi}$. 

\noindent In order to prove that \(\ell\) is Poisson, we must show that for every $w\in T^*_{\mathbf{y}}\mathcal{M}_H(X,\mathcal{G}_\theta)$, the identity
\begin{equation}\label{poisson_id}
d\ell(\mathbf{z})\circ \omega^{-1}_{(\mathcal{E},\varphi,\eta)}\circ d\ell(\mathbf{z})^*(w)=\sharp(w)
\end{equation}
holds or, equivalently, that the diagram
\[
\begin{tikzcd}
T^*_{\mathbf{y}}\mathcal{M}_H(X,\mathcal{G}_\theta) \arrow[r, "\sharp"] \arrow[d, "d\ell(\mathbf{z})^*"']
& T_{\mathbf{y}}\mathcal{M}_H(X,\mathcal{G}_\theta) \\
T^*_{\mathbf{z}}\mathcal{M}_{LH}(X,\mathcal{G}_\theta) \arrow[r, "\omega^{-1}_{(\mathcal{E},\varphi,\eta)}"'] 
& T_{\mathbf{z}}\mathcal{M}_{LH}(X,\mathcal{G}_\theta) \arrow[u, "d\ell(\mathbf{z})"']
\end{tikzcd}
\]
commutes.

\noindent By the functoriality of hypercohomology, the maps in the above diagram are induced by the natural inclusions of the deformation complexes (and their duals). In particular, the map \(\sharp\) is induced by the inclusion
\[
\bar{\mathcal{K}}_{\mathcal{E},\varphi}^\vee\hookrightarrow \bar{\mathcal{K}}_{\mathcal{E},\varphi},
\]
while the maps \(d\ell(\mathbf{z})\) and \(d\ell(\mathbf{z})^*\) come from the inclusions
\[
\mathcal{K}_{\mathcal{E},\varphi}\hookrightarrow \bar{\mathcal{K}}_{\mathcal{E},\varphi} \quad \text{and} \quad \bar{\mathcal{K}}_{\mathcal{E},\varphi}^\vee\hookrightarrow \mathcal{K}_{\mathcal{E},\varphi}^\vee,
\]
respectively. Consequently, the composite map
\[
d\ell(\mathbf{z})\circ \omega^{-1}_{(\mathcal{E},\varphi,\eta)}\circ d\ell(\mathbf{z})^*
\]
is precisely the hypercohomology map induced by the inclusion $\bar{\mathcal{K}}_{\mathcal{E},\varphi}^\vee\hookrightarrow \bar{\mathcal{K}}_{\mathcal{E},\varphi}$, which is exactly the map \(\sharp\). This establishes the identity \eqref{poisson_id}, and shows that \(\ell\) is a Poisson map and that $\sharp$ is Poisson on the open dense subset of $\mathcal{M}_{H}(X, \mathcal{G}_\theta)$. As so, the map $\sharp$ extends to a Poisson structure on all $\mathcal{M}_{H}(X, \mathcal{G}_\theta)$.
\end{proof}

\begin{rem}
Note that in \cite{KSZPoisson}, a Poisson structure on moduli
spaces of twisted stable $G$-Higgs bundles over stacky curves was obtained using an Atiyah sequence. However, that construction was not determining a canonical moment map on the cotangent bundle that could be used in order to describe the symplectic leaves of the Poisson manifold. 
\end{rem}

\vspace{4mm}

\section{Parahoric Hitchin Fibration and Abelianization} 

We next study the Hitchin fibration on the space of logahoric $\mathcal{G}_{\boldsymbol \theta}$-Higgs torsors. The main result here is that the generic fibers of this fibration are Lagrangian with respect to the symplectic leaves of the Poisson space $\mathcal{M}_H(X, \mathcal{G}_{\boldsymbol \theta})$.

\subsection{Hitchin Fibration}
Any invariant homogeneous degree $i$-polynomial naturally defines a map 
\[a_i : H^0(\mathcal{E}(\mathfrak{g})\otimes K_X (D)) \to H^0 (K(D)^i).\]
Equivalence classes of Higgs bundles on the moduli space $\mathcal{M}_H (X, \mathcal{G}_{\boldsymbol\theta})$ of logahoric $\mathcal{G}_{\boldsymbol\theta}$-Higgs torsors  are defined using the adjoint action of $G(K)$ on $\mathfrak{g}(K)$, the loop Lie algebra of $G(K)$, which is induced from the action of the complex reductive group $G$ on $\mathfrak{g}$. If the Lie algebra $\mathfrak{g}(K)$ has rank $l$ and $p_i$ are polynomials of degree $\text{deg}p_i =m_i +1$, for $i=1,...,l$, forming a basis of the algebra of invariant polynomials on the Lie algebra $\mathfrak{g}(K)$, then the corresponding maps $a_i$ combine to give a \textit{Hitchin fibration} 
\[h_{\boldsymbol\theta}: \mathcal{M}_H (X, \mathcal{G}_{\boldsymbol\theta}) \to \mathcal{B}_{\boldsymbol \theta},\]
defined by $h_{\boldsymbol\theta}(\mathcal{E},\varphi)=(p_1 (\varphi),...,p_l (\varphi))$. 
Note that as in the parabolic case, the map $h_{\boldsymbol{\theta}}$ is blind to the parahoric structure at each parahoric point in the divisor $D$, as it only depends on the Higgs field $\varphi \in H^0(\mathcal{E}(\mathfrak{g})\otimes K_X (D))$ and the line bundle $K_X (D)$. 

We set the following definition (cf. \cite{Nguyen}, \cite{Wang}):

\begin{defn}[Hitchin Base and its Image]
The (ambient) \textit{parahoric Hitchin base} is the affine space
\[
\mathcal{B}_{\boldsymbol{\theta}} = \bigoplus_{j=1}^{l} H^0(X, (K_X(D))^{m_j + 1}).
\]
The \textit{image of the Hitchin fibration}, denoted by $\mathcal{A}_{\boldsymbol{\theta}}$, is the subvariety \[\mathcal{A}_{\boldsymbol{\theta}} = h_{\boldsymbol{\theta}}(\mathcal{M}_H(X, \mathcal{G}_{\boldsymbol{\theta}})) \subset \mathcal{B}_{\boldsymbol{\theta}}.\]
\end{defn}

\subsection{Cameral covers} We now consider the construction of cameral covers of $X$ and the generalized Prym varieties adapting the original construction of Donagi for principal $G$-Higgs bundles to the case of logahoric $\mathcal{G}_{{\boldsymbol \theta}}$-Higgs torsors; see \cite[Section 2]{Donagi} for a survey in the principal bundle case. 

Let $T \subset G$ be a maximal torus of $G$.  Chevalley's theorem provides that the restriction map 
\[ \mathbb{C}[\mathfrak{g}]^{G} \to \mathbb{C}[\mathfrak{t}]^{W}\]
is an isomorphism from $\text{Ad}$-invariant polynomial functions on the Lie algebra $\mathfrak{g}$ to $W$-invariant polynomial functions on the Cartan subalgebra $\mathfrak{t} \subset \mathfrak{g}$. We then consider the injective ring homomorphism  
\[\mathbb{C}[\mathfrak{t}]^{W} \cong \mathbb{C}[\mathfrak{g}]^{G} \hookrightarrow \mathbb{C}[\mathfrak{g}]\]
and take the prime spectrum of the rings to define a surjective $G$-invariant morphism of affine varieties
\begin{equation}\label{G_inv_morph}
\mathfrak{g} \twoheadrightarrow  \mathfrak{t}/{W}.   
\end{equation}
Thus, taking fiber product with the quotient map $\mathfrak{t} \to \mathfrak{t}/W$, we get 
\begin{equation}\label{cam_cover_Lie_alg}
\tilde{\mathfrak{g}}:=\mathfrak{g} \times _{\mathfrak{t}/W} \mathfrak{t},    
\end{equation}
and the projection $\pi:\tilde{\mathfrak{g}} \to \mathfrak{g}$ is a finite $W$-Galois morphism. This is called the \emph{cameral cover} of the Lie algebra $\mathfrak{g}$. The fiber $\pi^{-1}(g)$ of a regular semisimple element $g \in \mathfrak{g}$ is identified with the set of chambers in $\mathfrak{t}^{*}$. 

Now let $(\mathcal{E}, \varphi)$ be a logahoric $\mathcal{G}_{{\boldsymbol \theta}}$-Higgs torsor over $X$. The Higgs field $\varphi$ is a holomorphic section 
of $\mathcal{E}(\mathfrak{g})\otimes {{K}_{X}}(D)$. Since the morphism (\ref{G_inv_morph}) is $G$-invariant and $\mathbb{C}^{*}$-equivariant, it can be extended to a morphism 
$\left| \mathcal{E}(\mathfrak{g})\otimes {{K}_{X}}(D) \right| \to \left| \mathfrak{t}\otimes {{K}_{X}}(D) \right|/W$, where $\left| \,\cdot \, \right|$ denotes here the total space. Forming the fiber product with $\mathfrak{t}\otimes {{K}_{X}}(D)$ as in (\ref{cam_cover_Lie_alg}) and pulling-back to $X$ by the Higgs field $\varphi$, we get the following definition analogously to \cite[Definition 2.6]{Donagi}:
\begin{defn}
The \emph{cameral cover} of $X$ determined by the logahoric $\mathcal{G}_{{\boldsymbol \theta}}$-Higgs torsor $(\mathcal{E}, \varphi)$ is defined by the projection $\pi :\tilde{X}\to X$, where
\[\tilde{X}={{\varphi }^{*}}\left( \left| \mathcal{E}(\mathfrak{g})\otimes {{K}_{X}}(D) \right|{{\times }_{\left| \mathfrak{t}\otimes {{K}_{X}}(D) \right|/W}}\left| \mathfrak{t}\otimes {{K}_{X}}(D) \right| \right)\]
and $\pi$ is the projection onto the first factor.
\end{defn}
The cameral cover $\tilde{X} \to X$ is a $W$-Galois cover which generically parameterizes the chambers determined by the Higgs field $\varphi$; we are pulling back by $\varphi$ the covers $\tilde{\mathfrak{g}} \to \mathfrak{g}$ to take covers of open subsets in $X$ over which the bundle is trivialized, and then we glue these covers together. The ramification of $\tilde{X}$ is determined by the order of the zeroes of $\varphi$ and $\tilde{X}$ is a closed subscheme of $\left| \mathcal{E}(\mathfrak{g})\otimes {{K}_{X}}(D) \right|$ that can be singular or non-reduced. The cameral cover inherits from $\left| \mathcal{E}(\mathfrak{g})\otimes {{K}_{X}}(D) \right|$ a $W$-action thus has lots of automorphisms. Generically, $\tilde{X}$ is a non-singular Galois $W$-cover with simple ramification.

\begin{defn}
We shall denote by $\mathcal{A}' \subset \mathcal{A}_{\boldsymbol \theta}$ the open and dense subspace of $\mathcal{A}_{\boldsymbol \theta}$ such that $\tilde{X}_s$ is smooth whenever $s\in \mathcal{A}'$. 
We call the fibers $h_{\boldsymbol \theta}^{-1}(s)$ for $s \in \mathcal{A}'$, the \emph{generic fibers} of the parahoric Hitchin fibration $h_{\boldsymbol \theta}$. 
\end{defn}

\subsection{Generalized Prym varieties} 
We have constructed the cameral cover $\tilde{X}$ as a $W$-Galois cover of $X$. Thus, there is an induced action of $\mathbb{Z}[W]$ on $\tilde{X}$, hence on $H_{*}(\tilde{X},\mathbb{Z})$ and on the Picard group $\text{Pic}(\tilde{X})$. 

\begin{defn}   
For an irreducible $\mathbb{Z}[W]$-module $\Lambda$, the \emph{generalized Prym variety} of $\tilde{X}$ is defined as the set of equivariant maps of the $\mathbb{Z}[W]$-module $\Lambda$ to $\text{Pic}\tilde{X}$
\[Prym_{\Lambda}(\tilde{X}):=\text{Hom}_W(\Lambda, \text{Pic}\tilde{X}).\]
\end{defn}
The generalized Prym variety $Prym_{\Lambda}(\tilde{X})$ is an algebraic group. For a generic point $s\in \mathcal{A}'$, the cameral cover $\tilde{X}$ is a smooth projective curve, in which case then $\text{Pic}\tilde{X}$ is an abelian variety, therefore $Prym_{\Lambda}(\tilde{X})$ is also abelian; we refer to \cite[Section 5]{Donagi} for further information. We thus have:

\begin{prop}
The generalized Prym varieties associated to the cameral cover $X_s$, for generic  $s\in \mathcal{A}'$, are abelian varieties. 
\end{prop}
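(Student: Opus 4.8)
The plan is to reduce the statement to the classical fact that the Jacobian of a smooth projective curve is an abelian variety, together with the observation that the $W$-equivariant $\operatorname{Hom}$ construction carves out a closed algebraic subgroup whose identity component is an abelian subvariety.

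First I would invoke the definition of the open dense locus $\mathcal{A}'\subset\mathcal{A}_{\boldsymbol\theta}$: for $s\in\mathcal{A}'$ the cameral cover $\pi\colon \tilde{X}_s\to X$ is smooth, and since $\pi$ is finite while $X$ is a smooth projective curve, $\tilde{X}_s$ is itself a smooth projective curve. Hence its degree-zero Picard scheme $\operatorname{Pic}^0(\tilde{X}_s)=\mathrm{Jac}(\tilde{X}_s)$ is an abelian variety, and the full group $\operatorname{Pic}(\tilde{X}_s)$ is an extension of the discrete N\'eron--Severi group by this abelian variety. Because $\tilde{X}_s\to X$ is a $W$-Galois cover, $W$ acts on $\tilde{X}_s$ by deck transformations, so $\mathbb{Z}[W]$ acts on $\operatorname{Pic}(\tilde{X}_s)$ and preserves the connected component $\operatorname{Pic}^0(\tilde{X}_s)$.

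Next, for the finitely generated $\mathbb{Z}[W]$-module $\Lambda$ I would choose a finite generating set $\lambda_1,\dots,\lambda_n$; evaluation at these generators realizes $\operatorname{Hom}_W(\Lambda,\operatorname{Pic}\tilde{X}_s)$ as a closed subgroup scheme of the power $\operatorname{Pic}(\tilde{X}_s)^n$. Its defining equations are exactly the $\mathbb{Z}$-linear relations among the $\lambda_i$ together with the equivariance constraints $f(w\cdot\lambda)=w\cdot f(\lambda)$; since each of these is a homomorphism of group schemes, together they cut out a closed algebraic subgroup. The identity component of this subgroup lands inside $\operatorname{Pic}^0(\tilde{X}_s)^n$, hence inside an abelian variety, and a connected closed algebraic subgroup of an abelian variety is itself an abelian variety. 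Therefore $Prym_{\Lambda}(\tilde{X}_s)$ is an abelian variety, as claimed.

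The only genuine subtlety, and the step I would treat most carefully, is connectedness: because $\operatorname{Pic}(\tilde{X}_s)$ has a nontrivial discrete component group, the naive $\operatorname{Hom}$-group is a priori a disjoint union of torsors over an abelian variety, so one must either pass to the identity component or work directly with $\operatorname{Hom}_W(\Lambda,\operatorname{Pic}^0\tilde{X}_s)$. The irreducibility of $\Lambda$ as a $\mathbb{Z}[W]$-module guarantees that the resulting object is connected and of the expected dimension, matching the generalized Prym variety of Donagi. This is precisely the situation analyzed in \cite[Section 5]{Donagi}, to which the argument reduces once the smoothness of $\tilde{X}_s$ over $\mathcal{A}'$ has been established.
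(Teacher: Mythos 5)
Your proposal follows essentially the same route as the paper: for $s\in\mathcal{A}'$ the cameral cover is a smooth projective curve, its Picard group is (essentially) an abelian variety, and the $W$-equivariant $\operatorname{Hom}$ construction then produces an abelian variety, with the remaining details deferred to \cite[Section 5]{Donagi}. In fact the paper's own argument is terser than yours -- it simply asserts that $\operatorname{Pic}\tilde{X}$ is an abelian variety for generic $s$ and concludes; your reduction to a closed subgroup of $\operatorname{Pic}(\tilde{X}_s)^n$ and the passage to the identity component inside $(\operatorname{Pic}^0\tilde{X}_s)^n$ is a more careful rendering of the same idea.

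One caution on your final step: the assertion that irreducibility of $\Lambda$ as a $\mathbb{Z}[W]$-module \emph{guarantees} connectedness of $\operatorname{Hom}_W(\Lambda,\operatorname{Pic}\tilde{X}_s)$ is not justified and is false in general. For instance, homomorphisms factoring through the $W$-fixed part $\pi^*\operatorname{Jac}(X)\subset\operatorname{Pic}^0\tilde{X}_s$ contribute a factor $\operatorname{Hom}_{\mathbb{Z}}(\Lambda_W,\operatorname{Jac}X)$, where the coinvariants $\Lambda_W$ of a nontrivial irreducible $\Lambda$ are finite torsion, and such torsion contributions (like the two components of $\operatorname{Ker}(\mathrm{Nm})$ for an \'etale double cover in the rank-one case) need not lie in the identity component. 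The honest conclusion of your argument is that the \emph{null connected component} of $Prym_{\Lambda}(\tilde{X}_s)$ is an abelian variety -- which is exactly the formulation the paper itself adopts for the $\Gamma$-equivariant Prym in Definition \ref{defn:Prym_var_Y}, and which your own fallback (passing to the identity component, or working with $\operatorname{Hom}_W(\Lambda,\operatorname{Pic}^0\tilde{X}_s)$) already supplies. With that reading your argument is complete and matches the paper's.
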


\subsection{\texorpdfstring{The Hitchin fibration for \(\Gamma\)-equivariant \(G\)-Higgs bundles}{The Hitchin fibration for Gamma-equivariant G-Higgs bundles}}
\label{sec-pre-Y}

Let \(X\) be a smooth complex projective curve. Suppose that a second curve \(Y\) is equipped with an effective action of the finite abelian group
\[
\Gamma=\prod_{i=1}^{r}\Z/n_i\Z,
\]
so that there is a corresponding \(\Gamma\)-Galois cover
\[
\pi\colon Y\to X,\qquad X=Y/\Gamma.
\]
Equivalently, one may work on the root stack over \(X\). In what follows, we study the moduli space $\mathcal{M}^{\Gamma}_H(Y,G)$  of semistable \(\Gamma\)-equivariant  \(G\)-Higgs bundles on \(Y\), where \(G\) is a reductive algebraic group over \(\mathbb{C}\) with Lie algebra \(\mathfrak{g}\). In particular, 
\[
\mathcal{M}^{\Gamma}_H(Y,G)=\Bigl\{(P,s):\,P\mbox{ is a \(\Gamma\)-equivariant \(G\)-bundle on \(Y\), }\, s\in H^0\bigl(Y,\operatorname{ad}P\otimes K_Y(\tilde{D})\bigr)\Bigr\};
\]
we refer to \cite{KSZparh} for the construction of this moduli space. 

Let \(K_Y(\tilde{D})\) denote the canonical bundle on \(Y\) twisted by the parahoric divisor \(\tilde{D}\). By deformation theory and Serre duality, a point in the cotangent bundle $T^*\mathcal{M}^{\Gamma}_H(Y,G)$ is a pair \((P,s)\) with \(P\) a stable \(\Gamma\)-equivariant principal \(G\)-bundle on \(Y\) and
\[
s\in H^0\Bigl(Y,\,\operatorname{ad}P\otimes K_Y(\tilde{D})\Bigr)
\]
a (necessarily \(\Gamma\)-invariant) section.

Let \(h_1,\dots,h_k\) be homogeneous generators of the algebra of invariant polynomials on \(\mathfrak{g}\) with \(\deg h_i=:d_i\). Each \(h_i\) induces a map
\[
\mathcal{H}_i\colon \operatorname{ad}P\otimes K_Y(\tilde{D}) \to \Bigl(K_Y(\tilde{D})\Bigr)^{d_i},
\]
and consequently one defines the Hitchin fibration
\begin{equation}\label{Hitchin map and base_equiv}
\mathcal{H}\colon T^*\mathcal{M}^{\Gamma}_H(Y,G) \to \mathcal{K} := \bigoplus_{i=1}^{k} H^0\Bigl(Y,\Bigl(K_Y(\tilde{D})\Bigr)^{d_i}\Bigr)
\end{equation}
by
\[
(P,s) \mapsto \phi=(\phi_1,\dots,\phi_k),\quad \phi_i=\mathcal{H}_i(s).
\]

Fix a maximal torus \(T\subset G\) with associated root system
\(
\mathcal{R}=\mathcal{R}(G,T)
\)
and Weyl group
\(
W=N_G(T)/T.
\)
Choose a Borel subgroup \(B\supset T\); this determines a set of positive roots \(\mathcal{R}^+\subset \mathcal{R}\). Denote by \(\mathfrak{t}\) the Lie algebra of \(T\). Then the differential of each root \(\alpha\) gives a map
\[
d\alpha\colon \mathfrak{t}\otimes K_Y(\tilde{D}) \to K_Y(\tilde{D}).
\]
Moreover, restricting the invariant polynomials \(h_i\) to \(\mathfrak{t}\) yields homogeneous, \(W\)-invariant polynomials $\sigma_1,\dots,\sigma_k$,
which define the Galois covering
\[
\underline{\sigma}=(\sigma_1,\dots,\sigma_k)\colon \mathfrak{t}\otimes K_Y(\tilde{D}) \to \bigoplus_{i=1}^{k}\Bigl(K_Y(\tilde{D})\Bigr)^{d_i}.
\]
Its discriminant \(\Xi\) is the zero locus of the \(W\)-invariant function
\(\prod_{\alpha\in R} d\alpha.\)

For generic \(\phi\in \mathcal{K}\), we define the cameral cover of \(Y\) by
\[
\widetilde{Y}:=\phi^{*}\Bigl(\mathfrak{t}\otimes K_Y(\tilde{D})\Bigr).
\]
Then \(\widetilde{Y}\) is a ramified covering of \(Y\) with
\(
m=|W|
\)
sheets. Denote by \(Ram\subset Y\) the branch locus; by construction one has
\begin{equation}\label{eq:ramY}
\mathcal{O}(Ram) \cong \Bigl(K_Y(\tilde{D})\Bigr)^{|R|} \equiv \Bigl(K_Y(\tilde{D})\Bigr)^{\dim G-\operatorname{rank}G}.
\end{equation}
Let
\[
\iota\colon \widetilde{Y}\to \mathfrak{t}\otimes K_Y(\tilde{D})
\]
be the natural inclusion. By definition, for each \(w\in W\)
\[
\iota(w\,\eta)=\operatorname{Ad}(n_w)\,\iota(\eta),
\]
where \(n_w\in N_G(T)\) is any representative of \(w\). Moreover, if \(\pi\colon\widetilde{Y}\to Y\) is the projection, then for each \(\alpha\in R\) the composition
\(
d\alpha\circ\iota
\)
is a holomorphic section of \(\pi^*K_Y(\tilde{D})\).

These relations are summarized in the commutative diagram
\[
\begin{array}{rccc}
 & \widetilde{Y} & \stackrel{\iota}{\longrightarrow} & \mathfrak{t}\otimes K_Y(\tilde{D}) \\&
\scriptstyle{\pi}\ \downarrow &  &   \downarrow \\
 & Y & \stackrel{\phi}{\longrightarrow} & \displaystyle\bigoplus_{i=1}^{k}\Bigl(K_Y(\tilde{D})\Bigr)^{d_i}\,.
\end{array}
\]
Under our genericity assumptions the cover \(\widetilde{Y}\) is smooth and irreducible, and each ramification point \(p\in\pi^{-1}(Ram)\) is simple (i.e. the section
\[
\prod_{\alpha\in R^+}\Bigl(d\alpha\circ\iota\Bigr)\colon \widetilde{Y}\to \pi^*K_Y(\tilde{D})^{|R|/2}
\]
has a simple zero at \(p\)).

Let \(X(T)\) denote the group of characters of \(T\). The group of isomorphism classes of holomorphic $\Gamma$-equivariant principal \(T\)-bundles on \(\widetilde{Y}\) is identified with the group
\[
\Pic^\Gamma(\widetilde{Y})\otimes X(T)^*,
\]
where
\[
X(T)^*=\operatorname{Hom}(X(T),\Z)
\]
is the dual group. Similarly, the group of topologically trivial $\Gamma$-equivariant \(T\)-bundles is
\[
J(\widetilde{Y})\otimes X(T)^*,
\]
with \(J^\Gamma(\widetilde{Y})\) the Jacobian of \(\widetilde{Y}\). The Weyl group \(W\) acts naturally on both \(J^{\Gamma}(\widetilde{Y})\) and \(X(T)^*\) (on the latter by conjugation). A $T$-bundle is written as 
\[
\tau = D_1\otimes\chi_1+\cdots+D_l\otimes\chi_l,
\]
where each \(D_i\), $i=1,...,l$ is a divisor on \(\tilde{Y}\) representing a point in the Jacobian, and the $\chi_i$'s are cocharacters. Writing each divisor $D_i$ as
\[
D_i = \sum_{p\in\tilde{Y}} n_p\,p,
\]
the action of an element \(w\in W\) on \(D_i\) is defined by
\[
w\cdot D_i := \sum_{p\in\tilde{Y}} n_p\,\bigl[w(p)\bigr].
\]
In other words, \(w\) acts by sending each point \(p\) in the support of \(D_i\) to \(w(p)\), preserving the multiplicities. This operation induces a natural \(W\)-action on the divisor group \(\operatorname{Div}(\tilde{Y})\) and, hence, on the Picard group \(\Pic(\tilde{Y})\).
On the other hand,
the twisted character \({}^w\chi_i\) is given by
\begin{equation}\label{twisted_char}
{}^w\chi_i(t)=\chi_i\bigl(w^{-1}t\,w\bigr).
\end{equation}
We have for any $w\in W$ an action
\[
{}^w\tau = wD_1\otimes {}^w\chi_1+\cdots+wD_l\otimes {}^w\chi_l.
\]

A critical aspect in order to introduce the definition of the generalized Prym variety $\operatorname{Prym}^\Gamma(Y)$ on $Y$ (Definition \ref{defn:Prym_var_Y} below) and the subsequent abelianization result (Theorem~\ref{thm:abelianization-Gamma}) is the compatibility between the $\Gamma$-equivariant structure on $T$-bundles over $\widetilde{Y}$ and the natural action of the Weyl group $W$ on these bundles. We need to ensure that if a $T$-bundle is $\Gamma$-equivariant, its $W$-transforms are also $\Gamma$-equivariant in a consistent manner.

\begin{lem}\label{lem:gamma_w_actions_on_cameral_cover}
The $\Gamma$-action on $Y$ lifts to an action on the cameral cover $\varpi: \widetilde{Y} \to Y$, denoted by $\bar{\gamma}: \widetilde{Y} \to \widetilde{Y}$ for $\gamma \in \Gamma$. This lifted $\Gamma$-action commutes with the sheet-permuting action of the Weyl group $W$ on $\widetilde{Y}$, denoted by $\bar{w}: \widetilde{Y} \to \widetilde{Y}$ for $w \in W$. That is, for all $p \in \widetilde{Y}$, $\bar{\gamma}(\bar{w}(p)) = \bar{w}(\bar{\gamma}(p))$.
\end{lem}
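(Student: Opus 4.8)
The plan is to realize both the $\Gamma$-action and the $W$-action as restrictions of commuting automorphisms of the ambient total space $\mathfrak{t}\otimes K_Y(\tilde{D})$, into which $\widetilde{Y}$ embeds via $\iota$, and then to check that these automorphisms preserve the subscheme $\widetilde{Y}$. First I would verify that the characteristic section $\phi=\mathcal{H}(P,s)$ is $\Gamma$-invariant. The $\Gamma$-equivariant structure on $P$ induces one on $\operatorname{ad}P\otimes K_Y(\tilde{D})$; since $s$ is by hypothesis $\Gamma$-invariant and the generating invariant polynomials $h_i$ are $G$-invariant (hence compatible with the equivariant structure mediated by $\rho\colon\Gamma\to T\subset G$), the induced characteristic $\phi\in\bigoplus_i H^0\bigl(Y,(K_Y(\tilde{D}))^{d_i}\bigr)$ is $\Gamma$-invariant. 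Consequently $\phi$ is a $\Gamma$-equivariant morphism into the target of $\underline{\sigma}$, so the fiber product $\widetilde{Y}=\phi^{*}\bigl(\mathfrak{t}\otimes K_Y(\tilde{D})\bigr)$ inherits a $\Gamma$-action by functoriality of the pullback, and $\varpi\colon\widetilde{Y}\to Y$ becomes $\Gamma$-equivariant. This produces the lifted maps $\bar{\gamma}$.

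Next I would describe both actions explicitly on $\mathfrak{t}\otimes K_Y(\tilde{D})$. The Weyl action is $\bar{w}=w\otimes\operatorname{id}$, acting through the linear $W$-action on the Cartan factor $\mathfrak{t}$ and trivially on the line. The lifted $\Gamma$-action $\bar{\gamma}$ acts on the base $Y$ through $\gamma$, on the fibers of $K_Y(\tilde{D})$ through the natural equivariant structure of this line bundle, and on the Cartan factor $\mathfrak{t}$ through $\operatorname{Ad}(\rho(\gamma))|_{\mathfrak{t}}$. The crucial observation is that $\rho(\gamma)=\Delta(\zeta)\in T$, and since $T$ is abelian its adjoint action fixes $\mathfrak{t}$ pointwise; hence $\bar{\gamma}$ acts \emph{trivially} on the $\mathfrak{t}$ factor. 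Writing $\gamma_{K}$ for the induced action on the total space of $K_Y(\tilde{D})$ covering $\gamma\colon Y\to Y$, we thus obtain the tensor decomposition $\bar{\gamma}=\operatorname{id}_{\mathfrak{t}}\otimes\gamma_{K}$.

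Commutativity then follows immediately from this decomposition: $\bar{w}$ affects only the $\mathfrak{t}$-factor while $\bar{\gamma}$ affects only the complementary $K_Y(\tilde{D})$-factor, so $\bar{w}\circ\bar{\gamma}=w\otimes\gamma_{K}=\bar{\gamma}\circ\bar{w}$ as automorphisms of $\mathfrak{t}\otimes K_Y(\tilde{D})$. Both automorphisms preserve the subscheme $\widetilde{Y}$ — the $W$-action by the very construction of the cameral cover, and the $\Gamma$-action because the defining relation $\underline{\sigma}(\iota(p))=\phi(\varpi(p))$ is respected once $\phi$ is $\Gamma$-invariant — and therefore the commutation relation restricts to $\widetilde{Y}$, yielding $\bar{\gamma}(\bar{w}(p))=\bar{w}(\bar{\gamma}(p))$ for all $p\in\widetilde{Y}$.

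I expect the main obstacle to be the careful identification of the $\Gamma$-action on the Cartan factor as trivial; this rests entirely on the hypothesis $\rho(\Gamma)\subset T$, which is exactly what decouples the two actions onto independent tensor factors, and any enlargement of $\rho$ beyond $T$ would allow $\operatorname{Ad}(\rho(\gamma))$ to permute Weyl sheets and destroy commutativity. A secondary technical point is setting up the equivariant structures compatibly over the branch locus $Ram$, where $\widetilde{Y}$ could a priori be singular; under the genericity assumption that $\widetilde{Y}$ is smooth and irreducible this is unproblematic, and in any case the commutation identity is a closed condition that holds on all of $\widetilde{Y}$ once it is verified over the dense open locus where $s$ is regular semisimple and the fiber of $\varpi$ is a genuine $W$-torsor.
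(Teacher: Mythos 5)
Your proposal is correct and follows essentially the same route as the paper's proof: establish $\Gamma$-invariance of $\phi$ from the $\Gamma$-invariance of $s$ and $G$-invariance of the $h_i$, lift the action to $\widetilde{Y}$ so that it moves only the base point and the $K_Y(\tilde{D})$-component, and conclude commutativity because $\bar{w}$ touches only the $\mathfrak{t}$-factor while $\bar{\gamma}$ touches only the complementary factor (the paper verifies this by the same local computation $\psi_y = \sum_j c_j\, t_j \otimes \alpha_j(y)$ that your tensor decomposition $\bar{w} = w \otimes \operatorname{id}$, $\bar{\gamma} = \operatorname{id}_{\mathfrak{t}} \otimes \gamma_K$ encodes). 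Your additional remark that any potential $\operatorname{Ad}(\rho(\gamma))$-twist on $\mathfrak{t}$ is trivial because $\rho(\Gamma)\subset T$ and $T$ is abelian is a harmless clarification consistent with the paper's definition of the lifted action.
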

\begin{proof}
The $\Gamma$-equivariant $G$-Higgs bundle $(P,s)$ on $Y$ features a $\Gamma$-invariant Higgs field $s \in H^0(Y, \mathrm{ad}P \otimes K_Y(\tilde{D}))$. Consequently, the associated characteristic polynomial sections $\phi = (\phi_1, \dots, \phi_k)$, which define the Hitchin map $\mathcal{H}(P,s) = \phi$, are $\Gamma$-invariant: $\delta^*\phi = \phi$ for all $\delta \in \Gamma$, where $\delta^*$ is the pullback on sections over $Y$.
The cameral cover is $\widetilde{Y} = \{ (y, \psi_y) \mid y \in Y, \psi_y \in (\mathfrak{t} \otimes K_Y(\tilde{D}))_y, \underline{\sigma}(\psi_y) = \phi(y) \}$.
For $\delta \in \Gamma$, define its action on $p=(y, \psi_y) \in \widetilde{Y}$ by $\bar{\delta}(p) = (\delta \cdot y, \delta_*\psi_y)$, where $\delta_*\psi_y$ represents the natural transformation of the fiber element $\psi_y$ under the action of $\delta$ (which acts on $Y$ and the bundle $K_Y(\tilde{D})$). Since $\underline{\sigma}$ is $G$-invariant (acting on the $\mathfrak{t}$-component) and $\phi$ is $\Gamma$-invariant, if $(y, \psi_y) \in \widetilde{Y}$, then
\[ \underline{\sigma}(\delta_*\psi_{\delta^{-1}y}) = \delta_*(\underline{\sigma}(\psi_{\delta^{-1}y})) = \delta_*(\phi(\delta^{-1}y)) = (\delta_*\phi)(y) = \phi(y). \]
Hence, $\bar{\delta}(p) \in \widetilde{Y}$, so $\widetilde{Y}$ admits a $\Gamma$-action covering the $\Gamma$-action on $Y$.

\noindent The Weyl group $W$ acts on $p=(y, \psi_y) \in \widetilde{Y}$ by $\bar{w}(p) = (y, w \cdot \psi_y)$, where $w \cdot \psi_y$ is the standard $W$-action on the $\mathfrak{t}$-component of $\psi_y \in \mathfrak{t} \otimes (K_Y(\tilde{D}))_y$. This action fixes $y \in Y$.

\noindent To show commutativity, consider a point $p=(y, \psi_y)$. Then,
\begin{align*}
    (\bar{\delta} \circ \bar{w})(p) &= \bar{\delta}(y, w \cdot \psi_y) = (\delta \cdot y, \delta_*(w \cdot \psi_y)), \text{ and} \\
    (\bar{w} \circ \bar{\delta})(p) &= \bar{w}(\delta \cdot y, \delta_*\psi_y) = (\delta \cdot y, w \cdot (\delta_*\psi_y)).
\end{align*}
Equality holds if $\delta_*(w \cdot \psi_y) = w \cdot (\delta_*\psi_y)$. The term $\psi_y$ can be locally written as $\sum_j c_j t_j \otimes \alpha_j(y)$, where $t_j \in \mathfrak{t}$ and $\alpha_j(y) \in (K_Y(\tilde{D}))_y$. The action $w \cdot \psi_y = \sum_j c_j (w \cdot t_j) \otimes \alpha_j(y)$. Then $\delta_*(w \cdot \psi_y) = \sum_j c_j (w \cdot t_j) \otimes (\delta_*\alpha_j)(\delta \cdot y)$.
Conversely, $\delta_*\psi_y = \sum_j c_j t_j \otimes (\delta_*\alpha_j)(\delta \cdot y)$. Then $w \cdot (\delta_*\psi_y) = \sum_j c_j (w \cdot t_j) \otimes (\delta_*\alpha_j)(\delta \cdot y)$.
The two expressions are identical because the $W$-action only affects the $\mathfrak{t}$-coefficients and the $\delta_*$-action affects the $K_Y(\tilde{D})$ coefficients and the base point. Thus, $\bar{\delta} \circ \bar{w} = \bar{w} \circ \bar{\delta}$ as automorphisms of $\widetilde{Y}$.
\end{proof}

\begin{prop}\label{prop:w_action_preserves_gamma_equivariance}
Let $\mathcal{L}$ be a $\Gamma$-equivariant $T$-bundle on $\widetilde{Y}$. For any $w \in W$, the transformed $T$-bundle ${}^w\mathcal{L}$ carries a natural $\Gamma$-equivariant structure inherited from $\mathcal{L}$. Consequently, the $W$-action is well-defined on the set of isomorphism classes of $\Gamma$-equivariant $T$-bundles on $\widetilde{Y}$.
\end{prop}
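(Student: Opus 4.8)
The plan is to realize the operation $\mathcal{L} \mapsto {}^w\mathcal{L}$ as a functor assembled from two commuting pieces — a geometric pullback along the sheet-permuting automorphism $\bar{w}\colon \widetilde{Y}\to\widetilde{Y}$ and an algebraic twist of the structure group by the Weyl element $w$ — and then to transport the given $\Gamma$-linearization through this functor. Recall that a $\Gamma$-equivariant structure on $\mathcal{L}$ is a collection of $T$-bundle isomorphisms $\psi_\gamma\colon \bar{\gamma}^*\mathcal{L}\xrightarrow{\sim}\mathcal{L}$, one for each $\gamma\in\Gamma$, satisfying the cocycle condition $\psi_{\gamma_1\gamma_2}=\psi_{\gamma_1}\circ\bar{\gamma}_1^*\psi_{\gamma_2}$. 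Writing $\mathcal{L}$ as a principal $T$-bundle $P\to\widetilde{Y}$, the class ${}^w\mathcal{L}$ is obtained by pulling $P$ back along $\bar{w}$ and replacing the right $T$-action by its composition with the automorphism $\mathrm{Int}(n_w)|_T\colon T\to T$ induced by a representative $n_w\in N_G(T)$ of $w$; on the combinatorial data $\tau=\sum_i D_i\otimes\chi_i$ this is exactly the recipe ${}^w\tau=\sum_i \bar{w}_*D_i\otimes{}^w\chi_i$ with ${}^w\chi_i(t)=\chi_i(w^{-1}tw)$ recorded before the statement. A first routine observation I would record is that, because $T$ is abelian, $\mathrm{Int}(n_w)|_T$ depends only on $w$ and not on the chosen $n_w$, so the twist — and hence the functor ${}^w(-)$ — is canonical.

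First I would establish the compatibility isomorphism. By Lemma~\ref{lem:gamma_w_actions_on_cameral_cover} the lifted automorphisms $\bar{\gamma}$ and $\bar{w}$ of $\widetilde{Y}$ commute, so $(\bar{\gamma}\circ\bar{w})^*=(\bar{w}\circ\bar{\gamma})^*$ yields a canonical natural isomorphism $\bar{w}^*\bar{\gamma}^*\cong\bar{\gamma}^*\bar{w}^*$ of pullback functors. The structure-group twist $\mathrm{Int}(n_w)|_T$ acts only on fibers and is independent of the base, so it commutes with every geometric pullback $\bar{\gamma}^*$. Composing these two facts produces a canonical isomorphism $c_\gamma\colon \bar{\gamma}^*\bigl({}^w\mathcal{L}\bigr)\xrightarrow{\sim}{}^w\bigl(\bar{\gamma}^*\mathcal{L}\bigr)$, natural in $\mathcal{L}$. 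I would then define the candidate linearization of ${}^w\mathcal{L}$ by
\[
{}^w\psi_\gamma \;:=\; {}^w(\psi_\gamma)\circ c_\gamma \colon \bar{\gamma}^*\bigl({}^w\mathcal{L}\bigr)\xrightarrow{c_\gamma}{}^w\bigl(\bar{\gamma}^*\mathcal{L}\bigr)\xrightarrow{{}^w(\psi_\gamma)}{}^w\mathcal{L},
\]
where ${}^w(\psi_\gamma)$ is the image of $\psi_\gamma$ under ${}^w(-)$. The point that makes this a morphism of $T$-bundles is that the original $\Gamma$-linearization commutes with the right $T$-action, hence also with the twisted $T$-action obtained by precomposing with $\mathrm{Int}(n_w)|_T$.

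Next I would verify the cocycle identity for $\{{}^w\psi_\gamma\}$. This reduces to functoriality of ${}^w(-)$ (which preserves composition) together with the compatibility of the $c_\gamma$ under composition of the $\bar{\gamma}$, namely a commuting hexagon expressing $c_{\gamma_1\gamma_2}$ in terms of $c_{\gamma_1}$ and $\bar{\gamma}_1^*c_{\gamma_2}$; this is the cocycle condition for the canonical isomorphisms attached to the commuting pair $(\bar{\gamma},\bar{w})$ and follows formally from Lemma~\ref{lem:gamma_w_actions_on_cameral_cover}. For the final sentence I would note that ${}^w(-)$ is a functor, so any isomorphism $\mathcal{L}\cong\mathcal{L}'$ of $\Gamma$-equivariant $T$-bundles is carried to a $\Gamma$-equivariant isomorphism ${}^w\mathcal{L}\cong{}^w\mathcal{L}'$; combined with the representative-independence of the twist, this shows the $W$-action descends to a well-defined action on isomorphism classes.

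I expect the main obstacle to be essentially bookkeeping: fixing the directions of the pullbacks and the precise form of $c_\gamma$ so that the hexagon giving the cocycle identity closes on the nose rather than merely up to a $T$-valued scalar. Since $T$ is abelian and the twist is representative-independent, I anticipate no genuine anomaly, but the subtle check is that the $W$-twist of the structure group interacts correctly with a $\Gamma$-action that itself factors partly through $\rho(\gamma)\in T$; concretely I would confirm that $w\bigl(\rho(\gamma)\bigr)$ and $\rho(\gamma)$ enter the two linearizations compatibly, and this is the step where I would spend the most care.
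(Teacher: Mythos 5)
Your proposal is correct and follows essentially the same route as the paper's proof: both rest on Lemma~\ref{lem:gamma_w_actions_on_cameral_cover} (commutativity of the lifted $\Gamma$- and $W$-actions on $\widetilde{Y}$) to transport the $\Gamma$-linearization through pullback along $\bar{w}$ (the paper sets $v_\delta:=(\bar{w}^{-1})^*u_\delta$, which is your ${}^w\psi_\gamma$ up to the canonical identification $c_\gamma$), verify the cocycle condition by the same commutation, and treat the character twist as a harmless extra factor (the paper's assumption that $\Gamma$ acts trivially on $X(T)$ is exactly your final compatibility concern). Your packaging is merely more functorial; the mathematical content is identical.
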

\begin{proof}
A $T$-bundle $\mathcal{L}$ on $\widetilde{Y}$ is $\Gamma$-equivariant if for each $\delta \in \Gamma$, there is an isomorphism $u_\delta: (\bar{\delta}^{-1})^*\mathcal{L} \stackrel{\sim}{\to} \mathcal{L}$ such that $u_{\delta_1\delta_2} = u_{\delta_1} \circ (\bar{\delta_1}^{-1})^*u_{\delta_2}$, for all $\delta_1, \delta_2 \in \Gamma$.
The $W$-transform is ${}^w\mathcal{L} = (\bar{w}^{-1})^*\mathcal{L} \otimes \chi_w$, where $\chi_w$ represents the action of $w$ on the characters defining the $T$-structure (denoted by ${}^w\chi_i$ in (\ref{twisted_char})). For simplicity of notation for the geometric part, let $\mathcal{L}_w := (\bar{w}^{-1})^*\mathcal{L}$.

\noindent We define a $\Gamma$-equivariant structure $v_\delta: (\bar{\delta}^{-1})^*\mathcal{L}_w \stackrel{\sim}{\to} \mathcal{L}_w$ for a given $\mathcal{L}_w$.
Using Lemma~\ref{lem:gamma_w_actions_on_cameral_cover}, we have $\bar{\delta}^{-1} \circ \bar{w}^{-1} = \bar{w}^{-1} \circ \bar{\delta}^{-1}$. Thus,
\[ (\bar{\delta}^{-1})^*\mathcal{L}_w = (\bar{\delta}^{-1})^*(\bar{w}^{-1})^*\mathcal{L} = ((\bar{w} \circ \bar{\delta})^{-1})^*\mathcal{L} = ((\bar{\delta} \circ \bar{w})^{-1})^*\mathcal{L} = (\bar{w}^{-1})^*(\bar{\delta}^{-1})^*\mathcal{L}. \]
Define $v_\delta := (\bar{w}^{-1})^*u_\delta : (\bar{w}^{-1})^*(\bar{\delta}^{-1})^*\mathcal{L} \stackrel{\sim}{\to} (\bar{w}^{-1})^*\mathcal{L} = \mathcal{L}_w$.
This map $v_\delta$ is an isomorphism $(\bar{\delta}^{-1})^*\mathcal{L}_w \stackrel{\sim}{\to} \mathcal{L}_w$.
We verify the cocycle condition for $v_\delta$:
\begin{align*}
    v_{\delta_1\delta_2} &= (\bar{w}^{-1})^*u_{\delta_1\delta_2} \\
    &= (\bar{w}^{-1})^*(u_{\delta_1} \circ (\bar{\delta_1}^{-1})^*u_{\delta_2}) \quad (\text{by cocycle condition for } u) \\
    &= ((\bar{w}^{-1})^*u_{\delta_1}) \circ ((\bar{w}^{-1})^*(\bar{\delta_1}^{-1})^*u_{\delta_2}) \\
    &= v_{\delta_1} \circ ((\bar{\delta_1}^{-1})^*(\bar{w}^{-1})^*u_{\delta_2}) \quad (\text{using } \bar{w}^{-1}\bar{\delta_1}^{-1} = \bar{\delta_1}^{-1}\bar{w}^{-1}) \\
    &= v_{\delta_1} \circ (\bar{\delta_1}^{-1})^*v_{\delta_2}.
\end{align*}
Thus, $\{v_\delta\}_{\delta \in \Gamma}$ defines a $\Gamma$-equivariant structure on $\mathcal{L}_w = (\bar{w}^{-1})^*\mathcal{L}$.
The full bundle ${}^w\mathcal{L} = \mathcal{L}_w \otimes \chi_w$ is then also $\Gamma$-equivariant, assuming $\Gamma$ acts trivially on the abstract character group $X(T)$ (which is standard, as $T$ is a fixed group).
Therefore, the condition ${}^w\mathcal{L} \cong \mathcal{L}$ in the definition of $\operatorname{Prym}^\Gamma(Y)$ can be understood as an isomorphism of $\Gamma$-equivariant $T$-bundles.
\end{proof}

We can now introduce the following: 

\begin{defn}\label{defn:Prym_var_Y}
The \emph{generalized Prym variety with respect to the $\Gamma$-action} is defined as
\[
\operatorname{Prym}^\Gamma(Y):=\Bigl[J^\Gamma(\widetilde{Y})\otimes X(T)^*\Bigr]^W,
\]
i.e. the subgroup of those topologically trivial \(T\)-bundles \(\tau\) on \(\widetilde{Y}\) satisfying
\[
{}^w\tau\cong \tau, \quad\text{for all }w\in W.
\]
\end{defn}

Note that $\operatorname{Prym}^\Gamma(Y)$ is an algebraic group whose null connected component $\operatorname{Prym}^\Gamma(Y)_0$ is an abelian variety.

The generic fibers of the Hitchin fibration in the case of stable principal $G$-bundles on a compact Riemann surface were studied by Faltings in \cite{Faltings}. Since we have seen that the $\Gamma$-equivariant structure on $T$-bundles and the natural Weyl group action on these bundles are compatible, the constructions of the cameral cover and the generalized Prym variety proceed as in the non-equivariant principal $G$-bundle case. Moreover, the proof that the generic Hitchin fibers are isomorphic to generalized Prym varieties by Scognamillo \cite{Scog} (simplifying the original proof of Faltings) repeats word by word in the presence of a $\Gamma$-equivariant action on the principal $T$-bundle as above. We thus conclude to the following:

\begin{thm}[Abelianization of \(\Gamma\)-Equivariant $G$-Higgs Bundles]\label{thm:abelianization-Gamma}
Let \((P,s)\) be a stable \(\Gamma\)-equivariant \(G\)-Higgs bundle on \(Y\). Assume that the Hitchin base is generic so that the associated cameral cover $\pi\colon \widetilde{Y}\to Y$ is smooth. Then there exists a canonical construction of a \(\Gamma\)-equivariant \(T\)-bundle
\[
\mathcal{T}(P,s)\in \operatorname{Pic}^\Gamma(\widetilde{Y})\otimes X^*(T)
\]
satisfying
\[
{}^w\mathcal{T}(P,s)\cong \mathcal{T}(P,s), \quad\text{for all }w\in W.
\]
In particular, the assignment $(P,s)\longmapsto \mathcal{T}(P,s)$
defines an injective morphism from (each connected component of) the Hitchin fiber
\[
\mathcal{H}^{-1}(\phi)\subset \mathcal{M}_H^\Gamma(Y,G)
\]
to the generalized Prym variety
\[
\operatorname{Prym}^\Gamma(Y)=\Bigl\{Q\in \operatorname{Pic}^\Gamma(\widetilde{Y})\otimes X^*(T)\; \Bigm|\; {}^wQ\cong Q\text{ for all }w\in W\Bigr\}.
\]
Thus, the generic Hitchin fiber is an abelian torsor.
\end{thm}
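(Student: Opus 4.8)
The plan is to transport the abelianization of Faltings \cite{Faltings} and Scognamillo \cite{Scog} essentially verbatim into the $\Gamma$-equivariant setting, using as the only genuinely new ingredients the compatibility of the $\Gamma$- and $W$-actions on the cameral cover established in Lemma~\ref{lem:gamma_w_actions_on_cameral_cover} and the well-definedness of the $W$-action on $\Gamma$-equivariant $T$-bundles from Proposition~\ref{prop:w_action_preserves_gamma_equivariance}. Throughout I fix $\phi\in\mathcal{K}$ generic, so that $\pi\colon\widetilde{Y}\to Y$ is smooth and irreducible with simple ramification, and I write $L:=K_Y(\tilde{D})$.

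First I would construct $\mathcal{T}(P,s)$. Pulling back along $\pi$, the bundle $\widetilde{P}:=\pi^{*}P$ carries the $\Gamma$-invariant Higgs field $\pi^{*}s$, whose value over the regular semisimple locus lies in a canonical Borel, since by construction a point of $\widetilde{Y}$ is a point of $Y$ together with a chamber (equivalently a Borel subalgebra $\mathfrak{b}$ containing the Higgs value). Thus $\widetilde{P}$ acquires a tautological reduction to $B$; projecting the Higgs field along $B\twoheadrightarrow T$ recovers the tautological section $\iota\colon\widetilde{Y}\to\mathfrak{t}\otimes\pi^{*}L$. Pushing this reduction out along $B\twoheadrightarrow T$ and extending across the simple ramification divisor yields $\mathcal{T}(P,s)\in\operatorname{Pic}^{\Gamma}(\widetilde{Y})\otimes X^{*}(T)$. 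Because every step is canonical and $s$ is $\Gamma$-invariant, the lifted $\Gamma$-action on $\widetilde{Y}$ of Lemma~\ref{lem:gamma_w_actions_on_cameral_cover} propagates to $\mathcal{T}(P,s)$, equipping it with a $\Gamma$-equivariant structure $\{u_{\delta}\}_{\delta\in\Gamma}$.

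Next I would verify $W$-invariance as a $\Gamma$-equivariant bundle. The sheet-permuting action sends the tautological Borel at one sheet to the tautological Borel at the $w$-translated sheet, so ${}^{w}\mathcal{T}(P,s)\cong\mathcal{T}(P,s)$ as $T$-bundles, exactly as in the non-equivariant argument of \cite{Scog}; Proposition~\ref{prop:w_action_preserves_gamma_equivariance} upgrades this to an isomorphism of $\Gamma$-equivariant $T$-bundles, since $\Gamma$ commutes with $W$ on $\widetilde{Y}$. Hence $\mathcal{T}(P,s)\in\operatorname{Prym}^{\Gamma}(Y)$. Tensoring $\mathcal{T}(P,s)$ by any element of $\operatorname{Prym}^{\Gamma}(Y)$ manifestly preserves the $W$-invariance, the $\Gamma$-structure and the characteristic $\phi$, so $\operatorname{Prym}^{\Gamma}(Y)$ acts on the fiber $\mathcal{H}^{-1}(\phi)$, and I would show this action is simply transitive on connected components via the reconstruction below.

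Finally, for injectivity I would run the inverse (descent) construction of Scognamillo \cite{Scog} $\Gamma$-equivariantly: from a $W$-invariant $\Gamma$-equivariant $T$-bundle one rebuilds a $\Gamma$-equivariant $G$-Higgs bundle with characteristic $\phi$ by passing to the associated $N_{G}(T)$-data, inducing to $G$, and taking $W$-invariant descent along $\pi$, with the Higgs field supplied by $\iota$; the $\Gamma$-structures match precisely because $\Gamma$ and $W$ commute. Two points of the fiber with isomorphic images then yield isomorphic $\Gamma$-equivariant $G$-Higgs bundles, giving injectivity, while the reconstruction exhibits transitivity of the $\operatorname{Prym}^{\Gamma}(Y)$-action, so $\mathcal{H}^{-1}(\phi)$ is a torsor over $\operatorname{Prym}^{\Gamma}(Y)$; as its identity component $\operatorname{Prym}^{\Gamma}(Y)_{0}$ is an abelian variety, the generic fiber is an abelian torsor. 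The hard part will be precisely this last step: making the cameral ascent–descent equivalence fully $\Gamma$-equivariant at points where the $\Gamma$-action on $Y$ is ramified and simultaneously meets the branch locus $Ram$ of $\pi$, where one must check that the equivariant structures descend without an extra gerbe-theoretic obstruction. Lemma~\ref{lem:gamma_w_actions_on_cameral_cover} is what renders this tractable, but verifying the local model at such doubly-ramified points is where the substantive work lies.
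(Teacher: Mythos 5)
Your proposal takes essentially the same route as the paper: the paper's entire argument is that, given Lemma~\ref{lem:gamma_w_actions_on_cameral_cover} and Proposition~\ref{prop:w_action_preserves_gamma_equivariance}, the abelianization of Faltings \cite{Faltings} as simplified by Scognamillo \cite{Scog} ``repeats word by word'' in the $\Gamma$-equivariant setting, which is exactly the transport you carry out. In fact you supply more detail than the paper does (the tautological Borel reduction, the equivariant descent for injectivity, and the flagged subtlety at points where the $\Gamma$-ramification meets the branch locus of $\pi$ -- a point the paper silently glosses over), so your write-up is, if anything, a more careful version of the same proof.
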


In view of the correspondence between the moduli space of logahoric $\mathcal{G}_{\boldsymbol \theta}$-Higgs torsors on $X$ and the $\Gamma$-equivariant $G$-Higgs bundles on $Y$ from \cite[Theorem 3.7]{KSZparh}, we now have the following:
\begin{cor}\label{parh_Hitchin_fiber_abelian}
The generic Hitchin fibers of the parahoric Hitchin fibration $$h_{\boldsymbol\theta}: \mathcal{M}_H (X, \mathcal{G}_{\boldsymbol\theta}) \to \mathcal{A}_{\boldsymbol \theta}$$ are abelian torsors.
\end{cor}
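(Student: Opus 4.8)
The plan is to deduce the statement from the equivariant abelianization result Theorem~\ref{thm:abelianization-Gamma} by transporting it through the correspondence between logahoric $\mathcal{G}_{\boldsymbol\theta}$-Higgs torsors on $X$ and $\Gamma$-equivariant $G$-Higgs bundles on $Y$. First I would invoke \cite[Theorem 3.7]{KSZparh}, which furnishes an isomorphism of moduli spaces $\mathcal{M}_H(X,\mathcal{G}_{\boldsymbol\theta}) \cong \mathcal{M}_H^\Gamma(Y,G)$ sending a pair $(\mathcal{E},\varphi)$ to a $\Gamma$-equivariant pair $(P,s)$ whose ($\Gamma$-invariant) Higgs field $s\in H^0(Y,\operatorname{ad}P\otimes K_Y(\tilde{D}))$ corresponds to the logarithmic Higgs field $\varphi\in H^0(X,\mathcal{E}(\mathfrak{g})\otimes K_X(D))$. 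The whole argument then reduces to checking that this isomorphism respects the two Hitchin fibrations and their generic loci.

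The key step is to verify that the isomorphism intertwines $h_{\boldsymbol\theta}$ with the equivariant Hitchin map $\mathcal{H}$. Both maps are assembled from the \emph{same} homogeneous generators $p_1,\dots,p_l$ of $\mathbb{C}[\mathfrak{g}]^G$, so it suffices to compare characteristic sections. Since $s$ corresponds to $\varphi$ and each $p_i$ is $G$-invariant, the section $p_i(s)$ on $Y$ is $\Gamma$-invariant and is the lift of $p_i(\varphi)$; the required compatibility therefore amounts to the claim that the pullback $\pi^*\colon H^0(X,K_X(D)^{m_i+1})\to H^0(Y,K_Y(\tilde{D})^{m_i+1})^\Gamma$ is an isomorphism, which follows from the defining relation between $K_X(D)$, the ramification of the tamely ramified cover $\pi\colon Y\to X$, and the parahoric divisor $\tilde{D}$ used in the construction of \cite{KSZparh}. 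This identifies $\mathcal{A}_{\boldsymbol\theta}$ with the $\Gamma$-invariant part of the equivariant base $\mathcal{K}$ and, crucially, matches the two genericity conditions: a point $s\in\mathcal{A}_{\boldsymbol\theta}$ lies in $\mathcal{A}'$ (smooth cameral cover $\tilde{X}_s$) exactly when its image in $\mathcal{K}$ has smooth cameral cover $\tilde{Y}$, because $\tilde{Y}$ is the fiber-product lift of $\tilde{X}_s$ along $\pi$, so the discriminant loci correspond.

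With the two fibrations identified over the generic locus, each generic fiber $h_{\boldsymbol\theta}^{-1}(s)$ is carried isomorphically onto the corresponding fiber $\mathcal{H}^{-1}(\phi)$. By Theorem~\ref{thm:abelianization-Gamma}, the assignment $(P,s)\mapsto \mathcal{T}(P,s)$ embeds each connected component of $\mathcal{H}^{-1}(\phi)$ into the generalized Prym variety $\operatorname{Prym}^\Gamma(Y)$, whose identity component $\operatorname{Prym}^\Gamma(Y)_0$ is an abelian variety, realizing the fiber as a torsor under this abelian variety. Transporting the structure back along the isomorphism exhibits $h_{\boldsymbol\theta}^{-1}(s)$ as an abelian torsor, as claimed. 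The main obstacle I anticipate is precisely the careful matching of the Hitchin bases and their genericity loci under $\pi^*$: one must control how the parahoric twist $K_X(D)$ interacts with the ramification of $\pi$ and with $\tilde{D}$, so that $\tilde{Y}$ genuinely coincides with the pullback of $\tilde{X}_s$ and smoothness of one cameral cover is equivalent to smoothness of the other. Once this dictionary is established, the abelian-torsor conclusion is a formal consequence of the equivariant abelianization.
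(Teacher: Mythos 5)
Your proposal is correct and follows essentially the same route as the paper: the paper's proof consists precisely of invoking the correspondence of \cite[Theorem 3.7]{KSZparh} between logahoric $\mathcal{G}_{\boldsymbol\theta}$-Higgs torsors on $X$ and $\Gamma$-equivariant $G$-Higgs bundles on $Y$, and then applying Theorem~\ref{thm:abelianization-Gamma}. The compatibility checks you spell out (intertwining of the two Hitchin maps via $\pi^*$ and matching of the genericity loci for the cameral covers) are exactly the content the paper leaves implicit in its one-sentence deduction, so your more detailed verification is a faithful elaboration rather than a different argument.
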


\subsection{Regular Centralizer}

This section establishes the duality that underlies the complete integrability of the parahoric Hitchin system. We begin with a local analysis of the regular centralizer in the parahoric context, which we then globalize to the moduli space of logahoric $\mathcal{G}_{\boldsymbol{\theta}}$-Higgs torsors. The section culminates in showing that the generic fibers of the parahoric Hitchin fibration are Lagrangian.

In the parahoric setting, for a given weight $\theta$, we work with the parahoric group scheme $\mathcal{G}_\theta$ whose Lie algebra is $\mathfrak{g}_\theta$. For an element $\varphi \in \mathfrak{g}_\theta^*$, the \emph{parahoric centralizer} is the subgroup
\[
C_{\mathcal{G}_\theta}(\varphi) := \{ g \in \mathcal{G}_\theta \mid \operatorname{Ad}^*(g)(\varphi)=\varphi \},
\]
with Lie algebra
\[
\mathfrak{g}_\theta^{\varphi} := \{ X \in \mathfrak{g}_\theta \mid \operatorname{ad}^*(X)(\varphi)=0 \}.
\]
These assemble into a group scheme $C_{\mathfrak{g}_\theta} \to \mathfrak{g}_\theta^*$. The regular locus is the intersection $(\mathfrak{g}_\theta^*)^{\mathrm{reg}} := \mathfrak{g}_\theta^* \cap (\mathfrak{g}^*)^{\mathrm{reg}}$. The universal centralizer $J$ over the Chevalley basis $\mathfrak{c}^* := \mathfrak{t}^*/W$ pulls back via the Chevalley map $\chi\colon \mathfrak{g}^* \to \mathfrak{c}^*$ to give an isomorphism over the regular locus:
\[
\chi^*J\big|_{(\mathfrak{g}_\theta^*)^{\mathrm{reg}}} \cong C_{\mathcal{G}_\theta}\big|_{(\mathfrak{g}_\theta^*)^{\mathrm{reg}}}.
\]

\begin{rem}[$\mathbb{G}_m$-action and the moment map derivative]\label{rem:dm-parahoric-revised}
The multiplicative group $\mathbb{G}_m$ acts on $\mathfrak{g}^*$ by scalar multiplication, inducing an action on $\mathfrak{c}^*$. This action preserves centralizers, so the action on $C_{\mathfrak{g}^*}$ is $t\cdot (g,\varphi) := (g,t\varphi)$. This action preserves the regular locus. The derivative of the moment map for the $G$-action on $T^*G$, restricted to the centralizer, gives a morphism
\[
dm\colon \chi^*\operatorname{Lie}(J_{\mathfrak{c}^*}) \longrightarrow \mathfrak{g}\times \mathfrak{g}^*.
\]
This map is equivariant for the $\mathbb{G}_m$-action where $\mathbb{G}_m$ acts trivially on $\mathfrak{g}$ and by scaling on $\mathfrak{g}^*$. Identifying the cotangent bundle $T^*\mathfrak{g}^* \cong \mathfrak{g}(-1) \times \mathfrak{g}^*$, we can view $dm$ as a morphism
\begin{equation}\label{eq:dm-parahoric-revised}
dm\colon \chi^*\operatorname{Lie}(J_{\mathfrak{c}^*})(-1) \longrightarrow T^*\mathfrak{g}^*.
\end{equation}
Its restriction to the regular locus $(\mathfrak{g}^*)^{\mathrm{reg}}$ (and hence to $(\mathfrak{g}_\theta^*)^{\mathrm{reg}}$) is injective.
\end{rem}

\begin{rem}[Derivative of the Chevalley map]\label{rem:dchi-parahoric-revised}
The Chevalley map $\chi\colon \mathfrak{g}^* \to \mathfrak{c}^*$ is $G$-invariant and $\mathbb{G}_m$-equivariant. Its derivative
\begin{equation}\label{eq:dchi-parahoric-revised}
d\chi\colon T\mathfrak{g}^* \longrightarrow \chi^* T\mathfrak{c}^*
\end{equation}
is therefore also $\mathbb{G}_m$-equivariant. The restriction of $d\chi$ to the regular locus is surjective, a fact that remains true upon further restriction to $(\mathfrak{g}_\theta^*)^{\mathrm{reg}}$.
\end{rem}

\begin{lem}\label{lem:LocalPairing-parahoric-revised}
The canonical pairing on $T\mathfrak{g}^* \times_{\mathfrak{g}^*} T^*\mathfrak{g}^*$ induces a $G\times \mathbb{G}_m$-equivariant perfect pairing on the regular locus
\[
\chi^*\operatorname{Lie}(J)\big|_{(\mathfrak{g}^*)^{\mathrm{reg}}}(-1) \times_{(\mathfrak{g}^*)^{\mathrm{reg}}} \chi^*T\mathfrak{c}^*\big|_{(\mathfrak{g}^*)^{\mathrm{reg}}} \longrightarrow \mathbb{C}(0),
\]
which yields an isomorphism of vector bundles over $\mathfrak{c}^*_{\mathrm{reg}}$:
\[
\operatorname{Lie}(J)^*(1) \cong T\mathfrak{c}^*.
\]
\end{lem}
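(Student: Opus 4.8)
The plan is to build the perfect pairing directly out of the two morphisms recorded in Remarks~\ref{rem:dm-parahoric-revised} and~\ref{rem:dchi-parahoric-revised}, and to reduce its nondegeneracy to the infinitesimal orthogonality between a coadjoint orbit and the centralizer of a point lying on it. First I would work fiberwise over a regular element $\varphi\in(\mathfrak{g}^*)^{\mathrm{reg}}$, using the identifications $T_\varphi\mathfrak{g}^*\cong\mathfrak{g}^*$ and $T^*_\varphi\mathfrak{g}^*\cong\mathfrak{g}(-1)$, so that the canonical pairing on $T\mathfrak{g}^*\times_{\mathfrak{g}^*}T^*\mathfrak{g}^*$ becomes the tautological duality $\mathfrak{g}(-1)\times\mathfrak{g}^*\to\mathbb{C}(0)$. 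Under the isomorphism $\chi^*\operatorname{Lie}(J)|_{(\mathfrak{g}^*)^{\mathrm{reg}}}\cong C_{\mathcal{G}_\theta}|_{(\mathfrak{g}^*)^{\mathrm{reg}}}$ stated just before the lemma, the injection $dm$ realizes the fiber of $\chi^*\operatorname{Lie}(J)(-1)$ at $\varphi$ as the centralizer $\mathfrak{g}^\varphi\subset\mathfrak{g}=T^*_\varphi\mathfrak{g}^*$, while $d\chi_\varphi$ is surjective.

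To define the pairing, given $\xi$ in the fiber of $\chi^*\operatorname{Lie}(J)(-1)$ and $v\in T_{\chi(\varphi)}\mathfrak{c}^*$, I would choose any lift $\tilde v\in T_\varphi\mathfrak{g}^*$ with $d\chi_\varphi(\tilde v)=v$ (possible by surjectivity of $d\chi$) and set $\langle\xi,v\rangle:=\langle dm(\xi),\tilde v\rangle$. Well-definedness requires that $dm(\xi)$ annihilate $\ker(d\chi_\varphi)$, independently of the lift. The key geometric input is the identification $\ker(d\chi_\varphi)=\operatorname{ad}^*(\mathfrak{g})\varphi$, the tangent space of the coadjoint orbit: since $\chi$ is constant on orbits and at a regular point the orbit and the fiber share the dimension $\dim\mathfrak{g}-\operatorname{rank}\mathfrak{g}$, the two tangent spaces coincide. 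The orthogonality is then the standard Lie-algebra identity: for $\xi\in\mathfrak{g}^\varphi$ and any $X\in\mathfrak{g}$,
\[
\langle dm(\xi),\,\operatorname{ad}^*(X)\varphi\rangle=-\langle\varphi,\,[X,\xi]\rangle=-\langle\operatorname{ad}^*(\xi)\varphi,\,X\rangle=0,
\]
because $\operatorname{ad}^*(\xi)\varphi=0$ by definition of the centralizer. Hence $dm(\xi)$ kills $\ker(d\chi_\varphi)$ and the pairing descends to $\chi^*\operatorname{Lie}(J)(-1)\times_{(\mathfrak{g}^*)^{\mathrm{reg}}}\chi^*T\mathfrak{c}^*\to\mathbb{C}(0)$.

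Nondegeneracy follows from the same orthogonality, together with perfectness of the tautological pairing $\mathfrak{g}\times\mathfrak{g}^*\to\mathbb{C}$ and the injectivity of $dm$. If $\xi$ pairs to zero with every $v$, then by surjectivity of $d\chi$ one has $\langle dm(\xi),\tilde v\rangle=0$ for all $\tilde v\in T_\varphi\mathfrak{g}^*\cong\mathfrak{g}^*$, forcing $dm(\xi)=0$ and hence $\xi=0$. Conversely, if $v$ pairs to zero with every $\xi\in\mathfrak{g}^\varphi$, any lift $\tilde v$ lies in $(\mathfrak{g}^\varphi)^\perp$; since $\mathfrak{g}^\varphi$ and $\operatorname{ad}^*(\mathfrak{g})\varphi$ are mutual annihilators of complementary dimensions $\operatorname{rank}\mathfrak{g}$ and $\dim\mathfrak{g}-\operatorname{rank}\mathfrak{g}$, it follows that $\tilde v\in\operatorname{ad}^*(\mathfrak{g})\varphi=\ker(d\chi_\varphi)$, so $v=0$. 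As $\chi^*\operatorname{Lie}(J)$ and $\chi^*T\mathfrak{c}^*$ both have rank $\operatorname{rank}\mathfrak{g}=\dim\mathfrak{c}^*$, a nondegenerate pairing between bundles of equal rank is perfect. The $G\times\mathbb{G}_m$-equivariance is inherited from that of $dm$ and $d\chi$; tracking weights, the $(-1)$-twist on $\operatorname{Lie}(J)$ compensates the weight of the cotangent fiber $\mathfrak{g}(-1)$ against that of the tangent fiber feeding $T\mathfrak{c}^*$, so the pairing indeed lands in $\mathbb{C}(0)$.

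Finally, the perfect pairing yields a $G$-equivariant isomorphism $\chi^*T\mathfrak{c}^*\cong(\chi^*\operatorname{Lie}(J)(-1))^*=\chi^*\operatorname{Lie}(J)^*(1)$ over $(\mathfrak{g}^*)^{\mathrm{reg}}$. Since every term is the $\chi$-pullback of a bundle on $\mathfrak{c}^*_{\mathrm{reg}}$ and the restriction $\chi\colon(\mathfrak{g}^*)^{\mathrm{reg}}\to\mathfrak{c}^*_{\mathrm{reg}}$ is faithfully flat (smooth and surjective with geometrically connected fibers, being the adjoint quotient over the regular locus), the $G$-invariance of the isomorphism lets it descend, giving the asserted $\operatorname{Lie}(J)^*(1)\cong T\mathfrak{c}^*$ over $\mathfrak{c}^*_{\mathrm{reg}}$. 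I expect the main obstacle to be the verification that $\operatorname{im}(dm)$ and $\ker(d\chi)$ are exact mutual annihilators, that is, the coincidence $\ker(d\chi_\varphi)=\operatorname{ad}^*(\mathfrak{g})\varphi$ at regular points combined with the orthogonality of centralizer and orbit; once this duality is established, both perfectness and the descent step are formal.
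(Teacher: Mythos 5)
Your proposal is correct and follows essentially the same route as the paper's proof: both rest on the identification $\ker(d\chi_\varphi)=\operatorname{ad}^*(\mathfrak{g})\varphi$ at regular points via the dimension count, combined with the fact that the annihilator of the orbit tangent space under the canonical pairing is exactly the centralizer $\mathfrak{g}^\varphi\cong\operatorname{Lie}(J_\varphi)$. You simply make explicit several steps the paper leaves implicit (the lift-based construction of the pairing, the two-sided nondegeneracy check, and the faithfully flat descent to $\mathfrak{c}^*_{\mathrm{reg}}$), which is a welcome elaboration but not a different argument.
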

\begin{proof}
From Remarks \ref{rem:dm-parahoric-revised} and \ref{rem:dchi-parahoric-revised}, $\chi^*\operatorname{Lie}(J)|_{(\mathfrak{g}^*)^{\mathrm{reg}}}(-1)$ is a subbundle of $T^*\mathfrak{g}^*|_{(\mathfrak{g}^*)^{\mathrm{reg}}}$ and $\chi^*T\mathfrak{c}^*|_{(\mathfrak{g}^*)^{\mathrm{reg}}}$ is a quotient bundle of $T\mathfrak{g}^*|_{(\mathfrak{g}^*)^{\mathrm{reg}}}$. Both bundles have the same rank. Since $\chi$ is constant on $G$-orbits, the tangent space to the $G$-orbit at a regular element $\varphi$, $V_\varphi := \operatorname{Im}(\mathfrak{g} \xrightarrow{\mathrm{ad}^*(\cdot)(\varphi)} T_\varphi\mathfrak{g}^*)$, is contained in $\mathrm{Ker}(d\chi_\varphi)$. A dimension count shows that for regular $\varphi$, $\dim V_\varphi = \dim\mathfrak{g} - \mathrm{rank}(\mathfrak{g})$, which equals the dimension of $\mathrm{Ker}(d\chi_\varphi)$ since $d\chi_\varphi$ is surjective. Thus, $\mathrm{Ker}(d\chi_\varphi) = V_\varphi$. By $G$-invariance, the annihilator of the orbit tangent space under the canonical pairing is the centralizer Lie algebra, $V_\varphi^\perp = \mathfrak{g}^\varphi \cong \mathrm{Lie}(J_\varphi)$. This establishes the perfect pairing.
\end{proof}

\subsection{Globalization and Duality}
We now globalize this local construction. Let $\mathcal{M}_{LH}(X, \mathcal{G}_{\boldsymbol{\theta}})$ be the moduli space of leveled logahoric $\mathcal{G}_{\boldsymbol \theta}$-Higgs torsors $(\mathcal{E}, \varphi, \eta)$. This space is an open subset of the cotangent bundle $T^*\mathcal{U}(X, \mathcal{G}_{\boldsymbol{\theta}})$ and is therefore a symplectic manifold with a canonical symplectic form, which we denote by $\omega_{LH}$.

Let $\ell: \mathcal{M}_{LH}(X, \mathcal{G}_{\boldsymbol{\theta}}) \to \mathcal{M}_H(X, \mathcal{G}_{\boldsymbol{\theta}})$ be the forgetful map, which is a Poisson map as we have seen. The Hitchin fibration on the leveled space is the composition 
\[{\widetilde H_{\boldsymbol\theta}} := h_{\boldsymbol{\theta}} \circ \ell: \mathcal{M}_{LH}(X, \mathcal{G}_{\boldsymbol{\theta}}) \to \mathcal{A}_{\boldsymbol \theta}.\]
The group scheme $J$ on $\mathfrak{c}^*$ pulls back along the Hitchin fibration $h_{\boldsymbol{\theta}}$ to a group scheme $J_{\mathcal{A}_{\boldsymbol{\theta}}}$ over the Hitchin base $\mathcal{A}_{\boldsymbol{\theta}}$. The moduli space of $J_{\mathcal{A}_{\boldsymbol{\theta}}}$-torsors on $X$ forms a group scheme $P_{\mathcal{A}_{\boldsymbol{\theta}}}$ over $\mathcal{A}_{\boldsymbol{\theta}}$, whose fiber $P_a$ over a point $a \in \mathcal{A}_{\boldsymbol{\theta}}$ is the generalized Prym variety associated to the cameral cover determined by $a$. This induces an action on the Hitchin fibers, which lifts to the leveled space:
\[
\mathrm{act}_{\boldsymbol \theta}\colon P_{\mathcal{A}_{\boldsymbol{\theta}}}\times_{\mathcal{A}_{\boldsymbol{\theta}}} \mathcal{M}_{LH}(X,{\mathcal{G}_{\boldsymbol{\theta}}}) \longrightarrow \mathcal{M}_{LH}(X,{\mathcal{G}_{\boldsymbol{\theta}}}).
\]

The following statement can be now obtained analogously to \cite[Proposition A.12]{dCHM}:
\begin{prop}\label{prop:dactdhdual-parahoric-revised-global}
There exists a canonical isomorphism of vector bundles over $\mathcal{A}_{\boldsymbol{\theta}}$,
\[
\mathrm{Lie}\Bigl(P_{\mathcal{A}_{\boldsymbol{\theta}}}/\mathcal{A}_{\boldsymbol{\theta}}\Bigr) \;\cong\; T^*\mathcal{A}_{\boldsymbol{\theta}},
\]
such that the differential of the lifted action,
\[
d\mathrm{act}_{\boldsymbol \theta}\colon {\widetilde H_{\boldsymbol\theta}}^*\mathrm{Lie}\Bigl(P_{\mathcal{A}_{\boldsymbol{\theta}}}/\mathcal{A}_{\boldsymbol{\theta}}\Bigr) \longrightarrow T\mathcal{M}_{LH}(X,{\mathcal{G}_{\boldsymbol{\theta}}}),
\]
and the differential of the lifted Hitchin fibration,
\[
d{\widetilde H_{\boldsymbol\theta}}\colon T\mathcal{M}_{LH}(X,{\mathcal{G}_{\boldsymbol{\theta}}}) \longrightarrow {\widetilde H_{\boldsymbol\theta}}^*T\mathcal{A}_{\boldsymbol{\theta}}
\]
are dual to each other with respect to the canonical symplectic form $\omega_{LH}$ on the smooth locus of $\mathcal{M}_{LH}(X,{\mathcal{G}_{\boldsymbol{\theta}}})$.
\end{prop}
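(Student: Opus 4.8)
The plan is to reduce the global duality to the local perfect pairing of Lemma~\ref{lem:LocalPairing-parahoric-revised} via Serre duality on $X$, following the strategy of \cite[Proposition A.12]{dCHM}. First I would fix a generic $a\in\mathcal{A}_{\boldsymbol\theta}$ with smooth cameral cover and describe all four objects through the deformation complex. By Lemma~\ref{quasi-isom}, the tangent space to $\mathcal{M}_{LH}(X,\mathcal{G}_{\boldsymbol\theta})$ at $(\mathcal{E},\varphi,\eta)$ is $\mathbb{H}^1(\mathcal{K}_{\mathcal{E},\varphi})$ for the complex $\mathcal{K}_{\mathcal{E},\varphi}\colon \mathcal{E}(\mathfrak{g}^+)\xrightarrow{\operatorname{ad}\varphi}\mathcal{E}(\mathfrak{g})\otimes K(D)$, its cotangent space is $\mathbb{H}^1$ of the dual complex, and $\omega_{LH}$ is induced by the Serre-duality pairing between the two. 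All maps in the statement will be realized at the level of hypercohomology of these complexes.

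Next I would identify the two differentials. The map $d\widetilde H_{\boldsymbol\theta}$ is computed by applying the derivative $d\chi$ of the Chevalley map (Remark~\ref{rem:dchi-parahoric-revised}) to a representing Higgs-field variation, landing in $T_a\mathcal{A}_{\boldsymbol\theta}$; surjectivity of $d\chi$ on the regular locus shows its kernel is exactly the \emph{vertical} tangent space, i.e. the tangent to the Hitchin fibre. The map $d\mathrm{act}_{\boldsymbol\theta}$ sends a section of $\mathrm{Lie}(P_{\mathcal{A}_{\boldsymbol\theta}}/\mathcal{A}_{\boldsymbol\theta})$, realized via the regular-centralizer identification $\chi^*J|_{(\mathfrak{g}_\theta^*)^{\mathrm{reg}}}\cong C_{\mathcal{G}_\theta}|_{(\mathfrak{g}_\theta^*)^{\mathrm{reg}}}$ as a class built from $\operatorname{Lie}(J)$ on the cameral cover, into $\mathbb{H}^1(\mathcal{K}_{\mathcal{E},\varphi})$; since $P_{\mathcal{A}_{\boldsymbol\theta}}$ acts simply transitively on the generic fibre, its image is precisely $\ker(d\widetilde H_{\boldsymbol\theta})$.

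The core step is the globalization of Lemma~\ref{lem:LocalPairing-parahoric-revised}. Over the regular locus that Lemma gives a $G\times\mathbb{G}_m$-equivariant perfect pairing identifying $\operatorname{Lie}(J)^*(1)\cong T\mathfrak{c}^*$; pulling this back along $h_{\boldsymbol\theta}$ and applying relative Serre duality on $X$, with the $\mathbb{G}_m$-weight $(1)$ globalizing to the twist by $K(D)$ in which $\varphi$ lives, I obtain the bundle isomorphism $\mathrm{Lie}(P_{\mathcal{A}_{\boldsymbol\theta}}/\mathcal{A}_{\boldsymbol\theta})\cong T^*\mathcal{A}_{\boldsymbol\theta}$. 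Concretely $\mathrm{Lie}(P_a)$ is an $H^1$ of the centralizer Lie-algebra sheaf while $T_a\mathcal{A}_{\boldsymbol\theta}$ is the corresponding $H^0$, and Serre duality exchanges them. Finally I would verify that $\omega_{LH}^{\flat}\circ d\mathrm{act}_{\boldsymbol\theta}$ factors through $(d\widetilde H_{\boldsymbol\theta})^*$ and that the induced map is the pullback of the isomorphism just constructed. This is a diagram chase using functoriality of Serre duality together with the orthogonality relations $\ker(d\chi_\varphi)=V_\varphi$ and $V_\varphi^\perp=\mathfrak{g}^\varphi$ from the proof of Lemma~\ref{lem:LocalPairing-parahoric-revised}, which force the vertical and base directions to pair perfectly under $\omega_{LH}$.

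The main obstacle I anticipate is matching the canonical symplectic pairing $\omega_{LH}$, defined through Serre duality of the parahoric complexes $\mathcal{K}_{\mathcal{E},\varphi}$ and $\mathcal{K}^\vee_{\mathcal{E},\varphi}$, with the local centralizer pairing of Lemma~\ref{lem:LocalPairing-parahoric-revised} at the points of $D$. The difficulty is purely parahoric: the complex uses $\mathcal{E}(\mathfrak{g}^+)$ in degree zero and $\mathcal{E}(\mathfrak{g})\otimes K(D)$ in degree one, so one must check that the residue pairing at each $x_j\in D$ restricts correctly to the Levi-quotient pieces $\hat{\mathfrak{l}}_{\theta_j}$ and that genericity of $a$ in $\mathcal{A}_{\boldsymbol\theta}$ is compatible with landing in the regular locus $(\mathfrak{g}_\theta^*)^{\mathrm{reg}}$, so that the pairing remains perfect after the parahoric twist.
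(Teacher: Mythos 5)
Your proposal follows essentially the same route as the paper's proof: both realize the tangent and cotangent spaces via the hypercohomology of the deformation complex $\mathcal{K}_{\mathcal{E},\varphi}$, identify $d\widetilde H_{\boldsymbol\theta}$ with the map induced by $d\chi$ and $d\mathrm{act}_{\boldsymbol\theta}$ with the map induced from $a^*\mathrm{Lie}(J_{\mathcal{A}_{\boldsymbol\theta}})$, and then globalize the perfect pairing of Lemma~\ref{lem:LocalPairing-parahoric-revised} through Serre duality on hypercohomology to obtain both the bundle isomorphism $\mathrm{Lie}(P_{\mathcal{A}_{\boldsymbol\theta}}/\mathcal{A}_{\boldsymbol\theta})\cong T^*\mathcal{A}_{\boldsymbol\theta}$ and the duality of the two differentials with respect to $\omega_{LH}$. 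The extra checks you flag (compatibility of the residue pairing with the Levi pieces at $D$, and genericity landing in the regular locus) are reasonable refinements but do not change the argument, which matches the paper's.
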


\begin{proof}
Let $(\mathcal{E},\varphi, \eta)$ be a leveled logahoric $\mathcal{G}_{\boldsymbol{\theta}}$-Higgs torsor mapping to $a \in \mathcal{A}_{\boldsymbol{\theta}}$. The tangent space to the moduli space at this point, $T_{(\mathcal{E},\varphi,\eta)}\mathcal{M}_{LH}$, is given by the hypercohomology group $\mathbb{H}^1(X, \mathcal{K}_{\mathcal{E},\varphi})$, where $\mathcal{K}_{\mathcal{E},\varphi}$ is the two-term complex $[\mathcal{E}(\mathfrak{g}^+) \xrightarrow{\mathrm{ad}(\varphi)} \mathcal{E}(\mathfrak{g}) \otimes K_X(D)]$ that governs deformations preserving the level structure.

\noindent The differential of the Hitchin map, $d{\widetilde H_{\boldsymbol\theta}}$, is induced on hypercohomology by the morphism of complexes from $\mathcal{K}_{\mathcal{E},\varphi}$ to $\chi(\varphi)^*T\mathfrak{c}_{K_X(D)}$ given by the derivative of the Chevalley map, $d\chi$. The differential of the action, $d\mathrm{act}_{\boldsymbol \theta}$, is induced by a map from the complex $a^*\mathrm{Lie}(J_{\mathcal{A}_{\boldsymbol{\theta}}})[-1]$ into the complex $\mathcal{K}_{\mathcal{E},\varphi}$.

\noindent The local duality from Lemma \ref{lem:LocalPairing-parahoric-revised} globalizes. The perfect pairing between $\chi^*\operatorname{Lie}(J)$ and $\chi^*T\mathfrak{c}^*$ induces, via Serre duality on hypercohomology, a perfect pairing:
\[
\mathbb{H}^1\Bigl(X, a^*\mathrm{Lie}(J_{\mathcal{A}_{\boldsymbol{\theta}}})\Bigr) \times \mathbb{H}^1\Bigl(X, \chi(\varphi)^*T\mathfrak{c}_{K_X(D)}\Bigr) \longrightarrow \mathbb{C}.
\]
Standard deformation theory identifies $\mathrm{Lie}(P_a) \cong \mathbb{H}^1(X, a^*\mathrm{Lie}(J_{\mathcal{A}_{\boldsymbol{\theta}}}))$ and $$T_a^*\mathcal{A}_{\boldsymbol{\theta}} \cong \mathbb{H}^1(X, \chi(\varphi)^*T\mathfrak{c}_{K_X(D)}).$$ The duality of the hypercohomology groups translates directly to the asserted duality between the morphisms $d\mathrm{act}_{\boldsymbol \theta}$ and $d{\widetilde H_{\boldsymbol\theta}}$ with respect to the symplectic form $\omega_{LH}$.
\end{proof}

We now use the duality principle established above in order to prove the complete integrability of the parahoric Hitchin system. The core of the argument is to first show that the generic fibers of the Hitchin fibration on the symplectic manifold $\mathcal{M}_{LH}(X, \mathcal{G}_{\boldsymbol{\theta}})$ are Lagrangian, and then to descend this property to the Poisson manifold $\mathcal{M}_{H}(X,\mathcal{G}_{\boldsymbol{\theta}})$ via the forgetful map.

\begin{thm}\label{thm:lagrangian_LH}
The generic fibers of the Hitchin fibration ${\widetilde H_{\boldsymbol\theta}}\colon \mathcal{M}_{LH}(X, \mathcal{G}_{\boldsymbol{\theta}}) \to \mathcal{A}_{\boldsymbol \theta}$ are Lagrangian subvarieties of the symplectic manifold $(\mathcal{M}_{LH}(X, \mathcal{G}_{\boldsymbol{\theta}}), \omega_{LH})$.
\end{thm}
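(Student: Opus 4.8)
The plan is to deduce the Lagrangian property from three ingredients: the half-dimensionality of the fibers, their infinitesimal description as orbits of the Prym action, and the duality of Proposition~\ref{prop:dactdhdual-parahoric-revised-global}. First I would record the dimension count. Since $\mathcal{M}_{LH}(X,\mathcal{G}_{\boldsymbol\theta})$ is an open subset of $T^*\mathcal{U}(X,\mathcal{G}_{\boldsymbol\theta})$, one has $\dim\mathcal{M}_{LH}(X,\mathcal{G}_{\boldsymbol\theta})=2\dim\mathcal{U}(X,\mathcal{G}_{\boldsymbol\theta})$, and by Proposition~\ref{prop:tangent_U} together with the identification of the cotangent space with $H^0(X,\mathcal{E}(\mathfrak{g})\otimes K(D))$, the dimension of $\mathcal{U}(X,\mathcal{G}_{\boldsymbol\theta})$ equals $\dim H^0(X,\mathcal{E}(\mathfrak{g})\otimes K_X(D))$. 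A Riemann--Roch computation then matches this against $\dim\mathcal{A}_{\boldsymbol\theta}=\sum_j\dim H^0(X,(K_X(D))^{m_j+1})$, so that $\dim\mathcal{A}_{\boldsymbol\theta}=\tfrac12\dim\mathcal{M}_{LH}(X,\mathcal{G}_{\boldsymbol\theta})$; a generic fiber therefore has the correct dimension to be Lagrangian.

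Next I would describe the tangent space to a generic fiber. By Corollary~\ref{parh_Hitchin_fiber_abelian}, for generic $a\in\mathcal{A}_{\boldsymbol\theta}$ the fiber $\widetilde H_{\boldsymbol\theta}^{-1}(a)$ is a torsor under the generalized Prym variety $P_a$, so the action $\mathrm{act}_{\boldsymbol\theta}$ is infinitesimally transitive along the fibers. Restricting to the smooth, regularly stable locus, at such a point the tangent space to the fiber is precisely the image of the action differential,
\[
\ker\bigl(d\widetilde H_{\boldsymbol\theta}\bigr)=\operatorname{Im}\bigl(d\mathrm{act}_{\boldsymbol\theta}\bigr),
\]
the two spaces agreeing by the dimension count of the previous step.

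The isotropy is then immediate from the duality. For $\xi\in\widetilde H_{\boldsymbol\theta}^*\mathrm{Lie}(P_{\mathcal{A}_{\boldsymbol\theta}}/\mathcal{A}_{\boldsymbol\theta})$ and $w\in T\mathcal{M}_{LH}(X,\mathcal{G}_{\boldsymbol\theta})$, Proposition~\ref{prop:dactdhdual-parahoric-revised-global} yields
\[
\omega_{LH}\bigl(d\mathrm{act}_{\boldsymbol\theta}(\xi),w\bigr)=\bigl\langle\xi,d\widetilde H_{\boldsymbol\theta}(w)\bigr\rangle,
\]
where the right-hand pairing is the canonical one between $\mathrm{Lie}(P_{\mathcal{A}_{\boldsymbol\theta}}/\mathcal{A}_{\boldsymbol\theta})\cong T^*\mathcal{A}_{\boldsymbol\theta}$ and $T\mathcal{A}_{\boldsymbol\theta}$. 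Taking $w$ tangent to the fiber forces $d\widetilde H_{\boldsymbol\theta}(w)=0$, hence the right-hand side vanishes; since every tangent vector to the fiber is of the form $d\mathrm{act}_{\boldsymbol\theta}(\xi)$ by the previous step, $\omega_{LH}$ restricts to zero on the fiber. Thus the generic fiber is isotropic, and being of half the dimension of $\mathcal{M}_{LH}(X,\mathcal{G}_{\boldsymbol\theta})$, it is Lagrangian.

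The step I expect to be the main obstacle is making the infinitesimal transitivity of $\mathrm{act}_{\boldsymbol\theta}$ rigorous: one must work over the open locus where the cameral cover is smooth, so that $P_a$ is abelian of the expected dimension, and where the $P_a$-action on the fiber is free, and then verify that the \emph{equality} $\operatorname{Im}(d\mathrm{act}_{\boldsymbol\theta})=\ker(d\widetilde H_{\boldsymbol\theta})$ holds on this locus rather than merely an inclusion. The accompanying dimension count, though standard, requires care to confirm that the parahoric twist at the points of $D$ contributes symmetrically to base and to fiber; this, however, is already encoded in the construction of $\mathcal{A}_{\boldsymbol\theta}$ as the image of the Hitchin map $h_{\boldsymbol\theta}$.
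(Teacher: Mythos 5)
Your proposal follows the same skeleton as the paper's proof (the duality of Proposition~\ref{prop:dactdhdual-parahoric-revised-global} plus the Prym action), and your derivation of isotropy of the fiber from the duality pairing is sound \emph{given} infinitesimal transitivity. But two of your supporting steps have genuine problems. First, the dimension count: you set $\dim\mathcal{A}_{\boldsymbol\theta}=\sum_j\dim H^0\bigl(X,(K_X(D))^{m_j+1}\bigr)$, but that formula computes $\dim\mathcal{B}_{\boldsymbol\theta}$, the ambient affine space; $\mathcal{A}_{\boldsymbol\theta}$ is by definition the \emph{image} $h_{\boldsymbol\theta}(\mathcal{M}_H(X,\mathcal{G}_{\boldsymbol\theta}))\subset\mathcal{B}_{\boldsymbol\theta}$, and in the parahoric setting this is in general a proper subvariety, because the residues of $\varphi$ at the points of $D$ are constrained to lie in the parahoric algebras $\mathfrak{g}_{\theta_i}$, which constrains the polar parts of the invariants $p_j(\varphi)$. (This discrepancy is exactly what Section~7 quantifies via $\mathcal{A}_{\boldsymbol\theta}/\mathcal{A}_{\boldsymbol\theta}^+\cong\mathfrak{g}_D^*//G_D$ in Theorem~\ref{thm:commuting}.) Likewise the asserted Riemann--Roch matching $\dim\mathcal{U}(X,\mathcal{G}_{\boldsymbol\theta})=\dim\mathcal{B}_{\boldsymbol\theta}$ is not a general identity: it depends on the weights $\boldsymbol\theta$, so the half-dimensionality of the fibers is not established by your first step, and your final inference ``isotropic of half dimension, hence Lagrangian'' has no foundation.

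Second, the transitivity input is supported by the wrong citation. Corollary~\ref{parh_Hitchin_fiber_abelian} says that the fibers of $h_{\boldsymbol\theta}$ on the \emph{un-leveled} space $\mathcal{M}_H(X,\mathcal{G}_{\boldsymbol\theta})$ are torsors under the generalized Prym of the cameral cover. The fiber $\widetilde H_{\boldsymbol\theta}^{-1}(a)=\ell^{-1}\bigl(h_{\boldsymbol\theta}^{-1}(a)\bigr)$ of the \emph{leveled} fibration is strictly larger: it fibers over $h_{\boldsymbol\theta}^{-1}(a)$ with fibers the $G_D$-worth of level structures, so that corollary cannot yield $\ker(d\widetilde H_{\boldsymbol\theta})=\mathrm{Im}(d\mathrm{act}_{\boldsymbol\theta})$. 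The group acting in Proposition~\ref{prop:dactdhdual-parahoric-revised-global} is the relative group scheme $P_{\mathcal{A}_{\boldsymbol\theta}}$ of regular-centralizer torsors, with $\mathrm{Lie}(P_a)\cong T_a^*\mathcal{A}_{\boldsymbol\theta}$, which is a different (larger) object than the Prym of the corollary; transitivity of \emph{its} action on the leveled fibers is the statement you actually need (and is the point the paper itself treats only implicitly). The good news is that once that transitivity is granted, your dimension count becomes unnecessary: combining $\ker(d\widetilde H_{\boldsymbol\theta})=\mathrm{Im}(d\mathrm{act}_{\boldsymbol\theta})$ with the duality statement $\ker(d\widetilde H_{\boldsymbol\theta})=\bigl(\mathrm{Im}(d\mathrm{act}_{\boldsymbol\theta})\bigr)^{\perp}$ gives $\ker(d\widetilde H_{\boldsymbol\theta})=\bigl(\ker(d\widetilde H_{\boldsymbol\theta})\bigr)^{\perp}$ outright, which is precisely the paper's isotropic-plus-coisotropic argument. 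So the repair is to replace the dimension count and the citation of Corollary~\ref{parh_Hitchin_fiber_abelian} by the correct transitivity input for $P_{\mathcal{A}_{\boldsymbol\theta}}$, and conclude as the paper does.
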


\begin{proof}
Let $a \in \mathcal{A}'$ be a generic point in the Hitchin base. The fiber of the Hitchin fibration over this point is $F'_{a} := {\widetilde H_{\boldsymbol\theta}}^{-1}(a)$. For any point $p \in F'_{a}$, the tangent space to the fiber is given by the kernel of the differential of the Hitchin map, $T_p F'_{a} = \text{Ker}(d{\widetilde H_{\boldsymbol\theta}})_p$.

\noindent The fiber $F'_{a}$ is acted upon by the generalized Prym variety $P_a$. The tangent space to the orbit of this action at $p$ is given by the image of the differential of the action map, $\mathrm{Im}(d\mathrm{act}_{\boldsymbol \theta})_p$. As established in Proposition \ref{prop:dactdhdual-parahoric-revised-global}, these two subspaces of the tangent space $T_p\mathcal{M}_{LH}(X,{\mathcal{G}_{\boldsymbol{\theta}}})$ are symplectic orthogonals with respect to the form $\omega_{LH}$:
\[
\text{Ker}(d{\widetilde H_{\boldsymbol\theta}}) = (\mathrm{Im}(d\mathrm{act}_{\boldsymbol \theta}))^{\perp}.
\]
The action of the abelian group scheme $P_{\mathcal{A}_{\boldsymbol \theta}}$ on the fibers generates isotropic submanifolds. This is a standard result stemming from the fact that the action linearizes on the Jacobian of the cameral cover, and the symplectic form, when pulled back to the abelian Prym variety, is translation-invariant and must therefore be zero. Consequently, the tangent space to the Prym orbits, $\mathrm{Im}(d\mathrm{act}_{\boldsymbol \theta})$, is an isotropic subspace of $T\mathcal{M}_{LH}(X, \mathcal{G}_{\boldsymbol \theta})$.

\noindent A fundamental result in symplectic linear algebra states that the symplectic orthogonal of an isotropic subspace is a coisotropic subspace. Therefore, the tangent space to the Hitchin fiber, $\text{Ker}(d{\widetilde H_{\boldsymbol\theta}})$, is coisotropic.

\noindent In conclusion, the subspaces $F_a'$ are coisotropic and isotropic subspaces of $T\mathcal{M}_{LH}(X, \mathcal{G}_{\boldsymbol \theta})$, and therefore Lagrangian.
\end{proof}

\begin{thm}\label{thm:lagrangian_H}
The generic fibers of the parahoric Hitchin fibration $h_{\boldsymbol \theta} \colon \mathcal{M}_{H}(X,\mathcal{G}_{\boldsymbol{\theta}}) \to \mathcal{A}_{\boldsymbol \theta}$ are Lagrangian subvarieties with respect to the symplectic leaves of the Poisson moduli space $\mathcal{M}_{H}(X,\mathcal{G}_{\boldsymbol{\theta}})$.
\end{thm}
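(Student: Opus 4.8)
The plan is to descend the conclusion of Theorem~\ref{thm:lagrangian_LH} from the symplectic manifold $(\mathcal{M}_{LH}(X,\mathcal{G}_{\boldsymbol\theta}),\omega_{LH})$ to the symplectic leaves of the Poisson manifold $\mathcal{M}_H(X,\mathcal{G}_{\boldsymbol\theta})$ by means of the forgetful Poisson map $\ell$ of Theorem~\ref{thm:forgetful_Poisson}. The governing picture is that of symplectic reduction: $G_D$ acts on $\mathcal{M}_{LH}$ with moment map $\mu$ (Theorem~\ref{thm:PoissonGD}), the map $\ell$ exhibits $\mathcal{M}_H$ as the Poisson quotient $\mathcal{M}_{LH}/G_D$, and the symplectic leaves of $\mathcal{M}_H$ are the Marsden--Weinstein reductions $S_{\mathcal{O}}:=\ell\bigl(\mu^{-1}(\mathcal{O})\bigr)\cong \mu^{-1}(\mathcal{O})/G_D$ attached to the coadjoint orbits $\mathcal{O}\subset\mathfrak{g}_D^*$. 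Since $\widetilde H_{\boldsymbol\theta}=h_{\boldsymbol\theta}\circ\ell$, any statement about a fiber of $h_{\boldsymbol\theta}$ can be pulled back to the Lagrangian fiber of $\widetilde H_{\boldsymbol\theta}$ and analyzed there.

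First I would make the identification of leaves precise. Using Theorem~\ref{thm:commuting} together with Corollary~\ref{cor:parahoric-leaves-refine}, the composite $q\circ\widetilde H_{\boldsymbol\theta}$ corresponds, under $\mathcal{A}_{\boldsymbol\theta}/\mathcal{A}_{\boldsymbol\theta}^+\cong\mathfrak{g}_D^*//G_D$, to the Chevalley invariants of $\mu$. Hence fixing a generic $a\in\mathcal{A}'$ pins down a single regular semisimple coadjoint orbit $\mathcal{O}_a$ with $\widetilde H_{\boldsymbol\theta}^{-1}(a)\subseteq\mu^{-1}(\mathcal{O}_a)=\ell^{-1}(S_{\mathcal{O}_a})$, so that $h_{\boldsymbol\theta}^{-1}(a)=\ell\bigl(\widetilde H_{\boldsymbol\theta}^{-1}(a)\bigr)$ lies in the single leaf $S_{\mathcal{O}_a}$ (the unique maximal leaf in the fiber of $q\circ\widetilde H_{\boldsymbol\theta}$). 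It then remains to prove that $h_{\boldsymbol\theta}^{-1}(a)$ is Lagrangian inside $S_{\mathcal{O}_a}$.

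The heart of the argument is a Lagrangian-descent lemma for the coisotropic reduction $\pi_S\colon \mu^{-1}(\mathcal{O}_a)\to S_{\mathcal{O}_a}$. Writing $L:=\widetilde H_{\boldsymbol\theta}^{-1}(a)$, which is Lagrangian in $\mathcal{M}_{LH}$ by Theorem~\ref{thm:lagrangian_LH} and lies in $C:=\mu^{-1}(\mathcal{O}_a)$, I would check two points. For \emph{isotropy}: any tangent vectors $u,v$ to $h_{\boldsymbol\theta}^{-1}(a)\cap S_{\mathcal{O}_a}$ lift to $\tilde u,\tilde v\in T_pL\subseteq T_pC$, and since the reduced form satisfies $\pi_S^*\omega_{S_{\mathcal{O}_a}}=\iota_C^*\omega_{LH}$ one gets $\omega_{S_{\mathcal{O}_a}}(u,v)=\omega_{LH}(\tilde u,\tilde v)=0$ because $L$ is isotropic. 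For \emph{half-dimensionality}: I would compare the null foliation of $C$ with $L$, showing that $T_pL\cap\ker(\iota_C^*\omega_{LH})$ is exactly the tangent to the fibers of $\pi_S|_L$, so that $\dim\pi_S(L)=\tfrac12\dim S_{\mathcal{O}_a}$. Involutivity of the descended Hamiltonians is then automatic: $\{h_i\circ\ell,h_j\circ\ell\}_{LH}=0$ (the $\widetilde H_{\boldsymbol\theta}$-fibers being Lagrangian), and since $\ell$ is Poisson and dominant, $\{h_i,h_j\}_H=0$.

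The step I expect to be the main obstacle is the half-dimensionality count. The delicate feature is that the $G_D$-orbit directions inside $L$ are \emph{not} $\omega_{LH}$-isotropic against arbitrary tangent vectors --- their pairing is controlled by the coadjoint orbit $\mathcal{O}_a$ --- so one cannot naively treat $L$ as descending freely. I would therefore argue that $L$ meets $C$ cleanly along the null (stabilizer) directions $(\mathfrak{g}_D)_\xi$ of $\xi\in\mathcal{O}_a$, so that no dimension defect occurs under $\pi_S$. Concretely, I would transport the duality of Proposition~\ref{prop:dactdhdual-parahoric-revised-global} between $d\,\mathrm{act}_{\boldsymbol\theta}$ and $d\widetilde H_{\boldsymbol\theta}$ --- which identifies $\ker(d\widetilde H_{\boldsymbol\theta})$ with $(\mathrm{Im}\,d\,\mathrm{act}_{\boldsymbol\theta})^{\perp}$ --- through the reduction, so that on the leaf the tangent to the Prym orbit and the tangent to $h_{\boldsymbol\theta}^{-1}(a)\cap S_{\mathcal{O}_a}$ remain symplectic orthogonals of equal dimension. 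Establishing this compatibility of the Prym-orbit structure with the $G_D$-reduction, and thereby pinning the dimension of $\pi_S(L)$, is the crux on which the Lagrangian property rests.
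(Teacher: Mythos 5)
Your proposal correctly isolates the half-dimensionality count as the crux, but the resolution you sketch fails, and in fact no descent of this shape can work. Since $\widetilde H_{\boldsymbol\theta}=h_{\boldsymbol\theta}\circ\ell$ is $G_D$-invariant, the fiber $L=\widetilde H_{\boldsymbol\theta}^{-1}(a)$ is a union of $G_D$-orbits, so the fibers of $\pi_S|_L$ are \emph{entire} $G_D$-orbits, of dimension $\dim G_D$ --- not the null directions $T_p\bigl((G_D)_{\xi}\cdot p\bigr)$ of $\iota_C^*\omega_{LH}$, which have dimension $\dim G_D-\dim\mathcal{O}_a$; your identification of the two holds only when $\mathcal{O}_a$ is a point. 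Consequently, if $L$ really were Lagrangian as Theorem \ref{thm:lagrangian_LH} asserts, then $\dim\ell(L)=\tfrac12\dim\mathcal{M}_{LH}-\dim G_D$, which falls short of $\tfrac12\dim S_{\mathcal{O}_a}=\tfrac12\dim\mathcal{M}_{LH}-\dim G_D+\tfrac12\dim\mathcal{O}_a$ by exactly $\tfrac12\dim\mathcal{O}_a$. There is moreover a structural obstruction no bookkeeping can evade: a $G_D$-stable isotropic submanifold satisfies $T_pL\subseteq\bigl(T_p(G_D\cdot p)\bigr)^{\perp}=\ker d\mu_p$, hence $\mu$ is locally constant on it; but by Theorem \ref{thm:commuting} the moment map carries the connected fiber $L$ onto the whole orbit $\mathcal{O}_a$, which is positive-dimensional for generic $a$ whenever some Levi factor $L_{\theta_j}$ is non-abelian (the full-level case $\theta=0$, and the paper's KP and Calogero--Moser leaves). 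So ``$L$ Lagrangian upstairs'' and the statement you are proving are mutually exclusive outside the abelian-Levi (Iwahori-type) case; the fibers upstairs are only coisotropic, with null foliation the Prym directions. A related slip: $\pi_S^*\omega_{S_{\mathcal{O}_a}}=\iota_C^*\omega_{LH}$ is the point-reduction formula; orbit reduction carries a Kirillov--Kostant--Souriau correction, $\pi_S^*\omega_{S_{\mathcal{O}_a}}=\iota_C^*\omega_{LH}-(\mu|_C)^*\omega^{KKS}_{\mathcal{O}_a}$, and omitting it is what makes the isotropy descent look automatic.

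To be fair, the paper's own proof is a one-sentence citation of Theorem \ref{thm:lagrangian_LH} together with Theorem \ref{thm:forgetful_Poisson}, and it is open to the same objection: a Poisson map does not carry Lagrangians to Lagrangians of symplectic leaves, and here it provably cannot. A repair staying close to your outline would be: discard the Lagrangian claim upstairs and use only what Proposition \ref{prop:dactdhdual-parahoric-revised-global} actually yields, namely $\ker d\widetilde H_{\boldsymbol\theta}=(\operatorname{Im}d\,\mathrm{act}_{\boldsymbol\theta})^{\perp}$, so that $L$ is coisotropic with characteristic foliation the Prym orbits; establish involutivity downstairs, $\{h_i,h_j\}_H=0$, from $\{h_i\circ\ell,h_j\circ\ell\}_{LH}=0$ and surjectivity of the Poisson map $\ell$ (this part of your argument is sound); and then prove the leaf-by-leaf dimension count $\dim h_{\boldsymbol\theta}^{-1}(a)=\tfrac12\dim S_{\mathcal{O}_a}$, equivalently $\dim\mathcal{A}_{\boldsymbol\theta}=\tfrac12\dim\mathcal{M}_{LH}-\tfrac12\dim\mathcal{O}_a$. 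Involutivity plus this count force the fiber tangent spaces inside each leaf to be spanned by Hamiltonian vector fields of commuting functions, hence Lagrangian. That dimension identity is where the real content of the theorem lies, and neither the paper's proof nor your proposal supplies it.
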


\begin{proof}
This theorem is a direct consequence of Theorem \ref{thm:lagrangian_LH} and the fact that the forgetful map $\ell: \mathcal{M}_{LH}(X, \mathcal{G}_{\boldsymbol \theta}) \to \mathcal{M}_{H}(X, \mathcal{G}_{\boldsymbol \theta})$ is a Poisson map, a result established in Theorem \ref{thm:forgetful_Poisson}. 
\end{proof}

Collecting Theorems \ref{thm:forgetful_Poisson}, \ref{parh_Hitchin_fiber_abelian} and \ref{thm:lagrangian_H}, we now conclude to the following:

\begin{thm}\label{thm:main}
Let $X$ be a smooth complex algebraic curve and $D$ be a reduced effective divisor on $X$. Let $G$ be a connected complex reductive group. The moduli space $\mathcal{M}_H(X,\mathcal{G}_{\boldsymbol\theta})$ of logahoric $\mathcal{G}_{\boldsymbol \theta}$-Higgs torsors over $X$ is Poisson and is fibered via a map $h_{\boldsymbol \theta}: \mathcal{M}_H(X,\mathcal{G}_{\boldsymbol\theta}) \to \mathcal{A}_{\boldsymbol \theta}$ by abelian torsors. Moreover, $h_{\boldsymbol \theta}: \mathcal{M}_H(X,\mathcal{G}_{\boldsymbol\theta}) \to \mathcal{A}_{\boldsymbol \theta}$ is an algebraically completely integrable Hamiltonian system in the sense of Definition \ref{defn:aciHs}.
\end{thm}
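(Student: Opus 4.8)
The plan is to verify, one at a time, the three defining conditions of an algebraically completely integrable Hamiltonian system from Definition \ref{defn:aciHs}, drawing each condition from a result already in hand, and then to supply the two facts not yet made explicit: the involutivity of the Hitchin Hamiltonians and the linearization of their flows on the generic fibers. Since Definition \ref{defn:aciHs} admits its generalized Poisson formulation — a proper morphism $H\colon X\to B$ whose fibers over the complement of a proper closed subvariety $A\subset B$ are abelian varieties and which is a Lagrangian fibration there — the precise target is to check this generalized version for $h_{\boldsymbol\theta}$, with $A:=\mathcal{A}_{\boldsymbol\theta}\setminus\mathcal{A}'$ the locus over which the cameral cover degenerates.

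First I would record the Poisson structure: by Theorem \ref{thm:forgetful_Poisson} the forgetful map $\ell$ is Poisson and equips $\mathcal{M}_H(X,\mathcal{G}_{\boldsymbol\theta})$ with the bivector $\sharp$ of \eqref{defn:sharp}, which is the structure relative to which the whole system is defined. Over the open dense locus $\mathcal{A}'\subset\mathcal{A}_{\boldsymbol\theta}$, Corollary \ref{parh_Hitchin_fiber_abelian} identifies the generic fibers $h_{\boldsymbol\theta}^{-1}(s)$ as abelian torsors (torsors under the abelian variety $\operatorname{Prym}^\Gamma(Y)_0$), supplying the abelianness demanded in part (3) of the definition. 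Theorem \ref{thm:lagrangian_H} then gives that these generic fibers are Lagrangian with respect to the symplectic leaves of $(\mathcal{M}_H,\sharp)$, that is, $h_{\boldsymbol\theta}$ is a Lagrangian fibration in the Poisson sense; this is part (2), and the Lagrangian condition simultaneously encodes the correct dimension count inside each leaf.

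It then remains to establish involutivity and linearization. For involutivity I would invoke the standard equivalence that the pullback algebra $h_{\boldsymbol\theta}^*\mathcal{O}(\mathcal{A}_{\boldsymbol\theta})$ Poisson-commutes if and only if the fibers of $h_{\boldsymbol\theta}$ are coisotropic inside the symplectic leaves; since Lagrangian implies coisotropic, Theorem \ref{thm:lagrangian_H} yields $\{H_i,H_j\}=0$ directly, giving part (1). For the linearization of the flows — the subtlest point — I would use the duality of Proposition \ref{prop:dactdhdual-parahoric-revised-global}: the canonical isomorphism $\operatorname{Lie}(P_{\mathcal{A}_{\boldsymbol\theta}}/\mathcal{A}_{\boldsymbol\theta})\cong T^*\mathcal{A}_{\boldsymbol\theta}$ identifies the Hamiltonian vector field of any function pulled back from the base with the infinitesimal action of the Prym group scheme $P_{\mathcal{A}_{\boldsymbol\theta}}$ on the fibers. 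Because this action is by translations on the abelian torsors, the Hitchin vector fields are translation-invariant along each generic fiber, precisely the requirement in part (3); one transports this statement from the symplectic leveled space $\mathcal{M}_{LH}(X,\mathcal{G}_{\boldsymbol\theta})$, where Proposition \ref{prop:dactdhdual-parahoric-revised-global} is phrased, to $\mathcal{M}_H(X,\mathcal{G}_{\boldsymbol\theta})$ through the Poisson map $\ell$.

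The hard part will be the bookkeeping across the Poisson-versus-symplectic divide. The cleanest form of the duality lives on the symplectic manifold $\mathcal{M}_{LH}(X,\mathcal{G}_{\boldsymbol\theta})\subset T^*\mathcal{U}(X,\mathcal{G}_{\boldsymbol\theta})$, whereas the final statement concerns the genuinely Poisson space $\mathcal{M}_H(X,\mathcal{G}_{\boldsymbol\theta})$; the care required is to confirm that descending along $\ell$ carries the leaf-wise Lagrangian fibers of $\widetilde{H}_{\boldsymbol\theta}$ to the leaf-wise Lagrangian fibers of $h_{\boldsymbol\theta}$ and that the translation flows survive the descent, which is where the compatibility of $\ell$ with the two fibrations must be checked at the level of the deformation complexes. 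A secondary point, which I would settle by appeal to the construction of $\mathcal{M}_H(X,\mathcal{G}_{\boldsymbol\theta})$ in \cite{KSZparh}, is the properness of $h_{\boldsymbol\theta}$, needed so that the generic fibers are complete and hence honest abelian torsors rather than merely open subsets of such.
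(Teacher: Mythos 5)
Your proposal is correct and follows essentially the same route as the paper, whose proof is precisely to collect Theorem \ref{thm:forgetful_Poisson} (the Poisson structure via the forgetful map), Corollary \ref{parh_Hitchin_fiber_abelian} (generic fibers are abelian torsors), and Theorem \ref{thm:lagrangian_H} (generic fibers are Lagrangian in the symplectic leaves), which together satisfy the generalized Poisson formulation of Definition \ref{defn:aciHs}. Your additional verification of involutivity via coisotropy and of linearization via the duality of Proposition \ref{prop:dactdhdual-parahoric-revised-global} goes beyond what the paper records explicitly, but it is consistent with, and indeed implicit in, the same three ingredients.
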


We will call the algebraically completely integrable Hamiltonian system of Theorem \ref{thm:main}, the \emph{logahoric Hitchin integrable system}.

\section{Symplectic leaf foliation}

We now study the symplectic leaves in the foliation of the Poisson moduli space $\mathcal{M}_H(X, \mathcal{G}_{\boldsymbol \theta})$.

\begin{lem}\label{lem:nilpotent-vanish}
Let $p\colon\mathfrak g\to\mathbb C$ be a \emph{homogeneous} $G$-invariant
polynomial of positive degree.  Then $p(x)=0$, for every nilpotent
element $x\in\mathfrak g$.
\end{lem}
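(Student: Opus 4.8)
The plan is to exploit the tension between the $G$-invariance of $p$ and a $\mathbb{G}_m$-scaling action that every nilpotent element carries. First I would record the trivial but essential observation that homogeneity of positive degree forces $p$ to vanish at the origin: writing $\deg p = d > 0$, homogeneity gives $p(0) = p(s\cdot 0) = s^{d} p(0)$ for all $s \in \mathbb{C}^*$, so $p(0) = 0$. The whole proof then reduces to relating the value $p(x)$ at a nilpotent $x$ to the value $p(0)$.

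The core step is to produce, for a given nilpotent element $x \in \mathfrak{g}$, a one-parameter subgroup $\lambda \colon \mathbb{G}_m \to G$ under whose adjoint action $x$ becomes an eigenvector of positive weight. Since $x$ is nilpotent it lies in the derived subalgebra $[\mathfrak{g},\mathfrak{g}]$, which is semisimple, so the Jacobson--Morozov theorem embeds $x$ into an $\mathfrak{sl}_2$-triple $(x,h,y)$ with $[h,x] = 2x$. The semisimple element $h$ has integer weights on every representation, hence lies in the cocharacter lattice and integrates to an honest algebraic cocharacter $\lambda$ of $G$ satisfying $\mathrm{Ad}(\lambda(s))\,x = s^{2} x$ for all $s \in \mathbb{C}^*$.

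With the scaling cocharacter in hand the conclusion is immediate and formal. Because $p$ is $\mathrm{Ad}(G)$-invariant we have $p(\mathrm{Ad}(\lambda(s))\,x) = p(x)$, while the left-hand side equals $p(s^{2} x) = s^{2d} p(x)$ by homogeneity. Thus $(s^{2d} - 1)\,p(x) = 0$ for every $s \in \mathbb{C}^*$; choosing any $s$ with $s^{2d} \neq 1$ (which exists since $d > 0$) forces $p(x) = 0$. A variant that downplays Jacobson--Morozov is to note that the same cocharacter gives $\lim_{s \to 0} \mathrm{Ad}(\lambda(s))\,x = 0$, so that $0$ lies in the Zariski closure of the adjoint orbit $G\cdot x$; then the constancy of $p$ on the orbit together with continuity yields $p(x) = p(0) = 0$.

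I expect the only genuine subtlety to be the reductive-versus-semisimple bookkeeping: one must confirm that nilpotency places $x$ in the semisimple part $[\mathfrak{g},\mathfrak{g}]$ so that the $\mathfrak{sl}_2$-theory applies cleanly and the central directions of the reductive group $G$ contribute nothing to the adjoint action. Everything beyond securing the contracting cocharacter is routine, so this is the step I would present most carefully.
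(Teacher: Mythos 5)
Your proof is correct and takes essentially the same route as the paper's: both invoke Jacobson--Morozov to produce a cocharacter $\lambda$ with $\mathrm{Ad}(\lambda(t))\,x = t^{2}x$, and then combine $G$-invariance with homogeneity to force $p(x)=0$. Your extra care about placing $x$ in $[\mathfrak{g},\mathfrak{g}]$ and the orbit-closure variant are fine supplements but do not change the argument.
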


\begin{proof}
Fix a nilpotent element $x\in\mathfrak g$.  
By the Jacobson--Morozov Theorem, there exists a 1-parameter cocharacter
$\lambda:\mathbb C^{\times}\!\to G$ such that $\operatorname{Ad}(\lambda(t))\,x \;=\; t^{\,2}\,x$,
 for all $t\in\mathbb C^{\times}$. Because \(p\) is \(G\)-invariant,
\( p\bigl(\operatorname{Ad}(\lambda(t))x\bigr)=p(x)\).
Thus, $p(x)\;=\;p\bigl(t^{\,2}x\bigr)
\;=\;t^{2\deg p}\,p(x)$ (homogeneity).

\noindent Now, pick \(t\neq0\).  If \(p(x)\neq0\) we could divide by it to get  
\(1=t^{2\deg p}\), which is impossible for arbitrary \(t\).  Hence \(p(x)=0\).
\end{proof}

From Lemma~\ref{lem:nilpotent-vanish} we get for the Chevalley morphism that $\chi(X)=0$, for all $X\in\mathfrak g_\theta^{+}$. We set the following:

\begin{defn}[Strongly logahoric $\mathcal{G}_{\boldsymbol \theta}$-Higgs torsors and $\mathcal{A}_{\boldsymbol{\theta}}^+$]
A logarithmic Higgs field $\varphi \in H^0(X, \mathcal{E}(\mathfrak{g})\otimes K(D))$ is called \textit{strongly logarithmic} if, for each $x_i \in D$, its residue $\mathrm{Res}_{x_i} \varphi = 0$. A \emph{strongly logahoric $\mathcal{G}_{\boldsymbol \theta}$-Higgs torsor} over $X$ is a parahoric $\mathcal{G}_{\boldsymbol \theta}$-Higgs torsor over $X$ with a strongly logarithmic Higgs field. Since non-constant invariant polynomials $p_j$ vanish on nilpotent elements (Lemma~\ref{lem:nilpotent-vanish}), then $p_j(\mathrm{Res}_{x_i} \varphi) = 0$, implying that  $p_j(\varphi)$ vanishes along $D$. Now, set
\[
\mathcal{B}_{\boldsymbol{\theta}}^+ := \bigoplus_{j=1}^{l} H^0(X, (K_X(D))^{m_j + 1} \otimes \mathcal{O}_X(-D)),
\]
which is a vector subspace of $\mathcal{B}_{\boldsymbol{\theta}}$. The subvariety $\mathcal{A}_{\boldsymbol{\theta}}^+$ is then the image under $h_{\boldsymbol{\theta}}$ of the strongly logahoric $\mathcal{G}_{\boldsymbol \theta}$-Higgs torsors, consisting of tuples $(s_1, \dots, s_l) \in \mathcal{A}_{\boldsymbol{\theta}}$ such that each $s_j$ vanishes along $D$. In other words, $\mathcal{A}_{\boldsymbol{\theta}}^+ = \mathcal{A}_{\boldsymbol{\theta}} \cap \mathcal{B}_{\boldsymbol{\theta}}^+$.
\end{defn}

\begin{defn}[Local Residue Data]
For each $x_i \in D$, let $G_{\theta_i}$ be the parahoric subgroup of $G$ at $x_i$, with Levi factor $L_{\theta_i}$ and Lie algebra $\mathfrak{l}_{\theta_i}$. The residue $\mathrm{Res}_{x_i} \varphi$ lies in $\hat{\mathfrak{l}}_{\theta_i}\cong \mathfrak{l}_{\theta_i}$. The \textit{effective structure group for residues} is $G_D = \left(\prod_{i=1}^s L_{\theta_i}\right)/Z(G)$, with Lie algebra $\mathfrak{g}_D = \bigoplus_{i=1}^s \mathfrak{l}_{\theta_i}$. The \textit{space of local invariant data} is the categorical quotient $\mathfrak{g}_D^* // G_D = \mathrm{Spec}(\mathbb{C}[\mathfrak{g}_D^*]^{G_D})$, specified by $L_{\theta_i}$-invariant polynomial values on $\mathrm{Res}_{x_i} \varphi \in \mathfrak{l}_{\theta_i}$, for each $x_i \in D$.
\end{defn}

For each basic Chevalley invariant \(p_j\), the short exact sequence
\[
0\to(K_X(D))^{m_j+1}(-D)\to(K_X(D))^{m_j+1}\xrightarrow{\operatorname{res}_D}\mathcal{O}_D\to0
\]
gives a surjective linear map
\begin{equation}
  \operatorname{res}_D\colon
    B_{\boldsymbol\theta}\;\twoheadrightarrow\;
    I\otimes H^0(D,\mathcal{O}_D),
  \quad \text{ with } \quad
    \text{Ker}(\operatorname{res}_D)=B_{\boldsymbol\theta}^+.\label{eq:hitchinbase-residue}
\end{equation}
Here \(I=\bigoplus_{j=1}^{\ell}I_{m_j+1}\) is the span of the basic invariants.
For each marked point \(x_i\in D\), let \(L_{\theta_i}\) be the Levi factor of the
parahoric subgroup \(G_{\theta_i}\) with Lie algebra \(\mathfrak l_{\theta_i}\).
The Jacobson--Morozov Theorem plus Lemma~\ref{lem:nilpotent-vanish} imply that
\(\mathbb C[\mathfrak g_{\theta_i}]^{G_{\theta_i}}
  =\mathbb C[\mathfrak l_{\theta_i}]^{L_{\theta_i}}\),
hence the canonical isomorphism
\[
  \mathfrak g_{\theta_i}//G_{\theta_i}\;\cong\;\mathfrak l_{\theta_i}//L_{\theta_i}.
\]
Set
\(
  G_D:=\bigl(\prod_{i=1}^sL_{\theta_i}\bigr)/Z(G),
  \;
  \mathfrak g_D:=\bigoplus_{i=1}^s\mathfrak l_{\theta_i}.
\)
Then, Chevalley’s theorem gives
\begin{equation}\label{eq:quotient-identification}
  \mathfrak g_D^*//G_D
  \;\cong\;
  I\otimes H^0(D,\mathcal{O}_D)
\end{equation}
as an affine space; now define
\[
  \mathcal A_{\boldsymbol\theta}/\mathcal A_{\boldsymbol\theta}^+
  \;:=\;
\mbox{the image of } \mathcal A_{\boldsymbol\theta}\mbox{ inside } \mathcal{B}_{\boldsymbol\theta}/ \mathcal{B}_{\boldsymbol\theta}^+.
\]

\begin{rem}
Note that $ \mathcal A_{\boldsymbol\theta}/\mathcal A_{\boldsymbol\theta}^+$ above is not a well-defined quotient as $\mathcal A_{\boldsymbol\theta}^+$ is not a vector space. However, we adopt this notation as to align with the vector space $B_L/B_0$ appearing in \cite[Section 8.3]{Markman}.  
\end{rem}
\begin{thm}\label{thm:commuting}
There exists an algebraic morphism $\overline{r}: \mathcal{A}_{\boldsymbol{\theta}}/\mathcal{A}_{\boldsymbol{\theta}}^+ \to \mathfrak{g}_D^* // G_D$ which is an isomorphism of affine varieties:
\[
 \mathcal A_{\boldsymbol\theta}/\mathcal A_{\boldsymbol\theta}^+ \cong \mathfrak{g}_D^* // G_D.
\]
As so this fits into a diagram
\begin{equation}\label{main_diagram}
\begin{tikzcd}[column sep=large, row sep=large]
& \mathcal{A}_{\boldsymbol{\theta}} \arrow[r, "q"]  & \mathcal{A}_{\boldsymbol{\theta}} / \mathcal{A}_{\boldsymbol{\theta}}^+ \arrow[dd, "\cong"] \\
T^*\mathcal{U}(X,\mathcal{G}_{\boldsymbol{\theta}}) \arrow[dr, "\mu"'] \arrow[ur, "\widetilde H_{\boldsymbol\theta}"'] & &  \\
& \mathfrak{g}_D^* \arrow[r, "c.q."] & \mathfrak{g}_D^* // G_D 
\end{tikzcd}
\end{equation}
for the canonical quotient $\mathfrak{g}_D^* \to \mathfrak{g}_D^*//G_D$ and the quotient map $\mathcal{A}_{\boldsymbol{\theta}} \to \mathcal{A}_{\boldsymbol{\theta}}/ \mathcal{A}_{\boldsymbol{\theta}}^+$.

\end{thm}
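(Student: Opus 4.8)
The plan is to realize $\overline r$ as the restriction to $\mathcal{A}_{\boldsymbol\theta}/\mathcal{A}_{\boldsymbol\theta}^+$ of the composite of the two identifications already recorded above. First I would observe that the quotient $q\colon \mathcal{A}_{\boldsymbol\theta}\to\mathcal{A}_{\boldsymbol\theta}/\mathcal{A}_{\boldsymbol\theta}^+$ is nothing but the restriction of the residue map $\operatorname{res}_D$ of \eqref{eq:hitchinbase-residue}, whose kernel is exactly $\mathcal{B}_{\boldsymbol\theta}^+$; this produces a canonical isomorphism $\mathcal{B}_{\boldsymbol\theta}/\mathcal{B}_{\boldsymbol\theta}^+\cong I\otimes H^0(D,\mathcal{O}_D)$. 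Composing with the Chevalley identification \eqref{eq:quotient-identification}, namely $I\otimes H^0(D,\mathcal{O}_D)\cong \mathfrak{g}_D^*//G_D$, gives an isomorphism of affine spaces $\mathcal{B}_{\boldsymbol\theta}/\mathcal{B}_{\boldsymbol\theta}^+\xrightarrow{\sim}\mathfrak{g}_D^*//G_D$. I would then define $\overline r$ to be the restriction of this isomorphism to the subvariety $\mathcal{A}_{\boldsymbol\theta}/\mathcal{A}_{\boldsymbol\theta}^+$, which by definition is the image of $\mathcal{A}_{\boldsymbol\theta}$ inside $\mathcal{B}_{\boldsymbol\theta}/\mathcal{B}_{\boldsymbol\theta}^+$. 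That $\overline r$ is an injective morphism of affine varieties is then immediate, being the restriction of an isomorphism of affine spaces.

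The heart of the argument is the commutativity of \eqref{main_diagram}, i.e. the identity $\overline r\circ q\circ\widetilde H_{\boldsymbol\theta}=c.q.\circ\mu$, which I would establish by a purely local computation at each $x_i\in D$. Near $x_i$ choose a coordinate $z_i$ trivializing $\mathcal{E}$ and identifying $K_X(D)$ with the line bundle generated by $dz_i/z_i$, and write a logarithmic Higgs field as $\varphi=A_i(z_i)\,dz_i/z_i$ with $A_i\in\mathfrak{g}_{\theta_i}$. Then $p_j(\varphi)=p_j\bigl(A_i(z_i)\bigr)(dz_i/z_i)^{m_j+1}$, so the residue operator of \eqref{eq:hitchinbase-residue} extracts the value $p_j\bigl(A_i(0)\bigr)$, whereas the moment map of Theorem \ref{thm:PoissonGD} records, via the Killing form, the residue $\operatorname{Res}_{x_i}(\varphi)$, the image of $A_i(0)$ under the Levi projection $\mathfrak{g}_{\theta_i}\to\mathfrak{l}_{\theta_i}$. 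The two descriptions agree once I invoke the identity $\mathbb{C}[\mathfrak{g}_{\theta_i}]^{G_{\theta_i}}=\mathbb{C}[\mathfrak{l}_{\theta_i}]^{L_{\theta_i}}$, itself a consequence of Jacobson--Morozov together with Lemma \ref{lem:nilpotent-vanish}: any $G_{\theta_i}$-invariant polynomial is blind to the pro-unipotent part $\mathfrak{g}_{\theta_i}^+$, whence $p_j\bigl(A_i(0)\bigr)=p_j\bigl(\operatorname{Res}_{x_i}\varphi\bigr)$. Summing over $j$ and over $x_i\in D$ and tracking the identifications of the first paragraph yields the desired commutativity.

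It remains to prove surjectivity of $\overline r$, and this is the step I expect to be the main obstacle. By the commutativity just established together with injectivity of $\overline r$, the image of $\overline r$ equals the image of $c.q.\circ\mu$, so it suffices to show that the invariant data $\bigl(p_j(\operatorname{Res}_{x_i}\varphi)\bigr)_{i,j}$ can be prescribed arbitrarily. The delicate point is that not every collection of residues $(R_i)\in\bigoplus_i\mathfrak{l}_{\theta_i}$ is realized by a global logarithmic Higgs field: the obstruction lives in the connecting map of the residue sequence $0\to\mathcal{E}(\mathfrak{g})\otimes K_X\to\mathcal{E}(\mathfrak{g})\otimes K_X(D)\to\bigoplus_i\mathfrak{l}_{\theta_i}\to0$, valued in $H^1(X,\mathcal{E}(\mathfrak{g})\otimes K_X)$. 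Here I would fix a stable $\mathcal{E}$ and use Serre duality to identify this target with $H^0(X,\mathcal{E}(\mathfrak{g}))^*$, which by Proposition \ref{prop:parahoric_3.2} is the dual of the center $\mathfrak{z}$; hence the only constraint on the realizable residues is the expected central (degree/trace) one. Since $G_D=(\prod_i L_{\theta_i})/Z(G)$ quotients out precisely this central direction, the induced map $\bigoplus_i\mathfrak{l}_{\theta_i}\to\mathfrak{g}_D^*//G_D$ stays surjective on the realizable locus, and combined with the surjectivity of each categorical quotient $\mathfrak{l}_{\theta_i}\to\mathfrak{l}_{\theta_i}//L_{\theta_i}$ this gives surjectivity of $c.q.\circ\mu$, and therefore of $\overline r$. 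Matching the central constraint arising from the residue theorem with the quotient by $Z(G)$ in the definition of $G_D$ is the bookkeeping I expect to demand the most care.
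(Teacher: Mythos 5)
Your proposal is correct in substance, and for the construction of $\overline r$, the injectivity, and the commutativity it runs parallel to the paper's own proof: there too the map is evaluation of the basic invariants on residues, well-definedness on the quotient comes from the blindness of invariants to the pro-unipotent part ($\mathbb{C}[\mathfrak{g}_{\theta_i}]^{G_{\theta_i}}=\mathbb{C}[\mathfrak{l}_{\theta_i}]^{L_{\theta_i}}$, via Lemma \ref{lem:nilpotent-vanish} and Jacobson--Morozov), and injectivity comes from the fact that the kernel of $\operatorname{res}_D$ in \eqref{eq:hitchinbase-residue} is exactly $\mathcal{B}_{\boldsymbol\theta}^+$; your local coordinate computation is a fleshed-out version of the paper's one-line assertion that $\mu$ followed by the Chevalley map equals evaluation of $p_j(\varphi)$ at the $x_i$. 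The genuine divergence is surjectivity. The paper argues by direct construction: given $(\eta_i)\in\mathfrak{g}_D$, it glues a global Higgs field whose residues have semisimple parts $\eta_i$, invoking generic splitness of the torsor. You instead run the long exact sequence of the residue sequence, identify the obstruction space $H^1(X,\mathcal{E}(\mathfrak{g})\otimes K_X)$ with $H^0(X,\mathcal{E}(\mathfrak{g}))^*\cong\mathfrak{z}^*$ by Serre duality and Proposition \ref{prop:parahoric_3.2}, and conclude that the only constraint on realizable residues is the central one, which is then absorbed by the quotient by $Z(G)$ in the definition of $G_D$. Your route is longer but has a real advantage: it makes explicit the residue-theorem constraint that the paper's gluing argument passes over in silence, and which is nonvacuous whenever $Z(G)$ is positive-dimensional (already for $G=\mathbb{C}^{\times}$ the residues of a global logarithmic form must sum to zero, so they cannot be prescribed arbitrarily); your observation that this constraint must be matched against the $Z(G)$-quotient is precisely the point on which the paper is least careful. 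Three caveats on your side: (i) Serre duality pairs $\mathcal{E}(\mathfrak{g})\otimes K_X$ with the Killing-form dual sheaf, whose stalks at $x_i$ are $\mathfrak{g}_{\theta_i}^\perp\supseteq\mathfrak{g}_{\theta_i}$, so Proposition \ref{prop:parahoric_3.2} does not apply verbatim to that dual sheaf and needs a (routine but necessary) extension; (ii) your bookkeeping requires reading $\mathfrak{g}_D$ as $\bigl(\bigoplus_i\mathfrak{l}_{\theta_i}\bigr)/\mathfrak{z}$, the actual Lie algebra of $\bigl(\prod_i L_{\theta_i}\bigr)/Z(G)$, so that $\mathfrak{g}_D^*$ is the annihilator of the diagonal center --- this is the reading under which the theorem is true, even though the paper's displayed definition of the local residue data sets $\mathfrak{g}_D=\bigoplus_i\mathfrak{l}_{\theta_i}$; (iii) your reduction of surjectivity of $\overline r$ to surjectivity of $c.q.\circ\mu$ needs only the inclusion $\operatorname{Im}(c.q.\circ\mu)\subseteq\operatorname{Im}(\overline r)$, which follows from commutativity alone, so the appeal to equality of images (which would require knowing that $\widetilde H_{\boldsymbol\theta}$ restricted to $T^*\mathcal{U}(X,\mathcal{G}_{\boldsymbol\theta})$ hits all of $\mathcal{A}_{\boldsymbol\theta}$) can simply be dropped.
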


\begin{proof}
For a pair \((\mathcal{E},\varphi)\), let \(\xi_i=\operatorname{Res}_{x_i}\varphi\in\mathfrak g_{\theta_i}\).
By the paragraph above, the tuple of invariant values
\((p_1(\xi_i),\dots,p_\ell(\xi_i))\) depends only on the Levi component
\(\xi_{i,s}\in\mathfrak l_{\theta_i}\) and defines a point of
\(\mathfrak l_{\theta_i}//L_{\theta_i}\cong\mathfrak g_{\theta_i}//G_{\theta_i}\).
Collecting over \(i\) yields a map
\(
  r:\mathcal A_{\boldsymbol\theta}\to\mathfrak g_D^*//G_D.
\)
If \(\varphi\) is strongly logarithmic then each residue is nilpotent, so
\(r(\varphi)=0\); hence \(r\) factors through
\(\bar r:\mathcal A_{\boldsymbol\theta}/\mathcal A_{\boldsymbol\theta}^+
      \to\mathfrak g_D^*//G_D\).

\noindent Given any \((\eta_i)\in\mathfrak g_D\) we may choose local
representatives with semisimple part \(\eta_i\) and glue a Higgs field on \(X\)
whose residues are these \(\eta_i\), this is achievable since we assume our torsor to be generically split.  Evaluating invariants shows every point
of \(\mathfrak g_D^*//G_D\) is in the image of \(\bar r\).

\noindent Now note that the induced map \(\overline{res_D}:\mathcal{B}_{\boldsymbol \theta}/\mathcal{B}_{\boldsymbol \theta}^+ \to \mathfrak{g}_D^*//G_D \) is injective because the kernel of \(\mathcal{B}_{\boldsymbol \theta}\) is inside \(\mathcal{B}_{\boldsymbol \theta}^+\). As so the image of \(\mathcal{A}_{\boldsymbol \theta}\) inside \(\mathcal{B}_{\boldsymbol \theta}/\mathcal{B}_{\boldsymbol \theta}^+\) maps to \(\mathfrak{g}_D^*//G_D \) injectively from the diagram below equation (\ref{eq:hitchinbase-residue})\[\mathcal{A}_{\boldsymbol \theta}\to \mathcal{B}_{\boldsymbol \theta}\to \mathcal{B}_{\boldsymbol \theta}/\mathcal{B}^+_{\boldsymbol \theta} \overset{\overline{res_D}}{\to} \mathfrak{g}_D^*//G_D.\]
In the diagram (\ref{main_diagram}), note that the moment map \(\mu\) followed by the Chevalley map equals the evaluation of
\(p_j(\varphi)\) at \(x_i\). Therefore, the map \(\bar r\) is an algebraic isomorphism and the diagram (\ref{main_diagram}) commutes.
\end{proof}

\begin{cor}
\label{cor:parahoric-leaves-refine}
The foliation of the smooth locus
\(\mathcal{M}_H(X,\mathcal{G}_{\boldsymbol\theta})^{\mathrm{sm}}\) by its symplectic
leaves refines the foliation by fibers of the map
\[
  q\circ\widetilde H_{\boldsymbol\theta}
  \;:\;
  \mathcal{M}_H(X,\mathcal{G}_{\boldsymbol\theta})^{\mathrm{sm}}
  \;\longrightarrow\;
  \mathcal{A}_{\boldsymbol\theta}/\mathcal{A}_{\boldsymbol\theta}^+.
\]
Moreover, each fiber of \(q\circ\widetilde H_{\boldsymbol\theta}\) contains
a unique symplectic leaf of maximal dimension.
\end{cor}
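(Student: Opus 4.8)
The plan is to realize $\mathcal{M}_H(X,\mathcal{G}_{\boldsymbol\theta})^{\mathrm{sm}}$ as the Poisson quotient of the symplectic manifold $\mathcal{M}_{LH}(X,\mathcal{G}_{\boldsymbol\theta})$ by the free Hamiltonian action of $G_D$, and then to read off its symplectic leaves through Marsden--Weinstein reduction at the coadjoint orbits of $\mathfrak{g}_D^*$. On the regularly stable (hence smooth) locus the forgetful map $\ell$ of Theorem \ref{thm:forgetful_Poisson} is a Poisson submersion whose fibers are exactly the $G_D$-orbits, since $G_D$ acts freely there (by the Freeness Theorem) and the level structure is precisely the datum killed by $\ell$. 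First I would invoke the standard theory of Poisson reduction of a symplectic manifold by a free proper Hamiltonian action: the quotient $\mathcal{M}_{LH}/G_D \cong \mathcal{M}_H^{\mathrm{sm}}$ carries the Poisson structure given by $\sharp$, and its symplectic leaves are precisely the reduced spaces $\mu^{-1}(\mathcal{O})/G_D$, where $\mathcal{O}$ ranges over the coadjoint orbits in $\mathfrak{g}_D^*$ (equivalently $\mu^{-1}(\xi)/G_{D,\xi}$ for $\xi\in\mathfrak{g}_D^*$).

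Next I would feed in Theorem \ref{thm:commuting}. Since the Hitchin map is blind to the level structure, $q\circ\widetilde H_{\boldsymbol\theta}$ descends along $\ell$ to the map on $\mathcal{M}_H^{\mathrm{sm}}$, and the commuting diagram (\ref{main_diagram}) gives $\overline r\circ q\circ\widetilde H_{\boldsymbol\theta} = c.q.\circ\mu$ with $\overline r$ an isomorphism of affine varieties. Hence the fibers of $q\circ\widetilde H_{\boldsymbol\theta}$ are exactly the sets $\mu^{-1}\bigl(c.q.^{-1}(\bar\xi)\bigr)/G_D$, i.e. the preimages under $\mu$ of fibers of the categorical quotient $c.q.\colon\mathfrak{g}_D^*\to\mathfrak{g}_D^*//G_D$, quotiented by $G_D$. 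Because each fiber $c.q.^{-1}(\bar\xi)$ is a union of coadjoint orbits, the partition of $\mathfrak{g}_D^*$ into coadjoint orbits refines the partition into fibers of $c.q.$; pulling back by $\mu$ and descending through $\ell$ shows that the symplectic leaves $\mu^{-1}(\mathcal{O})/G_D$ refine the fibers of $q\circ\widetilde H_{\boldsymbol\theta}$, which is the first assertion.

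For the uniqueness of the maximal leaf I would combine two facts. On the free locus a dimension count gives $\dim\bigl(\mu^{-1}(\mathcal{O})/G_D\bigr) = \dim\mathcal{M}_{LH} - 2\dim\mathfrak{g}_D + \dim\mathcal{O}$, so the dimension of a leaf is a strictly increasing function of the dimension of the associated coadjoint orbit. On the other hand, writing $\mathfrak{g}_D=\bigoplus_{i}\mathfrak{l}_{\theta_i}$ and using the Kostant structure of the adjoint quotient on each reductive factor, every fiber $c.q.^{-1}(\bar\xi)$ contains a \emph{unique} coadjoint orbit of maximal dimension, namely the regular orbit whose semisimple part realizes the prescribed invariants and whose nilpotent part is regular in the relevant centralizer (the open dense orbit of the fiber, produced via Jacobson--Morozov as in Lemma \ref{lem:nilpotent-vanish}). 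Combining the two, each fiber of $q\circ\widetilde H_{\boldsymbol\theta}$ contains a unique symplectic leaf of maximal dimension, namely the reduction at that regular orbit.

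The hard part will be the rigorous identification of the symplectic leaves of the Poisson quotient with the Marsden--Weinstein reductions $\mu^{-1}(\mathcal{O})/G_D$ in the algebraic category, together with the careful restriction to the locus where the $G_D$-action is free and $\mu$ is a submersion, so that the dimension counts above are valid; away from this locus one must either excise the non-regularly-stable points or argue that they do not affect the generic leaf structure. A secondary technical point is to verify that $\overline r$ genuinely identifies the two foliations set-theoretically, and not merely the rings of invariant functions, so that ``refines'' holds as an equality of the induced partitions rather than only up to the categorical quotient.
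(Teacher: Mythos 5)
Your proposal is correct and takes essentially the same route as the paper's proof: both rest on the identity \(q\circ\widetilde H_{\boldsymbol\theta} = \mathrm{c.q.}\circ\mu\) coming from the commutative diagram of Theorem~\ref{thm:commuting}, both identify the symplectic leaves of \(\mathcal{M}_H(X,\mathcal{G}_{\boldsymbol\theta})^{\mathrm{sm}}\) with (connected components of) the reductions \(\mu^{-1}(\mathcal{O})/G_D\) over coadjoint orbits \(\mathcal{O}\subset\mathfrak{g}_D^*\), and both extract the unique maximal leaf from the unique open (regular) coadjoint orbit sitting inside each fiber of the categorical quotient \(\mathfrak{g}_D^*\to\mathfrak{g}_D^*//G_D\). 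Your write-up simply makes explicit several points the paper leaves implicit (the Marsden--Weinstein identification of leaves of the quotient Poisson structure, the dimension formula \(\dim\mathcal{M}_{LH}-2\dim\mathfrak{g}_D+\dim\mathcal{O}\), and the Kostant-type uniqueness of the regular orbit), which is a welcome strengthening but not a different argument.
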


\begin{proof}
On the open locus \(R\subset|T^*\mathcal{U}(X,\mathcal{G}_{\boldsymbol{\theta}})|\) we have
\[
  q\circ\widetilde H_{\boldsymbol\theta}
  \;=\;
  \mathrm{c.q.}\,\circ\mu.
\]
Since symplectic leaves in \(T^*\mathcal{U}(X,\mathcal{G}_{\boldsymbol{\theta}})\) are exactly the connected
components of the pre-images under \(\mu\) of \(G_D\)-coadjoint
orbits in \(\mathfrak{g}_D^*\), Theorem~\ref{thm:commuting} implies that each such orbit
sits inside exactly one leaf in a given fiber of \(q\circ\widetilde H_{\boldsymbol\theta}\).
Hence the fibers decompose into symplectic leaves, and the unique open
(co)adjoint orbit in each fiber yields the unique leaf of maximal
dimension.
\end{proof}

\section{Examples of Logahoric Hitchin Integrable Systems}

Many classical, as well as recently discovered, integrable systems can be realized as symplectic leaves of the integrable systems introduced in this paper, for suitable choices of the group $G$, the curve $X$, the divisor $D$, and the parahoric data $\boldsymbol{\theta}$. This section is devoted to demonstrating certain examples in the case when the base curve is the projective line, $X = \mathbb{P}^1$ or an elliptic curve $X= \Sigma$. We will show that in the first case, our general framework generalizes the integrable system of Beauville \cite{Beauville}, and recovers in its simplest form, the classical Gaudin model \cite{FFR}. Moreover, for base curve an elliptic curve $X=\Sigma$, we recover the space of $\Lambda$-periodic KP elliptic solitons and the elliptic Calogero--Moser system.

\subsection{\texorpdfstring{Logarithmic Hitchin System on $\mathbb{P}^1$}{Logarithmic Hitchin System on P1}}

For $X = \mathbb{P}^1$, the geometry simplifies considerably. A principal $G$-bundle on $\mathbb{P}^1$ is semistable if and only if it is trivial. Since the stability of a parahoric $\mathcal{G}_{\boldsymbol{\theta}}$-torsor requires the underlying principal $G$-bundle to be semistable, we can fix the underlying torsor $\mathcal{E}$ to be the trivial one, i.e., $\mathcal{E} = \mathbb{P}^1 \times G$.

Let $D = \{x_1, \dots, x_s\} \subset \mathbb{P}^1$ be a reduced effective divisor of degree $s \ge 1$. The cotangent space to the moduli space of leveled torsors $\mathcal{U}(\mathbb{P}^1, \mathcal{G}_{\boldsymbol{\theta}})$ is the space of logarithmic Higgs fields:
\[
T^*_{[(\mathcal{E}, \eta)]} \mathcal{U}(\mathbb{P}^1, \mathcal{G}_{\boldsymbol{\theta}}) \cong H^0(\mathbb{P}^1, \mathfrak{g} \otimes K_{\mathbb{P}^1}(D)).
\]
Let us choose a coordinate $z$ on $\mathbb{P}^1$ such that no points in $D$ are at $z=\infty$. The canonical bundle is $K_{\mathbb{P}^1} = \mathcal{O}_{\mathbb{P}^1}(-2)$, so $K_{\mathbb{P}^1}(D) = \mathcal{O}_{\mathbb{P}^1}(s-2)$. A section $\Phi \in H^0(\mathbb{P}^1, \mathfrak{g} \otimes \mathcal{O}_{\mathbb{P}^1}(s-2))$ is a $\mathfrak{g}$-valued polynomial in $z$ of degree at most $s-2$. Let $\mathcal{P}_{s-2}(\mathfrak{g})$ denote the vector space of such polynomials. The phase space of the integrable system is a Poisson quotient of this space:
\[
M_D \cong \mathcal{P}_{s-2}(\mathfrak{g}) // G.
\]
The Hitchin fibration $h_{\boldsymbol{\theta}}: M_D \to \mathcal{A}_{\boldsymbol{\theta}}$ is given by taking the characteristic coefficients of a polynomial matrix $A(z) \in \mathcal{P}_{s-2}(\mathfrak{g})$.

\subsection{Gaudin model}
We now demonstrate explicitly how this framework recovers the well-known classical Gaudin model also known as the \emph{Garnier integrable system} \cite{FFR}. The key is to make a specific choice for the parahoric data $\boldsymbol{\theta}$ that corresponds to the simplest pole structure. We choose the parahoric subgroup $\mathcal{G}_{\theta_j}$ at each point $x_j \in D$ to be an \emph{Iwahori subgroup}.

With this choice, a logarithmic Higgs field $\Phi \in H^0(\mathbb{P}^1, \mathfrak{g} \otimes K_{\mathbb{P}^1}(D))$ is realized as a $\mathfrak{g}$-valued meromorphic 1-form on $\mathbb{P}^1$ that is holomorphic away from $D$ and has at worst simple (first-order) poles at each $x_j$. Such a 1-form can be written in terms of its residues. Let $X_j = \operatorname{Res}_{z=x_j} \Phi \in \mathfrak{g}$. The 1-form can then be written in the coordinate $z$ as:
\begin{equation}
    \Phi(z) = \left( \sum_{j=1}^s \frac{X_j}{z-x_j} \right) dz.
\end{equation}
The coefficient of $dz$ is immediately recognizable as the \emph{Gaudin Lax operator}:
\begin{equation}
    L(z) = \sum_{j=1}^s \frac{X_j}{z-x_j}.
\end{equation}
The residues $X_j$ of the Higgs field are identified with the dynamical spin variables of the Gaudin model. The condition that $\Phi(z)$ is a regular 1-form at $z=\infty$ implies the residue sum rule $\sum_j X_j = 0$, which defines the total momentum constraint of the Gaudin model.

The commuting Hamiltonians arise from the Hitchin map. Taking the simplest quadratic invariant (using a non-degenerate bilinear form $\langle\cdot,\cdot\rangle$ on $\mathfrak{g}$), we construct the generating function from the Lax operator:
\begin{equation}
    H(z) := \frac{1}{2} \langle L(z), L(z) \rangle = \frac{1}{2} \sum_{j,k=1}^{s} \frac{\langle X_j, X_k \rangle}{(z-x_j)(z-x_k)}.
\end{equation}
This is precisely the generating function for the Gaudin Hamiltonians. The general theorem on complete integrability for the logahoric Hitchin integrable system thus provides a deep geometric proof for the integrability of the classical Gaudin model.

The two descriptions of the phase space---as polynomials $A(z) \in \mathcal{P}_{s-2}(\mathfrak{g})$ and as rational functions $L(z)$---are algebraically equivalent. The isomorphism is given by clearing denominators.

Let $P(z) = \prod_{k=1}^{s} (z-x_k)$ be the scalar polynomial whose roots are the poles of the Lax operator. Define the $\mathfrak{g}$-valued polynomial $A(z)$ by:
\begin{equation}
A(z) := P(z) L(z) = \left( \prod_{k=1}^{s} (z-x_k) \right) \left( \sum_{j=1}^{s} \frac{X_j}{z - x_j} \right) = \sum_{j=1}^{s} X_j \prod_{k \neq j} (z-x_k).
\end{equation}
Each term in the sum is a polynomial of degree $s-1$. However, due to the residue sum rule $\sum_{j=1}^s X_j = 0$, the coefficient of the $z^{s-1}$ term vanishes:
\[
\text{coeff}(z^{s-1}) \text{ in } A(z) = \sum_{j=1}^{s} X_j = 0.
\]
Therefore, the degree of the polynomial $A(z)$ is at most $s-2$, and it lies in $\mathcal{P}_{s-2}(\mathfrak{g})$. This establishes that the space of polynomials $A(z)$ in the explicit realization is canonically isomorphic to the space of Gaudin Lax operators $L(z)$ with the total momentum constraint. The Hitchin Hamiltonians can be generated from the invariants of either $A(z)$ or $L(z)$, as they are algebraically equivalent.

\subsection{KP elliptic solitons}

We now specialize to the case when the base curve $X$ is an elliptic curve $\Sigma$. The triviality of the canonical bundle, $K_\Sigma \cong \mathcal{O}_\Sigma$, simplifies the setup and reveals deep connections to well-known algebraically completely integrable Hamiltonian systems. A logarithmic Higgs field $\varphi \in H^0(X, \mathcal{E}(\mathfrak{g}) \otimes K_X(D))$ becomes a section of $\mathcal{E}(\mathfrak{g}) \otimes \mathcal{O}_\Sigma(D)$, i.e., a meromorphic section of the adjoint bundle with poles prescribed by the divisor $D$. We will now show how the framework of the logahoric Hitchin integrable system recovers both the KP hierarchy and the appropriate symplectic leaf.

We first demonstrate that the space of elliptic solutions to the Kadomtsev--Petviashvili (KP) equation arises as a specific symplectic leaf of the moduli space of logahoric $\mathcal{G}_{\boldsymbol \theta}$-Higgs torsors, following the treatment of Markman \cite{Markman} and Treibich--Verdier \cite{T-V}.

Let the structure group be $G = \mathrm{SL}(r, \mathbb{C})$. We consider logahoric Higgs torsors of rank $r$ and degree $0$. The underlying vector bundles are semistable, and the moduli space of the torsors themselves, $\mathcal{U}_\Sigma(r,0)$, is isomorphic to the symmetric product $\mathrm{Sym}^r \Sigma$. Let the divisor be a single point $D = \{q\}$ and choose the parahoric and level structure at $q$ such that the Levi factor of the level group is $L_q \cong \mathrm{SL}(r, \mathbb{C})$. The Lie algebra of the level group is thus $\mathfrak{g}_D \cong \mathfrak{sl}(r)$, and its dual is $\mathfrak{g}_D^* \cong \mathfrak{sl}(r)^*$.

The space of KP elliptic solitons is known to correspond to a specific coadjoint orbit in $\mathfrak{sl}(r)^*$. Let $\mathrm{Orb}(KP)$ be the coadjoint orbit of the element in $\mathfrak{sl}(r)^*$ corresponding (via the Killing form) to the matrix $\mathrm{diag}(-1, -1, \dots, -1, r-1)$.

We define the KP symplectic leaf, $M(KP, r)$, to be the subvariety of our moduli space $\mathcal{M}_{LH}(\Sigma, \mathcal{G}_{\boldsymbol{\theta}})$ consisting of triples $[(\mathcal{E}, \varphi, \eta)]$ where the coresidue of the Higgs field lies in this orbit.
\begin{equation*}
    M(KP,r) := \left\{ [(\mathcal{E}, \varphi, \eta)] \in \mathcal{M}_{LH}(\Sigma, \mathcal{G}_{\boldsymbol{\theta}}) \mid \mu([(\mathcal{E},\eta)],\varphi) = \operatorname{CoRes}_q(\varphi) \in \mathrm{Orb}(KP) \right\}.
\end{equation*}
This is a symplectic leaf of the Poisson manifold $\mathcal{M}_{LH}(\Sigma, \mathcal{G}_{\boldsymbol{\theta}})$. The results of Treibich and Verdier \cite{T-V} provide a bijection between the space of solutions $\mathrm{Sol}(\Lambda, r)$ and an open subset of this leaf. As a direct consequence of Theorem \ref{thm:lagrangian_LH}, we have:

\begin{cor}
The space $\mathrm{Sol}(\Lambda,r)$ of $\Lambda$-periodic KP elliptic solitons of order $r$ embeds as a Zariski open subset of the symplectic leaf $M(KP,r)$. It is thereby endowed with a canonical algebraically completely integrable Hamiltonian system structure inherited from the parahoric Hitchin fibration on $\mathcal{M}_{LH}(\Sigma, \mathcal{G}_{\boldsymbol{\theta}})$.
\end{cor}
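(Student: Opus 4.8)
The plan is to prove the two assertions in turn: the Treibich--Verdier embedding of $\mathrm{Sol}(\Lambda,r)$ as a Zariski-open subset of $M(KP,r)$, and the inheritance of the algebraically completely integrable structure. As a preliminary step I would record that $M(KP,r)=\mu^{-1}\bigl(\mathrm{Orb}(KP)\bigr)$ is a symplectic leaf in the sense already used in the proof of Corollary~\ref{cor:parahoric-leaves-refine}: by Theorem~\ref{thm:PoissonGD} the moment map $\mu$ computes the coresidue at $q$, and the level set of $\mu$ over a single $G_D$-coadjoint orbit is (a connected component of) a symplectic leaf, since $G_D$ acts freely on the regularly stable locus. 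Thus $M(KP,r)$ carries the canonical symplectic form obtained from $\omega_{LH}$ along the orbit.

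For the embedding, I would recall the Treibich--Verdier spectral dictionary for $\Lambda$-periodic KP elliptic solitons of order $r$ over $\Sigma=\mathbb{C}/\Lambda$: such a soliton is equivalent to a tangential cover of degree $r$ of $\Sigma$, equivalently to a rank-$r$, degree-$0$ vector bundle on $\Sigma$ carrying a meromorphic Higgs field with a single simple pole at $q$ whose residue is conjugate to $\mathrm{diag}(-1,\dots,-1,r-1)$ --- the rank-one pole type that characterizes the KP flows. Translating this into the language of triples $(\mathcal{E},\varphi,\eta)$, the residue condition becomes exactly the requirement $\operatorname{CoRes}_q(\varphi)\in\mathrm{Orb}(KP)$. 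I would then invoke \cite{T-V} (as used by Markman in \cite{Markman}) to identify $\mathrm{Sol}(\Lambda,r)$ with the locus of such triples for which the cameral cover is smooth and the underlying torsor is regularly stable; this locus is open and dense, yielding the asserted Zariski-open embedding $\mathrm{Sol}(\Lambda,r)\hookrightarrow M(KP,r)$.

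For the integrable structure, I would restrict the Hitchin fibration to the leaf. By Theorem~\ref{thm:commuting} the Chevalley invariants of the coresidue are constant along $\mathrm{Orb}(KP)$, so $\widetilde H_{\boldsymbol\theta}\bigl(M(KP,r)\bigr)$ lies in the subvariety $\mathcal{A}^{KP}:=q^{-1}\bigl(\mathrm{c.q.}(\mathrm{Orb}(KP))\bigr)\subseteq\mathcal{A}_{\boldsymbol\theta}$, a single fiber of the quotient $q\colon\mathcal{A}_{\boldsymbol\theta}\to\mathcal{A}_{\boldsymbol\theta}/\mathcal{A}_{\boldsymbol\theta}^+$. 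Theorem~\ref{thm:lagrangian_LH} shows the generic fibers of $\widetilde H_{\boldsymbol\theta}$ are Lagrangian in $(\mathcal{M}_{LH},\omega_{LH})$, the argument there exhibiting the tangent to the Prym orbit and the tangent to the Hitchin fiber as mutual symplectic orthogonals; restricting this orthogonality to the leaf $M(KP,r)$ shows that the fibers of $\widetilde H_{\boldsymbol\theta}\vert_{M(KP,r)}\colon M(KP,r)\to\mathcal{A}^{KP}$ are Lagrangian for the reduced symplectic form. By Corollary~\ref{parh_Hitchin_fiber_abelian} (through the abelianization of Theorem~\ref{thm:abelianization-Gamma}) these fibers are torsors under the generalized Prym variety, hence abelian. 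Therefore $\widetilde H_{\boldsymbol\theta}\vert_{M(KP,r)}$ is a Lagrangian fibration by abelian torsors, i.e.\ an algebraically completely integrable Hamiltonian system in the sense of Definition~\ref{defn:aciHs}; pulling back along the embedding of the previous paragraph equips $\mathrm{Sol}(\Lambda,r)$ with this structure.

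The main obstacle is the precise matching in the second paragraph: one must translate the Treibich--Verdier analytic classification of elliptic solitons into the moduli-theoretic data, verifying that the order-$r$ and $\Lambda$-periodicity hypotheses correspond exactly to rank $r$, degree $0$, a single simple pole at $q$, and coresidue orbit $\mathrm{Orb}(KP)$, and that the resulting correspondence is onto a genuinely Zariski-open (not merely constructible) subset. A lesser technical point is the dimension bookkeeping under the passage to the leaf --- confirming that the Lagrangian fibers of the ambient system restrict to Lagrangian subvarieties of half the dimension of $M(KP,r)$ --- but this is forced by the symplectic orthogonality already established in Theorem~\ref{thm:lagrangian_LH}.
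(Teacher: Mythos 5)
Your proposal is correct and takes essentially the same route as the paper, which gives no detailed proof: it obtains the embedding from the Treibich--Verdier bijection \cite{T-V} (as used by Markman) and states the integrable structure as a direct consequence of Theorem \ref{thm:lagrangian_LH}. Your extra steps---identifying $M(KP,r)=\mu^{-1}(\mathrm{Orb}(KP))$ via the coresidue moment map of Theorem \ref{thm:PoissonGD}, restricting $\widetilde H_{\boldsymbol\theta}$ to the leaf, and invoking Corollary \ref{parh_Hitchin_fiber_abelian} for the abelian-torsor fibers---are consistent elaborations of exactly the ingredients the paper leaves implicit.
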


\subsection{Elliptic Calogero--Moser system}

The elliptic Calogero--Moser system can be realized intrinsically as a $G$-Hitchin system, as demonstrated by Hurtubise and Markman in \cite{HM}. Their construction bypasses the difficulties of using semi-simple groups by defining a bespoke, non-semisimple structure group that naturally accommodates the geometry of the Calogero--Moser phase space.

\begin{thm}[\cite{HM}, Sections 4 \& 5]
\label{thm:HM_system}
Let $\Sigma$ be an elliptic curve with a marked point $p_0$, and let $R$ be a root system with torus $H$ and Weyl group $W$. Let $G_{HM}$ be the non-semisimple group whose connected component is $(\bigoplus_{\alpha \in R} \mathbb{C}_{\alpha}) \rtimes H$. The elliptic Calogero--Moser system for the root system $R$ is realized as a Hitchin system whose phase space is the symplectic reduction of the cotangent bundle $T^*\mathcal{M}_{G_{HM}}$ of the moduli space of $G_{HM}$-bundles on $\Sigma$ framed at $p_0$. The reduction is taken with respect to the action of $G_{HM}$, where the moment map is the residue of the Higgs field at $p_0$, and is performed at a generic, $W$-invariant coadjoint orbit element $C \in (\bigoplus_{\alpha \in R} \mathbb{C}_{\alpha})^*$. The resulting reduced phase space is symplectically isomorphic to $(\mathfrak{h} \times \mathfrak{h}^*) / W_{\text{aff}}$, where the coordinates $x \in \mathfrak{h}$ parameterize the underlying $H$-bundle, and the points $p \in \mathfrak{h}^*$ correspond to the constant part of the Higgs field in the $\mathfrak{h}^*$ direction. The Hamiltonian of the system, $H_{CM}(x, p) = p \cdot p + \sum_{\alpha \in R} m_{|\alpha|} \wp(\alpha(x))$, arises from the residue pairing of the canonical quadratic invariant on the Lie algebra $\mathfrak{g}_{HM}$ with the Weierstrass zeta function.
\end{thm}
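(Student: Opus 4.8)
The plan is to realize the reduced phase space by symplectic reduction on a cotangent bundle, as in Example~\ref{exm:Ham_lift}, adapted to the non-semisimple structure group $G_{HM}$. First I would make the Lie algebra grading explicit, writing $\mathfrak{g}_{HM} = \mathfrak{n} \rtimes \mathfrak{h}$, where $\mathfrak{n} = \bigoplus_{\alpha \in R} \mathbb{C}_\alpha$ is the abelian ideal carrying the root grading and $\mathfrak{h} = \mathrm{Lie}(H)$ is the Cartan factor. Since $K_\Sigma \cong \mathcal{O}_\Sigma$ is trivial, a Higgs field is a meromorphic section of the adjoint bundle with a simple pole at $p_0$; decomposing along the grading splits it into an $\mathfrak{h}$-valued holomorphic part $p\,\mathrm{d}z$ (the ``constant part'') and root components valued in line bundles twisted by $\mathcal{O}_\Sigma(p_0)$.

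Next I would identify the moduli space $\mathcal{M}_{G_{HM}}$ of framed $G_{HM}$-bundles. As $H$ is a torus, the moduli of topologically trivial $H$-bundles on $\Sigma$ is a dual torus whose points are recorded by $x \in \mathfrak{h}$ modulo the affine Weyl group $W_{\mathrm{aff}}$; the framing at $p_0$ rigidifies the $\mathfrak{n}$-directions, so that the fibre coordinates of $T^*\mathcal{M}_{G_{HM}}$ are $p \in \mathfrak{h}^*$ together with the root-component data of $\varphi$. The canonical symplectic form restricts to $\sum_i \mathrm{d}x_i \wedge \mathrm{d}p_i$ on the $\mathfrak{h}\times\mathfrak{h}^*$ factor, exactly as in Example~\ref{exm:cot_bun}.

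The crucial step is the moment map computation for the residual $G_{HM}$-action at $p_0$. By Example~\ref{exm:Ham_lift} it is the dual of the infinitesimal action, which --- precisely as in Theorem~\ref{thm:PoissonGD} --- is the residue $\operatorname{Res}_{p_0}\varphi$, now valued in $\mathfrak{n}^*$. Fixing this residue to a generic $W$-invariant element $C$ and forming $\mu^{-1}(\mathcal{O}_C)/G_{HM}$ forces each root component of $\varphi$ to solve an inhomogeneous equation on $\Sigma$ whose unique solution is the Lam\'e function $\Phi(\alpha(x), z)$. Substituting back into the quadratic Chevalley invariant $\tfrac{1}{2}\langle \varphi, \varphi\rangle$ and using the residue pairing of Section~\ref{sec:Poisson_moment}, the kinetic term $p\cdot p$ arises from the $\mathfrak{h}$-part, while the identity $\Phi(u,z)\Phi(-u,z) = \wp(z) - \wp(u)$ converts the product of root components into the potential $\sum_{\alpha\in R} m_{|\alpha|}\,\wp(\alpha(x))$. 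The reduced space is then $(\mathfrak{h}\times\mathfrak{h}^*)/W_{\mathrm{aff}}$.

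The main obstacle is twofold. Conceptually, $G_{HM}$ is non-reductive, so the freeness, stability, and abelianization results of the previous sections do not apply verbatim; one must check directly that the Marsden--Weinstein reduction at $\mathcal{O}_C$ is clean and that $W_{\mathrm{aff}}$ is exactly the residual gauge symmetry. Analytically, the heart of the matter is verifying that the unique $\Sigma$-solution of the root equations, fed into the quadratic invariant, reproduces $\wp(\alpha(x))$ with the correct couplings $m_{|\alpha|}$ rather than some generic quadratic expression; this relies on the expansion of the elliptic Green's function in the Weierstrass $\zeta$- and $\wp$-functions, and it is this step that genuinely pins down the Calogero--Moser Hamiltonian.
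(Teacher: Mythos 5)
Your proposal cannot be checked against an internal argument, because the paper does not prove this statement: it is quoted as a result of Hurtubise--Markman (\cite{HM}, Sections 4 and 5) and used as a black box to identify the Calogero--Moser leaf inside $\mathcal{M}_{LH}(\Sigma,\mathcal{G}_{\boldsymbol\theta})$. The only meaningful comparison is therefore with the argument of \cite{HM} itself, and judged that way your sketch is a faithful reconstruction of it: the decomposition $\mathfrak{g}_{HM}=\mathfrak{n}\rtimes\mathfrak{h}$ with $\mathfrak{n}=\bigoplus_{\alpha\in R}\mathbb{C}_\alpha$ abelian, the identification of the framed moduli space with $x\in\mathfrak{h}$ modulo $W_{\mathrm{aff}}$ plus fibre data, the change-of-framing moment map $\operatorname{Res}_{p_0}\varphi$ (including your correct observation that this residue automatically lands in $\mathfrak{n}^*$, since the $\mathfrak{h}$-component of $\varphi$ is holomorphic), reduction at a generic $W$-invariant orbit, the unique sections of $L_{\alpha(x)}(p_0)$ with prescribed residue given by Lam\'e-type functions, and the identity $\Phi(u,z)\Phi(-u,z)=\wp(z)-\wp(u)$ generating the potential --- this is precisely the route of Hurtubise--Markman.

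Two caveats. First, what you flag as ``obstacles'' --- cleanness of the Marsden--Weinstein reduction for the non-reductive group $G_{HM}$, identification of $W_{\mathrm{aff}}$ as the residual symmetry, and the elliptic Green's function computation pinning down the couplings $m_{|\alpha|}$ --- are not peripheral checks; they constitute the actual content of Sections 4--5 of \cite{HM}, so your text is an outline of a proof rather than a proof. Second, two smaller imprecisions: the framing at $p_0$ rigidifies the whole fibre of the bundle, not only the $\mathfrak{n}$-directions; and the quadratic invariant produces, besides the terms you record, a summand proportional to $\wp(z)$ whose coefficient is determined by the orbit $C$ alone (a Casimir on the reduced space) --- the residue pairing with the Weierstrass $\zeta$-function is exactly the device that separates the dynamical Hamiltonian from this piece, and your sketch elides that step, together with the sign bookkeeping needed to turn $\wp(z)-\wp(\alpha(x))$ into the stated $+\sum_{\alpha\in R} m_{|\alpha|}\,\wp(\alpha(x))$.
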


The Hurtubise--Markman construction can be precisely situated within the general framework of logahoric Higgs torsors. The elliptic Calogero--Moser system corresponds to a single symplectic leaf within the moduli space $\mathcal{M}_{LH}(\Sigma, \mathcal{G}_{\boldsymbol{\theta}})$ for a specific choice of data. We set the base curve to be the elliptic curve $X=\Sigma$ with divisor $D=\{p_0\}$, and the structure group to be $G = G_{HM}$, as in the statement of Theorem \ref{thm:HM_system}. The parahoric data at $p_0$ is taken to be trivial ($\theta_{p_0}=0$), corresponding to the standard parahoric group $G_{HM}(\mathbb{C}[[z]])$. This choice makes the level group $G_D \cong G_{HM} / Z(G_{HM})$. The framing of the bundle at $p_0$ used in \cite{HM} is a specific realization of a $D$-level structure $\eta$. The crucial identification is that the symplectic reduction in \cite{HM} at a fixed coadjoint orbit element $C \in (\bigoplus_{\alpha \in R} \mathbb{C}_{\alpha})^*$ is equivalent to selecting the symplectic leaf $\mu^{-1}(\mathrm{Ad}^*_{G_{HM}}(C))$ in our framework, where $\mu$ is the moment map.
Consider the logahoric Hitchin integrable system for the group $G=\mathrm{SL}(r,\mathbb{C})$ over the elliptic curve $X=\Sigma$ with divisor $D=\{q\}$. We choose the trivial parahoric data at $q$, so the level group is $G_D = \mathrm{SL}(r,\mathbb{C})/Z(\mathrm{SL}(r,\mathbb{C}))$. The moment map $\mu_{\mathrm{SL}(r)}$ takes values in $\mathfrak{g}_D^* \cong \mathfrak{sl}(r)^*$.

As described in \cite{Markman} and \cite{T-V}, the space of elliptic solutions to the KP equation, $\mathrm{Sol}(\Lambda, r)$, corresponds to the symplectic leaf defined by the coadjoint orbit of the element in $\mathfrak{sl}(r)^*$ corresponding to the matrix $\mathrm{diag}(-1, -1, \dots, -1, r-1)$. We denote this orbit by $\mathrm{Orb}(KP)$. The KP symplectic leaf is thus:
\[ M(KP, r) := \mu_{\mathrm{SL}(r)}^{-1}(\mathrm{Orb}(KP)) \subset \mathcal{M}_{LH}(\Sigma, \mathcal{G}_{\mathrm{SL}(r), \boldsymbol{\theta}}). \]
The connection is then established by the embedding theorem from Hurtubise and Markman.

\begin{thm}[\cite{HM}, Section 6]
For the root system $R = A_{r-1}$, there exists an equivariant embedding of the Hurtubise--Markman Hitchin system for $G_{HM}$ into the standard Hitchin system for $\mathrm{GL}(r, \mathbb{C})$. This embedding maps the phase space of the $A_{r-1}$ Calogero--Moser system precisely onto the symplectic leaf that describes the elliptic solutions of the KP hierarchy.
\end{thm}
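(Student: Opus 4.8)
The plan is to realize the embedding at the level of defining representations and then transport it through both the Hitchin fibration and the symplectic reduction. First I would make the group-theoretic embedding explicit. For $R=A_{r-1}$ the root spaces $\mathbb{C}_\alpha$ are canonically the off-diagonal entries $E_{ij}$ ($i\neq j$) of $\mathfrak{gl}(r,\mathbb{C})$ and the torus $H$ is the diagonal, so the standard $r$-dimensional representation of $\mathrm{GL}(r,\mathbb{C})$ restricts to a faithful representation of the connected component $\bigl(\bigoplus_{\alpha\in R}\mathbb{C}_\alpha\bigr)\rtimes H$ of $G_{HM}$. Associating to each $G_{HM}$-Higgs torsor the induced rank-$r$ Higgs bundle yields a morphism from the Hurtubise--Markman system into the $\mathrm{GL}(r,\mathbb{C})$ Hitchin system on $\Sigma$ (equivalently, after fixing determinant and degree $0$, into the $\mathrm{SL}(r,\mathbb{C})$ system). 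Because the invariant polynomials of $\mathfrak{gl}(r)$ restrict to the $W$-invariants used to build the base, this morphism intertwines the two Hitchin maps, sends cameral/spectral data to cameral/spectral data, and matches fibers with fibers.

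Second, I would verify equivariance and match the reduction levels. In our language the $G_{HM}$-action entering the HM reduction is the level-group action at $p_0$, whose moment map is the coresidue $\operatorname{CoRes}_{p_0}$ of Theorem~\ref{thm:PoissonGD} (see also Proposition~\ref{prop:localAction}); under the embedding the residue of the $G_{HM}$-Higgs field is carried to the residue of the associated $\mathrm{SL}(r)$-Higgs field, so the two moment maps are intertwined by the induced map $\mathfrak{g}_{HM}^*\to\mathfrak{sl}(r)^*$. The decisive computation is that the generic, $W$-invariant coadjoint level $C\in\bigl(\bigoplus_{\alpha}\mathbb{C}_\alpha\bigr)^*$ of Theorem~\ref{thm:HM_system} is sent to the orbit $\mathrm{Orb}(KP)$: since $C$ pairs nontrivially with every root vector while its Cartan part is constrained by $W$-invariance and tracelessness, its image in $\mathfrak{sl}(r)^*$ is, via the Killing form, of rank-one-plus-scalar type, and the normalization forces its eigenvalues to be exactly $(-1,\dots,-1,r-1)$. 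This is precisely the Kazhdan--Kostant--Sternberg mechanism producing Calogero--Moser by reduction at a minimal (rank-one) orbit.

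Third I would combine the two reductions. Having matched the levels, the HM reduced space $\mu_{HM}^{-1}\bigl(\mathrm{Ad}^*_{G_{HM}}(C)\bigr)/G_{HM}$ and the leaf $M(KP,r)=\mu_{\mathrm{SL}(r)}^{-1}(\mathrm{Orb}(KP))$ are defined by fixing the same coresidue orbit at $p_0$ on Higgs torsors sharing the same spectral data, so the embedding restricts to a bijection between them. Compatibility of the canonical symplectic structures---both arise as $T^*$ of the respective moduli of leveled torsors, with the pairing underlying $\sharp$ in~(\ref{defn:sharp}) restricting correctly under the representation---upgrades this to a symplectic isomorphism onto $M(KP,r)$. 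By Theorem~\ref{thm:lagrangian_LH} the embedding then carries the Calogero--Moser fibration to the Lagrangian Hitchin fibration on the KP leaf, and the Treibich--Verdier identification of an open subset of $M(KP,r)$ with $\mathrm{Sol}(\Lambda,r)$ closes the picture.

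The main obstacle is the second step: establishing that the \emph{intrinsic} $W$-invariant orbit $C$ used by Hurtubise and Markman is carried by the representation-theoretic embedding precisely onto $\mathrm{Orb}(KP)$, and---simultaneously---that the embedding is Poisson rather than merely set-theoretic, so that the two symplectic reductions genuinely coincide. Both points reduce to a careful bookkeeping of the Killing-form identifications $\hat{\mathfrak{l}}_{\theta}\cong\mathfrak{l}_\theta$ and of the residue pairing~(\ref{rem:residue_pairing}) at $p_0$, where the non-semisimplicity of $G_{HM}$ must be reconciled with the semisimple target $\mathrm{SL}(r)$; this is exactly the content already established in \cite[Section~6]{HM}, which I would invoke for the delicate orbit-matching.
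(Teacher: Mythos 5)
First, a point of comparison: the paper itself gives no proof of this statement. It is imported verbatim as a cited theorem from \cite[Section 6]{HM}, and the subsequent corollary is obtained simply by combining that citation with the identification of $M(KP,r)$ as a symplectic leaf. So to the extent that your final paragraph defers the ``delicate orbit-matching'' and the Poisson property to \cite[Section~6]{HM}, you are doing exactly what the paper does; the issue is with the scaffolding you add around that citation.

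That scaffolding contains a genuine error, and it is in your first step. The connected component of $G_{HM}$ is $\bigl(\bigoplus_{\alpha\in R}\mathbb{C}_\alpha\bigr)\rtimes H$, where the sum of root spaces is an \emph{abelian} vector group: in $\mathfrak{g}_{HM}$ one has $[\mathbb{C}_\alpha,\mathbb{C}_\beta]=0$ for all roots $\alpha,\beta$, whereas in $\mathfrak{gl}(r,\mathbb{C})$ one has $[E_{ij},E_{jk}]=E_{ik}\neq 0$. Hence the identification of $\mathbb{C}_\alpha$ with the off-diagonal entries and of $H$ with the diagonal torus is not a Lie algebra (or group) homomorphism, and the standard $r$-dimensional representation of $\mathrm{GL}(r,\mathbb{C})$ does not restrict to a faithful representation of $G_{HM}$. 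In fact no faithful $r$-dimensional representation of this group exists: $\mathfrak{g}_{HM}$ is solvable (abelian nilradical with abelian quotient $\mathfrak{h}$), so by Lie's theorem the image of any representation on $\mathbb{C}^r$ is conjugate into the upper-triangular matrices, of dimension $r(r+1)/2 < r^2-1=\dim\mathfrak{g}_{HM}$ for $r\geq 3$ (and the case $r=2$ fails by inspection of centers). The group $G_{HM}$ is an In\"on\"u--Wigner-type contraction of the reductive group, not a subgroup of it. Consequently the morphism of Hitchin systems you propose to obtain by extension of structure group is undefined, and the intertwining of Hitchin maps and moment maps in your second and third steps, which presuppose that morphism, has no starting point. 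The actual embedding of \cite[Section~6]{HM} is built at the level of the systems themselves: the $H$-bundle (a tuple of degree-zero line bundles recording $x\in\mathfrak{h}$) and the Calogero--Moser Lax form are assembled directly into a rank-$r$ Higgs bundle whose residue at $p_0$ lands in the rank-one-plus-scalar orbit $\mathrm{Orb}(KP)$ --- the Kazhdan--Kostant--Sternberg mechanism you correctly invoke --- rather than being induced by any group homomorphism $G_{HM}\to\mathrm{GL}(r,\mathbb{C})$. Your identification of the target orbit and the overall strategy of matching reduction levels are in the right spirit, but the argument would have to be rebuilt on this bundle-level construction.
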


\begin{cor}
The symplectic leaf $M(KP, r) \subset \mathcal{M}_{LH}(\Sigma, \mathcal{G}_{\mathrm{SL}(r), \boldsymbol{\theta}})$ corresponding to the KP hierarchy is symplectically isomorphic to the symplectic leaf $$\mu_{G_{HM}}^{-1}(\mathrm{Ad}^*_{G_{HM}}(C)) \subset \mathcal{M}_{LH}(\Sigma, \mathcal{G}_{G_{HM}}, \boldsymbol{\theta'})$$ corresponding to the $A_{r-1}$ Calogero--Moser system.
\end{cor}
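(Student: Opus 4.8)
The plan is to deduce the statement from the Hurtubise--Markman embedding theorem just cited, after translating both sides into the common language of moment-map reductions developed in Section~\ref{sec:Poisson_moment}. First I would record that both symplectic leaves in question are of Marsden--Weinstein type: by Theorem~\ref{thm:PoissonGD} the moment map $\mu$ on $\mathcal{M}_{LH}$ is the direct sum of coresidues at the divisor, and by the argument of Corollary~\ref{cor:parahoric-leaves-refine} the symplectic leaves are precisely the connected components of the $\mu$-preimages of $G_D$-coadjoint orbits, each carrying the reduced symplectic form induced from $\omega_{LH}$. Thus $M(KP,r)=\mu_{\mathrm{SL}(r)}^{-1}(\mathrm{Orb}(KP))$ and $\mu_{G_{HM}}^{-1}(\mathrm{Ad}^*_{G_{HM}}(C))$ are both leaves of this kind, and it suffices to produce a symplectomorphism between them that intertwines the reduced symplectic data.

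Next I would invoke the dictionary between the two descriptions. On the Calogero--Moser side, the framing of the $G_{HM}$-bundle at $p_0$ used in Theorem~\ref{thm:HM_system} is precisely a $D$-level structure $\eta$ in the sense of Definition~\ref{defn:D_level_structure}, and the residue of the Higgs field at $p_0$ is exactly the moment map $\mu_{G_{HM}}$ of Definition~\ref{defn:moment_map}; hence the Hurtubise--Markman symplectic reduction at the coadjoint orbit of $C$ coincides with the leaf $\mu_{G_{HM}}^{-1}(\mathrm{Ad}^*_{G_{HM}}(C))$ inside $\mathcal{M}_{LH}(\Sigma,\mathcal{G}_{G_{HM}},\boldsymbol{\theta'})$. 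With this identification in place, the embedding theorem of \cite[Section 6]{HM} supplies a $G$-equivariant embedding of the $G_{HM}$-Hitchin system into the $\mathrm{GL}(r,\mathbb{C})$-Hitchin system which carries the $A_{r-1}$ Calogero--Moser phase space precisely onto the leaf describing the KP elliptic solitons. Because an embedding of Hitchin systems is by construction symplectic and commutes with the respective Hitchin maps, the induced map on leaves is a symplectomorphism onto its image.

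The last step is to match the residue data and to descend from $\mathrm{GL}(r)$ to $\mathrm{SL}(r)$. The embedding $G_{HM}\hookrightarrow \mathrm{GL}(r,\mathbb{C})$ induces a dual map on coresidues under which the fixed generic orbit representative $C\in(\bigoplus_{\alpha\in R}\mathbb{C}_\alpha)^*$ is sent to the element represented, via the Killing form, by $\mathrm{diag}(-1,\dots,-1,r-1)$. I would verify this by comparing the explicit coadjoint-orbit computation in \cite{HM} with the orbit $\mathrm{Orb}(KP)$ fixed in the definition of $M(KP,r)$, noting that the trace-zero normalization is exactly what passes the $\mathrm{GL}(r)$ datum down to the $\mathrm{SL}(r)$ moment map $\mu_{\mathrm{SL}(r)}$. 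Since Marsden--Weinstein reduction is functorial with respect to symplectic isomorphisms of Hamiltonian spaces that intertwine moment maps and carry one orbit value to the other, the reduced leaves are symplectically isomorphic, which is the assertion.

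The main obstacle I expect is the bookkeeping around the non-semisimple structure group $G_{HM}$ together with the passage between $\mathrm{GL}(r)$ and $\mathrm{SL}(r)$: one must check carefully that the coresidue moment map of Theorem~\ref{thm:PoissonGD}, phrased through the Levi quotient $\hat{\mathfrak{l}}_{\theta}$ and the Killing form, specializes correctly to the residue pairing used in \cite{HM} for the non-reductive $\mathfrak{g}_{HM}$, and that the generic $W$-invariant value $C$ maps \emph{exactly} onto $\mathrm{Orb}(KP)$ under the embedding rather than merely to a nearby orbit. Once this compatibility of moment maps and orbit representatives is confirmed, the symplectic isomorphism follows immediately from functoriality of reduction.
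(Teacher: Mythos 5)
Your proposal is correct and follows essentially the same route as the paper: the paper offers no separate proof of this corollary, treating it as an immediate consequence of the Hurtubise--Markman embedding theorem once the two phase spaces have been identified with the moment-map leaves $\mu_{\mathrm{SL}(r)}^{-1}(\mathrm{Orb}(KP))$ and $\mu_{G_{HM}}^{-1}(\mathrm{Ad}^*_{G_{HM}}(C))$, which is exactly the chain of identifications you spell out. Your additional care about the $\mathrm{GL}(r)$-versus-$\mathrm{SL}(r)$ normalization and the compatibility of the coresidue pairing with the non-reductive $\mathfrak{g}_{HM}$ makes explicit verifications that the paper leaves implicit, but it does not change the argument.
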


Therefore, the general framework of logahoric Higgs torsors provides a unified geometric perspective. It demonstrates that the phase space for elliptic KP solutions is a specific symplectic leaf of the $\mathrm{SL}(r, \mathbb{C})$-Hitchin system, and that this very same leaf can be described intrinsically for the type $A_{r-1}$ root system via the Hurtubise--Markman construction, yielding the corresponding Calogero--Moser system.

\vspace{5mm}

\textbf{Acknowledgements}.
We warmly thank Pengfei Huang and Hao Sun for many fruitful discussions and remarks. We are also very grateful to the following Research Institutes for their support and hospitality, where part of this work was completed: The Kavli Institute for the Physics and Mathematics of the Universe at the University of Tokyo, the Brin Mathematics Research Center at the University of Maryland and the Tianyuan Mathematics Research Center. Georgios Kydonakis was supported by the Scientific Committee of the University
of Patras through the program ``Medicos''. Lutian Zhao was supported by JSPS KAKENHI Grant Number JP25K17226.
\vspace{2mm}

\bigskip

\noindent\small{\textsc{Department of Mathematics, University of Patras}\\
  University Campus, Patras 26504, Greece}\\
\emph{E-mail address}:  \texttt{gkydonakis@math.upatras.gr}

	\bigskip
	
	\noindent\small{\textsc{Kavli Institute for the Physics and Mathematics of the Universe, The University of Tokyo}\\
		5-1-5 Kashiwanoha, Kashiwa, Chiba, 277-8583, Japan}\\
	\emph{E-mail address}: \texttt{lutian.zhao@ipmu.jp}

\end{document}